\crefname{hypothesis}{Hypothesis}{Hypotheses}
\newtheorem*{theorem_recall_1}{Theorem~2.1}
\newtheorem*{theorem_recall_2}{Theorem~2.2}
\newtheorem*{proposition_recall_1}{Proposition~3.1}
\newtheorem*{remark_}{Example}
\newtheorem*{remark__}{Remark}
\theoremstyle{plain}
\newtheorem{assumption}{Assumption}
\numberwithin{assumption}{subsection}
\newenvironment{manualassumption}[1]{%
  \manualassumptioninner
}{\endmanualassumptioninner}
\newenvironment{manualtheorem}[1]{%
  \manualtheoreminner
}{\endmanualtheoreminner}
\newcommand{\customlabel}[2]{%
   \protected@write \@auxout {}{\string \newlabel {#1}{{#2}{\thepage}{#2}{#1}{}} }%
   \hypertarget{#1}{}
}
\DeclareMathOperator*{\argmax}{arg\,max}
\DeclareMathOperator*{\argmin}{arg\,min}
\DeclareMathOperator{\sech}{sech}
\numberwithin{equation}{subsection}
\title{Two-Timescale Stochastic Gradient Descent in Continuous Time with Applications to Joint Online Parameter Estimation and Optimal Sensor Placement%\thanks{Submitted to the editors [\dots].
%\funding{This work was funded by [\dots].}}
}
\author{Louis Sharrock\thanks{Department of Mathematics, Imperial College London, South Kensington, SW7 2AZ, UK
  (\email{louis.sharrock16@imperial.ac.uk})}%, \url{http://www.imag.com/\string~ddoe/}).}
\and Nikolas Kantas$^{*}$%\thanks{Department of Mathematics, Imperial College London
  %(\email{n.kantas@imperial.ac.uk})}
 }
\begin{document}

\maketitle

% REQUIRED
\begin{abstract}
In this paper, we establish the almost sure convergence of two-timescale stochastic gradient descent algorithms in continuous time under general noise and stability conditions, extending well known results in discrete time. We analyse algorithms with additive noise and those with non-additive noise. In the non-additive case, our analysis is carried out under the assumption that the noise is a continuous-time Markov process, controlled by the algorithm states. The algorithms we consider can be applied to a broad class of bilevel optimisation problems. We study one such problem in detail, namely, the problem of joint online parameter estimation and optimal sensor placement for a partially observed diffusion process. We demonstrate how this can be formulated as a bilevel optimisation problem, and propose a solution in the form of a continuous-time, two-timescale, stochastic gradient descent algorithm. Furthermore, under suitable conditions on the latent signal, the filter, and the filter derivatives, we establish almost sure convergence of the online parameter estimates and optimal sensor placements to the stationary points of the asymptotic log-likelihood and asymptotic filter covariance, respectively. We also provide numerical examples, illustrating the application of the proposed methodology to a partially observed Bene\v{s} equation, and a partially observed stochastic advection-diffusion equation.
\end{abstract}

% REQUIRED
\begin{keywords}
  two-timescale stochastic approximation, stochastic gradient descent, recursive maximum likelihood, online parameter estimation, optimal sensor placement, Bene\v{s} filter, Kalman-Bucy filter, stochastic advection-diffusion equation.
\end{keywords}

% REQUIRED
\iffalse
\begin{AMS}
  TBC.
\end{AMS}
\fi

\section{Introduction} \label{sec:intro}
Many modern problems in engineering, the sciences, economics, and machine learning, involve the optimisation of two or more interdependent performance criteria. These include, among others, unsupervised learning \cite{Heusel2017}, reinforcement learning \cite{Karmakar2018,Konda2003a}, meta-learning \cite{Rajeswaran2019}, %game theory \cite{Roth2016}, 
and hyper-parameter optimisation \cite{Franceschi2018}. 
In this paper, we formulate such problems as unconstrained bilevel optimisation problems, in which the objective is to obtain  $\smash{\alpha^{*}\in\Lambda_{\alpha}\subseteq\mathbb{R}^{d_1}}$, $\smash{\beta^{*}({\alpha}^{*})\in\Lambda_{\beta}\subseteq\mathbb{R}^{d_2}}$,  such that
\begin{alignat}{2}
\alpha^{*}&\in \argmin_{\alpha\in\Lambda_{\alpha}} f\big(\alpha,\beta^{*}(\alpha)\big)~~,~~\beta^{*}(\alpha)&&\in\argmin_{\beta\in\Lambda_{\beta}}g(\alpha,\beta) \label{global_min}
\end{alignat}
where $f,g:\mathbb{R}^{d_1}\times\mathbb{R}^{d_2}\rightarrow\mathbb{R}$ are continuously differentiable functions, and $\Lambda_{\alpha}$, $\Lambda_{\beta}$ are closed subsets of $\mathbb{R}^{d_1}$, $\mathbb{R}^{d_2}$, respectively.  We will assume, as in many applications, that we only have access to noisy estimates of $f$ and $g$. 

There are, unsurprisingly, several significant challenges associated with this optimisation problem. Firstly, in order to evaluate the upper-level objective function, $f(\cdot,\cdot)$, one must obtain the global minimiser of the lower-level objective function $g(\alpha,\cdot)$, for all $\alpha\in\Lambda_{\alpha}$. This may be very difficult, particularly if $g(\alpha,\cdot)$ is a complex function. In many practical applications of interest, one or both of the objective functions may be prohibitively costly to compute (e.g., they may depend on very high-dimensional data), which compounds this problem. Secondly, it may not be possible to compute the gradient of the function $\smash{\beta^{*}(\alpha)}$. Thus, even if we could obtain $\smash{\beta^{*}(\alpha)}$ and evaluate $\smash{f(\alpha,\beta^{*}(\alpha))}$ for all $\alpha\in\Lambda_{\alpha}$, it would not be possible to solve the upper-level optimisation problem directly using gradient-based methods. 

In practice, and with these considerations in mind, it is typical to consider a slightly weaker optimisation problem,  in which the objective is to obtain $\smash{\alpha^{*}}$, $\smash{\beta^{*}}$ such that, simultaneously, $\alpha^{*}$ locally minimises $\smash{f(\cdot,\beta^{*})}$, and $\beta^{*}$ locally minimises $\smash{g(\alpha^{*},\cdot)}$.  That is, such that
\begin{equation}
\alpha^{*} =\argmin_{\alpha\in U_{\alpha^{*}}}f(\alpha,\beta^{*}) ~~~,~~~\beta^{*} = \argmin_{\beta\in U_{\beta^{*}}} g(\alpha^{*},\beta) \label{opt2_}
\end{equation}
where $U_{\alpha^{*}}\subset \Lambda_{\alpha}$ and $U_{\beta^{*}}\subset\Lambda_{\beta}$ are local neighbourhoods of $\alpha^{*}$ and $\beta^{*}$, respectively. We will not assume any form of convexity, and thus we weaken this objective further, seeking values of $\smash{\alpha^{*}}$ and $\smash{\beta^{*}}$ which satisfy the following local stationarity condition
\begin{equation}
\nabla_{\alpha}f(\alpha^{*},\beta^{*}) = 0~~~,~~~\nabla_{\beta}g(\alpha^{*},\beta^{*})=0. \label{eq102}
\end{equation}
In this paper, we analyse the use of gradient methods for this problem,  under the assumption that we continuously observe noisy estimates of these gradients (see Section \ref{sec:method}).  We also consider an important application of this problem which arises in continuous-time state-space models (see Section \ref{sec:apps}).

\subsection{Methodology} \label{sec:method}
\subsubsection{Two-Timescale Stochastic Gradient Descent}
A natural candidate for solving this class of bilevel optimisation problems is two-timescale stochastic gradient descent. Broadly speaking, stochastic gradient descent is a sequential method for determining the minima or maxima of an objective function whose values are only available via noise-corrupted observations (e.g., \cite{Benveniste1990,Borkar2008,Kushner2003}, and references therein). Two-timescale stochastic gradient descent algorithms represent one of the most important and complex subclasses of stochastic gradient descent methods. These algorithms consist of two coupled recursions, which evolve on different timescales (e.g., \cite{Borkar2008,Borkar1997,Tadic2004}). %In particular, the step-sizes of the `slow' recursion are considerably smaller than the step sizes of the `fast' recursion. %They can thus be considered as singularly perturbed SDEs.\footnote{For more details on singularly perturbed SDEs, see, for example, \cite{Pavliotis2008}.} 
In discrete time, this approach has found success in a wide variety of applications, including deep learning \cite{Heusel2017}, reinforcement learning \cite{Karmakar2018,Konda1999,Konda2003},
%Baras2000,Sutton2009,Sutton2009a
 signal processing \cite{Bhatnagar2001}, %power control in wireless networks \cite{Long2007}, admission control in communication networks \cite{Bhatnagar2001a}, 
 optimisation \cite{Doan2017}, 
 %Wang2017,
 and statistical inference \cite{Yang2019}. Consequently, the analysis of its asymptotic properties has been the subject of a large number of papers (e.g., \cite{Borkar2008,Borkar1997,Karmakar2018,Konda1999,Konda2004,Mokkadem2006,Tadic2004}). %Tadic2003

\subsubsection{Stochastic Gradient Descent in Continuous Time}
Although these papers provide an excellent insight, they only explicitly consider two-timescale algorithms in discrete time. Indeed, to the best of our knowledge, there are no existing works which explicitly consider the almost sure convergence of two-timescale stochastic gradient descent algorithms in continuous time. Even upon restriction to the single timescale case, asymptotic results for continuous-time stochastic approximation are somewhat sparse, and generally apply only to algorithms with relatively simple dynamics (e.g., \cite{Chen1982,Chen1994,Lazrieva2008,Sen1978,Valkeila2000,Yin1993}). %Chen1984,Chen1987%Indeed, the Markovian case has only recently received attention \cite{Bhudisaksang2020,Sirignano2017a,Sirignano2020a,Surace2019}. 
There are, however, some notable recent exceptions. 
In particular, almost sure convergence of a continuous-time stochastic gradient descent algorithm for the parameters of a fully observed diffusion process was recently established in \cite{Sirignano2017a}, and has since been extended to partially observed and jump diffusion processes \cite{Surace2019}, and jump diffusion processes \cite{Bhudisaksang2020}.

In addition to the mathematical interest, there are several reasons for considering these algorithms in continuous time. Firstly, models in engineering, finance, and the natural sciences are commonly formulated in continuous time, and thus it is natural also to formulate the corresponding statistical learning algorithms in continuous time. In addition, continuous time algorithms are a very good approximation to their discrete time analogues in cases where the sampling is very frequent (e.g., \cite{Levanony1994,Yin1993}). Furthermore, they may highlight, or even overcome, problems which arise in discrete time algorithms when the sampling rate increases \cite{Moore1988}, 
%Yin1993
including ill conditioning \cite{Salgado1988}, biased estimates \cite{Chen1980}, or even divergence \cite{Moore1988,Sirignano2017a}. 

In practice, it is evident that any stochastic gradient scheme in continuous time must be discretised. Thus, when designing statistical learning algorithms for continuous time models such as \eqref{signal_finite_dim_inf} - \eqref{obs_finite_dim_inf} (see Section \ref{sec:apps}), it is natural to ask why we would prefer to use a discrete time approximation of a continuous-time stochastic gradient descent algorithm over the traditional approach, which first discretises the continuous-time model, and then applies a classical discrete-time stochastic gradient descent algorithm.   We advocate the first approach for several reasons. Firstly, it allows one to directly apply any appropriate numerical discretisation scheme to the theoretically correct statistical learning equations. This enables direct control of the numerical error of the resulting algorithm, and can result in more accurate and robust parameter updates (see, for example, \cite[Section 6.1]{Sirignano2017a}). Indeed, there is no guarantee that discretising the model dynamics using a numerical scheme with certain numerical properties (e.g., higher order accuracy in time), and then applying traditional stochastic gradient descent, will result in a statistical learning algorithm which also has these properties. On the other hand, one can ensure that the desired numerical properties hold by applying the discretisation of choice directly to the true continuous time learning equation. Another advantage of this approach is that it can lead to more computationally efficient updates, particularly when the dimensions of the model are significantly larger than the number of model parameters. We will present one such example in Section \ref{sec:advection}.  Finally, we note that other important properties of the continuous time model, such as its invariant measure, will not necessarily be shared by the discretised model.

\subsection{Applications} \label{sec:apps}
 Among the many applications for the two-timescale, continuous time stochastic gradient descent studied in this paper, we are primarily motivated by an important bilevel optimisation problem arising in the following family of partially observed diffusion processes
% the problem of joint online parameter estimation and optimal sensor placement for a partially observed diffusion process. %models is the problem of joint parameter estimation and optimal sensor placement in state space models. 
%an important problem arising in the following family of partially observed diffusion processes 

\begin{subequations}
\begin{alignat}{2}
\mathrm{d}x(t) &= A(\theta,x(t))\mathrm{d}t + B(\theta,x(t))\mathrm{d}v(t)~,~~~&&x(0)=x_0, \label{signal_finite_dim_inf} \\
\mathrm{d}y(t) &= C(\theta,\boldsymbol{o},x(t))\mathrm{d}t + \mathrm{d}w(t)~,~~~&&y(0)=0,\label{obs_finite_dim_inf}
\end{alignat}
\end{subequations}
where
%$x = 
$x(t)$ denotes a hidden $\mathbb{R}^{n_x}$-valued signal process, %, and takes values in $\mathbb{R}^{n_x}$, 
 %$y = 
$y(t)$ denotes a $\mathbb{R}^{n_y}$-valued observation process, %, and takes values in $\mathbb{R}^{n_y}$. 
and %$v=
$v(t)$ and %$w =
$w(t)$ are independent $\mathbb{R}^{n_x}$- and $\mathbb{R}^{n_y}$-valued Wiener processes,
with incremental covariances ${Q}(\theta)\in\mathbb{R}^{n_x\times n_x}$, 
 $R(\boldsymbol{o})\in\mathbb{R}^{n_y\times n_y}$, which correspond to the signal noise and the measurement noise, respectively. 
Here,    
$\theta\in\Theta\subseteq\mathbb{R}^{n_{\theta}}$ denotes an $n_{\theta}$-dimensional parameter, and $\smash{\boldsymbol{o}=\{\boldsymbol{o}_i\}_{i=1}^{n_y}}$ $\smash{\in\Omega^{n_y}\subseteq(\mathbb{R}^{n_{\boldsymbol{o}}})^{n_y}}$ denotes a set of $n_y$ sensor locations, where $\boldsymbol{o}_i \in\Omega\subseteq \mathbb{R}^{n_{\boldsymbol{o}}}$ for $i=1,\dots,n_y$. 
%following class of partially observed, continuous-time diffusion processes %The study of such systems is old, but remains relevant to numerous applications in fields as diverse as acoustics and signal processing, image analysis and computer vision, automatic control and robotics, economics and finance, computational biology and bioinformatics, environmental monitoring, and meteorology [\textbf{refs}]. Such systems are defined by two stochastic processes - the signal process and the observation process - which are typically assumed to evolve according to a pair of stochastic differential equations (SDEs) of the form
%\begin{alignat}{2}
%\mathrm{d}x(t) &= \mathcal{A}_{\theta}(x(t))\mathrm{d}t + \mathcal{B}_{\theta}(x(t))\mathrm{d}v(t)~,~~~&&x(0)=x_0,  \\
%\mathrm{d}y(t) &= C_{\theta,\boldsymbol{o}}(x(t))\mathrm{d}t + \mathrm{d}w(t)~,~~~&&y(0)=0,
%\end{alignat}
%where $x = \{x(t)\}_{t\geq 0}$ denotes the unobserved $\mathbb{R}^{n_x}$-valued signal process, $y = \{y(t)\}_{t\geq0}$ denotes the $\mathbb{R}^{n_yn_{\boldsymbol{o}}}$-valued observation process, $v=\{v(t)\}_{t\geq 0}$, $w =\{w(t)\}_{t\geq0}$ are $\mathbb{R}^{n_x}$-, $\mathbb{R}^{n_yn_{\boldsymbol{o}}}$-valued Wiener processes, which correspond to the signal noise and the measurement noise, respectively, $\theta\in\Theta\in\mathbb{R}^{d_1}$ denotes a vector of model parameters, and $\boldsymbol{o}\in\Omega^{n_y}\in\mathbb{R}^{d_2}$ denotes a set of sensor locations.
We will assume that, for all $\theta\in\Theta$, and for all $\boldsymbol{o}\in\Omega^{n_y}$, the initial conditions $x_0\sim p_0(\theta,\boldsymbol{o})$ are independent of $\{w(t)\}_{t\geq0}$ and $\{v(t)\}_{t\geq 0}$. We will also assume that $A(\theta,\cdot)$%:\mathbb{R}^{n_x}\rightarrow\mathbb{R}^{n_x}$
, $B(\theta,\cdot)$
%:\mathbb{R}^{n_x}\rightarrow\mathbb{R}^{n_x\times n_x}$
, and $C(\theta,\boldsymbol{o},\cdot)$ 
%:\mathbb{R}^{n_x}\rightarrow\mathbb{R}^{n_y}$ 
are measurable functions which ensure the existence and uniqueness of strong solutions to these equations for all $t\geq 0$ (e.g., \cite{Bain2009}).

This setting is familiar in classical filtering theory, where the problem is to determine the conditional probability law of the latent signal process, given the history of observations $\mathcal{F}_t^{Y}=\sigma\{y(s):0\leq s\leq t\}$, under the assumption that the parameters are known, and the sensor locations are fixed. In most practical situations of interest, however, the model parameters are unknown, and must be inferred from the data. %In fact, inferring the model parameters is often the primary problem of interest. % (e.g., \cite{Baras1987}). %There are, however, several other important problems associated with such processes. 
Moreover, the sensors are often not fixed, in which case it may be possible to reduce the uncertainty in the state estimate by determining an optimal sensor placement. In this paper, we aim to perform both of these challenging tasks simultaneously.

\subsubsection{Online Parameter Estimation}
The problem of parameter estimation for continuous-time, partially observed diffusion processes is somewhat well studied, particularly in the offline setting (e.g., \cite{Dabbous1992}). %Dembo1986,Campillo1989
We note, however, that the majority of literature on this subject has been written for discrete-time, partially observed processes (e.g., \cite{Cappe2005}) 
%,Kantas2009
and, to a lesser extent, continuous-time, fully observed processes (e.g., \cite{Bishwal2008,Kutoyants1984,Levanony1994}).
%Borkar1982
 We are primarily concerned with online parameter estimation methods, which recursively estimate the unknown model parameters based on the continuous stream of observations. %In comparison to classical methods, which process the observed data in a batch fashion, online methods perform inference in real time, can track changes in parameters over time, are more computationally efficient, and have significantly smaller storage requirements.
Perhaps the most common approach to this task is recursive maximum likelihood (RML), which uses stochastic gradient descent to recursively seek the value of $\theta$ which maximises an asymptotic log-likelihood function %, $\tilde{\mathcal{L}}(\theta,\boldsymbol{o})$ 
(e.g., \cite{Gerencser1984,Gerencser2009,Surace2019}). %defined as
%\begin{equation}
%\tilde{\mathcal{L}}(\theta,\boldsymbol{o}) = \lim_{t\rightarrow\infty}\frac{1}{t}\mathcal{L}_t(\theta,\boldsymbol{o}).%=\lim_{t\rightarrow\infty}\left[\frac{1}{t}\int_0^t R^{-1}(\boldsymbol{o})\hat{C}(\theta,\boldsymbol{o},s)\cdot\mathrm{d}y(s)-\frac{1}{2t}\int_0^tR^{-1}(\boldsymbol{o})||\hat{C}(\theta,\boldsymbol{o},s)||^2\mathrm{d}s\right]. \label{asymptotic_ll} \hspace{-27mm}
%\end{equation}
Recently, almost sure convergence of this method in continuous-time, for a non-linear, partially observed finite-dimensional ergodic diffusion process was established in \cite{Surace2019}, extending the results previously obtained for the linear case in \cite{Gerencser1984,Gerencser2009}.
 
\subsubsection{Optimal Sensor Placement}
%Aside from parameter estimation, one of the most important problems associated with state estimation %in partially observed diffusion processes 
%is that of optimal sensor placement. %Indeed, the problem of determining the optimal placement of a finite number of measurement sensors, possibly subject to constraints, is relevant to a huge number of applications in engineering and the applied sciences, including meteorology, environmental monitoring, and fluid dynamics [\textbf{refs}]. 
%In such applications, the process of interest, even if defined continuously over space and in time, can only be measured at a finite number of spatial locations and time points [refs]. Moreover, the spatial and temporal density of observations is generally very low, due either to prohibitive expense %(i.e., the sensors are expensive, they are are expensive to place, or both), 
%or geographical inaccessibility. % (i.e., the sensors cannot be placed in particular locations). 
%Furthermore, measurements at certain points in the spatio-temporal domain may yield more information about the system than measurements at other points, due to spatio-temporal correlations in the signal. Thus, to a greater or lesser extent, the accuracy of the estimate of the latent state is dependent on the number and locations of the measurement sensors.
%Accordingly, %In the case of %infinite dimensional, 
%partially observed, continuous-time, stochastic dynamical systems, 
In contrast to online parameter estimation, the problem of optimal sensor placement for state estimation %in partially observed stochastic dynamical systems 
has been studied by a very large number of authors, and in a wide variety of contexts. %\footnote{Although we do not consider it here, the problem of optimal sensor placement for parameter estimation has also been studied extensively (see \cite{Patan2012,Ucinski2005} for recent reviews).} 
Arguably the first mathematically rigorous treatment of this problem for linear systems was provided by Athans \cite{Athans1972},  %\cite{Bensoussan1971,Bensoussan1972},
who formulated it as an application of optimal control on the %infinite dimensional 
Ricatti equation governing the covariance of the optimal filter (see also \cite{Meier1967}). %Kramer1971,Herring1974,
%\cite{Kubrusly1985} for an early review). 
%This extended the results in \cite{Athans1972}, in which similar conditions were obtained in the finite dimensional case. 
Under this framework, sensor locations are treated as control variables, and the optimal sensor locations are obtained as the minima of a suitable objective function, typically defined as the trace of the filter covariance at some finite time (e.g., \cite{Wu2016}), 
%(e.g., \cite{Bensoussan1971,Bensoussan1972,Cannon1971,Chen1975,Curtain1978,Omatu1978,Wu2016}), 
or the integral of the trace of the filter covariance over some finite time interval (e.g., \cite{Chen1975}).
%(e.g. \cite{Burns2015,Chen1975,Herring1974,Korbicz1994}), 
%which are designed to minimise uncertainty in the state estimate. 
 %In either case, one then has
%\begin{equation}
%\hat{\boldsymbol{o}}_T = \argmin_{\boldsymbol{o}\in\mathcal{O}}\mathcal{J}_T(\theta,\boldsymbol{o}).
%\end{equation}
%%
%% either here on in section 3 %%
%%
 %Recently, and in the spirit of Bensoussan's original approach, Burns et al. \cite{Burns2015a,Burns2011,Burns2015,Burns2009,Hintermuller2017,Rautenberg2010} have provided a rigorous general framework for determining optimal location and trajectories of sensor networks for linear stochastic distributed parameter systems. In particular, in \cite{Burns2015}, the optimisation problem is precisely formulated as the minimisation of a functional involving the trace of a solution to the integral Ricatti equation, with constraints given by the allowed trajectories of the sensor network. The existence of Bochner integrable solutions to this equation, and thus the existence of optimal sensor locations, is established in \cite{Burns2015a,Burns2011}. Finally, in \cite{Burns2015}, a Galerkin type numerical scheme for the finite dimensional approximation of these solutions is proposed, for which convergence is proved in $\smash{L_p}$ norm. In these authors' most recent contribution, the existing objective functional is combined with a `worst case scenario' functional, which involves a further optimisation problem for directional sensitivities over a set of admissible perturbations \cite{Hintermuller2017}. 
One can also consider optimal sensor placement with respect to asymptotic versions of these functions (e.g. \cite{Zhang2018}). 
%\cite{Aidarous1978,Amouroux1978,Rafajlowicz1984,Tang2017,Yu1973,Zhang2018}).
 %In this case, the optimal sensor placements are obtained, possibly recursively, % and often using gradient descent methods, 
 %as the minima of the asymptotic objective function.%, $\tilde{\mathcal{J}}(\theta,\boldsymbol{o})$. 
%\begin{equation}
%\tilde{\mathcal{J}}(\theta,\boldsymbol{o}) = \frac{1}{t}\lim_{t\rightarrow\infty}\mathcal{J}_t(\theta,\boldsymbol{o}).
%\end{equation}

%% either here on in section 3 %%
%Most recently, Zhang and Morris \cite{Zhang2018} considered minimisation of the trace of the mild solution of the infinite dimensional algebraic Ricatti equation as a sensor placement criterion. In particular, they proved that the trace of the solution to this equation minimises the steady-state error variance, and thus represents an appropriate design objective. They also established the existence of solutions (i.e., optimal sensor locations) to the corresponding optimal sensor placement problem, as well as the convergence of appropriate finite-dimensional approximations to these solutions, using results previously obtained for optimal actuator locations in \cite{Morris2011}. Tang and Morris \cite{Tang2017} have since extended this approach to the case that the form of the observation operator is not assumed fixed; that is, the shape as well as the location of the sensors is a design variable.

\subsubsection{Joint Online Parameter Estimation and Optimal Sensor Placement}
In the vast majority of practical applications, parameter estimation and optimal sensor placement are both highly relevant. Moreover, they are often inter-dependent, in the sense that the optimal sensor placement may depend, to a greater or lesser extent, on the current parameter estimate. It would thus be highly convenient to tackle these two problems together and, if possible, in an online fashion.  In fact, doing so may lead to significant performance improvements \cite{Sharrock2020}. %That is, to simultaneously and recursively maximise the asymptotic log-likelihood with respect to the model parameters, and minimise the asymptotic objective function with respect to the sensor locations. 
This is naturally formulated as a bilevel optimisation problem, in which the two objective functions are given by the asymptotic log-likelihood and the asymptotic sensor placement objective, respectively. The theoretical analysis of this problem is significantly complicated, however, by the dynamics in the state-space model \eqref{signal_finite_dim_inf} - \eqref{obs_finite_dim_inf}.

%, in which the objective is to obtain $\hat{\theta}\in\Theta$, $\hat{\boldsymbol{o}}\in\Omega^{n_y}$.
%\footnote{Depending on our primary objective, we may also consider the dual bilevel optimisation problem, in which case the objective is to obtain the values of $\hat{\theta}\in\Theta$, $\hat{\boldsymbol{o}}\in\Omega^{n_y}$ such that 
%\begin{alignat}{2}
%\hat{\boldsymbol{o}}&= \argmin_{\boldsymbol{o}\in\Omega^{n_y}} \tilde{\mathcal{J}}\big(\argmax_{\theta\in\Theta}\tilde{\mathcal{L}}(\theta,\boldsymbol{o}),\boldsymbol{o}\big)~~~,~~~\hat{{\theta}}&&= \argmax_{\theta\in\Theta} \tilde{\mathcal{L}}\big(\theta,\hat{\boldsymbol{o}}\big).
%\end{alignat}} 
%\begin{alignat}{2}
%\hat{\theta}&= \argmax_{\theta\in\Theta} \tilde{\mathcal{L}}\big(\theta,\argmin_{\boldsymbol{o}\in\Omega^{n_y}} \tilde{\mathcal{J}}(\theta,\boldsymbol{o})\big)~~~,~~~\hat{\boldsymbol{o}}&&= \argmin_{\boldsymbol{o}\in\Omega^{n_y}} \tilde{\mathcal{J}}\big(\hat{\theta},\boldsymbol{o}\big).
%\end{alignat}
%To the best of our knowledge, however, there are no existing results which provide a mathematically rigorous solution to this problem. 

\subsection{Contributions}

\subsubsection{Convergence of Two-Timescale Stochastic Gradient Descent in Continuous Time} In this paper, we establish the almost sure convergence of two-timescale stochastic gradient descent algorithms in continuous time, under general noise and stability conditions. 
%The noise conditions, as well as the conditions on the objective functions, are some of the weakest under which almost sure convergence can still be obtained. 
We consider algorithms with additive, state-dependent noise, and, importantly, also those with non-additive, state-dependent noise. In the second case, our analysis is carried out under the assumption that the non-additive noise can be represented by an ergodic diffusion process, controlled by the algorithm states. To our knowledge, this is the first rigorous analysis of a two-timescale stochastic approximation algorithm with Markovian dynamics in continuous time.

Our proof of these results closely follows the classical ODE method (e.g., \cite{Benveniste1990,Borkar2000,Kushner2003,Ljung1977}), adapted appropriately to the continuous time setting \cite{Chen1987,Kushner1978}. In the Markovian noise case, it also draws upon well known regularity results relating to the solution of the Poisson equation associated with the infinitesimal generator of the ergodic diffusion process \cite{Pardoux2003,Pardoux2001}. The obtained results cover a broad class of non-linear, two-timescale stochastic gradient descent algorithms in continuous time.  In particular, they can be applied to the stochastic gradient descent algorithm proposed for the joint online parameter estimation and optimal sensor placement (see below). They also include, upon restriction to a single timescale, the continuous-time stochastic gradient descent algorithms recently studied in \cite{Sirignano2017a,Surace2019}. 

\subsubsection{Joint Online Parameter Estimation and Optimal Sensor Placement}
On the basis of our theoretical results, we also propose a solution to the problem of joint online parameter estimation and optimal sensor placement in the form of a two-timescale, stochastic gradient descent algorithm. Under suitable conditions on the process consisting of the latent signal process, the filter, and the filter derivatives, we establish almost sure convergence of the online parameter estimates and recursive optimal sensor placements generated by this algorithm to the stationary points of the asymptotic log-likelihood and the asymptotic filter covariance, respectively. The effectiveness of this algorithm is demonstrated via two numerical examples: a one-dimensional, partially observed stochastic differential equation (SDE) of Bene\v{s} class, and a high-dimensional, partially observed advection-diffusion equation. 

\subsection{Paper Organisation}
The remainder of this paper is organised as follows. In Section \ref{subsec:main1}, we analyse the convergence of continuous-time, two-timescale stochastic gradient descent algorithms with additive noise. In Section \ref{subsec:main2}, we extend our analysis to continuous-time, two-timescale stochastic gradient descent algorithms with Markovian dynamics. In Section \ref{sec:RML_ROSP}, we apply these results to the problem of joint online parameter estimation and optimal sensor placement. In particular, we obtain a continuous-time, two-timescale stochastic gradient descent algorithm for this problem, and prove the almost sure convergence of the recursive parameter estimates and the recursive sensor placements to the stationary points of the asymptotic log-likelihood and the asymptotic sensor objective function, respectively. In Section \ref{sec:numerics}, we provide numerical examples illustrating the performance of the proposed algorithm. Finally, in Section \ref{sec:conclusions}, we offer some concluding remarks. 

\section{Main Results}
\label{sec:main}

We will assume, throughout this section, that $(\Omega,\mathcal{F},\mathbb{P})$ is a complete probability space, equipped with a filtration $(\mathcal{F}_t)_{t\geq 0}$ which satisfies the usual conditions. 

\subsection{Two Timescale Stochastic Gradient Descent in Continuous Time}
\label{subsec:main1}

 Let $f,g:\mathbb{R}^{d_1}\times\mathbb{R}^{d_2}\rightarrow\mathbb{R}$ be continuously differentiable functions. Suppose that, for any inputs $\{\alpha(t)\}_{t\geq 0}$, $\{\beta(t)\}_{t\geq 0}$, it is possible to obtain noisy estimates of $\nabla_{\alpha}f$ and $\nabla_{\beta}g$ according to 
\begin{subequations}
\begin{align}
\mathrm{d}z_1(t)& = \nabla_{\alpha}f(\alpha(t),\beta(t))\mathrm{d}t + \mathrm{d}\xi_1(t) \label{obs1} \\
\mathrm{d}z_2(t)& = \nabla_{\beta}g(\alpha(t),\beta(t))\mathrm{d}t + \mathrm{d}\xi_2(t)  \label{obs2}
\end{align}
\end{subequations}
 where $\{\xi_1(t)\}_{t\geq 0}$ and $\{\xi_2(t)\}_{t\geq 0}$ are $\mathbb{R}^{d_1}$ and $\mathbb{R}^{d_2}$ valued continuous semi-martingales on $(\Omega,\mathcal{F},\mathbb{P})$, which are assumed to be measurable, random functions of $\{\alpha(s)\}_{0\leq s\leq t}$ and $\{\beta(s)\}_{0\leq s< t}$.\footnote{ That is, in a slight abuse of notation, we write $\{\xi_i(t)\}_{t\geq 0}$, $i=1,2$, to denote $\{\xi_i(t,\alpha(t),\beta(t))_{t\geq 0}$.}\textsuperscript{,}\footnote{ In order to aid intuition, it is instructive to consider the formal time derivative of these measurement equations, viz  %. In particular, our assumption is formally equivalent to the assumption that we obtain noisy measurements of $\nabla_{\alpha}f(\alpha(t),\beta(t)$ and $\nabla_{\beta}g(\alpha(t),\beta(t))$ of the form
\begin{subequations}
\begin{align}
\dot{z}_1(t) &= \nabla_{\alpha} f(\alpha(t),\beta(t)) + \dot{\xi}_1(t),\label{measurement1} \\
\dot{z}_2(t) &=\nabla_{\beta}g(\alpha(t),\beta(t)) + \dot{\xi}_2(t). \label{measurement2} 
\end{align}
\end{subequations}
%where we include ``$\cdot$'' around the noise terms to emphasise the fact that these time-derivatives should only be understood in a purely formal sense. 
This formulation, while lacking rigour, is useful in order to emphasise the connection with the standard form of noisy gradient measurements assumed in (two-timescale) stochastic approximation algorithms in discrete time (e.g., \cite{Borkar2008,Tadic2004}).}  The functions $f$ and $g$ are to be regarded as the objective functions in \eqref{opt2_}, while the semi-martingales $\{\xi_i(t)\}_{t\geq 0}$, $i=1,2$, can be considered as additive noise. 
On the basis of these noisy observations, it is natural to seek the stationary points of $f$ and $g$ via the following algorithm:
\iffalse
\footnote{These equations are, of course, shorthand for the integral equations
\begin{align*}
\alpha(t) &= \alpha_0 -\int_0^t\gamma_1(s)\nabla_{\alpha} f(\alpha(s),\beta(s))\mathrm{d}s -\int_0^t\gamma_1(s)\mathrm{d}\xi_1(s), \\
\beta(t) &= \beta_0 -\int_0^t\gamma_2(s)\nabla_{\beta} g(\alpha(s),\beta(s))\mathrm{d}s -\int_0^t\gamma_2(s)\mathrm{d}\xi_2(s).
\end{align*}}
\fi
\begin{subequations}
\begin{align}
\mathrm{d}\alpha(t) &= -\gamma_{1}(t)\left[\nabla_{\alpha} f(\alpha(t),\beta(t))\mathrm{d}t + \mathrm{d}\xi_1(t)\right],~~~t\geq0,~~~\alpha(0)=\alpha_0, \label{eq1} \\
\mathrm{d}\beta(t) &= -\gamma_{2}(t)\left[\nabla_{\beta} g(\alpha(t),\beta(t))\mathrm{d}t + \mathrm{d}\xi_2(t)\right],~~~t\geq0,~~~\beta(0)=\beta_0, \label{eq2}
\end{align}
\end{subequations}
where $\{\gamma_i(t)\}_{t\geq 0}$, $i=1,2$, are positive, non-increasing, deterministic functions known as the learning rates; 
%$f,g:\mathbb{R}^{d_1}\times\mathbb{R}^{d_2}\rightarrow\mathbb{R}$ are continuously differentiable functions; 
and $\alpha_0\in\mathbb{R}^{d_1}$, $\beta_0\in\mathbb{R}^{d_2}$ are random variables on $(\Omega,\mathcal{F},\mathbb{P})$. %; 
%and $\{\xi_1(t)\}_{t\geq 0},\{\xi_2(t)\}_{t\geq 0}$ are $\mathbb{R}^{d_1}$ and $\mathbb{R}^{d_2}$ valued continuous semi-martingales on the same probability space. 
%\footnote{More precisely, we assume that $\{\xi_1(t)\}_{t\geq 0}$, $\{\xi_2(t)\}_{t\geq 0}$ are $\mathbb{R}^{d_1}$ and $\mathbb{R}^{d_2}$ valued continuous semi-martingales on $(\Omega,\mathcal{F},\mathbb{P})$. This is the largest class of continuous stochastic processes for which the It\^o integrals implied by equations (\ref{eq1})-(\ref{eq2}) are well-defined.} 
 We will refer to this algorithm as two-timescale stochastic gradient descent in continuous time. It represents the continuous time, gradient descent analogue of the two-timescale stochastic approximation algorithm originally introduced in \cite{Borkar1997}, and since analysed in numerous works (e.g., \cite{Borkar2008}). It can also be considered a two-timescale generalisation of the continuous time stochastic approximation algorithms introduced in \cite{Driml1961}, and later studied in, for example, \cite{Chen1982,Nevelson1976,Sen1978,Yin1993}.  
%Chen1984

Before we proceed, it is worth noting that Algorithm \eqref{eq1} - \eqref{eq2} is not the only possible two-timescale stochastic gradient descent scheme that one can use to simultaneously optimise $f(\alpha,\beta)$ and $g(\alpha,\beta)$. This algorithm is certainly a natural choice if one only has access to noisy estimates of the partial derivatives $\nabla_{\alpha}f(\alpha,\beta)$ and $\nabla_{\beta} g(\alpha,\beta)$, and is interested in solving the bilevel optimisation problem in \eqref{opt2_}. It is less well suited, however, to the stronger version of the bilevel optimisation problem in \eqref{global_min}, since it ignores the dependence of the true upper level objective $f(\alpha,\beta^{*}(\alpha))$ on $\alpha$ in its second argument. As such, if one has access to additional gradient information, then it may be preferable to use higher order updates to capture the dependence on $\beta^{*}(\alpha)$. We provide details of one such approach in Appendix \ref{theorem1_ext1} (see also \cite{Hong2020} in discrete time).

We will analyse Algorithm \eqref{eq1} - \eqref{eq2} under the following set of assumptions. Broadly speaking, these represent the continuous time analogues of standard assumptions used in the almost sure convergence analysis of two-timescale stochastic approximation algorithms in discrete time (see, e.g., \cite[Chapter 6]{Borkar2008} or \cite{Tadic2004}).

\begin{assumption} \label{assumption1}
The learning rates $\{\gamma_i(t)\}_{t\geq 0}$, $i=1,2$, are positive, non-increasing functions, which satisfy
\begin{subequations} 
\begin{align}
\lim_{t\rightarrow\infty}\gamma_1(t)=\lim_{t\rightarrow\infty}\gamma_2(t) = \lim_{t\rightarrow\infty}\frac{\gamma_1(t)}{\gamma_2(t)}&=0, \\
\int_0^{\infty}\gamma_1(t)\mathrm{d}t = \int_0^{\infty}\gamma_2(t)\mathrm{d}t &= \infty. %, \\
%\int_0^{\infty}\gamma^2_1(t)\mathrm{d}t = \int_0^{\infty}\gamma^2_2(t)\mathrm{d}t &< \infty.
\end{align}
\end{subequations}
\end{assumption}

%Assumption \ref{assumption1} 
This assumption relates to the asymptotic properties of the learning rates \linebreak $\{\gamma_i(t)\}_{t\geq 0}$, $i=1,2$. It is the continuous time analogue of the standard step-size assumption used in the convergence analysis of two-timescale stochastic approximation algorithms in discrete time (e.g., \cite{Borkar2008,Borkar1997,Tadic2004}). In particular, this assumption implies that the process $\{\alpha(t)\}_{t\geq 0}$ evolves on a slower time-scale than the process $\{\beta(t)\}_{t\geq 0}$.  Thus, intuitively speaking, the fast component, $\beta(\cdot)$, will see the slow component, $\alpha(\cdot)$, as quasi-static, while the slow component will see the fast component as essentially equilibrated \cite{Borkar1997}. A standard choice of step sizes which satisfies this assumption is $\gamma_1(t) = \gamma_1^0(\delta_1+t)^{-\eta_1}$, $\gamma_2(t) = \gamma_2^0(\delta_2+t)^{-\eta_2}$ for $t\geq 0$, where $\gamma_1^0,\gamma_2^0>0$ and $\delta_1,\delta_2>0$ are positive constants, and $\eta_1,\eta_2\in(0,1]$ are constants such that $\eta_1>\eta_2$.
%\footnote{The constants $\delta_1,\delta_2$ ensure regularity at $t=0$. 
%%This is apparent upon comparison with the discrete time case, in which one simply takes $\gamma_1(n) = \gamma_1^0n^{-a_1}$, $\gamma_2(n) = \gamma_2^0 n^{-a_2}$ for $n\geq 1$. %}\textsuperscript{,}\footnote{
%Another standard choice of step sizes which satisfies this assumption is $\gamma_1(t) = \gamma_1^0 (\delta_1+t)^{-\eta_1}$, $\gamma_2(t) = \gamma_2^0 (\delta_2+t)^{-\eta_2}$, with all constants defined as previously (e.g., \cite{Konda2004}.)}

\begin{assumption} \label{assumption2}
The functions $\nabla_{\alpha} f:\mathbb{R}^{d_1}\times\mathbb{R}^{d_2}\rightarrow\mathbb{R}^{d_1}$ and $\nabla_{\beta} g:\mathbb{R}^{d_1}\times\mathbb{R}^{d_2}\rightarrow\mathbb{R}^{d_2}$ are locally Lipschitz continuous. 
\iffalse
That is, for each $\alpha_1\in\mathbb{R}^{d_1}$ and $\beta_1\in\mathbb{R}^{d_2}$, there exist positive constants $L_{\alpha},L_{\beta}>0$ and $\delta_{\alpha},\delta_{\beta}>0$ such that, for all $\alpha_2\in\mathbb{R}^{d_1}, \beta_2\in\mathbb{R}^{d_2}$ with $||\alpha_2-\alpha_1||<\delta_{\alpha}$ and $||\beta_2-\beta_1||<\delta_{\beta}$,
\begin{subequations}
\begin{align}
||\nabla_{\alpha} f(\alpha_1,\beta_1) - \nabla_{\alpha} f(\alpha_2,\beta_2)|| &\leq L_{\alpha} \left[ ||\alpha_1-\alpha_2|| + ||\beta_1-\beta_2||\right], \\
||\nabla_{\beta} g(\alpha_1,\beta_1) - \nabla_{\alpha} g(\alpha_2,\beta_2)||&\leq L_{\beta} \left[ ||\alpha_1-\alpha_2|| + ||\beta_1-\beta_2||\right].
\end{align}
\end{subequations}
\fi
\end{assumption} 

%Assumption \ref{assumption2}
This assumption relates to the smoothness of the objective functions $f(\cdot)$ and $g(\cdot)$, and is standard both in two-timescale stochastic approximation algorithms in discrete time \cite{Borkar1997,Karmakar2018,Konda2003a}, and single-timescale stochastic approximation algorithms in continuous time \cite{Chen1994,Sen1978,Yin1993}, 
%Melnikov1996
 although slightly weaker assumptions may also be possible (see, e.g., \cite{Lazrieva2008}). This assumption implies, in particular, that the functions $\nabla_{\alpha} f$ and $\nabla_{\beta} g$ locally satisfy linear growth conditions. %That is, there exist positive constants $K_{\alpha},K_{\beta}>0$, such that,
%\begin{align}
%||\nabla_{\alpha} f(\alpha,\beta)||&\leq K_{\alpha} (1+||\alpha||+||\beta||), \\
%||\nabla_{\beta} g(\alpha,\beta)||&\leq K_{\beta} (1+||\alpha||+||\beta||).
%\end{align}
%~\\[-2mm]

\begin{assumption} \label{assumption3}
For all $T\in[0,\infty)$, the noise processes $\{\xi_i(t)\}_{t\geq 0}$, $i=1,2$, satisfy 
%$\{\zeta_1(t)\}_{t\geq 0}$, $\{\zeta_2(t)\}_{t\geq 0}$ satisfy
\begin{align}
\lim_{s\rightarrow\infty} \sup_{t\in[s,s+T]}\left|\left| \int_{s}^{t} \gamma_i(v)\mathrm{d}\xi_i(v)\right|\right|&=0~~~\text{a.s.}
%\lim_{s\rightarrow\infty} \sup_{t\in[s,s+T]}\left|\left| \int_{s}^{t} \gamma_1(v)\mathrm{d}\xi_1(v)\right|\right|&=0, \\
%\lim_{s\rightarrow\infty} \sup_{t\in[s,s+T]}\left|\left| \int_{s}^{t} \gamma_2(v)\mathrm{d}\xi_2(v)\right|\right|&=0,
\end{align}
%almost surely.
\end{assumption}

%Assumption \ref{assumption3} 
This assumption relates to the asymptotic properties of the additive noise processes $\smash{\{\xi_i(t)\}_{t\geq 0}}$, $i=1,2$. It can be regarded as the continuous-time, two-timescale generalisation of the Kushner-Clark condition \cite{Kushner1978}. This assumption is significantly weaker than the noise conditions adopted in many of the existing results on almost sure convergence of continuous-time, single-timescale stochastic approximation algorithms. In particular, it includes the cases when $\{\xi_i(t)\}_{t\geq 0}$, $i=1,2$, are continuous (local) martingales \cite{Sen1978},\footnote{We should remark that the case when the noise process is a local martingale is also considered by Lazrieva et al. (e.g., \cite{Lazrieva2008}) and Valkeila et al. (e.g., \cite{Valkeila2000}).
%\cite{Lazrieva1997,Lazrieva2008,Lazrieva2003,Lazrieva2010,Melnikov1996,Valkeila2000}. 
In these works, however, there is no requirement that this local martingale is continuous.} continuous finite variation processes with zero mean \cite{Yin1993}, or diffusion processes \cite{Chen1994}. It also holds, under certain additional assumptions, for algorithms with Markovian dynamics \cite{Sirignano2017a,Sirignano2020a,Surace2019}. The discrete-time analogue of this condition first appeared in \cite{Tadic2004}, weakening the noise condition originally used in \cite{Borkar1997}. 
%In fact, in the context of single-timescale stochastic approximation, the Kushner-Clark condition is the weakest condition under which it is possible to establish almost sure convergence (e.g., \cite{Kushner2003}). Furthermore, under certain stability conditions, the Kushner-Clark condition is both necessary and sufficient for the almost sure convergence of discrete-time, single-timescale stochastic approximation algorithms \cite{Wang1996}.

\begin{assumption} \label{assumption4}
The iterates $\{\alpha(t)\}_{t\geq 0}$, $\{\beta(t)\}_{t\geq 0}$ are almost surely bounded:
\begin{equation}
\sup_{t\geq 0} \left[||\alpha(t)|| + ||\beta(t)||\right]<\infty.
\end{equation}
\end{assumption}

%Assumption \ref{assumption4} 
This assumption is necessary in order to prove almost sure convergence. In general, however, it is far from automatic, and not very straightforward to establish. % \cite{Borkar2008}. 
Indeed, sufficient conditions tend to be highly problem specific, or else somewhat restrictive (e.g., \cite{Lazrieva2008,Sen1978,Valkeila2000}). To circumvent this issue, a common approach is to include a truncation or projection device in the algorithm, which ensures that the iterates remain bounded with probability one,  at the expense of an additional error term (e.g., \cite{Chen1987,Kushner2003,Surace2019}). 
%Chen1994
%This is typically at the expense of an additional error term. %, which can be handled using standard techniques (see, e.g., \cite{Kushner2003}). 
This may, however, also introduce spurious fixed points on the boundary of the domain (e.g., \cite{Surace2019}). An alternative method, which avoids this shortcoming, is the `continuous time stochastic approximation procedure with randomly varying truncations', originally introduced in \cite{Chen1987}. %In Appendix [], we prove, for the first time, that this procedure can be partially extended to the two-timescale setting. In particular, we establish that this procedure can be used to guarantee almost sure boundedness of iterates on the fast-timescale $\{\beta(t)\}_{t\geq 0}$. 
This procedure can be partially extended to the two-timescale framework, to establish almost sure boundedness of iterates on the fast-timescale. %$\{\beta(t)\}_{t\geq 0}$, which somewhat weakens the requirements of Assumption \ref{assumption4}. For brevity, the details are not included here.  
It is currently unclear, however, how to fully extend this approach to the two-timescale setting.
Another common approach is to omit the boundedness assumption entirely, and instead state asymptotic results which are local in nature %(e.g., \cite{Benveniste1990,Borkar1997,Tadic2004}). 
That is, which hold almost surely on the event $\Lambda = \{\sup_{t\geq 0} ||\alpha(t)||<\infty\}\cap\{\sup_{t\geq 0}||\beta(t)||<\infty\}$. In the single-timescale setting, it is often then straightforward to establish the global counterparts of these results, by combining them with existing methods for verifying stability (e.g., \cite{Benveniste1990,Borkar2000,Kushner1997}). In contrast, the stability of two-timescale stochastic approximation algorithms has thus far not received much attention. Indeed, to the best our knowledge, the only existing result along these lines is \cite{Lakshminarayanan2017}.

\begin{assumption} \label{assumption5}
For all $\alpha\in\mathbb{R}^{d_1}$, the ordinary differential equation (ODE)
\begin{equation}
%\frac{\mathrm{d}\beta(t)}{\mathrm{d}t}
\dot{\beta}(t) = -\nabla_{\beta} g(\alpha,\beta(t))
\end{equation} 
has a discrete, countable set of isolated equilibria $\{\beta_i^{*}\}_{i\geq 1} = \{\beta^{*}_i(\alpha)\}_{i\geq 1}$, where $\beta^{*}_i:\mathbb{R}^{d_1}\rightarrow\mathbb{R}^{d_2}$, $i\geq 1$, are locally Lipschitz-continuous maps. 
\end{assumption}

%Assumption \ref{assumption5} 
This is a stability condition relating to the fast recursion. It is somewhat weaker than the standard fast-timescale assumption used in the analysis of discrete-time, two-timescale stochastic approximation algorithms, which requires that this equation must have a unique global asymptotically stable equilibrium (e.g., \cite{Borkar1997,Konda1999,Tadic2004}). We note, however, that a similar assumption has previously appeared in \cite{Karmakar2018}. It may be possible to weaken this assumption further - that is, to remove the requirement for a discrete, countable set of equilibria - using the tools recently established in \cite{Tadic2015}. There, in the context of discrete-time, single-timescale stochastic gradient descent, almost sure single-limit point convergence is proved in the case of multiple or non-isolated equilibria, using tools from differential geometry (namely, the Lojasiewicz gradient inequality). It remains an open problem to determine whether these results can be extended to the continuous-time or the two-timescale setting.

In order to state our final assumption, we will require the following definitions. Let $x\in\mathbb{R}^{d}$, and let $h:\mathbb{R}^{d}\rightarrow\mathbb{R}^d$. Consider an ODE of the form $\dot{x}(t) = h(x(t))$. We say that a set $A\subset\mathbb{R}^{d}$ is {invariant} for this ODE if any trajectory $x(t)$ satisfying $x(0)\in A$ satisfies $x(t)\in A$ for all $t\in\mathbb{R}$. In addition, we say that $A$ is internally chain transitive for this ODE if for any $x\in A$, and for any $\varepsilon>0$, $T>0$, there exists $n\in\mathbb{N}$, points $x_0,x_1,\dots,x_n =x$ in $A$, and times $t_1,\dots,t_n \geq T$, such that, for all $1\leq i \leq n$, the trajectory of the ODE initialised at $x_{i-1}$ is in the $\varepsilon$-neighbourhood of $x_{i}$ at time $t_{i}$. We can now state our final assumption.

\iffalse
\begin{assumption} \label{assumption6}
For all $i\geq 1$, the set $f(E^{i}_{f},\beta^{*}(E^{i}_{f}))$ contains no open sets of $\mathbb{R}^{d_1}$ other than the empty set (i.e., has empty interior), where
\begin{equation}
E_f^{i} = \big\{\alpha\in\mathbb{R}^{d_1}:\nabla_{\alpha}f(\alpha,\beta^{*}_i(\alpha))=0\big\}.
\end{equation}
\end{assumption}
\fi

\begin{assumption} \label{assumption6}
For all $i\geq 1$, the only internally chain transitive invariant sets of the ordinary differential equation
\begin{equation}
%\frac{\mathrm{d}\beta(t)}{\mathrm{d}t}
\dot{\alpha}(t) = -\nabla_{\alpha} f(\alpha(t),\beta_{i}^{*}(\alpha(t))) \label{fODE}
\end{equation} 
are its equilibrium points. 
\end{assumption}

%Assumption \ref{assumption6} 
This is a stability condition relating to the slow recursion. It can be regarded as a slightly weaker version of standard slow-timescale assumption used in the analysis of two-timescale stochastic approximation algorithms, which stipulates that this ordinary differential equation must have a unique, globally asymptotically stable equilibrium (e.g., \cite{Borkar2008,Borkar1997,Konda1999}). This assumption is required in order to rule out the possibility that \eqref{fODE} admits other internally chain transitive invariant sets aside from equilibria, such as cyclic orbit chains (see \cite{Benaim1999}). We remark that, under additional assumptions on $\beta_{i}^{*}(\cdot)$, % is differentiable and satisfies $\nabla_{\alpha}\beta_{i}^{*}(\alpha)=0$,
one can replace this with the weaker assumption that \eqref{fODE} has a discrete, countable set of isolated equilibria. 
% that the set $E_f^{i} = \big\{\alpha\in\mathbb{R}^{d_1}:\nabla_{\alpha}f(\alpha,\beta^{*}_i(\alpha))=0\big\}$ has empty interior. We provide further details on this remark in Appendix \ref{theorem1_ext}. 
Unfortunately, without additional assumptions on $\beta_{i}^{*}(\cdot)$, one cannot use this condition directly, since $f(\cdot,\beta_{i}^{*}(\cdot)$ is not, in general, a strict Lyapunov function for \eqref{fODE}. We discuss this point in further detail in Appendix \ref{theorem1_ext1}.

We conclude this commentary with the remark that our condition(s) on the objective function(s) are, broadly speaking, also slightly more general than those adopted in many of the existing results on the convergence of continuous-time, single-timescale stochastic approximation algorithms. In particular, we do not insist on the existence of a unique root for the gradient of the objective functions, as is the case in \cite{Chen1994,Lazrieva2008,Sen1978,Valkeila2000}.
%Melnikov1996 

Our main result on the convergence of Algorithm \eqref{eq1} - \eqref{eq2} is contained in the following theorem.%\\[-3mm]

\begin{theorem} \label{theorem1}
Assume that Assumptions \ref{assumption1} - \ref{assumption6} hold. Then, almost surely,  
\begin{equation}
\lim_{t\rightarrow\infty}\nabla_{\alpha}f(\alpha(t),\beta(t)) = \lim_{t\rightarrow\infty}\nabla_{\beta}g(\alpha(t),\beta(t)) = 0.
\end{equation}
\end{theorem}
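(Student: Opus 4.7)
The plan is to follow the classical ODE method, carefully adapted to continuous time and to the two-timescale structure, as in \cite{Benaim1999,Borkar1997,Tadic2004} and their continuous time analogues \cite{Chen1987,Kushner1978}. The overall strategy is: first show that the fast iterate $\beta(t)$ tracks one of the locally Lipschitz equilibrium branches $\beta_i^{*}(\alpha(t))$ from Assumption \ref{assumption5}; then, substituting this into the dynamics of $\alpha(t)$, show that $\alpha(t)$ is an asymptotic pseudo-trajectory of the slow ODE \eqref{fODE}; and finally apply Benaim's limit-set theorem together with Assumption \ref{assumption6} to conclude convergence to equilibria. A preliminary step is to note that Assumption \ref{assumption4} gives a (random) compact set in which both iterates live, and hence by Assumption \ref{assumption2} one may freely use global Lipschitz bounds on this set.

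For the fast timescale analysis, I would rescale time via $\tau(t)=\int_{0}^{t}\gamma_{2}(s)\mathrm{d}s$, so that in the new clock $\beta$ satisfies $\mathrm{d}\tilde\beta(\tau)=-\nabla_{\beta}g(\tilde\alpha(\tau),\tilde\beta(\tau))\mathrm{d}\tau-\mathrm{d}\tilde\xi_{2}(\tau)$ while $\tilde\alpha$ drifts at rate $\gamma_{1}/\gamma_{2}\to 0$. Writing $\tilde\beta$ as a perturbation of the ODE $\dot\beta=-\nabla_{\beta}g(\alpha,\beta)$ with $\alpha$ frozen, I would bound the difference on intervals $[\tau,\tau+T]$ by a Gronwall argument, where the three error sources — the noise integral, the change in $\tilde\alpha$, and the error in time rescaling — are each controlled: the noise term vanishes asymptotically by Assumption \ref{assumption3}; the $\tilde\alpha$ variation vanishes because $\gamma_{1}/\gamma_{2}\to 0$ combined with boundedness of $\nabla_{\alpha}f$ on compacts; the discretisation of time rescaling is harmless. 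Combining with Assumption \ref{assumption5} (isolated, locally Lipschitz equilibrium branches), an Arzelà–Ascoli / asymptotic pseudo-trajectory argument yields that $\beta(t)-\beta_{i}^{*}(\alpha(t))\to 0$ almost surely for some (random) index $i$.

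For the slow timescale, I would reparametrise by $\sigma(t)=\int_{0}^{t}\gamma_{1}(s)\mathrm{d}s$ and rewrite \eqref{eq1} as
\begin{equation*}
\mathrm{d}\tilde\alpha(\sigma)=-\nabla_{\alpha}f\big(\tilde\alpha(\sigma),\beta_{i}^{*}(\tilde\alpha(\sigma))\big)\mathrm{d}\sigma+\mathrm{d}r(\sigma)-\mathrm{d}\tilde\xi_{1}(\sigma),
\end{equation*}
where $r(\sigma)$ collects the tracking error $\nabla_{\alpha}f(\alpha,\beta)-\nabla_{\alpha}f(\alpha,\beta_{i}^{*}(\alpha))$. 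By the first step and local Lipschitz continuity of $\nabla_{\alpha}f$, the remainder $r$ goes to zero uniformly on bounded intervals, while the noise contribution vanishes by Assumption \ref{assumption3}. Consequently, the interpolated $\tilde\alpha(\sigma)$ is an asymptotic pseudo-trajectory of \eqref{fODE} in the sense of \cite{Benaim1999}. The standard limit-set theorem then says its $\omega$-limit set is internally chain transitive and invariant for \eqref{fODE}; Assumption \ref{assumption6} forces this set to consist only of equilibria, so $\nabla_{\alpha}f(\alpha(t),\beta_{i}^{*}(\alpha(t)))\to 0$. Combined with the fast-timescale tracking, this yields $\nabla_{\alpha}f(\alpha(t),\beta(t))\to 0$ and $\nabla_{\beta}g(\alpha(t),\beta(t))\to 0$.

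The main obstacle I anticipate is the fast-timescale step in the presence of several equilibrium branches $\{\beta_{i}^{*}\}_{i\geq 1}$: I need to argue that $\beta(t)$ cannot asymptotically oscillate between branches. This is where the \emph{isolated} character of the equilibria in Assumption \ref{assumption5} and the local Lipschitz continuity of the branches are essential — they allow one to partition a neighbourhood of the union of branches into disjoint tubes around each $\beta_{i}^{*}(\cdot)$ and then use the asymptotic pseudo-trajectory property together with continuity of trajectories to fix a single index $i$ eventually. A secondary, more technical, obstacle is treating rigorously the rescaling-based identification of $\tilde\alpha$ as quasi-static on the fast clock; handling this requires a careful two-level Gronwall-type argument rather than a single ODE perturbation estimate, exactly as in the continuous-time Kushner–Clark framework \cite{Kushner1978} combined with the two-timescale comparison lemma of \cite{Borkar1997,Tadic2004}.
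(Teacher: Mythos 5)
Your proposal follows essentially the same route as the paper's proof: two time rescalings, a Gr\"onwall estimate showing that the rescaled iterates are asymptotic pseudo-trajectories of the frozen fast ODE and of the slow ODE \eqref{fODE} respectively, and Bena\"{i}m's limit-set theorem combined with Assumptions \ref{assumption5} and \ref{assumption6} to identify the limit sets. The decompositions you describe (noise integral controlled by Assumption \ref{assumption3}, drift of the slow variable controlled by $\gamma_1/\gamma_2\to 0$, tracking error controlled by local Lipschitz continuity on the compact set supplied by Assumption \ref{assumption4}) match Lemmas \ref{lemmaA1} and \ref{lemmaA3} almost exactly.

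The one step that is under-justified as written is the fast-timescale conclusion. Being an asymptotic pseudo-trajectory of the coupled system \eqref{eqA1}--\eqref{eqA2} only gives convergence to an internally chain transitive invariant set of that flow; by itself this does not force convergence to the union of the equilibrium branches, and neither Arzel\`a--Ascoli nor the isolation of the branches supplies the missing ingredient. What does supply it --- and what the paper uses in Lemma \ref{lemmaA2} --- is that $g$ is a strict Lyapunov function for \eqref{eqA1}--\eqref{eqA2} and that its set of critical values has empty interior (a consequence of the branches in Assumption \ref{assumption5} being discrete and countable), so that Proposition 6.4 of \cite{Benaim1999} confines every internally chain transitive set to $E_g=\cup_{i\geq 1}\{(\alpha,\beta_i^{*}(\alpha)):\alpha\in\mathbb{R}^{d_1}\}$; connectedness of chain transitive sets together with the isolation of the branches then fixes a single index $i$, which is the role your ``tube'' argument is meant to play. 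Once you add this Lyapunov ingredient the argument closes; note that the analogous issue does not arise on the slow timescale, because there Assumption \ref{assumption6} is precisely the hypothesis that rules out non-equilibrium chain transitive sets of \eqref{fODE}.
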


\begin{proof}
See Appendix \ref{sec:proof1}.
\end{proof}

Our proof of Theorem \ref{theorem1} follows the ODE method. This approach was first introduced in \cite{Ljung1977}, and extensively developed by Kushner et al. (e.g., \cite{Benveniste1990,Kushner2003,Kushner1978,Kushner1984}) and later Bena\"im et al. \cite{Benaim1996,Benaim1999}. 
%Benaim2005
 It was first used to prove almost sure convergence of a two-timescale stochastic approximation algorithm in \cite{Borkar1997}, which considered a discrete-time stochastic approximation algorithm with state-independent additive noise. It has since also been used to establish the convergence of more general discrete-time, two-timescale stochastic approximation algorithms \cite{Karmakar2018,Konda1999,Tadic2004}. In the context of continuous-time, single-timescale stochastic approximation, this method of proof has largely been neglected, with several notable exceptions \cite{Chen1982,Kushner1978,Yin1993}. 
 %Chen1987
 While other continuous-time approaches (e.g., \cite{Chen1994,Sen1978,Lazrieva2008,Valkeila2000}) 
 %Melnikov1996
 may be more direct, they may also require slightly more restrictive assumptions. Moreover, it is unclear whether these approaches can straightforwardly be adapted to the two-timescale setting, or even to more complex single-timescale algorithms, such as those with Markovian dynamics (e.g., \cite{Sirignano2017a}).  One other advantage of this method of proof is that it is straightforwardly adapted to other variations of Algorithm \eqref{eq1} - \eqref{eq2}, as discussed prior to the statement of our assumptions. In Appendix \ref{theorem1_ext1}, we show rigorously how to use this approach to establish an almost sure convergence result for one such algorithm. 
 
 We should emphasise, at this point, that Theorem \ref{theorem1} establishes almost sure convergence precisely to the {stationary points} of the objective functions $f$ and $g$. In particular, the stated assumptions do not guarantee convergence to the set of local (or global) minima. On this point, two remarks are pertinent. Firstly, results of this type are standard in the recent literature on stochastic gradient descent in continuous time (e.g., \cite{Sirignano2017a,Surace2019}), and the more classical literature on two-timescale stochastic approximation (e.g., \cite{Borkar2008}). Secondly, under additional assumptions, it should be possible to extend our analysis to guarantee that our algorithm converges almost surely to local minima of the two objective functions. Indeed, when a single timescale is considered, there are several existing `avoidance of saddle' type results \cite{Brandiere1998,Ge2015,Mertikopoulos2020,Pemantle1990}. While no explicit results of this type exist in the two-timescale framework, we outline details of the (minimal) assumptions which would be required to obtain such a result in Appendix \ref{theorem1_ext2}, and discuss briefly how they can be used together with the results of this paper. 
 
 %This is the subject of the following section.

 Another natural extension of Theorem \ref{theorem1} (and, later, Theorem \ref{theorem1a}) it to establish convergence rates for the continuous-time, two-timescale stochastic gradient descent algorithm. While this is beyond the scope of this paper, let us make some brief remarks regarding such an extension, with reference to some relevant literature. In discrete time, convergence rates of two-timescale stochastic approximation algorithms are the subject of several classical papers (e.g., \cite{Konda2004,Mokkadem2006}), and have also received renewed attention in recent years (e.g., \cite{Dalal2020,Doan2021,Xu2019}). In light of these results (see, in particular, \cite{Mokkadem2006}), it is reasonable to conjecture that, under the appropriate additional assumptions, it is possible to establish a central limit theorem for the continuous-time two-timescale algorithm of the form
\begin{equation}
\begin{pmatrix} \gamma_{1}^{-1/2}(t) (\alpha(t)-\alpha^{*}) \\ \gamma_{2}^{-1/2}(t)  (\beta(t)-\beta^{*}) \end{pmatrix} \stackrel{\mathcal{D}}{\longrightarrow} \mathcal{N}\left(\begin{pmatrix} \vphantom{\gamma_{1}^{-1/2}(t)} 0 \\ \vphantom{\gamma_{1}^{-1/2}(t)} 0 \end{pmatrix}, \begin{pmatrix} \Sigma_{\alpha}  & \vphantom{\gamma_{1}^{-1/2}(t)} 0 \\ \vphantom{\gamma_{1}^{-1/2}(t)} 0 & \Sigma_{\beta} \end{pmatrix} \right).
\end{equation}
where $\Sigma_{\alpha}\in\mathbb{R}^{d_1\times d_1},\Sigma_{\beta}\in\mathbb{R}^{d_2\times d_2}$ are matrices defined in terms of the Hessians of the objective functions $f$ and $g$, and terms appearing in the definition of the additive noise processes $\xi_1$ and $\xi_2$. While many of the techniques used to establish such results in discrete time carry over straightforwardly to continuous time, others require much more careful adaptation, and do not have direct analogues. Here, we suggest that some of the results established in \cite{Chen1994,Yin1993} may prove useful. In the presence of Markovian dynamics (see Section \ref{subsec:main2}), the analysis required to establish convergence rates is even more involved. This being said, recent results in \cite{Sirignano2020a} seem very promising in this direction.

%We conclude this section by noting that Algorithm \eqref{eq1} - \eqref{eq2} is not the only possible two-timescale stochastic gradient descent scheme that one can use to simultaneously optimise $f(\alpha,\beta)$ and $g(\alpha,\beta)$. The proposed algorithm is well suited to the weaker version of the bilevel optimisation problem in \eqref{opt2_}, requiring only noisy estimates of the partial gradients $\nabla_{\alpha}f(\alpha,\beta)$ and $\nabla_{\beta} g(\alpha,\beta)$. It is less well suited, however, to the stronger version of the bilevel optimisation problem in \eqref{global_min}, since it does not account for the dependence of the true upper level objective function $f(\alpha,\beta^{*}(\alpha))$ on $\alpha$ in its second argument. Although it does require additional assumptions on both $f$ and $g$, one can obtain an alternative version of Algorithm \eqref{eq1} - \eqref{eq2} which explicitly accounts for this dependence, and thus targets \eqref{global_min} directly (see also \cite{Hong2020} in discrete time). In Appendix \ref{theorem1_ext1}, we provide details of this algorithm, and rigorously establish its almost sure convergence as an extension of Theorem \ref{theorem1}.

\subsection{Two Timescale Stochastic Gradient Descent in Continuous Time with Markovian Dynamics}
\label{subsec:main2}
 Using the results obtained in Section \ref{subsec:main1}, we now consider the situation in which the noisy estimates of $\nabla_{\alpha}f$ and $\nabla_{\beta}g$ are governed by some additional continuous-time dynamical process.  In particular, we now analyse the convergence of the algorithm
\begin{subequations}
\begin{align}
\mathrm{d}\alpha(t) = -\gamma_{1}(t)\left[F(\alpha(t),\beta(t),{\mathcal{X}}(t))\mathrm{d}t + \mathrm{d}\zeta_1(t)\right],~~~t\geq0,~~~\alpha(0)=\alpha_0, \label{alg2_1} \\
\mathrm{d}\beta(t) = -\gamma_2(t)\left[G(\alpha(t),\beta(t),{\mathcal{X}}(t))\mathrm{d}t + \mathrm{d}\zeta_2(t)\right],~~~t\geq0,~~~\beta(0)=\beta_0, \label{alg2_2}
\end{align}
\end{subequations}
%where $\{\gamma_i(t)\}_{t\geq 0}$, $i=1,2$, are positive, decreasing functions; 
where $F,G:\mathbb{R}^{d_1}\times\mathbb{R}^{d_2}\times\mathbb{R}^{d_3}\rightarrow\mathbb{R}^{d_1},\mathbb{R}^{d_2}$
%, $G:\mathbb{R}^{d_1}\times\mathbb{R}^{d_2}\times\mathbb{R}^{d_3}\rightarrow\mathbb{R}^{d_2}$ 
are Borel measurable functions,
%$\alpha_0\in\mathbb{R}^{d_1}$, $\beta_0\in\mathbb{R}^{d_2}$ are random variables on $(\Omega,\mathcal{F},\mathbb{P})$; 
 $\{\zeta_i(t)\}_{t\geq 0}$, $i=1,2$, %, and $\{\mathcal{X}(t)\}_{t\geq 0}$
 are $\mathbb{R}^{d_1}$, $\mathbb{R}^{d_2}$ %and $\mathbb{R}^{d_3}$ 
 valued continuous semi-martingales on $(\Omega,\mathcal{F},\mathbb{P})$,
 $\{\mathcal{X}(t)\}_{t\geq 0}$ is a $\mathbb{R}^{d_3}$ valued diffusion process on the same probability space, and all other terms are as defined previously.
%Once more, we assume that, for all $t\geq 0$, $\{\zeta_i(t)\}_{t\geq 0}$, $i=1,2$, are measurable, random functions of $\{\alpha(s),\beta(s)\}_{0\leq s< t}$. % and $\mathbb{R}^{d_3}$ valued continuous semi-martingales. %\footnote{As before, we assume that $\{\xi_1(t)\}_{t\geq 0}$, $\{\xi_2(t)\}_{t\geq 0}$ are $\mathbb{R}^{d_1}$ and $\mathbb{R}^{d_2}$ valued continuous semi-martingales on $(\Omega,\mathcal{F},\mathbb{P})$.} 
%\footnote{That is, in a slight abuse of notation, we write $\{\zeta_i(t)\}_{t\geq 0}$, $i=1,2$, to denote $\{\zeta_i(t,\alpha(t),\beta(t),\mathcal{X}(t))\}_{t\geq 0}$.}  
 In this algorithm, the functions $F(\cdot)$ and $G(\cdot)$ are to be regarded as noisy estimators of $\nabla_{\alpha} f(\cdot)$ and $\nabla_{\beta} g(\cdot)$; the precise relationship between these functions will be clarified below. The semi-martingales $\smash{\{\zeta_i(t)\}_{t\geq 0}}$, $i=1,2$, can once more be considered as additive noise; while the Markov process $\smash{\{\mathcal{X}(t)\}_{t\geq 0}}$ can be regarded as non-additive noise. % Meanwhile, 

We will refer to this algorithm as two-timescale stochastic gradient descent in continuous time with Markovian dynamics. This algorithm represents the continuous time analogue of the discrete-time, two-timescale stochastic approximation algorithm with state-dependent non-additive noise analysed in \cite[Section IV]{Tadic2004}. In fact, our presentation is slightly more general than in \cite{Tadic2004}, as we also allow for the possibility of additive, state-dependent noise via the terms $\{\zeta_i(t)\}_{t\geq 0}$, $i=1,2$. This increases the number of applications in which our algorithm can be applied (see Section \ref{sec:RML_ROSP}), while not significantly complicating the analysis. The almost sure convergence of two-timescale stochastic approximation algorithms with Markovian dynamics in discrete time has also been studied, under various assumptions, in \cite{Karmakar2018,Konda2003,Konda2003a}. 
%Singh2005
Conversely, there are no existing works which provide a rigorous analysis of two-timescale stochastic approximation algorithms with Markovian dynamics in continuous time. 

We analyse this algorithm under the assumption that $\mathcal{X} = \smash{\{{\mathcal{X}}(t)\}_{t\geq 0}}$ is a diffusion process on $\mathbb{R}^{d_3}$, controlled by the algorithm states $\smash{\{\alpha(t)\}_{t\geq 0},\{\beta(t)\}_{t\geq 0}}$. In particular, we suppose that this process evolves according to
\iffalse
\footnote{We remark that, for fixed $\alpha\in\mathbb{R}^{d_1}$, $\beta\in\mathbb{R}^{d_2}$, we write $\{\mathcal{X}(\alpha,\beta,t)\}_{t\geq 0}$ to denote the continuous-time diffusion process. In particular, we reserve the notation $\smash{\{\mathcal{X}(t)\}_{t\geq 0}}$ for the value of this process integrated along the path of the algorithm states $\{\alpha(t)\}_{t\geq 0}$, $\{\beta(t)\}_{t\geq 0}$. Thus, $\{\mathcal{X}(t)\}_{t\geq 0}\equiv\{\mathcal{X}(\alpha(t),\beta(t),t)\}_{t\geq 0}$.
%\begin{equation}
%\{\mathcal{X}(t)\}_{t\geq 0}\equiv\{\mathcal{X}(\alpha(t),\beta(t),t)\}_{t\geq 0}.
%\end{equation}
}
\fi
%\begin{equation}
%\{\mathcal{X}(t)\}_{t\geq 0}\equiv\{\mathcal{X}(\alpha(t),\beta(t),t)\}_{t\geq 0}.
%\end{equation}
%In particular, we assume that
\begin{equation}
\vphantom{\sum}\mathrm{d}{\mathcal{X}}(t) = \Phi(\alpha(t),\beta(t),{\mathcal{X}}(t))\mathrm{d}t + \Psi(\alpha(t),\beta(t),{\mathcal{X}}(t))\mathrm{d}b(t),~t\geq0,~{\mathcal{X}}(0)={\mathcal{X}}_0,\label{diffusion} 
\end{equation}
where, for all $\alpha\in\mathbb{R}^{d_1}$, $\beta\in\mathbb{R}^{d_2}$, $\Phi(\alpha,\beta,\cdot):\mathbb{R}^{d_3}\rightarrow\mathbb{R}^{d_3}$ and $\Psi(\alpha,\beta,\cdot):\mathbb{R}^{d_3}\rightarrow \mathbb{R}^{d_3\times d_4}$ are Borel measurable functions; $\mathcal{X}_{0}$ is a random variable defined on $(\Omega,\mathcal{F},\mathbb{P})$; and $\{b(t)\}_{t\geq 0}$ is a $\mathbb{R}^{d_4}$ valued Wiener process on the same probability space. 
%We will write $\mathcal{A}_{\mathcal{X}}$ to denote the infinitesimal generator of $\mathcal{X}$.  
We should remark that, whenever $\alpha\in\mathbb{R}^{d_1}$, $\beta\in\mathbb{R}^{d_2}$ are fixed, we will denote the corresponding diffusion process by $\{\mathcal{X}(\alpha,\beta,t)\}_{t\geq 0}$, making explicit the dependence on these parameters.  

Our motivation for this choice of dynamics is threefold:  firstly, the existence, uniqueness, and asymptotic properties of this class of processes are very well studied (e.g., \cite{Ikeda1989,Pardoux2001}). %Karatzas1998
Secondly, this choice is sufficiently broad for many practical situations of interest. %It also means that, under certain additional assumptions, it is possible to use existing regularity results on a related Poisson equation (e.g. \cite{Pardoux2003,Pardoux2001}).  
 Finally, under the assumption that $\{\mathcal{X}(\alpha,\beta,t)\}_{t\geq 0}$ is ergodic for all $\alpha\in\mathbb{R}^{d_1}$, $\beta\in\mathbb{R}^{d_2}$, with unique invariant measure $\mu_{\alpha,\beta}(\cdot)$ (see Assumption \ref{assumption2a}), one can obtain an explicit relation between the estimators $F(\cdot)$ and $G(\cdot)$ and the gradients of the objective functions $\nabla_{\alpha}f(\cdot)$ and $\nabla_{\beta}g(\cdot)$. In particular, in this case the true objective functions are often defined as ergodic averages of the noisy estimators:
\begin{subequations}
\begin{align}
\nabla_{\alpha} f(\alpha,\beta) &= \int_{\mathbb{R}^{d_3}} F(\alpha,\beta,x)\mu_{\alpha,\beta}(\mathrm{d}x), \label{ergodic_average1} \\
\nabla_{\beta} g(\alpha,\beta) &= \int_{\mathbb{R}^{d_3}} G(\alpha,\beta,x)\mu_{\alpha,\beta}(\mathrm{d}x). \label{ergodic_average2}
\end{align} 
\end{subequations}
We remark that, in general, it is not possible to obtain the unique invariant measure $\mu_{\alpha,\beta}(\cdot)$ of the ergodic diffusion process $\mathcal{X}$ in closed form, let alone compute these integrals. Thus, in the Markovian framework we typically cannot compute the gradients $\nabla_{\alpha}f$ and $\nabla_{\beta}g$ exactly, even in the absence of the additive noise processes $\{\zeta_{i}(t)\}_{t\geq 0}$, $i=1,2$.

We analyse this algorithm under the following set of assumptions.  Similarly to before, these assumptions can be viewed both as the continuous time analogues of standard assumption used for the almost sure convergence analysis of two-timescale stochastic approximation algorithms with Markovian dynamics in discrete time (e.g., \cite[Section IV]{Tadic2004}), and as the two-timescale generalisation of assumptions more recently introduced to analyse the convergence of single-timescale stochastic gradient descent algorithms with Markovian dynamics in continuous time \cite{Sirignano2017a,Surace2019}.  
%We provide a more detailed discussion of these assumptions in Section \ref{sec:discussion}. 
%These are discussed in detail in Appendix \ref{app:theorem2_assumptions}.

\begin{assumption} \label{assumption1a}
The learning rates $\{\gamma_i(t)\}_{t\geq 0}$, $i=1,2$,  satisfy Assumption \ref{assumption1}. Furthermore, 
\begin{align}
\int_0^{\infty}\gamma^2_i(t)\mathrm{d}t<\infty,~~\int_0^{\infty}\left|\dot{\gamma}_i(t)\right|\mathrm{d}t< \infty,
\end{align}
and there exist $r_i>0$, $i=1,2$, such that $\lim_{t\rightarrow\infty} \gamma_i^2(t)t^{\frac{1}{2}+2r_i} = 0$.
\iffalse
\begin{equation}
\lim_{t\rightarrow\infty} \gamma_i^2(t)t^{\frac{1}{2}+2r_i} = 0.
\end{equation}
\fi
\end{assumption}

This assumption represents the two-timescale generalisation of the learning rate assumptions used in the analysis of single-timescale stochastic gradient descent algorithms with Markovian dynamics in continuous time \cite{Sirignano2017a,Sirignano2020a,Surace2019}. %As previously, it implies that the process $\{\alpha(t)\}_{t\geq 0}$ evolves on a slower time-scale than the process $\{\beta(t)\}_{t\geq 0}$. 
This assumption is satisfied by the learning rates specified after Assumption \ref{assumption1}, under the additional condition that now $\eta_1,\eta_2\in(\frac{1}{2},1]$. 
%which satisfies this assumption is $\gamma_1(t) = \gamma_1^0(\delta_1+t^{\eta_1})^{-1}$, $\gamma_2(t) = \gamma_2^0(\delta_2+t^{\eta_2})^{-1}$ for $t\geq 0$, where $\gamma_1^0,\gamma_2^0>0$ and $\delta_1,\delta_2>0$ are positive constants, and now $\eta_1,\eta_2\in(\frac{1}{2},1]$ are constants such that $\eta_1>\eta_2$. 
We remark, as in \cite{Sirignano2017a}, that the condition relating to the derivatives, namely that $\smash{\int_{0}^{\infty} |\dot{\gamma}_i(t)|\mathrm{d}t<\infty}$, $i=1,2$, is satisfied automatically if the learning rates are chosen to be monotonic functions of $t$.

\begin{manualassumption}{2.2.2a} \label{assumption2a}
The process $\{\mathcal{X}(\alpha,\beta,t)\}_{t\geq 0}$ is ergodic for all $\alpha\in\mathbb{R}^{d_1}$, $\beta\in\mathbb{R}^{d_2}$, with unique invariant probability measure $\mu_{\alpha,\beta}$ on $(\mathbb{R}^{d_3},\mathbb{B}_{d_3})$, where $\mathbb{B}_{d_3}$ denotes the Borel $\sigma$-algebra on $\mathbb{R}^{d_3}$.
\end{manualassumption}

%Assumption \ref{assumption2a} 
This assumption relates to the asymptotic properties of the non-additive, state-dependent noise process $\{\mathcal{X}(\alpha,\beta,t)\}_{t\geq 0}$. In the context of discrete-time stochastic approximation with Markovian dynamics, the requirement of ergodicity is relatively standard in both single-timescale (e.g., \cite{Benveniste1990,Kushner1984,Kushner1996}) and two-timescale (e.g., \cite{Konda2003,Konda2003a,Tadic2004}) settings, although slightly weaker assumptions are possible (e.g., \cite{Metivier1984}).
% \footnote{It should be noted that it is possible to relax this assumption, and to establish almost sure convergence under the slightly weaker assumptions introduced in \cite{Metivier1984} (see also \cite{Benveniste1990,Kushner2003}). These assumptions relate to the existence of solutions to a related Poisson equation, and automatically hold under the assumption of ergodicity.} 
This assumption is also central to the existing results on the convergence of stochastic gradient descent with Markovian dynamics in continuous time \cite{Bhudisaksang2020,Sirignano2017a,Surace2019}.

\begin{manualassumption}{2.2.2b} \label{assumption2ai}
For any $q>0$, $\alpha\in\mathbb{R}^{d_1}$, $\beta\in\mathbb{R}^{d_2}$, there exists constants $K_{q},K^{\alpha}_{q},K_{q}^{\beta}>0$, such that 
\begin{subequations}
\begin{align}
\int_{\mathbb{R}^{d_3}} (1+||x||^{q})\mu_{\alpha,\beta}(\mathrm{d}x) &\leq K_q, \\
\int_{\mathbb{R}^{d_3}} (1+||x||^{q})|\nu^{(\alpha)}_{\alpha,\beta,i}(\mathrm{d}x)|&\leq K^{\alpha}_q, \\
\int_{\mathbb{R}^{d_3}} (1+||x||^{q})|\nu^{(\beta)}_{\alpha,\beta,i}(\mathrm{d}x)|&\leq K^{\beta}_q,
\end{align}
\end{subequations}
where $\smash{|\nu^{(\alpha)}_{\alpha,\beta,i}(\mathrm{d}x)|,|\nu^{(\beta)}_{\alpha,\beta,i}(\mathrm{d}x)|}$ denote the total variations of the finite signed measures 
$\smash{\nu^{(\alpha)}_{\alpha,\beta,i}=\partial_{\alpha_i}\mu_{\alpha,\beta}}$, $i=1,\dots,d_1$, and $\smash{\nu^{(\beta)}_{\alpha,\beta,i}=\partial_{\beta_i}\mu_{\alpha,\beta}}$, $i=1,\dots,d_2$.
\end{manualassumption}

%Assumption \ref{assumption2ai} 
This assumption relates to the regularity of the invariant measure and its derivatives. It can be regarded as a two-timescale extension of the regularity conditions used for the convergence analysis of the continuous-time, single-timescale stochastic gradient descent algorithm with Markovian dynamics in \cite{Surace2019}.\footnote{We refer to \cite[Part II]{Benveniste1990} for a detailed discussion of the corresponding conditions used in the convergence analysis of discrete-time stochastic approximation algorithms with Markovian dynamics. We remark only that, in this case, it is typical to require that the transition kernels of the Markov process satisfy certain regularity conditions, rather than the invariant measure (if this exists).} This condition ensures that the objective functions $f(\cdot)$ and $g(\cdot)$, and their first two derivatives, are uniformly bounded in both arguments.\footnote{In the analysis of discrete-time stochastic approximation algorithms with Markovian dynamics, it is not uncommon for boundedness to be assumed \emph{a priori}. See, for example, \cite{Metivier1984} in the single-timescale case, and \cite{Konda2003} in the two-timescale case.}

In order to state the remaining assumptions, we will require the following additional notation. We will say that a function $H:\mathbb{R}^{d_1}\times\mathbb{R}^{d_2}\times\mathbb{R}^{d}\rightarrow \mathbb{R}$ satisfies the polynomial growth property (PGP) if there exist $q,K>0$ such that, for all $\alpha\in\mathbb{R}^{d_1},\beta\in\mathbb{R}^{d_2}$,
\begin{equation}
|H(\alpha,\beta,x)|\leq  K(1+||x||^q).
\end{equation}
We will write $\smash{\mathbb{H}^{i+\delta,j}}(\mathbb{R}^d)$, $i,j\in\mathbb{N}$, $\delta\in(0,1)$, to denote the space of all functions $H:\mathbb{R}^{d_1}\times\mathbb{R}^{d_2}\times\mathbb{R}^{d}\rightarrow \mathbb{R}$ such that $H(\cdot,\cdot,x)\in C^j(\mathbb{R}^{d_1\times d_2})$ and $H(\alpha,\beta,\cdot)\in C^{i}(\mathbb{R}^{d})$; and such that $\smash{\nabla^{i'}_{x} \nabla^{j'}_{\alpha}H(\alpha,\beta,\cdot)}$, $\smash{\nabla^{i'}_{x} \nabla^{j'}_{\beta}H(\alpha,\beta,\cdot)}$ are H\"older continuous with exponent $\delta$, uniformly in $\alpha$ and $\beta$, for $0\leq i'\leq i$, $0\leq j'\leq j$. We will also write $\smash{\mathbb{H}_c^{i+\delta,j}}(\mathbb{R}^d)$ for the subspace consisting of all $\smash{H\in\mathbb{H}^{i+\delta,j}}(\mathbb{R}^d)$ such that $H$ is centered, in the sense that $\smash{\int_{\mathbb{R}^{d_3}} H(\alpha,\beta,x)\mu_{\alpha,\beta}(\mathrm{d}x)=0}$. Finally, we will write $\bar{\mathbb{H}}^{i+\delta,j}(\mathbb{R}^d)$ to denote the subspace consisting of $H\in\mathbb{H}^{i+\delta,j}(\mathbb{R}^d)$ such that $H$ and all of its first and second derivatives with respect to $\alpha$ and $\beta$ satisfy the PGP.

\begin{manualassumption}{2.2.2c} \label{assumption4a}
There exist differentiable functions $f,g:\mathbb{R}^{d_1}\times\mathbb{R}^{d_2}\rightarrow\mathbb{R}$ such that $\nabla_{\alpha}f(\cdot)$ and $\nabla_{\beta}g(\cdot)$ are locally Lipschitz continuous, and unique Borel measurable functions $\tilde{F}:\mathbb{R}^{d_1}\times\mathbb{R}^{d_2}\times\mathbb{R}^{d_3}\rightarrow\mathbb{R}^{d_1}$, $\tilde{G}:\mathbb{R}^{d_1}\times\mathbb{R}^{d_2}\times\mathbb{R}^{d_3}\rightarrow\mathbb{R}^{d_2}$ such that, for all $\alpha\in\mathbb{R}^{d_1}$, $\beta\in\mathbb{R}^{d_2}$, $x\in\mathbb{R}^{d_3}$,
\begin{subequations}
\begin{align}
\mathcal{A}_{\mathcal{X}}\tilde{F}(\alpha,\beta,x) &= \nabla_{\alpha} f(\alpha,\beta)-F(\alpha,\beta,x), \\
 \mathcal{A}_{\mathcal{X}}\tilde{G}(\alpha,\beta,x) &= \nabla_{\beta} g(\alpha,\beta)-G(\alpha,\beta,x),
\end{align}
\end{subequations}
where $\mathcal{A}_{\mathcal{X}}$ is the infinitesimal generator of $\mathcal{X}$. In addition, the functions $\tilde{F}(\alpha,\beta,x)$ and $\tilde{G}(\alpha,\beta,x)$ are in $\bar{\mathbb{H}}^{1+\delta,2}(\mathbb{R}^{d_3})$, and their mixed first partial derivatives with respect to $(\alpha,x)$ and $(\beta,x)$ have the PGP.
\end{manualassumption}

\begin{manualassumption}{2.2.2d} \label{assumption4aiii}
The diffusion coefficient $\Psi$ has the PGP componentwise. In particular, it grows no faster than polynomially with respect to the $x$ variable.
\end{manualassumption}

\begin{manualassumption}{2.2.2e}  \label{assumption4aii}
For all $q>0$, and for all $t\geq 0$, $\mathbb{E}[||\mathcal{X}(t)||^q]<\infty$. Furthermore, there exists $K>0$ such that for all $t$ sufficiently large, 
\begin{subequations}
\begin{align}
\mathbb{E}\left[\sup_{s\leq t}||\mathcal{X}(\alpha,\beta,s)||^q\right]&\leq K\sqrt{t}~,~~~\forall \alpha\in\mathbb{R}^{d_1}~,~\forall \beta\in\mathbb{R}^{d_2}, \\
\mathbb{E}\left[\sup_{s\leq t}||\mathcal{X}(s)||^q\right]&\leq K\sqrt{t}.
\end{align}
\end{subequations}
\end{manualassumption} 

%Assumptions \ref{assumption4aiii} - \ref{assumption4aii} 
These three assumptions relate to the properties of the ergodic diffusion process $\{\mathcal{X}(\alpha,\beta,t)\}_{t\geq 0}$, and the definitions of the objective functions $f(\cdot)$ and $g(\cdot)$.  In particular, the first condition establishes the relationship between the gradients of the objective functions $\nabla_{\alpha} f(\cdot)$ and $\nabla_{\beta} g(\cdot)$ and the unbiased estimators $F(\cdot)$ and $G(\cdot)$. It also relates to the existence, uniqueness, and properties of solutions of the associated Poisson equations.  The second condition pertains to the growth properties of the ergodic diffusion process, while the third condition provides bounds on its moments. Together, these conditions ensure that error terms which arise due to the noisy estimates of $\nabla_{\alpha}f(\cdot)$ and $\nabla_{\beta}g(\cdot)$, tend to zero sufficiently quickly as $t\rightarrow\infty$. They are therefore essential,  whether or not they are required explicitly,  to existing results on the almost sure convergence of continuous-time stochastic gradient descent with Markovian dynamics \cite{Bhudisaksang2020,Sirignano2017a,Surace2019}. %We should note, however, that the single-timescale analogue of these conditions has only appeared {explicitly} in \cite{Surace2019}. %as assumptions, in a slightly less general form, in \cite{Surace2019}.}

The discrete-time analogues of these conditions, and variations thereof, also appear in almost all of the existing convergence results for stochastic approximation algorithms with Markovian dynamics in discrete time (e.g., \cite{Benveniste1990,Kushner2003,Metivier1984,Tadic2015}), including those with two-timescales (e.g., \cite{Konda2003,Konda2003a,Tadic2004}).\footnote{In discrete-time, these conditions are often stated in terms of the Markov transition kernel. They were first introduced in \cite[Section III]{Metivier1984} (see also \cite[Part II]{Benveniste1990}), and later generalised in \cite{Kushner2003}.}\textsuperscript{,}\footnote{Interestingly, the final two equations in Assumption \ref{assumption4aii} are peculiar to the continuous-time setting. In discrete time, only the first moment bound appears in the analysis of algorithms with Markovian dynamics (e.g., \cite{Benveniste1990,Metivier1984,Tadic2015}), including the two-timescale case (e.g., \cite{Konda2003,Tadic2004}).} Our particular choice of assumptions can be considered as the two-timescale, continuous-time generalisation of the conditions appearing in \cite[Section III]{Metivier1984} and \cite[Part II]{Benveniste1990}. It also closely resembles a continuous-time analogue of the assumptions used in \cite[Section IV]{Tadic2004} for a discrete-time, two-timescale stochastic approximation algorithm with non-additive, state-dependent noise.

 It remains only to provide our assumptions on the additive noise processes \linebreak $\{\zeta_i(t)\}_{t\geq 0}$, $i=1,2$. In order to state these assumptions, we will now require an explicit form for these semi-martingales. In particular, we will assume that they evolves according to
\begin{equation}
\mathrm{d}\zeta_i(t) = \zeta_i^{(1)}(\alpha(t),\beta(t),\mathcal{X}(t))\mathrm{d}a_i(t) + \zeta_i^{(2)}(\alpha(t),\beta(t),\mathcal{X}(t)) \mathrm{d}z_i(t) \label{add_noise}
\end{equation}
where, for all $\smash{\alpha\in\mathbb{R}^{d_1}}$, $\smash{\beta\in\mathbb{R}^{d_2}}$, $\smash{\zeta_i^{(1)}(\alpha,\beta,\cdot):\mathbb{R}^{d_3}\rightarrow}$ $\smash{\mathbb{R}^{d_i}}$, $\smash{\zeta_i^{(2)}(\alpha,\beta,\cdot):\mathbb{R}^{d_3}\rightarrow \mathbb{R}^{d_i\times d_5^{i}}}$ are Borel measurable functions; $\{a_i(t)\}_{t\geq 0}$ are predictable, increasing processes, 
and $\smash{\{z_i(t)\}_{t\geq 0}}$ are $\smash{\mathbb{R}^{d_5^{i}}}$ valued Wiener processes defined on $(\Omega,\mathcal{F},\mathbb{P})$. 

\begin{manualassumption}{2.2.3a} \label{assumption5a}
For all $T>0$, the processes $\{\zeta_i(t)\}_{t\geq 0}$, $i=1,2$ satisfy
\begin{align}
\lim_{s\rightarrow\infty} \sup_{t\in[s,s+T]}\left|\left| \int_{s}^{t} \gamma_i(v)\mathrm{d}\zeta_i(v)\right|\right|&=0~,~~~\text{a.s.}
%\lim_{s\rightarrow\infty} \sup_{t\in[s,s+T]}\left|\left| \int_{s}^{t} \gamma_1(v)\mathrm{d}\zeta_1(v)\right|\right|&=0, \\
%\lim_{s\rightarrow\infty} \sup_{t\in[s,s+T]}\left|\left| \int_{s}^{t} \gamma_2(v)\mathrm{d}\zeta_2(v)\right|\right|&=0,
\end{align}
\end{manualassumption}

\begin{manualassumption}{2.2.3b} \label{assumption5b}
The functions $\zeta_i^{(2)}$, $i=1,2$, have the PGP componentwise. In particular, they grow no faster than polynomially with respect to the $x$ variable.
%$\{\zeta_1(t)\}_{t\geq 0}$, $\{\zeta_2(t)\}_{t\geq 0}$ satisfy
%\begin{align}
%\lim_{s\rightarrow\infty} \sup_{t\in[s,s+T]}\left|\left| \int_{s}^{t} \gamma_i(v)\mathrm{d}\zeta_i(v)\right|\right|&=0.
%\lim_{s\rightarrow\infty} \sup_{t\in[s,s+T]}\left|\left| \int_{s}^{t} \gamma_1(v)\mathrm{d}\zeta_1(v)\right|\right|&=0, \\
%\lim_{s\rightarrow\infty} \sup_{t\in[s,s+T]}\left|\left| \int_{s}^{t} \gamma_2(v)\mathrm{d}\zeta_2(v)\right|\right|&=0,
%\end{align}
\end{manualassumption}

\begin{manualassumption}{2.2.3c} \label{assumption5c}
There exist constants $A_{z_1,z_2},A_{z_i,b}>0$, $i=1,2$, such that, component-wise,
\begin{equation}
c_{z_1,z_2}(t) = \frac{\mathrm{d}[z_1,z_2](t)}{\mathrm{d}t}\leq A_{z_1,z_2}~,~c_{z_i,b}(t) = \frac{\mathrm{d}[z_i,b](t)}{\mathrm{d}t}\leq A_{z_i,b}.
\end{equation}
where $[\cdot,\cdot]$ denotes the quadratic variation.
\end{manualassumption}

%Assumption \ref{assumption5a} - \ref{assumption5c} 
%These three assumptions relate to the properties of the additive, state-dependent noise sequences $\{\zeta_i(t)\}_{t\geq 0}$, $i=1,2$. 
The first of these conditions is identical to the noise condition which appeared in the analysis of the general two-timescale stochastic gradient descent algorithm in Section \ref{subsec:main1}. Once again, this can be regarded as a continuous-time version of the Kushner-Clark condition. The other two assumptions are unique to the continuous-time, two-timescale stochastic gradient descent algorithm with Markovian dynamics introduced in this paper. We should note, however, that similar assumptions have previously appeared in the analysis of the single-timescale stochastic approximation schemes in, for example, \cite{Lazrieva2008,Valkeila2000}.%\cite{Lazrieva1997,Lazrieva2003,Lazrieva2008,Lazrieva2010a,Lazrieva2010,Lazrieva2017,Melnikov1996,Valkeila2000}.

Our main result on the convergence of Algorithm (\ref{alg2_1}) - (\ref{alg2_2}) 
is contained in the following theorem.

\begin{theorem} \label{theorem1a}
 Assume that Assumptions \ref{assumption1a} - \ref{assumption5c} and \ref{assumption4} hold. In addition, assume that Assumptions \ref{assumption5} - \ref{assumption6} hold for the functions $f(\cdot)$ and $g(\cdot)$ defined in Assumption \ref{assumption4a}. Then, almost surely, 
%\begin{equation}
%\lim_{t\rightarrow\infty}(\alpha(t),\beta(t)) \in \{\alpha\in\mathbb{R}^{d_1}:\nabla_{\alpha}f(\alpha,\beta^{*}_i(\alpha))=0\}.
%\end{equation}
%In particular, 
\begin{equation}
\lim_{t\rightarrow\infty}\nabla_{\alpha}f(\alpha(t),\beta(t)) = \lim_{t\rightarrow\infty}\nabla_{\beta}g(\alpha(t),\beta(t)) = 0.
\end{equation}
\end{theorem}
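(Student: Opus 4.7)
The plan is to reduce Theorem \ref{theorem1a} to Theorem \ref{theorem1} by absorbing the Markovian noise into an effective additive semi-martingale noise that satisfies Assumption \ref{assumption3}. The mechanism is the Poisson equation provided by Assumption \ref{assumption4a}, which allows us to substitute $F(\alpha,\beta,x)=\nabla_\alpha f(\alpha,\beta)-\mathcal A_\mathcal{X}\tilde F(\alpha,\beta,x)$ and similarly for $G$. If the resulting operator-driven terms can be rearranged into Kushner--Clark-type noise increments, then \eqref{alg2_1}--\eqref{alg2_2} takes the structural form of \eqref{eq1}--\eqref{eq2}, and Theorem \ref{theorem1} applies directly.

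\textbf{It\^o decomposition.} Since $\tilde F,\tilde G\in\bar{\mathbb{H}}^{1+\delta,2}(\mathbb{R}^{d_3})$ by Assumption \ref{assumption4a}, I would apply It\^o's formula to $\tilde F(\alpha(t),\beta(t),\mathcal X(t))$ along the joint dynamics \eqref{alg2_1}--\eqref{alg2_2} and \eqref{diffusion}, obtaining schematically
\begin{align*}
\mathcal A_\mathcal{X}\tilde F\,\mathrm dt \;=\; \mathrm d\tilde F \;-\; \nabla_\alpha\tilde F\cdot\mathrm d\alpha(t) \;-\; \nabla_\beta\tilde F\cdot\mathrm d\beta(t) \;-\; \nabla_x\tilde F\,\Psi\,\mathrm db(t).
\end{align*}
Substituting back into \eqref{alg2_1} and performing the same manipulation on $\tilde G$ in \eqref{alg2_2} rewrites the algorithm as $\mathrm d\alpha(t)=-\gamma_1(t)[\nabla_\alpha f\,\mathrm dt+\mathrm d\xi_1(t)]$ and $\mathrm d\beta(t)=-\gamma_2(t)[\nabla_\beta g\,\mathrm dt+\mathrm d\xi_2(t)]$, where each effective noise $\xi_i$ splits into four pieces: (i) the original additive semi-martingale $\zeta_i$; (ii) a continuous martingale of the form $\int\gamma_i\nabla_x\tilde H\,\Psi\,\mathrm db$; (iii) a ``coboundary'' term $\int\gamma_i\,\mathrm d\tilde H$; and (iv) chain-rule cross-terms of the form $\int\gamma_i\nabla_\alpha\tilde H\cdot\mathrm d\alpha$ and $\int\gamma_i\nabla_\beta\tilde H\cdot\mathrm d\beta$, where $\tilde H$ stands for $\tilde F$ or $\tilde G$ as appropriate.

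\textbf{Main obstacle.} The bulk of the proof is verifying that each of these four pieces satisfies Assumption \ref{assumption3}, i.e., $\lim_{s\to\infty}\sup_{t\in[s,s+T]}\|\int_s^t\gamma_i\,\mathrm d\xi_i\|=0$ almost surely. Piece (i) is immediate from Assumption \ref{assumption5a}. For (ii), combining the polynomial growth of $\nabla_x\tilde H$ and $\Psi$ (Assumptions \ref{assumption4a} and \ref{assumption4aiii}) with the moment bounds on $\mathcal X$ (Assumption \ref{assumption4aii}) controls the quadratic variation of the stochastic integral, after which the square-summability $\int_0^\infty\gamma_i^2\,\mathrm dt<\infty$ from Assumption \ref{assumption1a}, together with a Burkholder--Davis--Gundy estimate and a Borel--Cantelli argument, yields almost sure convergence; this is the two-timescale adaptation of the single-timescale arguments in \cite{Sirignano2017a,Surace2019}. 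For (iii), an integration by parts produces boundary terms $\gamma_i(t)\tilde H$ that vanish via $\gamma_i\to 0$, the polynomial bound on $\tilde H$, the boundedness of iterates (Assumption \ref{assumption4}) and the moment bound on $\mathcal X$, while the remaining $|\dot\gamma_i|\,\tilde H$ integral is controlled by $\int_0^\infty|\dot\gamma_i|\,\mathrm dv<\infty$. The most delicate piece is (iv): substituting the dynamics into these chain-rule terms produces increments carrying a factor $\gamma_1\gamma_2\leq\tfrac12(\gamma_1^2+\gamma_2^2)$, and the resulting bound is integrable by virtue of Assumption \ref{assumption1a}; this is precisely where the two-timescale learning-rate conditions (including $\gamma_1/\gamma_2\to 0$) enter essentially. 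Once all four pieces are controlled, Assumption \ref{assumption3} holds for the effective $\xi_i$, and Theorem \ref{theorem1} applied to the rewritten algorithm yields the claimed limits.
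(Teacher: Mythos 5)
Your proposal follows essentially the same route as the paper: rewrite \eqref{alg2_1}--\eqref{alg2_2} in the form \eqref{eq1}--\eqref{eq2} with effective noise $\mathrm{d}\xi_i = (F-\nabla_\alpha f)\,\mathrm{d}t + \mathrm{d}\zeta_i$ (resp.\ $G-\nabla_\beta g$), use the Poisson equation from Assumption \ref{assumption4a} together with It\^o's formula to decompose $\int_s^t\gamma_i(u)[F-\nabla_\alpha f]\,\mathrm{d}u$ into boundary, finite-variation and martingale pieces, verify Assumption \ref{assumption3} for each, and invoke Theorem \ref{theorem1}; this is exactly Lemma \ref{lemmaB1} and the concluding argument in Appendix \ref{sec:proof2} (the paper applies It\^o to $\gamma_1(t)\tilde F$ directly, which is equivalent to your It\^o-plus-integration-by-parts).

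One point where your sketch under-justifies: the boundary terms $\gamma_i(t)\tilde H(\alpha(t),\beta(t),\mathcal X(t))$ do \emph{not} vanish almost surely merely from $\gamma_i\to 0$, polynomial growth and the moment bounds, because Assumption \ref{assumption4aii} only controls $\mathbb{E}[\sup_{s\le t}\|\mathcal X(s)\|^q]$ up to order $\sqrt t$. This is precisely where the condition $\lim_{t\to\infty}\gamma_i^2(t)t^{1/2+2r_i}=0$ from Assumption \ref{assumption1a} is needed, via a Chebyshev--Borel--Cantelli argument along dyadic times (the paper's $J^{(1)}$ term); conversely, the martingale piece is most cleanly handled by the It\^o isometry and Doob's martingale convergence theorem rather than Borel--Cantelli. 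With that reallocation your argument matches the paper's proof.
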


\begin{proof}
See Appendix \ref{sec:proof2}.
\end{proof}

Our proof of Theorem \ref{theorem1a} is obtained by rewriting Algorithm (\ref{alg2_1}) - (\ref{alg2_2}) in the form of Algorithm (\ref{eq1}) - (\ref{eq2}), viz
\begin{subequations}
\begin{align}
\mathrm{d}\alpha(t) = -\gamma_{1}(t)\bigg[&\nabla_{\alpha} f(\alpha(t),\beta(t))\mathrm{d}t + \underbrace{\big(F(\alpha(t),\beta(t),\mathcal{X}(t))-\nabla_{\alpha} f(\alpha(t),\beta(t))\big)\mathrm{d}t + \mathrm{d}\zeta_1(t)}_{=\hspace{.5mm}\mathrm{d}\xi_1(t)}\bigg], \hspace{-6mm} \label{eq2213a} \\
\mathrm{d}\beta(t) = -\gamma_{2}(t)\bigg[&\nabla_{\beta} g(\alpha(t),\beta(t))\mathrm{d}t + \underbrace{\big(G(\alpha(t),\beta(t),\mathcal{X}(t))-\nabla_{\beta} g(\alpha(t),\beta(t))\big)\mathrm{d}t + \mathrm{d}\zeta_2(t)}_{=\hspace{.5mm}\mathrm{d}\xi_2(t)}\bigg],\hspace{-6mm}  \label{eq2213b}
\end{align}
\end{subequations}
and proving that the conditions of Theorem \ref{theorem1a} (Assumptions \ref{assumption1a} - \ref{assumption5c}) 
imply the conditions of Theorem \ref{theorem1} (Assumptions \ref{assumption1} - \ref{assumption3}). Clearly, if this is the case, then Theorem \ref{theorem1a} follows directly from Theorem \ref{theorem1}. This statement holds trivially for all conditions except those relating to the noise processes. It thus remains to establish that, under the noise conditions in Theorem \ref{theorem1a} (Assumptions \ref{assumption2a} - \ref{assumption4aii}, \ref{assumption5a} - \ref{assumption5c}), the noise condition in Theorem \ref{theorem1} (Assumption \ref{assumption3}) holds for the noise processes $\{\xi_i(t)\}_{t\geq 0}$, $i=1,2$, as defined by \eqref{eq2213a} - \eqref{eq2213b}. The central part of this proof is thus to control terms of the form,
\begin{align}
&\int_0^t \gamma_{1}(s)\left[F(\alpha(s),\beta(s),\mathcal{X}(s))-\nabla_{\alpha} f(\alpha(s),\beta(s))\right]\mathrm{d}s, \\
&\int_0^t \gamma_{1}(s)\left[G(\alpha(s),\beta(s),\mathcal{X}(s))-\nabla_{\beta} g(\alpha(s),\beta(s))\right]\mathrm{d}s.
\end{align}
This is achieved by rewriting each such term using the solution of an appropriate Poisson equation, and applying regularity results. This approach - namely, the use of the Poisson equation - is standard in the almost sure convergence analysis of stochastic approximation algorithms with Markovian dynamics, both in discrete time, including the single-timescale case (e.g. \cite{Benveniste1990,Kushner2003,Metivier1984}) 
%Care2019,Soderstrom1983
and two-timescale case (e.g. \cite{Konda2003,Konda2003a,Tadic2004}), and in continuous time \cite{Bhudisaksang2020,Sirignano2017a,Sirignano2020a,Surace2019}. 

This part of our proof most closely resembles the proofs of \cite[Lemma 3.1]{Sirignano2017a} and \cite[Lemma 1]{Surace2019}, adapted to the current, somewhat more general setting. In general, however, our proof follows an entirely different approach to those in \cite{Sirignano2017a,Surace2019}. Indeed, the ODE method is central to our proof, while the proofs in these papers are based on more classical stochastic descent arguments. In particular, they represent a continuous-time, Markovian extension of the method introduced in \cite{Bertsekas2000}, under the additional assumption that the objective function is bounded from below. This method, broadly speaking, demonstrates that whenever the magnitude of the gradient of the objective function is large, it remains so for a sufficiently long time interval, guaranteeing a decrease in the value of the objective function which is significant and dominates the noise effects. Under the additional assumption that the objective function is bounded from below, it must converge almost surely to some finite value, and its gradient must converge to zero \cite{Bertsekas2000}. %\footnote{If the boundedness assumption is removed, then either $f(\alpha(t))\rightarrow-\infty$, or else $f(\alpha(t))$ converges to a finite value and $\nabla_{\alpha}f(\alpha(t))\rightarrow0$ \cite{Bertsekas2000}.} 
Crucially, these arguments do not rely on the assumption that the algorithm iterates remain bounded, which represents a significant advantage over the ODE method. It is thus of clear interest to extend this approach to the two-timescale setting. Thus far, however, our attempts to do so have been unsuccessful, due to the presence of the secondary process.\footnote{In the single-timescale case, one proves that when $\nabla f(\cdot)$ is `large', the objective function $f(\cdot)$ decreases by at least $\delta>0$, and that when $\nabla f(\cdot)$ is `small', the objective function $f(\cdot)$ increases by no more than some smaller positive constant amount $0<\delta_1<\delta$. In the two-timescale case, the second of these steps is no longer possible.} As such, this remains an interesting direction for future study. 

We conclude this section with the remark that Theorem \ref{theorem1a}, and its proof, still hold upon restriction to a single-timescale (i.e., under the assumption that either $\alpha(t)$ or $\beta(t)$ is held fixed). In this case, of course, we only require assumptions which pertain to that timescale. In this context, our theorem includes, as a particular case, the convergence result in \cite{Sirignano2017a}. %\footnote{We provide details in Appendix [].} 
Moreover, our proof provides an entirely different proof of that result.

%\subsection{Discussion} \label{sec:discussion}
%In this section, we provide an extended discussion of the assumptions required for Theorems \ref{theorem1} and \ref{theorem1a}. In particular, we attempt to place our assumptions in the context of the assumptions used in existing analyses of two-timescale stochastic approximation algorithms in discrete time, and single-timescale stochastic approximation algorithms in continuous time.

%\subsubsection{Theorem \ref{theorem1}} 

%\subsubsection{Theorem \ref{theorem1a}}

%The remaining assumptions required by Theorem \ref{theorem1a} are identical to those required by Theorem \ref{theorem1}.

\section{Online Parameter Estimation and Optimal Sensor Placement}
\label{sec:RML_ROSP}

To illustrate the results of the previous section, we now consider the problem of joint online parameter estimation and optimal sensor placement in a partially observed diffusion model. Throughout this section, we will consider the family of partially observed diffusion processes described by equations (\ref{signal_finite_dim_inf}) - (\ref{obs_finite_dim_inf}).

\subsection{Online Parameter Estimation} 
\label{sec:param_est}
We first review the problem of online parameter estimation. We will suppose that the model generates the observation process $\{y(t)\}_{t\geq 0}$ according to a true, but unknown, static parameter $\theta^{*}$. The objective is then to obtain an estimator $\{{\theta}(t)\}_{t\geq 0}$ of $\theta^{*}$ which is both $\mathcal{F}_t^Y$-measurable and recursively computable. That is, an estimator which can be computed online using the continuous stream of observations, without revisiting the past. In this subsection, we will assume that the sensor locations $\boldsymbol{o}\in\Omega^{n_y}$ are fixed. %We will, however, make explicit the dependence of functions on $\boldsymbol{o}$, where appropriate.

One such estimator can be obtained as a modification of the classical offline maximum likelihood estimator (e.g., \cite{Surace2019}). 
%Moura1986
We thus recall the expression for the log-likelihood of the observations, or incomplete data log-likelihood, for a partially observed diffusion process (e.g., \cite{Balakrishnan1973}), 
%Tugnait1985,Surace2019,Gerencser1984
namely
\begin{align}
\mathcal{L}_t(\theta,\boldsymbol{o})= \int_0^t R^{-1}(\boldsymbol{o}) \hat{C}(\theta,\boldsymbol{o},s)\cdot\mathrm{d}y(s)-\frac{1}{2}\int_0^t ||R^{-\frac{1}{2}}(\boldsymbol{o})\hat{C}(\theta,\boldsymbol{o},s)||^2\mathrm{d}s, \label{ll_func}
\end{align}
%where we have introduced 
%\begin{equation}
%\hat{C}(\theta,\boldsymbol{o},s) = \mathbb{E}_{\theta,\boldsymbol{o}}[C(\theta,\boldsymbol{o},x(s))|\mathcal{F}_s^Y]
%\end{equation}
where $\hat{C}(\theta,\boldsymbol{o},s)$ denotes the conditional expectation of $C(\theta,\boldsymbol{o},x(s))$, given the observation sigma-algebra $\mathcal{F}_s^Y$,  viz
%, assuming that both the parameter vector and the sensor locations are known, i.e.,  
\begin{equation}
\hat{C}(\theta,\boldsymbol{o},s) = \mathbb{E}_{\theta,\boldsymbol{o}}\left[C(\theta,\boldsymbol{o},x(s))|\mathcal{F}_s^Y\right]. \label{Chat}
\end{equation}

In the online setting, a standard approach to parameter estimation is to recursively seek the value of $\theta$ which maximises the asymptotic log-likelihood, viz
\begin{align}
\tilde{\mathcal{L}}(\theta,\boldsymbol{o}) = \lim_{t\rightarrow\infty}\frac{1}{t}\mathcal{L}_t(\theta,\boldsymbol{o}) \label{asymptotic_ll} 
\end{align}
Typically, neither the asymptotic log-likelihood, nor its gradient, are available in analytic form. It is, however, possible to compute noisy estimates of these quantities at any finite time, using the integrand of the log-likelihood and the integrand of its gradient, respectively. This optimisation problem can thus be tackled using continuous-time stochastic gradient ascent, whereby the parameters follow a noisy ascent direction given by the integrand of the gradient of the log-likelihood, evaluated with the current parameter estimate. In particular, initialised at ${\theta}_0\in\Theta$, the parameter estimates $\{{\theta}(t)\}_{t\geq 0}$ are generated according to the SDE \cite{Surace2019} 
\begin{equation}
\mathrm{d}{\theta}(t) = \left\{ \begin{array}{lll} \gamma(t) \big[\hat{C}^{\theta}(\theta(t),\boldsymbol{o},t)\big]^T R^{-1}(\boldsymbol{o})\big[\mathrm{d}y(t)-\hat{C}(\theta(t),\boldsymbol{o},t)\mathrm{d}t\big] & , & {\theta}(t)\in\Theta, \\[1.5mm] 0 & , & {\theta}(t)\not\in\Theta,  \end{array}\right. \label{eq_sde}
\end{equation}
%where $\{\gamma(t)\}_{t\geq 0}$ is a non-negative, non-increasing continuous sequence of real step-sizes, and 
where $\hat{C}^{\theta}(\theta,\boldsymbol{o},t)=\nabla_{\theta}\hat{C}(\theta,\boldsymbol{o},t)$ is used to denote the gradient of $\hat{C}(\theta,\boldsymbol{o},t)$ with respect to the parameter vector.\footnote{We use the convention that the gradient operator adds a covariant dimension to the tensor field upon which it acts. Thus, for example, since $\smash{\hat{C}(\theta,\boldsymbol{o},t)=\mathbb{E}_{\theta,\boldsymbol{o}}[C(\theta,\boldsymbol{o},x(t))|\mathcal{F}_t^Y]}$ takes values in $\mathbb{R}^{n_y}$, its gradient $\smash{\hat{C}^{\theta}(\theta,\boldsymbol{o},t)=\nabla_{\theta}\hat{C}(\theta,\boldsymbol{o},t)}$, takes values in $\mathbb{R}^{n_y\times n_{\theta}}$.} 
Following \cite{Surace2019}, this algorithm includes a projection device which ensures that the parameter estimates $\{\theta(t)\}_{t\geq 0}$ remain in $\
\Theta\subset\mathbb{R}^{n_{\theta}}$ with probability one. This is common for algorithms of this type (e.g., \cite{Ljung1999}). %\footnote{Other, more complex, projection devices are of course possible, but will not be considered here.} 
In the literature on statistical inference and system identification, this algorithm is commonly referred to as recursive maximum likelihood (RML).
%% either here on in the introduction %%

The asymptotic properties of this method for partially observed, discrete-time systems (e.g., \cite{LeGland1995,LeGland1997%,
,Tadic2010,Tadic2021}), 
%Schwartz2019,Poyiadjis2011,%,Krishnamurthy2002,Doucet2003,Collings1998
and for fully-observed, continuous-time systems (e.g., \cite{Bishwal2008,Kutoyants1984,Levanony1994}), 
%Borkar1982
have been studied extensively. In comparison, the partially observed, continuous-time case has received relatively little attention. %\footnote{We do not attempt to review the fully observed, continuous-time case here. Instead, we refer to, e.g., \cite{Bishwal2008,Borkar1982,Kutoyants1984,Levanony1994}.} 
The use of a continuous-time RML method for online parameter estimation in a partially-observed linear diffusion process was first proposed in \cite{Gerencser1984}, and later extended in \cite{Gerencser2009}.\footnote{We remark that the SDEs for the estimators considered in \cite{Gerencser1984,Gerencser2009} include an additional second order term, which arises when the It\^o-Venzel formula is applied to the score function.} This approach has more recently been revisited in \cite{Surace2019}. In this paper, the authors derived a RML estimator for the parameters of a general, non-linear partially observed diffusion process, and established the almost sure convergence of this estimator to the stationary points of the asymptotic log-likelihood under appropriate conditions on the process consisting of the latent state, the filter, and the filter derivative. %\footnote{We remark that \cite{Ljung1987,Moura1986} have also considered the use of a continuous-time RML method for non-linear partially observed diffusion processes. In these papers, however, in addition to the parameters, the hidden state is estimated via maximum likelihood, rather than the usual filtering paradigm.} 
This paper extended the results in \cite{Sirignano2017a} to the partially-observed setting, and is the estimator which we consider in the current paper.

\subsection{Optimal Sensor Placement} \label{sec:sensor} We now turn our attention to the problem of optimal sensor placement. We will suppose that the observation process $\{y(t)\}_{t\geq 0}$ is generated using a finite set of $n_y$ sensors. Our objective is to obtain an estimator of the set of $n_y$ sensor locations $\hat{\boldsymbol{o}} = \{\boldsymbol{o}_i\}_{i=1}^{n_y}$ which are optimal with respect to some pre-determined criteria, possibly subject to constraints. Once more, we require our estimator to be $\mathcal{F}_t^Y$-measurable and recursively computable. In this subsection, we will assume that the parameter $\theta\in\Theta$ is fixed. %We will, however, make explicit the dependence of functions on $\theta$, where appropriate.

A standard approach to this problem is to define a suitable objective function, say $\mathcal{J}_t(\theta,\cdot):\Omega^{n_y}\rightarrow\mathbb{R}$, and then to define the optimal estimator as 
\begin{equation}
\hat{\boldsymbol{o}}(t) = \argmin_{\boldsymbol{o}\in\Omega^{n_y}} \mathcal{J}_t(\theta,\boldsymbol{o}).
\end{equation}
We focus on the objective of optimal state estimation. In this case, following \cite{Burns2015}, %Herring1974,Korbicz1994,Chen1975
we will consider an objective function of the form
\begin{equation}
\mathcal{J}_t(\theta,\boldsymbol{o}) = \int_0^{t} \underbrace{\mathrm{Tr}\left[H(s)\hat{\Sigma}(\theta,\boldsymbol{o},s)\right]}_{\hat{j}(\theta,\boldsymbol{o},s)}\mathrm{d}s:=\int_0^{t} \hat{j}(\theta,\boldsymbol{o},s)\mathrm{d}s, \label{jhat}
\end{equation}
where $H(s):\mathbb{R}^{d_x\times d_x}\rightarrow\mathbb{R}^{d_x\times d_x}$ is a  matrix  which allows one to weight significant parts of the state estimate, and $\hat{\Sigma}(\theta,\boldsymbol{o},s)$ denotes the conditional covariance of the latent state $x(s)$, given the history of observations $\mathcal{F}_s^Y$, viz
\begin{align}
\hat{\Sigma}(\theta,\boldsymbol{o},s)&=\mathrm{Cov}_{\theta,\boldsymbol{o}}\left(x(s)|\mathcal{F}_s^Y\right).\label{covariance_cond} % \\
%= \widehat{x^2}(\theta,\boldsymbol{o},s)-\big[\widehat{x}(\theta,\boldsymbol{o},s)\big]^2:
%&=\mathbb{E}_{\theta,\boldsymbol{o}}\left[x(s)x(s)^T|\mathcal{F}_s^Y\right] -  \mathbb{E}_{\theta,\boldsymbol{o}}\left[x(s)|\mathcal{F}_s^Y\right]\mathbb{E}_{\theta,\boldsymbol{o}}\left[x(s)|\mathcal{F}_s^Y\right]^T  \nonumber
\end{align}
%where $\Sigma(\theta,\boldsymbol{o},x(s)) = [x(s)-\mathbb{E}_{\theta,\boldsymbol{o}}[x(s)|\mathcal{F}_s^Y]]^2$. 
%\begin{align}
%\Sigma(\theta,\boldsymbol{o},x(s)) = \left[x(s)-\mathbb{E}_{\theta,\boldsymbol{o}}[x(s)|\mathcal{F}_s^Y]\right]^2.
%\end{align}
Broadly speaking, the use of this objective corresponds to seeking the sensor placement which minimises the uncertainty in the estimate of the latent state. Other choices for the objective function are, of course, possible. These include, among many others, the trace of the conditional covariance at some finite, terminal time,
% (e.g., \cite{Bensoussan1971,Bensoussan1972,Cannon1971,Chen1975,Curtain1978,Omatu1978,Wu2016}), and variants thereof (e.g., \cite{Kumar1978,Nakamori1980}), 
and the trace of the steady-state conditional covariance 
In the online setting, the objective is to recursively estimate the optimal sensor locations $\hat{\boldsymbol{o}}$ in real time using the continuous stream of observations. In this case, one approach is to recursively seek the value of $\boldsymbol{o}$ which minimises an asymptotic version of the objective function (e.g., \cite{Zhang2018}), namely
\begin{equation}
\tilde{\mathcal{J}}(\theta,\boldsymbol{o})= \lim_{t\rightarrow\infty} \frac{1}{t}{\mathcal{J}}_t(\theta,\boldsymbol{o})= \lim_{t\rightarrow\infty}\left[\frac{1}{t}\int_0^{t} \hat{j}(\theta,\boldsymbol{o},s)\mathrm{d}s\right]. \label{asymptotic_obj}
\end{equation}
%% either here on in the introduction %%
%In the case of linear, infinite-dimensional partially observed diffusion processes, this objective function, or some variant thereof, has been considered by several authors (e.g., \cite{Aidarous1978,Amouroux1978,Rafajlowicz1984,Tang2017,Yu1973,Zhang2018}). 
\iffalse
Most recently, Zhang and Morris \cite{Zhang2018} considered minimisation of the trace of the mild solution of the infinite dimensional algebraic Ricatti equation as a sensor placement criterion. In particular, they proved that the trace of the solution to this equation minimises the steady-state error variance, and thus represents an appropriate design objective. They also established the existence of solutions (i.e., optimal sensor locations) to the corresponding optimal sensor placement problem, as well as the convergence of appropriate finite-dimensional approximations to these solutions, using results previously obtained for optimal actuator locations in \cite{Morris2011}. Tang and Morris \cite{Tang2017} have since extended this approach to the case that the form of the observation operator is not assumed fixed; that is, the shape as well as the location of the sensors is a design variable.
\fi
As in the previous section, typically neither the asymptotic objective function, nor its gradient, are available in analytic form.\footnote{A notable exception to this is the linear Gaussian case, in which case the asymptotic objective function is the solution of the algebraic Ricatti equation, which is independent of the observation process, and can thus be computed prior to receiving any observations (e.g., \cite{Kalman1961}). This independence no longer holds, however, when online parameter estimation and optimal sensor placement are coupled (see Section \ref{sec_joint_RML_OSP}). In this case, the (asymptotic) objective function depends on the parameter estimates via equation (\ref{asymptotic_obj}), and the parameter estimates depend on the observations via equation (\ref{eq_sde}). Thus, implicitly, the sensor placements estimates do now depend on the observations.}
%In this case, the use of the word `online' to describe a stochastic gradient descent algorithm is somewhat misleading, as all computations can be carried out prior to receiving any observations.} 
It is, however, possible to compute noisy estimates of these quantities at any finite time, using the integrand of the objective function and its gradient, respectively. Similar to online parameter estimation, this optimisation problem can thus also be tackled using continuous-time stochastic gradient descent, whereby the sensor locations follow a noisy descent direction given by the integrand of the gradient of the objective function, evaluated with the current estimates of the sensor placements.
%\footnote{In the linear Gaussian case, it is arguable that the use of a stochastic gradient descent algorithm is sub-optimal. In particular, in this case, one can obtain the true gradient of the asymptotic objective function exactly by solving the algebraic Ricatti equation and its gradient, (e.g., \cite{Burns2015}), and thus use a non-stochastic gradient descent algorithm directly. This is no longer true, however, when online parameter estimation and optimal sensor placement are coupled (see Section \ref{sec_joint_RML_OSP}). In this case, the (asymptotic) objective function depends on the parameter estimates via equation (\ref{asymptotic_obj}), and the parameter estimates depend on the observations via equation (\ref{eq_sde}). Thus, implicitly, the sensor placements estimates do now depend on the observations.} 
In particular, initialised at ${\boldsymbol{o}}_0\in\Omega^{n_y}$, the sensor locations $\{\boldsymbol{o}(t)\}_{t\geq 0}$ are generated according to %the ordinary differential equation
\begin{equation}
\mathrm{d}{\boldsymbol{o}}(t)= \left\{ \begin{array}{lll}  -\gamma(t)\big[\hat{j}^{\boldsymbol{o}}(\theta,\boldsymbol{o}(t),t)\big]^T\mathrm{d}t & , & {\boldsymbol{o}}(t)\in\Omega^{n_y} \\[1.5mm] 0 & , & {\boldsymbol{o}}(t)\not\in\Omega^{n_y}  \end{array}\right. \label{sensor_placement}
\end{equation}
%where $\{\gamma(t)\}_{t\geq 0}$ is a non-negative, non-increasing continuous sequence of real step-sizes, and
where $\smash{\hat{j}^{\boldsymbol{o}}(\theta,\boldsymbol{o},t) = \nabla_{\boldsymbol{o}}\hat{j}(\theta,\boldsymbol{o},t) = \nabla_{\boldsymbol{o}}\mathrm{Tr}[H(s)\hat{\Sigma}(\theta,\boldsymbol{o},s)]}$ is used
to denote the gradient of $\hat{j}(\theta,\boldsymbol{o},t)$ with respect to the sensor locations. Similar to the online parameter estimation algorithm, this recursion includes a projection device to ensure that the sensor placements $\{\boldsymbol{o}(t)\}_{t\geq 0}$ remain in $\Omega^{n_y}\subset\mathbb{R}^{n_yn_{\boldsymbol{o}}}$ with probability one.

\subsection{The Filter and Its Gradients} \label{sec:filter}
 In order to implement either of these algorithms, it is necessary to compute the  conditional expectations  $\hat{C}(\theta,\boldsymbol{o},t)$ and $\hat{j}(\theta,\boldsymbol{o},t)$, as well as their gradients, $\smash{\hat{C}^{\theta}(\theta,\boldsymbol{o},t)}$ and $\smash{\hat{j}^{\boldsymbol{o}}(\theta,\boldsymbol{o},t)}$.
%\begin{align}
%\hat{C}(\theta,\boldsymbol{o},t) &= \mathbb{E}_{\theta,\boldsymbol{o}}\left[C(\theta,\boldsymbol{o},x(t))|\mathcal{F}_t^Y\right] \\
%\hat{\Sigma}(\theta,\boldsymbol{o},t) &= \mathbb{E}_{\theta,\boldsymbol{o}}\left[\Sigma(\theta,\boldsymbol{o},x(t))|\mathcal{F}_t^Y\right]
%\end{align} 
 In principle, this requires one to obtain solutions of the Kushner-Stratonovich equation for arbitrary integrable $\varphi:\mathbb{R}^{n_x}\rightarrow\mathbb{R}$, viz (e.g., \cite{Bain2009,Kushner1964})
\begin{equation}
\mathrm{d}\hat{\varphi}(t)= (\hat{\mathcal{A}_{x}{\varphi}})(t) + \big((\hat{C\varphi})(t) - \hat{C}(t)\hat{\varphi}(t)\big)\cdot \big(\mathrm{d}y(t) - \hat{C}(t)\mathrm{d}t\big), \label{eq_KS}
\end{equation}
where $\hat{\varphi}(t)=\mathbb{E}[\varphi(x(t))|\mathcal{F}_t^Y]$ denotes the conditional expectation of $\varphi(t)$ given the history of observations $\mathcal{F}_t^{Y}$,
%shorthand for the conditional expectation of a function given the history of observations, $\hat{\varphi}(t)=\mathbb{E}[\varphi(x(t))|\mathcal{F}_t^Y]$, 
and $\mathcal{A}_x$ denotes the infinitesimal generator of the latent signal process.
 %which determines the evolution of the conditional distribution of the latent signal process given the history of observations
In general, exact solutions to the Kushner-Stratonovich equation are very rarely available \cite{Maurel1984}.
%Ocone1988,Levine1991,Ocone1989
 In order to make any progress, we must therefore introduce the following additional assumption. 

\begin{assumption} \label{filter_assumption1}
The Kushner-Stratonovich equation admits a finite dimensional recursive solution, or a finite-dimensional recursive approximation.
\end{assumption}

There are a small but important class of filters for which finite-dimensional recursive solutions do exist, namely, the Kalman-Bucy filter %\cite{Falb1967,
\cite{Kalman1961}, 
%Kushner1970
the Bene\v{s} filter \cite{Benes1981}, %Benes1985
and extensions thereof %Charalambous1998,Daum1987,Haussmann1988,Elliott1997,Charalambous1998a
(e.g., \cite{Daum1986,Ocone1982}). In addition, there are a much larger class of processes for which finite-dimensional recursive approximations are available,  and thus, crucially, for which the proposed algorithm can still be applied.   Standard approximation schemes include, among others, the extended Kalman-Bucy filter \cite{DelMoral2018}, the unscented Kalman-Bucy filter \cite{Sarkka2007}, projection filters \cite{Brigo1998}, assumed-density filters \cite{Brigo1999}, %Ito2000 
the ensemble Kalman-Bucy filter (EnKBF) \cite{Moral2018}, and other particle filters (e.g., \cite{DelMoral2000}, \cite[Chapter 9]{Bain2009}, and references therein). % \cite{Crisan2011}
%Before we proceed further, let us introduce a simple example of a partially observed diffusion process of the form \eqref{signal_finite_dim_inf} - \eqref{obs_finite_dim_inf}. We will continually refer back to this example throughout this section, in order to help clarify and contextualise the significant additional notation that is required in order to present our framework in the most general form possible.
%We now return to our discussion. 

This assumption implies, in particular, that there exists a finite-dimensional, $\mathcal{F}_t^Y$-adapted process $M(\theta,\boldsymbol{o}) = \{M(\theta,\boldsymbol{o},t)\}_{t\geq 0}$, taking values in $\mathbb{R}^p$, and functions  $\smash{\psi_{C}(\theta,\boldsymbol{o},\cdot):\mathbb{R}^p\rightarrow\mathbb{R}^{n_y}}$, $\smash{\psi_{j}(\theta,\boldsymbol{o},\cdot):\mathbb{R}^p\rightarrow\mathbb{R}^{n_x}}$ such that, in the case of an exact solution, 
\begin{subequations}
\begin{align}
\hat{C}(\theta,\boldsymbol{o},t) &= \psi_{C}(\theta,\boldsymbol{o},M(\theta,\boldsymbol{o},t)), \label{phi_C_def} \\
\hat{j}(\theta,\boldsymbol{o},t) &= \psi_{j}(\theta,\boldsymbol{o},M(\theta,\boldsymbol{o},t)), \label{phi_j_def}
\end{align}
\end{subequations}
or, in the case of an approximate solution, such that these equations hold only approximately. The process $M(\theta,\boldsymbol{o})$ is typically referred to as the finite-dimensional (approximate) filter representation, or more simply, the filter. We provide an illustrative example of one such finite-dimensional filter representation after stating our remaining assumption.

   We are also required to compute the gradients $\hat{C}^{\theta}(\theta,\boldsymbol{o},t)$ and $\hat{j}^{\boldsymbol{o}}(\theta,\boldsymbol{o},t)$ in order to implement our algorithm. We must therefore also introduce the following additional assumption.

\begin{assumption} \label{filter_assumption2}
The finite-dimensional filter representation %, namely $M(\theta,\boldsymbol{o}) = \{M(\theta,\boldsymbol{o},t)\}_{t\geq 0}$, 
is continuously differentiable with respect to $\theta$ and $\boldsymbol{o}$.
\end{assumption}

Following this assumption, it is possible to define $M^{\theta}(\theta,\boldsymbol{o}) = \{M^{\theta}(\theta,\boldsymbol{o},t)\}_{t\geq 0}$ $ =\{\nabla_{\theta}M(\theta,\boldsymbol{o},t)\}_{t\geq 0}$ and $M^{\boldsymbol{o}}(\theta,\boldsymbol{o})= \{M^{\boldsymbol{o}}(\theta,\boldsymbol{o},t)\}_{t\geq 0} = \{\nabla_{\boldsymbol{o}}M(\theta,\boldsymbol{o},t)\}_{t\geq 0}$ as the $\mathbb{R}^{p\times n_{\theta}}$ and $\mathbb{R}^{p\times n_yn_{\boldsymbol{o}}}$ valued processes consisting of the gradients of the finite dimensional filter representation with respect to $\theta$ and $\boldsymbol{o}$, respectively. We will refer to these processes as the (finite-dimensional) tangent filters. 

It follows, upon formal differentiation of equations \eqref{phi_C_def} and \eqref{phi_j_def}, that, either exactly or approximately, we have

\begin{subequations}
\begin{align}
\hat{C}^{\theta}(\theta,\boldsymbol{o},t) &= \psi_{C}^{\theta}(\theta,\boldsymbol{o},M(\theta,\boldsymbol{o},t),M^{\theta}(\theta,\boldsymbol{o},t)) \label{hatC_theta_0} \\
&= \nabla_{\theta}\psi_{C}(\theta,\boldsymbol{o},M(\theta,\boldsymbol{o},t)) + \nabla_{M}\psi_{C}(\theta,\boldsymbol{o},M(\theta,\boldsymbol{o},t))M^{\theta}(\theta,\boldsymbol{o},t), \label{hatC_theta}
\end{align}
\end{subequations}
and
\begin{subequations}
\begin{align}
\hat{j}^{\boldsymbol{o}}(\theta,\boldsymbol{o},t) &= \psi_{j}^{\boldsymbol{o}}(\theta,\boldsymbol{o},M(\theta,\boldsymbol{o},t),M^{\boldsymbol{o}}(\theta,\boldsymbol{o},t)) \label{hatj_o_0} \\
&= \nabla_{\boldsymbol{o}}\psi_{j}(\theta,\boldsymbol{o},M(\theta,\boldsymbol{o},t)) + \nabla_{M}\psi_{j}(\theta,\boldsymbol{o},M(\theta,\boldsymbol{o},t))M^{\boldsymbol{o}}(\theta,\boldsymbol{o},t). \label{hatj_o}
\end{align}
\end{subequations}
 
 We are now ready to introduce our final assumption on the filter. This assumption will allow us to rewrite the joint online parameter estimation and optimal sensor placement algorithm in the form of Algorithm \eqref{alg2_1} - \eqref{alg2_2}, and thus to apply Theorem \ref{theorem1a}.
 \begin{assumption} \label{filter_assumption3}
 The finite-dimensional filter representation satisfies a stochastic differential equation of the form
\begin{align}
\mathrm{d}M(\theta,\boldsymbol{o},t) &= S(\theta,\boldsymbol{o},M(\theta,\boldsymbol{o},t))\mathrm{d}t+  {T}(\theta,\boldsymbol{o},M(\theta,\boldsymbol{o},t))\mathrm{d}y(t)  \label{eq_filter_sde} \\
&~+  {U}(\theta,\boldsymbol{o},M(\theta,\boldsymbol{o},t))\mathrm{d}a(t), \nonumber
\end{align} 
where $a=\{a(t)\}_{t\geq 0}$ is a $\mathbb{R}^{q}$ valued Wiener process independent of $\smash{\mathcal{F}_t^{X,Y}}$, and  the functions $S$, ${T}$, and ${U}$ map $\mathbb{R}^{n_{\theta}}\times\mathbb{R}^{n_yn_{\boldsymbol{o}}}\times\mathbb{R}^{p}$ to $\mathbb{R}^p$, $\mathbb{R}^{p\times n_yn_{\boldsymbol{o}}}$, and $\mathbb{R}^{p\times q}$, respectively.
 \end{assumption}

This assumption can be shown to hold for a broad class of filters. In particular, the inclusion of the independent noise process means that this SDE holds for a large class of approximate filters, including many of those mentioned after Assumption \ref{filter_assumption1}. It follows from this assumption, upon differentiation of \eqref{eq_filter_sde}, that the finite-dimensional tangent filters %, namely $M^{\theta}(\theta,\boldsymbol{o})=\{M^{\theta}(\theta,\boldsymbol{o},t)\}_{t\geq0}$ and $M^{\boldsymbol{o}}(\theta,\boldsymbol{o})=\{M^{\boldsymbol{o}}(\theta,\boldsymbol{o},t)\}_{t\geq0}$, 
satisfy the SDEs 
\begin{subequations}
\begin{align}
\mathrm{d} M^{\theta}(\theta,\boldsymbol{o},t) &= S'_{\theta}(\theta,\boldsymbol{o},M(\theta,\boldsymbol{o},t),M^{\theta}(\theta,\boldsymbol{o},t))\mathrm{d}t \label{eq_filter_grad_param_sde} \\
&~+{T}'_{\theta}(\theta,\boldsymbol{o},M(\theta,\boldsymbol{o},t),M^{\theta}(\theta,\boldsymbol{o},t))\mathrm{d}y(t) \nonumber\\
&~+{U}'_{\theta}(\theta,\boldsymbol{o},M(\theta,\boldsymbol{o},t),M^{\theta}(\theta,\boldsymbol{o},t))\mathrm{d}a(t), \nonumber \\[2mm]
\mathrm{d} M^{\boldsymbol{o}}(\theta,\boldsymbol{o},t) &= S'_{\boldsymbol{o}}(\theta,\boldsymbol{o},M(\theta,\boldsymbol{o},t),M^{\boldsymbol{o}}(\theta,\boldsymbol{o},t))\mathrm{d}t \label{eq_filter_grad_sensor_sde}\\
&~+{T}'_{\boldsymbol{o}}(\theta,\boldsymbol{o},M(\theta,\boldsymbol{o},t),M^{\boldsymbol{o}}(\theta,\boldsymbol{o},t))\mathrm{d}y(t) \nonumber \\
&~+{U}'_{\boldsymbol{o}}(\theta,\boldsymbol{o},M(\theta,\boldsymbol{o},t),M^{\boldsymbol{o}}(\theta,\boldsymbol{o},t))\mathrm{d}a(t). \nonumber 
\end{align}
\end{subequations}
where, for example, the tensor field $S'_{\theta}$ is obtained explicitly according to
\begin{align}
S'_{\theta}(\theta,\boldsymbol{o},M(\theta,\boldsymbol{o},t),M^{\theta}(\theta,\boldsymbol{o},t)) &= \nabla_{\theta}S(\theta,\boldsymbol{o},M(\theta,\boldsymbol{o},t)) \\
&~+ \nabla_{M}S(\theta,\boldsymbol{o},M(\theta,\boldsymbol{o},t))M^{\theta}(\theta,\boldsymbol{o},t). \nonumber
\end{align}
with analogous expressions for the tensor fields $T'_{\theta}$, $U'_{\theta}$, $S'_{\boldsymbol{o}}$, $T'_{\boldsymbol{o}}$ and $U'_{\boldsymbol{o}}$ . %One can similarly obtain, upon formal differentiation with respect to $\boldsymbol{o}$, a SDE for the process $\nabla_{\boldsymbol{o}}M(\theta,\boldsymbol{o}) = \{\nabla_{\boldsymbol{o}}M(\theta,\boldsymbol{o},t)\}_{t\geq 0}$. 

We can now summarise the evolution equations for the latent signal, the finite-dimensional filter, and the finite-dimensional tangent filters, into a single SDE. In particular, let us define  $\smash{\mathcal{X}(\theta,\boldsymbol{o})=\{\mathcal{X}(\theta,\boldsymbol{o},t)\}_{t\geq 0}}$ as the $\mathbb{R}^{N}$ valued diffusion process consisting of the concatenation of the latent signal, the (vectorised) finite-dimensional filter, and the (vectorised) finite-dimensional tangent filters, with $N = n_x+ p + pn_{\theta} + p n_yn_{\boldsymbol{o}}$. That is, in a slight abuse of notation, %$\smash{\mathcal{X}(\theta,\boldsymbol{o},t) = \big(x(t),M(\theta,\boldsymbol{o},t),  M^{\theta}(\theta,\boldsymbol{o},t),M^{\boldsymbol{o}}(\theta,\boldsymbol{o},t)\big)^T}$.

\begin{equation}
\smash{\mathcal{X}(\theta,\boldsymbol{o},t) = \big(x(t),\mathrm{vec}(M(\theta,\boldsymbol{o},t),  \mathrm{vec}(M^{\theta}(\theta,\boldsymbol{o},t)),\mathrm{vec}(M^{\boldsymbol{o}}(\theta,\boldsymbol{o},t))\big)^T.}
\end{equation}

%where $N = n_x+ p + pn_{\theta} + p n_yn_{\boldsymbol{o}}$, and $\mathcal{V}:\mathbb{R}^{n_x}\times \mathbb{R}^p\times\mathbb{R}^{p\times n_{\theta}}\times \mathbb{R}^{p\times n_yn_{\boldsymbol{o}}}\rightarrow\mathbb{R}^{N}$ denotes a concatenation operator. 

It then follows straightforwardly, stacking the equation for the signal process \eqref{signal_finite_dim_inf}, the filter \eqref{eq_filter_sde}, and tangent filters \eqref{eq_filter_grad_param_sde} - \eqref{eq_filter_grad_sensor_sde}, and substituting the equation for the observation process \eqref{obs_finite_dim_inf}, that
 
\begin{equation}
\mathrm{d}\mathcal{X}(\theta,\boldsymbol{o},t) = \Phi(\theta,\boldsymbol{o},\mathcal{X}(\theta,\boldsymbol{o},t))\mathrm{d}t + \Psi(\theta,\boldsymbol{o},\mathcal{X}(\theta,\boldsymbol{o},t))\mathrm{d}b(t), \label{compact_SDE}
\end{equation}
where the functions $\Phi$ and $\Psi$ take values in $\mathbb{R}^{N}$ and $\mathbb{R}^{N\times(n_x+n_y+q)}$, respectively, and 
where $\smash{b = \{b(t)\}_{t\geq 0}}$ is the $\smash{\mathbb{R}^{n_x+n_y+q}}$ valued Wiener process obtained by concatenating the signal noise process $\smash{v=\{v(t)\}_{t\geq 0}}$, the observation noise process $\smash{w=\{v(t)\}_{t\geq 0}}$, and the independent noise process arising in the equations for the finite-dimensional filter representation $\smash{a=\{a(t)\}_{t\geq 0}}$.

%\subsubsection{A Simple Example} \label{sec:example}
\begin{remark_}
To help to illustrate the notation introduced in this section, let us consider a simple one-dimensional linear Gaussian model with a single unknown parameter, and a single sensor location, viz
\begin{alignat}{3}
\mathrm{d}x(t) &= -\theta x(t)\mathrm{d}t + \mathrm{d}v(t)~&&,~~~&&x(0)=x_0, \label{linear_gaussian1} \\
 \mathrm{d}y(t) &= x(t)\mathrm{d}t + \mathrm{d}w(t)~&&,~~~&&y(0)=0, \label{linear_gaussian2} 
\end{alignat}
where $v=\{w(t)\}_{t\geq 0}$ and $w=\{v(t)\}_{t\geq 0}$ are one-dimensional Brownian motions with incremental variances $Q(\theta)= 1$ and $R({o}) = ({o}-{o}_0)^2$, and $\smash{x_0\sim \mathcal{N}(0,\frac{1}{2\theta})}$. Clearly, this is an example of a partially observed diffusion process of the form \eqref{signal_finite_dim_inf} - \eqref{obs_finite_dim_inf}, with $\theta \in\mathbb{R}$, $o \in\mathbb{R}$, and operators $A(\theta,x) = -\theta x$, $B(\theta,x) =1$, and $C(\theta,\boldsymbol{o},x) = x$. We can also identify, using \eqref{Chat} and \eqref{jhat}, the conditional expectations 
\begin{subequations}
\begin{align}
\hat{C}(\theta,\boldsymbol{o},t) &= \mathbb{E}_{\theta,\boldsymbol{o}}[C(\theta,\boldsymbol{o},x(t))|\mathcal{F}_t^{Y}] = \mathbb{E}_{\theta,\boldsymbol{o}}[x(t)|\mathcal{F}_t^Y] \label{Chat_gauss0} \\
\hat{j}(\theta,\boldsymbol{o},t) &= \mathrm{Tr}[\mathrm{Var}_{\theta,\boldsymbol{o}}[x(t)|\mathcal{F}_t^Y]]= \mathrm{Var}_{\theta,\boldsymbol{o}}\left[x(t)|\mathcal{F}_t^Y\right]. \label{jhat_gauss0}
\end{align}
\end{subequations}
Let us consider each of the assumptions in turn introduced in this section in turn, starting with Assumption \ref{filter_assumption1}. For the linear Gaussian model, it is well known that the optimal filter has a Gaussian distribution with mean $\smash{\hat{x}(\theta,o,t) = \mathbb{E}_{\theta,o}[x(t)|\mathcal{F}_t^{Y}]}$ and variance $\smash{\hat{\Sigma}(\theta,o,t) =\mathrm{Var}_{\theta,o}[x(t)|\mathcal{F}_t^Y]}$, both of which can be computed recursively (the precise form of these equations is presented below in \eqref{linear_gaussian_filter}). %Kushner1970
This is known as the Kalman-Bucy filter \cite{Kalman1961}.  We thus have a $p=2$ dimensional filter representation $\smash{M(\theta,\boldsymbol{o},t)}$ $\smash{= (\hat{x}(\theta,\boldsymbol{o},t),\hat{\Sigma}(\theta,\boldsymbol{o},t))^T}$. 
%The two-dimensional filter representation 
%$M(\theta,o,t) = (\hat{x}(\theta,o,t),\hat{\Sigma}(\theta,o,t))^T$. 
%nce this filter representation is exact, wIt follows immediately from \eqref{phi_C_def} - \eqref{phi_j_def} and \eqref{Chat_gauss0} - \eqref{jhat_gauss0} that, for the linear Gaussian model,
%We first recall that, using \eqref{Chat} and \eqref{jhat}, that $\hat{C}(\theta,o,t) = \mathbb{E}_{\theta,o}[C(\theta,o,x(t)|\mathcal{F}_t^{Y}] = \mathbb{E}_{\theta,o}[x(t)|\mathcal{F}_t^Y] = \hat{x}(\theta,o,t)$ and  $\hat{j}(\theta,o,t) = \mathrm{Tr}[\mathrm{Var}_{\theta,o}(x(t)|\mathcal{F}_t^Y)]= \mathrm{Var}_{\theta,o}(x(t)|\mathcal{F}_t^Y) = \hat{\Sigma}(\theta,o,t)$. %\footnote{We remark that, for simplicity, we set the weighting operator $H(t)=1$.} 
%It follows, now using \eqref{phi_C_def} and \eqref{phi_j_def}, that
It follows straightforwardly that, in the case,
\begin{subequations}
\begin{align}
 \psi_{C}(\theta,\boldsymbol{o},M(\theta,\boldsymbol{o},t))&:=\hat{C}(\theta,\boldsymbol{o},t)= \hat{x}(\theta,\boldsymbol{o},t) \label{hatC} \\
\psi_{j}(\theta,\boldsymbol{o},M(\theta,\boldsymbol{o},t))&:= \hat{j}(\theta,\boldsymbol{o},t) = \hat{\Sigma}(\theta,\boldsymbol{o},t). \label{hatj}
\end{align}
\end{subequations}
We next consider Assumption \ref{filter_assumption2}. In the current example, it is clear that the two-dimensional filter $M(\theta,\boldsymbol{o},t)$ is continuously differentiable with respect to both $\theta$ and $o$. Indeed, this follows directly from the differentiability of $A(\theta,x)$, $B(\theta,x)$, $C(\theta,\boldsymbol{o},x)$, $Q(\theta)$ and $R(\boldsymbol{o})$ with respect to these variables. We can thus define the tangent filters $\smash{M^{\theta}(\theta,\boldsymbol{o},t) = (\hat{x}^{\theta}(\theta,\boldsymbol{o},t), \hat{\Sigma}^{\theta}(\theta,\boldsymbol{o},t))^T}$ and $\smash{M^{\boldsymbol{o}}(\theta,\boldsymbol{o},t)= (\hat{x}^{\boldsymbol{o}}(\theta,\boldsymbol{o},t), \hat{\Sigma}^{\boldsymbol{o}}(\theta,\boldsymbol{o},t))^T}$, 
\iffalse
\begin{subequations}
\begin{align}
M^{\theta}(\theta,\boldsymbol{o},t) &= (\hat{x}^{\theta}(\theta,\boldsymbol{o},t), %& \mu^{\sigma}(\theta,\boldsymbol{o},t) 
%& \mu^{c}(\theta,\boldsymbol{o},t) 
\hat{\Sigma}^{\theta}(\theta,\boldsymbol{o},t)) %& P^{\sigma}(\theta,\boldsymbol{o},t) 
%& P^{c}(\theta,\boldsymbol{o},t) 
^T \label{M_theta} \\
M^{\boldsymbol{o}}(\theta,\boldsymbol{o},t) &= (\hat{x}^{\boldsymbol{o}}(\theta,\boldsymbol{o},t), \hat{\Sigma}^{\boldsymbol{o}}(\theta,\boldsymbol{o},t))^T \label{M_o}
\end{align}
\end{subequations}
%where, for example, $\hat{x}^{\theta}(\theta,\boldsymbol{o},t)$ denotes the derivative of $\hat{x}(\theta,\boldsymbol{o},t)$ with respect to $\theta$. 
It follows straightforwardly from \eqref{hatC_theta_0} - \eqref{hatC_theta}, \eqref{hatj_o_0} - \eqref{hatj_o}, and \eqref{hatC} - \eqref{hatj}, that 
\fi
and compute
\begin{subequations}
\begin{align}
 \psi_{C}^{\theta}(\theta,\boldsymbol{o},M(\theta,\boldsymbol{o},t),M^{\theta}(\theta,\boldsymbol{o},t)) &:=\hat{C}^{\theta}(\theta,\boldsymbol{o},t)=
\hat{x}^{\theta}(\theta,\boldsymbol{o},t)
\label{Chat_gauss} \\
\psi_{j}^{\boldsymbol{o}}(\theta,\boldsymbol{o},M(\theta,\boldsymbol{o},t),M^{\boldsymbol{o}}(\theta,\boldsymbol{o},t))&:=\hat{j}^{\boldsymbol{o}}(\theta,\boldsymbol{o},t)= \hat{\Sigma}^{\boldsymbol{o}}(\theta,\boldsymbol{o},t) \label{jhat_gauss} 
\end{align}
\end{subequations}
Finally, we consider Assumption \ref{filter_assumption3}. The Kalman-Bucy filter evolves according to the following SDE
 \begin{align}
 \underbrace{\begin{pmatrix} \mathrm{d}\hat{x}(\theta,\boldsymbol{o},t) \\ \mathrm{d}\hat{\Sigma}(\theta,\boldsymbol{o},t) \end{pmatrix}}_{\mathrm{d}M(\theta,\boldsymbol{o},t)} &= \underbrace{\begin{pmatrix} -\theta \hat{x}(\theta,\boldsymbol{o},t) - %c^2 
 (\boldsymbol{o}-\boldsymbol{o}_0)^{-2} \hat{x}(\theta,\boldsymbol{o},t)\hat{\Sigma}(\theta,\boldsymbol{o},t) \label{linear_gaussian_filter} \\[1.5mm]
 1 - 2\theta \hat{\Sigma}(\theta,\boldsymbol{o},t) - %c^2 
 (\boldsymbol{o}-\boldsymbol{o}_0)^{-2}\hat{\Sigma}^2(\theta,\boldsymbol{o},t) \end{pmatrix}}_{S(\theta,\boldsymbol{o},M(\theta,\boldsymbol{o},t))} \mathrm{d}t \\[1mm]
 &\hspace{32mm}+ \underbrace{\begin{pmatrix} %c
 %c
 (\boldsymbol{o}-\boldsymbol{o}_0)^{-2} \hat{\Sigma}(\theta,\boldsymbol{o},t) \\[1.5mm] 0 \end{pmatrix}}_{T(\theta,\boldsymbol{o},M(\theta,\boldsymbol{o},t))} \mathrm{d}y(t), \nonumber 
 \end{align}
Thus, the filter does indeed evolve according to an SDE of the form \eqref{eq_filter_sde}, with the final term identically equal to zero. Taking formal derivatives of this SDE, we can obtain the SDEs for the tangent filters, namely%for $M^{\theta}(\theta,o,t)$, we have
\allowdisplaybreaks
\begin{align} 
\underbrace{\begin{pmatrix} \\ \mathrm{d}\hat{x}^{\theta}(\theta,\boldsymbol{o},t) \\  \\[4mm] \mathrm{d}\hat{\Sigma}^{\theta}(\theta,\boldsymbol{o},t) \\[-2mm] ~ \end{pmatrix}}_{\mathrm{d}M^{\theta}(\theta,\boldsymbol{o},t)} &= \underbrace{
\begin{pmatrix} 
-\hat{x}(\theta,\boldsymbol{o},t) - \theta\hat{x}^{\theta}(\theta,\boldsymbol{o},t)  \\
- %c^2 
(\boldsymbol{o}-\boldsymbol{o}_0)^{-2}\hat{x}^{\theta}(\theta,\boldsymbol{o},t)\hat{\Sigma}(\theta,\boldsymbol{o},t) \\
- %c^2
(\boldsymbol{o}-\boldsymbol{o}_0)^{-2}\hat{x}(\theta,\boldsymbol{o},t)\hat{\Sigma}^{\theta}(\theta,\boldsymbol{o},t) \\[2mm]
- 2 \hat{\Sigma}(\theta,\boldsymbol{o},t) - 2 \theta \hat{\Sigma}^{\theta}(\theta,\boldsymbol{o},t) \\
-2(\boldsymbol{o}-\boldsymbol{o}_0)^{-2}\hat{\Sigma}(\theta,\boldsymbol{o},t)\hat{\Sigma}^{\theta}(\theta,\boldsymbol{o},t) 
 \end{pmatrix}}_{S'_{\theta}(\theta,\boldsymbol{o},M(\theta,\boldsymbol{o},t))} \mathrm{d}t \label{linear_gaussian_filter_tangent1}
\\[1mm]
&\hspace{16mm}+ \underbrace{
\begin{pmatrix} 
\vphantom{-\theta\hat{x}^{\boldsymbol{o}}(\theta,\boldsymbol{o},t) + 2c^2(\boldsymbol{o}-\boldsymbol{o}_0)^{-3} \hat{x}(\theta,\boldsymbol{o},t)\hat{\Sigma}(\theta,\boldsymbol{o},t)} \\ 
(\boldsymbol{o}-\boldsymbol{o}_0)^{-2}\hat{\Sigma}^{\theta}(\theta,\boldsymbol{o},t) \\ 
\vphantom{- c^2(\boldsymbol{o}-\boldsymbol{o}_0)^{-2}\hat{x}(\theta,\boldsymbol{o},t)\hat{\Sigma}^{\theta}(\theta,\boldsymbol{o},t)} \\[2mm] 
0 \\
\vphantom{-2(\boldsymbol{o}-\boldsymbol{o}_0)^{-2}\hat{\Sigma}(\theta,\boldsymbol{o},t)\hat{\Sigma}^{\theta}(\theta,\boldsymbol{o},t) } 
\end{pmatrix}}_{T'_{\theta}(\theta,\boldsymbol{o},M(\theta,\boldsymbol{o},t))} \mathrm{d}y(t). \nonumber % \\[3mm]
\end{align}
and, similarly,%, for $M^{\boldsymbol{o}}(\theta,\boldsymbol{o},t)$, we have
\begin{align}
\underbrace{\begin{pmatrix} \\ ~\\[-1mm] \mathrm{d}\hat{x}^{\boldsymbol{o}}(\theta,\boldsymbol{o},t) \\[1mm]  \\[4mm] \mathrm{d}\hat{\Sigma}^{\boldsymbol{o}}(\theta,\boldsymbol{o},t) \\[-2mm] ~ \end{pmatrix}}_{\mathrm{d}M^{\boldsymbol{o}}(\theta,\boldsymbol{o},t)} &= \underbrace{
\begin{pmatrix} 
-\theta\hat{x}^{\boldsymbol{o}}(\theta,\boldsymbol{o},t) \\
+ 2%c^2
(\boldsymbol{o}-\boldsymbol{o}_0)^{-3} \hat{x}(\theta,\boldsymbol{o},t)\hat{\Sigma}(\theta,\boldsymbol{o},t) \\
- %c^2 
(\boldsymbol{o}-\boldsymbol{o}_0)^{-2}\hat{x}^{\boldsymbol{o}}(\theta,\boldsymbol{o},t)\hat{\Sigma}(\theta,\boldsymbol{o},t) \\
- %c^2
(\boldsymbol{o}-\boldsymbol{o}_0)^{-2}\hat{x}(\theta,\boldsymbol{o},t)\hat{\Sigma}^{\boldsymbol{o}}(\theta,\boldsymbol{o},t) \\[3mm]
- 2\theta \hat{\Sigma}^{\boldsymbol{o}}(\theta,\boldsymbol{o},t) + 2(\boldsymbol{o}-\boldsymbol{o}_0)^{-3} \hat{\Sigma}^2(\theta,\boldsymbol{o},t) \\
- 2(\boldsymbol{o}-\boldsymbol{o}_0)^{-2} \hat{\Sigma}(\theta,\boldsymbol{o},t)\hat{\Sigma}^{\boldsymbol{o}}(\theta,\boldsymbol{o},t) \\
\end{pmatrix}
}_{S'_{\boldsymbol{o}}(\theta,\boldsymbol{o},M(\theta,\boldsymbol{o},t))} \mathrm{d}t  \label{linear_gaussian_filter_tangent2} \\[1mm]
&\hspace{20mm}+  \underbrace{
\begin{pmatrix} \\[-2.2mm] 
-2(\boldsymbol{o}-\boldsymbol{o}_0)^{-3}\hat{\Sigma}(\theta,\boldsymbol{o},t) \\ 
+(\boldsymbol{o}-\boldsymbol{o}_0)^{-2}\hat{\Sigma}^{\boldsymbol{o}}(\theta,\boldsymbol{o},t) 
\\[-2.2mm] 
\vphantom{- c^2(\boldsymbol{o}-\boldsymbol{o}_0)^{-2}\hat{x}(\theta,\boldsymbol{o},t)\hat{\Sigma}^{\boldsymbol{o}}(\theta,\boldsymbol{o},t)} \\[3mm] 
0 \\ 
\vphantom{- 2(\boldsymbol{o}-\boldsymbol{o}_0)^{-2} \hat{\Sigma}(\theta,\boldsymbol{o},t)\hat{\Sigma}^{\boldsymbol{o}}(\theta,\boldsymbol{o},t)} 
\end{pmatrix}}_{T'_{\boldsymbol{o}}(\theta,\boldsymbol{o},M(\theta,\boldsymbol{o},t))} \mathrm{d}y(t).  \nonumber 
\end{align}
Finally, we can concatenate the (one-dimensional) signal, the (two-dimensional) filter, and the two (two-dimensional) tangent filters into a single diffusion process, namely, %. In particular, we define $\mathcal{X}(\theta,\boldsymbol{o})=\{\mathcal{X}(\theta,\boldsymbol{o},t)\}_{t\geq 0}$ as the $\mathbb{R}^{1+2+2+2}$ dimensional process given by
\begin{align}
\mathcal{X}(\theta,\boldsymbol{o},t) = 
&\big(x(t),
\underbrace{\hat{x}(\theta,\boldsymbol{o},t),\hat{\Sigma}(\theta,\boldsymbol{o},t)}_{\mathrm{vec}(M(\theta,\boldsymbol{o},t))},\underbrace{\hat{x}^{\theta}(\theta,\boldsymbol{o},t),\hat{\Sigma}^{\theta}(\theta,\boldsymbol{o},t)}_{\mathrm{vec}(M^{\theta}(\theta,\boldsymbol{o},t))},
%,\mu^{\sigma}(\theta,\boldsymbol{o},t)\right., \\
%&~\left.P^{\sigma}(\theta,\boldsymbol{o},t)
%\right. \\
%&~~~~~~\left.\mu^{c}(\theta,\boldsymbol{o},t),P^{c}(\theta,\boldsymbol{o},t),
\underbrace{\hat{x}^{\boldsymbol{o}}(\theta,\boldsymbol{o},t),\hat{\Sigma}^{\boldsymbol{o}}(\theta,\boldsymbol{o},t)}_{\mathrm{vec}(M^{\boldsymbol{o}}(\theta,\boldsymbol{o},t))}\big)^T.  \hspace{-5mm}
\end{align}
This process evolves according to an SDE of the form \eqref{compact_SDE}, which we obtain by stacking the signal equation \eqref{linear_gaussian1}, the Kalman-Bucy filtering equations \eqref{linear_gaussian_filter}, and the tangent Kalman-Bucy filtering equations \eqref{linear_gaussian_filter_tangent1} - \eqref{linear_gaussian_filter_tangent2}, before substituting the observation equation \eqref{linear_gaussian2}. For brevity, the explicit form of this equation is omitted. %simple calculations are omitted. %(see Appendix [\textbf{to do}]). 
\end{remark_}

\subsection{Joint Parameter Estimation and Optimal Sensor Placement} \label{sec_joint_RML_OSP} 

We can finally now turn our attention to the problem of simultaneous online parameter estimation and online optimal sensor placement. As outlined in the introduction, we cast this as an unconstrained bilevel optimisation problem, in which the objective is to obtain  ${\theta}^{*}\in\Theta$, ${\boldsymbol{o}}^{*}(\theta^{*})\in\Omega^{n_y}$ such that
%\footnote{Depending on our primary objective, we may also consider the dual bilevel optimisation problem, in which case the objective is to obtain the values of $\hat{\theta}\in\Theta$, $\hat{\boldsymbol{o}}\in\Omega^{n_y}$ such that 
%\begin{alignat}{2}
%\hat{\boldsymbol{o}}&= \argmin_{\boldsymbol{o}\in\Omega^{n_y}} \tilde{\mathcal{J}}\big(\argmax_{\theta\in\Theta}\tilde{\mathcal{L}}(\theta,\boldsymbol{o}),\boldsymbol{o}\big)~~~,~~~\hat{{\theta}}&&= \argmax_{\theta\in\Theta} \tilde{\mathcal{L}}\big(\theta,\hat{\boldsymbol{o}}\big).
%\end{alignat}} 
\begin{alignat}{2}
{\theta}^{*}&\in \argmin_{\theta\in\Theta} \left[-\tilde{\mathcal{L}}\big(\theta,\boldsymbol{o}^{*}(\theta)\big)\right]~~~,~~~{\boldsymbol{o}}^{*}(\theta)&&\in \argmin_{\boldsymbol{o}\in\Omega^{n_y}} \tilde{\mathcal{J}}\big({\theta},\boldsymbol{o}\big).
\end{alignat}
We should remark that, depending on our primary objective, we may instead specify $\tilde{\mathcal{J}}$ as the upper-level objective function, and $-\tilde{\mathcal{L}}$ as the lower-level objective function. Indeed, the subsequent methodology
%, up to the specification of the respective learning rates,
 is generic to either case. As previously, we will consider a weaker version of this problem, in which we simply seek to obtain joint stationary points of $\tilde{\mathcal{L}}$ and $\tilde{\mathcal{J}}$. 
%be interested in the dual bilevel optimisation problem, in which case the objective is to obtain $\hat{\theta}\in\Theta$, $\hat{\boldsymbol{o}}\in\Omega^{n_y}$ such that 
%\begin{alignat}{2}
%\hat{\boldsymbol{o}}&= \argmin_{\boldsymbol{o}\in\Omega^{n_y}} \tilde{\mathcal{J}}\big(\argmax_{\theta\in\Theta}\tilde{\mathcal{L}}(\theta,\boldsymbol{o}),\boldsymbol{o}\big)~~~,~~~\hat{{\theta}}&&= \argmax_{\theta\in\Theta} \tilde{\mathcal{L}}\big(\theta,\hat{\boldsymbol{o}}\big).
%\end{alignat}

\subsubsection{The `Ideal' Algorithm}

To solve this bilevel optimisation problem, we propose a continuous-time, stochastic gradient descent algorithm, which combines the schemes in Sections \ref{sec:param_est} and \ref{sec:sensor}, c.f., \eqref{eq_sde} and \eqref{sensor_placement}. In particular, suppose some initialisation at ${\theta}_0\in\Theta$, ${\boldsymbol{o}}_0\in\Omega^{n_y}$. Then, simultaneously, we generate parameter estimates $\{\theta(t)\}_{t\geq 0}$ and optimal sensor locations $\{\boldsymbol{o}(t)\}_{t\geq 0}$ according to 
\begin{subequations}
\begin{alignat}{2}
\mathrm{d}{\theta}(t) \hspace{-.5mm} &=
 \left\{ 
\begin{array}{l} 
 \gamma_{1}(t) \big[\hat{C}^{\theta}(\theta(t),\boldsymbol{o}(t),t)\big]^T R^{-1}(\boldsymbol{o}(t))\big[\mathrm{d}y(t)-\hat{C}(\theta(t),\boldsymbol{o}(t),t)\mathrm{d}t\big] \\   0 \end{array} \right. 
 &&\begin{array}{ll} 
\hspace{-3mm} ,  &  \hspace{-1.5mm} {\theta}(t)\in\Theta, \\
\hspace{-3mm} , & \hspace{-1.5mm} {\theta}(t)\not\in\Theta,  
 \end{array} \hspace{-5mm}
 \label{RML_ROSP_1'}  \\
\mathrm{d}{\boldsymbol{o}}(t)  \hspace{-.5mm} &=  \left\{ 
\begin{array}{l}  
-\gamma_{2}(t)\big[\hat{j}^{\boldsymbol{o}}(\theta(t),\boldsymbol{o}(t),t)\big]^T\mathrm{d}t \\
0 
\end{array} 
\right.
 &&\begin{array}{ll} 
\hspace{-3mm}, &  \hspace{-1.5mm} {\boldsymbol{o}}(t)\in\Omega^{n_y},  \\
\hspace{-3mm} , & \hspace{-1.5mm} {\boldsymbol{o}}(t)\not\in\Omega^{n_y}.
 \end{array} \hspace{-5mm}
 \label{RML_ROSP_2'}
\end{alignat}
\end{subequations}
 %where $\{\gamma_{i}(t)\}_{t\geq 0}$, $i=1,2$, are non-negative, non-increasing continuous sequences of real step-sizes. %As previously, these recursions include projections to ensure that the parameter estimates $\{\theta(t)\}_{t\geq 0}$ and the sensor placement $\{\boldsymbol{o}(t)\}_{t\geq 0}$ remain in the open sets $\Theta$ and $\Omega^{n_y}$, respectively, with probability one.
 
%We can now obtain, finally, an implementable version of Algorithm \eqref{RML_ROSP_1'} - \eqref{RML_ROSP_2'}. 

\subsubsection{The Implementable Algorithm}
 As outlined previously, it is typically not possible to implement Algorithm \eqref{RML_ROSP_1'} - \eqref{RML_ROSP_2'} in its current form, since it depends on the possibly intractable conditional expectations $\hat{C},\hat{C}^{\theta}$, and $\hat{j}^{\boldsymbol{o}}$. %Using the notation introduced in the Section \ref{sec:filter}, however, we can now obtain an implementable version of this algorithm.
\iffalse

 \fi
We can, however, obtain an implementable version of this algorithm by replacing these quantities by their (possibly approximate) finite-dimensional filter representations $\psi_{C}$, $\psi_{C}^{\theta}$, and $\psi_{j}^{\boldsymbol{o}}$. %, as defined in \eqref{phi_C_def}, \eqref{hatC_theta_0}, and \eqref{hatj_o_0}. 
 For the purpose of our theoretical analysis, it will also be useful to rewrite Algorithm \eqref{RML_ROSP_1'} - \eqref{RML_ROSP_2'} in the form of Algorithm \eqref{alg2_1} - \eqref{alg2_2}, the generic two-timescale algorithm analysed in Section \ref{subsec:main2}.\footnote{We note that this also requires us to replace $\mathrm{d}y(t)$ in \eqref{RML_ROSP_1'} using the observation equation \eqref{obs_finite_dim_inf}.} After following these steps, we finally arrive at  % into Algorithm \eqref{RML_ROSP_1'} - \eqref{RML_ROSP_2'}, 
%we arrive at
\iffalse
\begin{subequations}
\begin{alignat}{2}
\mathrm{d}{\theta}(t) \hspace{-.5mm} &=
 \left\{ 
\begin{array}{l} 
 \gamma_{1}(t) \big[\psi_{C}^{\theta}(\theta(t),\boldsymbol{o}(t),t)\big]^T R^{-1}(\boldsymbol{o}(t))\big[\mathrm{d}y(t)-\psi_{C}(\theta(t),\boldsymbol{o}(t),t)\mathrm{d}t\big] \\   0 \end{array} \right. 
 &&\begin{array}{ll} 
\hspace{-3mm} ,  &  \hspace{-1.5mm} {\theta}(t)\in\Theta, \\
\hspace{-3mm} , & \hspace{-1.5mm} {\theta}(t)\not\in\Theta,  
 \end{array} \hspace{-5mm}
 \label{RML_ROSP_eq1}  \\
\mathrm{d}{\boldsymbol{o}}(t)  \hspace{-.5mm} &=  \left\{ 
\begin{array}{l}  
-\gamma_{2}(t)\big[\psi_{j}^{\boldsymbol{o}}(\theta(t),\boldsymbol{o}(t),t)\big]^T\mathrm{d}t \\
0 
\end{array} 
\right.
 &&\begin{array}{ll} 
\hspace{-3mm}, &  \hspace{-1.5mm} {\boldsymbol{o}}(t)\in\Omega^{n_y},  \\
\hspace{-3mm} , & \hspace{-1.5mm} {\boldsymbol{o}}(t)\not\in\Omega^{n_y}.
 \end{array} \hspace{-5mm}
 \label{RML_ROSP_eq2}
\end{alignat}
\end{subequations}
\fi
\begin{subequations}
\begin{alignat}{2}
\mathrm{d}{\theta}(t) &=
 \left\{ \begin{array}{l} -\gamma_{1}(t)\big[F({\theta}(t),{\boldsymbol{o}}(t),\mathcal{X}(t))\mathrm{d}t+\mathrm{d}\zeta_1(t)\big] \\ 0 \end{array} \right.
 &&\begin{array}{ll}  , &  {\theta}(t)\in\Theta, \\   , &  {\theta}(t)\not\in\Theta,  \end{array} \label{RML_ROSP_eq1} \\
\mathrm{d}{\boldsymbol{o}}(t)&= \left\{ \begin{array}{l}  -\gamma_{2}(t)\big[G({\theta}(t),{\boldsymbol{o}}(t),\mathcal{X}(t))\mathrm{d}t\big] \\ 0 \end{array} \right.
 &&\begin{array}{ll}   , & {\boldsymbol{o}}(t)\in\Omega^{n_y}, \\ , &  {\boldsymbol{o}}(t)\not\in\Omega^{n_y},  \end{array}  \label{RML_ROSP_eq2} 
\end{alignat}
\end{subequations}

where $F$ and $G$ are the $\mathbb{R}^{n_{\theta}}$- and $\mathbb{R}^{n_yn_{\boldsymbol{o}}}$-valued functions defined according to
\begin{align}
F(\theta(t),\boldsymbol{o}(t),\mathcal{X}(t))&= -\big[\psi_{C}^{\theta}(\theta(t),\boldsymbol{o}(t),M(t),M^{\theta}(t))\big]^T R^{-1}(\boldsymbol{o}(t)) \label{F_def_}  \\
&\hspace{7.5mm}\big[C(\theta^{*},\boldsymbol{o}(t),x(t)) - \psi_{C}(\theta(t),\boldsymbol{o}(t),M(t))\big], \nonumber  \\[2mm]
G(\theta(t),\boldsymbol{o}(t),\mathcal{X}(t))& = \psi_{j}^{\boldsymbol{o}}(\theta(t),\boldsymbol{o}(t),M(t),M^{\boldsymbol{o}}(t))^T, \label{G_def_} 
\intertext{
where $\zeta_{1}$ is the $\mathbb{R}^{n_{\theta}}$-valued semi-martingale which evolves according to the SDE}
\mathrm{d}\zeta_1(\theta(t),\boldsymbol{o}(t),t) &= \underbrace{\big[\psi_{C}^{\theta}(\theta(t),\boldsymbol{o}(t),M(t),M^{\theta}(t))\big]^T\mathcal{R}^{-1}(\boldsymbol{o}(t))}_{\zeta_1^{(2)}(\theta(t),\boldsymbol{o}(t),\mathcal{X}(t))}\mathrm{d}w(t), \label{zeta_1_def}
\end{align}
and where $\smash{\mathcal{X}=\{\mathcal{X}(t)\}_{t\geq 0} = \{\mathcal{X}(\theta(t),\boldsymbol{o}(t),t)\}_{t\geq 0}}$
 is the $\mathbb{R}^{N}$-valued diffusion process defined in \eqref{compact_SDE},  
 \iffalse
  which satisfies the SDE
\begin{equation}
\mathrm{d}\mathcal{X}(t)= \Phi({\theta}(t),{\boldsymbol{o}}(t),\mathcal{X}(t))\mathrm{d}t + \Psi({\theta}(t),{\boldsymbol{o}}(t),\mathcal{X}(t))\mathrm{d}b(t),
\end{equation}
\fi
consisting of latent state, the filter, and the tangent filters, now integrated along the path of the algorithm iterates. We emphasise that this algorithm can be implemented for both exact (e.g. Kalman-Bucy, Ben\^es) and approximate (e.g., ensemble Kalman-Bucy, unscented Kalman-Bucy, projection) filters.

\begin{remark_}
Let us return to the one-dimensional linear Gaussian example considered in the previous section. We can now provide the specific joint online parameter estimation and optimal sensor placement algorithm for this model. In particular, substituting our previous expressions for $C(\theta,\boldsymbol{o},x)$, $R(\boldsymbol{o})$, $\psi_{C}(\theta,\boldsymbol{o},M)$, $\psi_{C}^{\theta}(\theta,\boldsymbol{o},M,M^{\theta})$, and $\psi_{j}^{\boldsymbol{o}}(\theta,\boldsymbol{o},M,M^{\boldsymbol{o}})$, c.f. \eqref{linear_gaussian2}, \eqref{hatC}, \eqref{Chat_gauss} and \eqref{jhat_gauss}, into the equations for $F$, $G$, and $\zeta_1$, c.f. \eqref{F_def_} , \eqref{G_def_} and \eqref{zeta_1_def}, % \eqref{RML_ROSP_eq1} - \eqref{RML_ROSP_eq2}, 
we obtain the update equations
%\footnote{For convenience, we have also set $\Theta = \Omega = \mathbb{R}$, and thus can ignore the projection device.}
\begin{subequations}
\begin{alignat}{2}
\mathrm{d}\theta(t) &= -\gamma_{1}(t)\left[-\hat{x}^{\theta}(\theta(t),\boldsymbol{o}(t),t) (\boldsymbol{o}(t)-\boldsymbol{o}_0)^{-2} (x(t) - \hat{x}(\theta(t),\boldsymbol{o}(t),t) )\mathrm{d}t \right. \\
 &\hspace{16mm} \left.+\hat{x}^{\theta}(\theta(t),\boldsymbol{o}(t),t) (\boldsymbol{o}(t)-\boldsymbol{o}_0)^{-2} \mathrm{d}w(t))\right] \nonumber \\[2mm]
\mathrm{d}\boldsymbol{o}(t) &= -\gamma_{2}(t) \left[\hat{\Sigma}^{\boldsymbol{o}}(\theta(t),\boldsymbol{o}(t),t)\right] \mathrm{d}t.
\end{alignat}
\end{subequations}
where the filter mean $\hat{x}(\theta(t),\boldsymbol{o}(t),t)$, and the filter derivatives $\hat{x}^{\theta}(\theta(t),{o}(t),t)$ and $\hat{\Sigma}^{\boldsymbol{o}}(\theta(t),\boldsymbol{o}(t),t)$, evolve according to the Kalman-Bucy filter equation \eqref{linear_gaussian_filter}, and the tangent Kalman-Bucy filter equations \eqref{linear_gaussian_filter_tangent1} - \eqref{linear_gaussian_filter_tangent2}, now evaluated along the path of the algorithm iterates.
\end{remark_}

\subsubsection{Main Result}
We will analyse Algorithm \eqref{RML_ROSP_eq1} - \eqref{RML_ROSP_eq2} under most of the assumptions introduced in Section \ref{subsec:main2} for the general two-timescale gradient descent algorithm with Markovian dynamics,\footnote{In particular, we now no longer require two of the conditions relating to the additive, state-dependent noise processes $\{\zeta_i(t)\}_{t\geq 0}$, namely Assumptions \ref{assumption5a} and \ref{assumption5c}, as these can be shown to follow directly from Assumption \ref{assumption5b}.} in addition to the assumptions introduced in Section \ref{sec:filter} for the filter and filter derivatives. 

 In order to state our main result, we must first define the representations of the asymptotic log-likelihood and the asymptotic sensor placement objective, c.f. \eqref{asymptotic_ll} and \eqref{asymptotic_obj}, in terms of the (possibly approximate) finite dimensional filter. In particular, we will write
\begin{align}
\tilde{\mathcal{L}}^{\text{(filter)}}(\theta,\boldsymbol{o}) &= \lim_{t\rightarrow\infty}\frac{1}{t}\left[\int_0^t R^{-1}(\boldsymbol{o}) \psi_{C}(\theta,\boldsymbol{o},M(\theta,\boldsymbol{o},s))\cdot\mathrm{d}y(s) \right. \label{approxL} \\
&\hspace{17mm}\left.- \frac{1}{2}\int_0^t ||R^{-\frac{1}{2}}(\boldsymbol{o})\psi_{C}(\theta,\boldsymbol{o},M(\theta,\boldsymbol{o},s))||^2\mathrm{d}s\right] \nonumber \\[2mm]
\tilde{\mathcal{J}}^{\text{(filter)}}(\theta,\boldsymbol{o})&=\lim_{t\rightarrow\infty}\frac{1}{t}\left[\int_0^t \psi_{j}(\theta,\boldsymbol{o},M(\theta,\boldsymbol{o},s))\mathrm{d}s\right]. \label{approxJ}
\end{align}
%In the case that it possible to obtain exact solutions to the Kushner-Stratonovich equation (e.g., using a Kalman-Bucy filter for a linear Gaussian model), these representations will be exact, and thus Algorithm \eqref{RML_ROSP_eq1} - \eqref{RML_ROSP_eq2} will seek to optimise the true objective functions. Conversely, if it only possible to obtain approximate solutions to the Kushner-Stratonovich equation (e.g., using a continuous time particle filter for a non-linear model), Algorithm \eqref{RML_ROSP_eq1} - \eqref{RML_ROSP_eq2} will seek to optimise the corresponding approximations of the true objective functions. 

%We remark that, in Assumption \ref{assumption5c}, the noise processes $\{\zeta_i(t)\}_{t\geq 0}$, $i=1,2$, are defined implicitly via equations (\ref{RML_ROSP_eq1}) - (\ref{RML_ROSP_eq2}).
%\begin{align}
%\zeta_1(t)&:= \zeta_1(\theta(t),\boldsymbol{o}(t),\mathcal{X}(t),t) = \int_0^tH(\theta(t),\boldsymbol{o}(t),\mathcal{X}(t),t)\mathrm{d}w(t) \\
%\zeta_2(t)&= 0.
%\end{align}

We are now ready to state our main result on the convergence of Algorithm \eqref{RML_ROSP_eq1} - \eqref{RML_ROSP_eq2}. 

%in the following proposition.  %[\textbf{include projection?}]%\textbf{To add}: either need to change statement of theorem, and include the boundary case; or leave the statement of the theorem, and change the algorithm (i.e., remove the projection back into the compact set). The second case would be easier to analyse in the framework of Section \ref{subsec:main2}, and we could just include, as a remark after the theorem, the simple projection used in \cite{Surace2019} to ensure boundedness. See \cite{Tadic2015} for a similar remark.]

\begin{proposition} \label{theorem2}
Assume that Assumptions \ref{assumption1a}, \ref{assumption2a} -  \ref{assumption4aii}, \ref{assumption5b}, \ref{assumption4} - \ref{assumption6}, and \ref{filter_assumption1} - \ref{filter_assumption3} hold. Then, with probability one, 
\begin{align}
\lim_{t\rightarrow\infty}\nabla_{\theta}\tilde{\mathcal{L}}^{ \mathrm{(filter)}}(\theta(t),\boldsymbol{o}(t)) = \lim_{t\rightarrow\infty}\nabla_{\boldsymbol{o}}\tilde{\mathcal{J}}^{ \mathrm{(filter)}}(\theta(t),\boldsymbol{o}(t)) = 0,
\end{align}
or
\begin{align}
\lim_{t\rightarrow\infty}(\theta(t),\boldsymbol{o}(t)) \in \{(\theta,\boldsymbol{o}):\theta\in \partial\Theta\cup \boldsymbol{o}\in \partial\Omega^{n_y}\}.
\end{align}
\end{proposition}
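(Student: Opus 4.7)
The plan is to recognise Algorithm \eqref{RML_ROSP_eq1}--\eqref{RML_ROSP_eq2} as a special case of the generic Markovian two-timescale scheme \eqref{alg2_1}--\eqref{alg2_2}, with $\alpha \leftrightarrow \theta$, $\beta \leftrightarrow \boldsymbol{o}$, and the controlled Markov process $\mathcal{X}$ given by the joint signal--filter--tangent-filter diffusion \eqref{compact_SDE}, which is ergodic for each fixed $(\theta,\boldsymbol{o})$ by Assumption \ref{assumption2a}. The additive noise $\zeta_1$ in \eqref{zeta_1_def} is already of the form \eqref{add_noise}, while $\zeta_2 \equiv 0$. The proof then reduces to verifying the hypotheses of Theorem \ref{theorem1a} with $f = -\tilde{\mathcal{L}}^{\mathrm{(filter)}}$ and $g = \tilde{\mathcal{J}}^{\mathrm{(filter)}}$, after which the conclusion of that theorem yields the first alternative of the proposition.

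The core technical step is to establish the ergodic representations
\[
\nabla_\theta\bigl[-\tilde{\mathcal{L}}^{\mathrm{(filter)}}(\theta,\boldsymbol{o})\bigr] = \int_{\mathbb{R}^N} F(\theta,\boldsymbol{o},x)\,\mu_{\theta,\boldsymbol{o}}(\mathrm{d}x), \qquad \nabla_{\boldsymbol{o}} \tilde{\mathcal{J}}^{\mathrm{(filter)}}(\theta,\boldsymbol{o}) = \int_{\mathbb{R}^N} G(\theta,\boldsymbol{o},x)\,\mu_{\theta,\boldsymbol{o}}(\mathrm{d}x).
\]
For $\tilde{\mathcal{J}}^{\mathrm{(filter)}}$, I would apply the ergodic theorem to the time-average in \eqref{approxJ} to obtain $\tilde{\mathcal{J}}^{\mathrm{(filter)}}(\theta,\boldsymbol{o}) = \int \psi_j(\theta,\boldsymbol{o},M)\,\mu_{\theta,\boldsymbol{o}}(\mathrm{d}\mathcal{X})$, then differentiate under the integral sign (justified by Assumption \ref{assumption2ai} together with the polynomial growth of $\psi_j$ and its derivatives) and expand using the chain rule \eqref{hatj_o}. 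For $\tilde{\mathcal{L}}^{\mathrm{(filter)}}$, I would first substitute the observation equation \eqref{obs_finite_dim_inf} into \eqref{approxL}; the stochastic integral against $\mathrm{d}w$ becomes a continuous martingale whose $1/t$ time-average vanishes a.s.\ by the strong law for martingales, and the remaining absolutely continuous term is treated as for $\tilde{\mathcal{J}}^{\mathrm{(filter)}}$. These representations, combined with standard Pardoux--Veretennikov-type solvability of the associated Poisson equations (using Assumptions \ref{assumption2a}--\ref{assumption2ai}), deliver the centred correctors $\tilde{F},\tilde{G}$ required by Assumption \ref{assumption4a}.

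With these identifications, the remaining hypotheses match directly. Assumptions \ref{assumption1a}, \ref{assumption2a}--\ref{assumption4aii} and \ref{assumption5b} are imposed verbatim; Assumption \ref{assumption4} is automatic because the projection device confines the iterates to $\Theta \times \Omega^{n_y}$; and Assumptions \ref{assumption5}--\ref{assumption6} are hypothesised directly on $\tilde{\mathcal{L}}^{\mathrm{(filter)}}$ and $\tilde{\mathcal{J}}^{\mathrm{(filter)}}$. The two noise conditions \ref{assumption5a} and \ref{assumption5c} required by Theorem \ref{theorem1a} that are absent from the proposition's hypothesis list follow from what is assumed: \ref{assumption5c} is trivial because $\zeta_1$ is driven by the single Wiener process $w$, and \ref{assumption5a} follows from the PGP bound \ref{assumption5b} on $\zeta_1^{(2)}$, the moment estimate in \ref{assumption4aii}, and the $\gamma_1^2$-integrability from \ref{assumption1a}, by applying Doob's and Burkholder--Davis--Gundy inequalities to $\int_s^{s+T}\gamma_1(v)\zeta_1^{(2)}(\theta(v),\boldsymbol{o}(v),\mathcal{X}(v))\,\mathrm{d}w(v)$. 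Local Lipschitz continuity of the two gradients is inherited from the filter smoothness (Assumption \ref{filter_assumption2}) combined with \ref{assumption2ai}.

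The dichotomy in the statement comes from the projection device: if $(\theta(t),\boldsymbol{o}(t))$ remains in the interior of $\Theta \times \Omega^{n_y}$ for all sufficiently large $t$, then the projection is eventually inactive, Theorem \ref{theorem1a} applies, and the first alternative holds; otherwise, the iterates are driven to the boundary by the projection, yielding the second alternative. The principal obstacle is the first step outlined above, namely justifying the interchange of $\nabla_{\theta},\nabla_{\boldsymbol{o}}$ with integration against $\mu_{\theta,\boldsymbol{o}}$ uniformly in $(\theta,\boldsymbol{o})$, and handling the martingale term in $\tilde{\mathcal{L}}^{\mathrm{(filter)}}$; this is a continuous-time, two-timescale analogue of arguments previously employed in \cite{Surace2019,Sirignano2017a} for the single-timescale RML estimator, and it is the place where the full strength of Assumptions \ref{assumption2ai} and \ref{assumption4aii} is needed.
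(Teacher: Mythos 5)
Your proposal is correct and follows essentially the same route as the paper: reduce Algorithm \eqref{RML_ROSP_eq1}--\eqref{RML_ROSP_eq2} to Theorem \ref{theorem1a} by handling the boundary through the exit-time dichotomy, and check that Assumption \ref{assumption5b} (together with Assumptions \ref{assumption1a} and \ref{assumption4aii}) yields the two omitted noise conditions \ref{assumption5a} and \ref{assumption5c}, via an $L^2$-bounded-martingale convergence argument and a direct computation of the quadratic covariations of $w$ with $b=(v,w,a)^T$. The one point to note is that what you call the ``core technical step'' --- the ergodic representations of the gradients and the solvability of the associated Poisson equations --- is already hypothesised, since Assumption \ref{assumption4a} lies in the assumed range \ref{assumption2a}--\ref{assumption4aii}; the paper defers sufficient conditions for that assumption to Appendix \ref{appendix:loglik_sufficient}, so this material, while harmless, is not part of the proof of the proposition itself.
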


\begin{remark__}
We assume here that the stated assumptions hold for the diffusion process $\mathcal{X}$ defined in \eqref{compact_SDE}, the functions $F$ and $G$ defined in \eqref{F_def_} and \eqref{G_def_}, and the semi-martingale $\zeta_1$ defined in \eqref{zeta_1_def}. Moreover, where necessary, we replace the algorithm iterates $(\alpha,\beta)$ by $(\theta,\boldsymbol{o})$, and the functions $f$ and $g$ by $\tilde{\mathcal{L}}^{(\text{filter})}$ and $\tilde{\mathcal{J}}^{(\text{filter})}$, as defined in \eqref{approxL} and \eqref{approxJ}.
\end{remark__}

\begin{proof}
See Appendix \ref{appendix:proof_theorem2}.
\end{proof}

Proposition \ref{theorem2} is obtained as a corollary of Theorem \ref{theorem1a}. In particular, Algorithm \eqref{RML_ROSP_eq1} - \eqref{RML_ROSP_eq2}  is a special case of Algorithm \eqref{alg2_1} - \eqref{alg2_2}, in which the additive noise for the slow process is defined by equation \eqref{zeta_1_def}, and the additive noise for the fast process is identically equal to zero. Aside from notational differences, the modifications in the statement of this theorem, when compared to Theorem \ref{theorem1a}, are due solely to the inclusion of the projection which ensures that the algorithm iterates remain in the open sets $\Theta\in\mathbb{R}^{n_{\theta}}$, $\Omega^{n_y}\in\mathbb{R}^{n_yn_{\boldsymbol{o}}}$ with probability one.

Proposition \ref{theorem2} extends Theorem 1 in \cite{Surace2019}, in which almost sure convergence of the online parameter estimate was established under slightly weaker conditions. In particular, the almost sure convergence results in \cite{Surace2019} does not depend on almost sure boundedness of the algorithm iterates. The method of proof, however, is entirely different (see discussion in Section \ref{subsec:main2}). We remark, as in the previous section, that our theorem (and its proof) still holds upon restriction to a single-timescale; that is, under the assumption that only the parameters are estimated, while the sensor locations are fixed, or vice versa. In this case, of course, we only require assumptions which relate to the quantity of interest. Thus, upon restriction to a single-timescale (i.e., assuming that the sensors are fixed), our theorem reduces to the result in \cite{Surace2019}, while our proof provides an entirely different proof for that result.

\subsubsection{Extensions for Approximate Filters}
Proposition \ref{theorem2} guarantees that the online parameter estimates and the optimal sensor placements generated by Algorithm \eqref{RML_ROSP_eq1} - \eqref{RML_ROSP_eq2} converge to the stationary points of the finite-dimensional filter representations of the asymptotic log-likelihood and the sensor placement objective function, namely, $\tilde{\mathcal{L}}^{\text{(filter)}}(\theta)$ and $\tilde{\mathcal{J}}^{\text{(filter)}}(\theta)$. In the case that one can obtain exact solutions to the Kushner Stratonovich equation (e.g., using the Kalman-Bucy filter for a linear Gaussian model), these representations will be exact, and thus this proposition implies convergence to the stationary points of the `true' objective functions $\tilde{\mathcal{L}}(\theta)$ and $\tilde{\mathcal{J}}(\theta)$. 

On the other hand, if it is only possible to obtain approximate solutions to the Kushner-Stratonovich equation (e.g., using a continuous time particle filter for a non-linear model), Proposition \ref{theorem2} still guarantees convergence, but now to the stationary points of an `approximate' asymptotic log-likelihood and an `approximate' asymptotic sensor placement objective function, 
namely, the representations of these functions in terms of the approximate finite-dimensional filter. In this case, it is clear that the asymptotic properties of the online parameter estimates and optimal sensor placements with respect to the `true' objective functions will be determined by the properties of the approximate filter. In particular, in order to obtain convergence (e.g., in $\mathbb{L}^p$) to the stationary points of the true objective functions, one now requires bounds, preferably uniform in time, on terms such as
\begin{align}
\mathbb{E}\left[||\psi_{C}(\theta,\boldsymbol{o},M(t)) - \hat{C}(\theta,\boldsymbol{o},t)||^{n}\right]~,~\mathbb{E}\left[||\psi_{j}(\theta,\boldsymbol{o},M(t)) - \hat{j}(\theta,\boldsymbol{o},t)||^{n}\right]
\end{align}
We discuss this point in greater depth in Appendix \ref{app:theorem2_ext}, and sketch the details of how one can obtain an $\mathbb{L}^p$ convergence result of this type for the Ensemble Kalman-Bucy Filter (EnKBF) (e.g., \cite{DeWiljes2018,Moral2018}). 

\subsubsection{Sufficient Conditions}
We conclude this section with some brief remarks on the assumptions required for Proposition \ref{theorem2} (see also \cite{Surace2019}). The majority of these assumptions are fairly classical, namely, those on the learning rate (Assumption \ref{assumption1a}), the additive noise process $\zeta_1$ (Assumption \ref{assumption5b}), the stability of the algorithm iterates (Assumption \ref{assumption4}), and the stationary points of the asymptotic objective functions (Assumptions \ref{assumption5} - \ref{assumption6}). Meanwhile, our assumptions on the filter (Assumptions \ref{filter_assumption1} - \ref{filter_assumption3}) are relatively weak, and are satisfied by many exact and approximate filters. In fact, using the results recently established in \cite{Bhudisaksang2020}, it may be possible to relax Assumption \ref{filter_assumption3} further, and allow the evolution equation for the filter to include a jump process. This would further extend the applicability of this result, allowing for a broader class of continuous-time particle filters (e.g., \cite{DelMoral2000}).

%The majority of these assumptions are relatively straightforward to verify. In particular, the conditions on the learning rate are trivially satisfied. The condition on the additive noise process is satisfied provided the function $\psi_{C}$ has the polynomial growth property (see Appendix \ref{appendix:loglik_sufficient}). The conditions on the objective functions are generally satisfied, noting that functions with isolated equilibria are dense in $C(\mathbb{R}^d)$ (e.g., \ref{}). Finally, the conditions on the filter (Assumptions \ref{filter_assumption1} - \ref{filter_assumption3}) are satisfied for a broad class of exact and approximate filters. 

It remains to consider the assumptions relating to the diffusion process $\mathcal{X}$ (Assumptions \ref{assumption2a} - \ref{assumption4aii}). %This process, we recall, is defined as the concatenation of the latent signal progress, the filter, and the tangent filters. 
These include ergodicity (Assumption \ref{assumption2a}), uniformly bounded moments (Assumption \ref{assumption2ai}, Assumption \ref{assumption4aii}), polynomial growth for the diffusion term in the associated SDE (Assumption \ref{assumption4a}), and existence and regularity of solutions of the Poisson equations associated with the generator of this process and the asymptotic objective functions (Assumption \ref{assumption4aiii}). We provide sufficient conditions for one of these assumptions (Assumption \ref{assumption4a}) in Appendix \ref{appendix:loglik_sufficient}. In particular, we show that this assumption can be replaced by the slightly weaker assumptions that (i) certain functions appearing in the definition of Algorithm \eqref{RML_ROSP_eq1} - \eqref{RML_ROSP_eq2} have the polynomial growth property and (ii) for certain functions satisfying the polynomial growth property, the Poisson equation admits a unique solution which also has this property. 

In general, while our conditions are certainly necessary in order to establish almost sure convergence, they are somewhat strong, and in general must be verified on a case by case basis. This being said, in the linear Gaussian case, one can obtain sufficient conditions which are straightforward to verify (see Appendix A in \cite{Sharrock2020}). In particular, these conditions coincide with standard conditions required for stability of the Kalman-Bucy filter, and are thus arguably the weakest under which an asymptotic result of this type can be established.

More broadly, the problem of obtaining more easily verifiable sufficient conditions remains open. Indeed, the diffusion process is generally highly degenerate, and thus standard sufficient conditions for non degenerate elliptic diffusion processes (see Appendix \ref{appendix_sufficient_conditions_elliptic}), do not apply (e.g., \cite{Pardoux2003,Pardoux2001}). %Bianca2017
In the case that an exact, finite-dimensional solution to the Kushner-Stratonovich equation exists, ergodicity of the optimal filter follows directly from ergodicity of the latent signal process and the non-degeneracy of the observation process (e.g., \cite{Budhiraja2003,Kunita1991}), 
%Kunita1971,VanHandel2009
but ergodicity of the tangent filter(s) must still established. Meanwhile, in the case that only an approximate, finite-dimensional solution to the Kushner-Stratonovich equation exists, there is no guarantee that the approximate filter is ergodic, let alone the tangent filter(s).

\section{Numerical Examples}
\label{sec:numerics}

To illustrate the results of Section \ref{sec:RML_ROSP}, we now provide two examples of joint online parameter estimation and optimal sensor placement. In both cases, we study the numerical performance of the proposed two-timescale stochastic gradient descent algorithm, and verify numerically the convergence of the parameter estimates and the sensor placements. In the first case, we also provide explicit derivations of the parameter and sensor update equations. In the second case, these details,  as well an explicit verification of the conditions of Proposition \ref{theorem2},  appear in a separate paper \cite{Sharrock2020}.

\subsection{One-Dimensional Benes Filter} We first consider a one-dimensional, partially observed diffusion process defined by

\begin{alignat}{2}
\mathrm{d}x(t) &= \mu\sigma\tanh\left[\frac{\mu}{\sigma}x(t)\right]\mathrm{d}t + \mathrm{d}w(t)~,~~~&&x(0)=0, \label{benes1} \\
\mathrm{d}y(t) &= cx(t) \mathrm{d}t + \mathrm{d}v(t)~,~~~&&y(0)=0,\label{benes2}
\end{alignat}

\iffalse
\begin{alignat}{2}
\mathrm{d}x(t) &= \mu\sigma\tanh\left(\frac{\mu}{\sigma}x(t)\right)\mathrm{d}t + \mathrm{d}w(t)~,~~~&&x(0)=x_0, \\
\mathrm{d}y(t) &= c \hspace{.4mm} x(t)\hspace{.2mm}\mathrm{d}t + \mathrm{d}v(t)~,~~~&&y(0)=0,
\end{alignat}
\fi
%where ${A}(\theta,x)=$, $C(\theta,\boldsymbol{o},x) = cx$; and
where $w=\{w(t)\}_{t\geq 0}$ and $v=\{v(t)\}_{t\geq 0}$ are independent, one-dimensional Brownian motions with incremental variances $q(\theta)$  $= \sigma^2$ and $r(\boldsymbol{o}) = \tau^2 + (\boldsymbol{o}-\boldsymbol{o}_0)^2$, respectively, for some fixed positive constant $\tau\in\mathbb{R}_{+}$. We assume that the initial condition $x_0\in\mathbb{R}$, that the parameters $\mu,c\in\mathbb{R}$ and $\sigma\in\mathbb{R}_{+}$, respectively, and that the sensor location $\boldsymbol{o}\in\mathbb{R}$. We thus have a three-dimensional parameter vector $\theta = (\mu,c,\sigma)\in \mathbb{R}^2\times\mathbb{R}_{+}$, and a single, one-dimensional sensor location $\boldsymbol{o}\in\mathbb{R}$. %[\textbf{To add}: interpretation of parameters, and sensor placement; possible applications.] 

This system has an analytic, finite-dimensional solution, known as the Bene\v{s} filter \cite{Benes1981}. Namely, the conditional law of the latent signal process $x=\{x(t)\}_{t\geq 0}$ given the history of observations $\mathcal{F}_t^Y = \sigma(y(s):0\leq s\leq t)$ is a weighted mixture of two normal distributions \cite[Chapter 6]{Bain2009}, which takes the form
\begin{align}
\pi_t &= w^{+}(\theta,\boldsymbol{o},t) \hspace{.5mm}\mathcal{N}\left(\frac{A^{+}(\theta,\boldsymbol{o},t)}{2B(\theta,\boldsymbol{o},t)},\frac{1}{2B(\theta,\boldsymbol{o},t)}\right) \\
&+ w^{-}(\theta,\boldsymbol{o},t)\hspace{.5mm}\mathcal{N}\left(\frac{A^{-}(\theta,\boldsymbol{o},t)}{2B(\theta,\boldsymbol{o},t)},\frac{1}{2B(\theta,\boldsymbol{o},t)}\right) \nonumber,
\end{align}
where
\begin{subequations}
\begin{align}
w^{\pm}(\theta,\boldsymbol{o},t)&=\frac{\exp\big(\tfrac{A^{\pm}(\theta,\boldsymbol{o},t)^2}{4B(\theta,\boldsymbol{o},t)}\big)}{\exp\big(\frac{A^{+}(\theta,\boldsymbol{o},t)^2}{4B(\theta,\boldsymbol{o},t)}\big)+\exp\big(\frac{A^{-}(\theta,\boldsymbol{o},t)^2}{4B(\theta,\boldsymbol{o},t)}\big)}\\
A^{\pm}(\theta,\boldsymbol{o},t)&=\pm\frac{\mu}{\sigma} + cr^{-1}(\boldsymbol{o})\int_0^t\frac{\sinh(c\sigma r^{-\frac{1}{2}}(\boldsymbol{o}) s)}{\sinh(c\sigma r^{-\frac{1}{2}}(\boldsymbol{o}) t)}\mathrm{d}y(s) \\%+ \frac{cr^{-\frac{1}{2}}(\boldsymbol{o})x_0}{\sigma \sinh(cr^{-\frac{1}{2}}(\boldsymbol{o})\sigma t)}\\
B(\theta,\boldsymbol{o},t)&= \frac{cr^{-\frac{1}{2}}(\boldsymbol{o})}{2\sigma}\coth(c\sigma r^{-\frac{1}{2}}(\boldsymbol{o}) t).
\end{align}
\end{subequations}
%and $\mathcal{N}(\mu,\sigma^2)$ denotes a normal distribution with mean $\mu$ and variance $\sigma^2$. 
It follows, in particular, that the optimal filter has a (non-unique) two-dimensional representation, which we will write as $M(\theta,\boldsymbol{o},t)= (m(\theta,\boldsymbol{o},t),P(\theta,\boldsymbol{o},t))^T$. In this case, we choose to define
\begin{subequations}
\begin{align}
m(\theta,\boldsymbol{o},t) &= c \frac{A^{\pm}(\theta,\boldsymbol{o},t)\mp \frac{\mu}{\sigma}}{2B(\theta,\boldsymbol{o},t)} = c\sigma r^{-\frac{1}{2}}(\boldsymbol{o}) \frac{\int_0^t\sinh\left(cr^{-\frac{1}{2}}(\boldsymbol{o})\sigma s\right)\mathrm{d}y(t)}{\cosh\left(cr^{-\frac{1}{2}}(\boldsymbol{o}) \sigma t\right)} \\
P(\theta,\boldsymbol{o},t) &= \frac{1}{2B(\theta,\boldsymbol{o},t)} = \frac{\sigma r^{\frac{1}{2}}(\boldsymbol{o})}{c}\tanh\left(cr^{-\frac{1}{2}}(\boldsymbol{o})\sigma t\right).
\end{align}
\end{subequations}
This choice implies that the finite-dimensional filter evolves according to an SDE of the required form, namely (e.g., \cite{Sarkka2019}) %Katzur1984
\begin{equation}
\mathrm{d}M(\theta,\boldsymbol{o},t) = \begin{pmatrix} -c^2r^{-1}(\boldsymbol{o})P(\theta,\boldsymbol{o},t)m(\theta,\boldsymbol{o},t) \\ \sigma^2-c^2r^{-1}(\boldsymbol{o})P^2(\theta,\boldsymbol{o},t) \end{pmatrix} \mathrm{d}t + \begin{pmatrix} cr^{-1}(\boldsymbol{o}) P(\theta,\boldsymbol{o},t) \\ 0 \end{pmatrix} \mathrm{d}y(t). \label{M_sde}
\end{equation}
\iffalse
 corresponds to the sufficient statistics
\begin{align}
m(\theta,\boldsymbol{o},t) &= c \frac{A^{\pm}(\theta,\boldsymbol{o},t)\mp \frac{\mu}{\sigma}}{2B(\theta,\boldsymbol{o},t)} = c\sigma r^{-\frac{1}{2}}(\boldsymbol{o}) \frac{\int_0^t\sinh\left(cr^{-\frac{1}{2}}(\boldsymbol{o})\sigma s\right)\mathrm{d}y(t)}{\cosh\left(cr^{-\frac{1}{2}}(\boldsymbol{o}) \sigma t\right)} \\
P(\theta,\boldsymbol{o},t) &= \frac{1}{2B(\theta,\boldsymbol{o},t)} = \frac{\sigma r^{\frac{1}{2}}(\boldsymbol{o})}{c}\tanh\left(cr^{-\frac{1}{2}}(\boldsymbol{o})\sigma t\right),
\end{align}
That is, equivalently, 
\begin{align}
A^{\pm}(\theta,\boldsymbol{o},t)&= \frac{m(\theta,\boldsymbol{o},t)}{cP(\theta,\boldsymbol{o},t)}\pm\frac{\mu}{\sigma} \\
B(\theta,\boldsymbol{o},t)&= \frac{1}{2P(\theta,\boldsymbol{o},t)}
\end{align}
\fi

The equations for the tangent filters, namely $M^{\mu}(\theta,\boldsymbol{o},t)$, $M^{\sigma}(\theta,\boldsymbol{o},t)$, $M^{c}(\theta,\boldsymbol{o},t)$ and $M^{\boldsymbol{o}}(\theta,\boldsymbol{o},t)$, can then be obtained by (formal) differentiation of this equation with respect to the relevant variable. Illustratively, for the first parameter, we have
\begin{equation}
\mathrm{d}M^{\mu}(\theta,\boldsymbol{o},t) = 
\begin{pmatrix} 
-c^2r^{-1}(\boldsymbol{o})P^{\mu}(\theta,\boldsymbol{o},t)m(\theta,\boldsymbol{o},t) \\
~-c^2r^{-1}(\boldsymbol{o}) P(\theta,\boldsymbol{o},t)m^{\mu}(\theta,\boldsymbol{o},t)  \\[3mm] 
-2c^2r^{-1}(\boldsymbol{o})P(\theta,\boldsymbol{o},t)P^{\mu}(\theta,\boldsymbol{o},t)
\end{pmatrix} 
\mathrm{d}t 
+ 
\begin{pmatrix} 
cr^{-1}(\boldsymbol{o}) P^{\mu}(\theta,\boldsymbol{o},t) \\ 
~\\[2mm]
0
\end{pmatrix} 
\mathrm{d}y(t).
\end{equation}
% = \{M(\theta,\boldsymbol{o},t)\}_{t\geq 0}=\{(m(\theta,\boldsymbol{o},t),P(\theta,\boldsymbol{o},t))^T\}_{t\geq 0}$, 

We should remark that $m(\theta,\boldsymbol{o},t)$ and $P(\theta,\boldsymbol{o},t)$ do not correspond directly to the mean $\hat{x}(\theta,\boldsymbol{o},t)$ and variance $\hat{\Sigma}(\theta,\boldsymbol{o},t)$ of the optimal filter.  However, these quantities can be computed %straightforwardly using standard properties of a weighted mixture distribution. %, namely
%\begin{align}
%\hat{x}(\theta,\boldsymbol{o},t) &= w^{+}(\theta,\boldsymbol{o},t)\frac{A^{+}(\theta,\boldsymbol{o},t)}{2B(\theta,\boldsymbol{o},t)}+w^{-}(\theta,\boldsymbol{o},t)\frac{A^{-}(\theta,\boldsymbol{o},t)}{2B(\theta,\boldsymbol{o},t)} \label{posterior_mean} \\
%\hat{\Sigma}(\theta,\boldsymbol{o},t) &= w^{+}(\theta,\boldsymbol{o},t)\left[\frac{1}{2B(\theta,\boldsymbol{o},t)}+\frac{A^{+}(\theta,\boldsymbol{o},t)^2}{4B(\theta,\boldsymbol{o},t)^2} - \hat{x}(\theta,\boldsymbol{o},t)^2\right] \label{posterior_cov} \\
%&\hspace{.2mm}+w^{-}(\theta,\boldsymbol{o},t)\left[\frac{1}{2B(\theta,\boldsymbol{o},t)}+\frac{A^{-}(\theta,\boldsymbol{o},t)^2}{4B(\theta,\boldsymbol{o},t)^2} - \hat{x}(\theta,\boldsymbol{o},t)^2\right]. \nonumber 
%\end{align}
%However, these quantities can be computed straightforwardly as \cite{Sarkka2019},
%Alternatively, they can be computed 
as \cite{Sarkka2019}
\begin{subequations}
\begin{align}
\hat{x}(\theta,\boldsymbol{o},t) %&= \mathbb{E}\big[x(t)|\mathcal{F}_t^Y\big] 
&= m(\theta,\boldsymbol{o},t) + \frac{\mu}{\sigma} P(\theta,\boldsymbol{o},t)\tanh\left(\frac{\mu}{\sigma} m(\theta,\boldsymbol{o},t)\right),  \label{posterior_mean} \\
\hat{\Sigma}(\theta,\boldsymbol{o},t)%&= \mathbb{E}\big[\big(x(t)-\hat{x}(\theta,\boldsymbol{o},t)\big)^2\big|\mathcal{F}_t^Y\big]
&=P(\theta,\boldsymbol{o},t) + \frac{\mu^2}{\sigma^2}\left(1-\tanh^2(\frac{\mu}{\sigma} m(\theta,\boldsymbol{o},t))\right)P^2(\theta,\boldsymbol{o},t). \label{posterior_cov}
\end{align}
\end{subequations}
We    can then compute the conditional expectations 
\begin{subequations}
\begin{align}
\hat{C}(\theta,\boldsymbol{o},t) &=
 \psi_{C}(\theta,\boldsymbol{o},M(\theta,\boldsymbol{o},t))= c\hat{x}(\theta,\boldsymbol{o},t), \\
\hat{j}(\theta,\boldsymbol{o},t)&=
\psi_{j}(\theta,\boldsymbol{o},M(\theta,\boldsymbol{o},t))= \mathrm{Tr}\big[\hat{\Sigma}(\theta,\boldsymbol{o},t)\big].
\end{align}
\end{subequations}
It is now straightforward to obtain the explicit form of the two-timescale, joint online parameter estimation and optimal sensor placement algorithm for this system. In particular, we have 
\begin{subequations}
\begin{align}
\mathrm{d}\mu(t)&=-\gamma_{1,\mu}(t) \hspace{.2mm} c \hspace{.4mm} \hat{x}^{\mu}(\theta(t),\boldsymbol{o}(t),t)\left[c\hspace{.4mm}\hat{x}(\theta(t),\boldsymbol{o}(t),t)\mathrm{d}t-\mathrm{d}y(t)\right] \\
\mathrm{d}\sigma(t)&=-\gamma_{1,\sigma}(t) \hspace{.2mm}  c \hspace{.4mm} \hat{x}^{\sigma}(\theta(t),\boldsymbol{o}(t),t)\left[c\hspace{.4mm}\hat{x}(\theta(t),\boldsymbol{o}(t),t)\mathrm{d}t-\mathrm{d}y(t)\right] \\
\mathrm{d}c(t)&=-\gamma_{1,c}(t) \left[\hat{x}(\theta(t),\boldsymbol{o}(t),t) + c\hat{x}^{c}(\theta(t),\boldsymbol{o}(t),t)\right]\left[c\hspace{.4mm}\hat{x}(\theta(t),\boldsymbol{o}(t),t)\mathrm{d}t-\mathrm{d}y(t)\right] \hspace{-5mm} \\[1mm]
\mathrm{d}\boldsymbol{o}(t)&=-\gamma_{2,\boldsymbol{o}}(t)\mathrm{Tr}\big[\hat{\Sigma}^{\boldsymbol{o}}(\theta(t),\boldsymbol{o}(t),t)\big].
\end{align}
\end{subequations}
where $\hat{x}^{\mu}$, $\hat{x}^{\sigma}$, $\hat{x}^c$ and $\hat{\Sigma}^{\boldsymbol{o}}$ are the filter derivatives of the posterior mean and the posterior variance, respectively. These quantities are obtained by differentiating \eqref{posterior_mean} - \eqref{posterior_cov} with respect to the relevant variable, and substituting the filter and the relevant tangent filter where appropriate.

The performance of the two-timescale stochastic gradient descent algorithm is illustrated in Figure \ref{fig1}. In this simulation, we assume that the parameters $\sigma^2 =\sigma^2_{*}= 4$, $c = c_{*}=0.7$ and $\tau^2=\tau_{*}^2=2$ are fixed, while the parameter $\mu$ is learned. The true value of this parameter is given by $\mu^{*} = 3$, and we consider two initial parameter estimates $\mu_0 = \{1,7\}$. Meanwhile, the optimal sensor placement and the initial sensor placement is given by $\boldsymbol{o}_{*} = 4$, and we consider initial sensor placements $\boldsymbol{o}_0 = \{2,6\}$. We remark that, as in any gradient based algorithm, the convergence of the proposed scheme may be sensitive to initialisation. In this case, however, it appears to be robust to this choice. 

 For simplicity, we choose to integrate all SDEs using a standard Euler-Maruyama discretisation, with  $\Delta t = 0.01$, although similar results are obtained for other choices of $\Delta t$.  We provide results for several choices of learning rates of the form $\gamma_{\mu}(t) = \gamma_{\mu}^{0} t^{-\eta_{\mu}}$ and $\gamma_{o}(t) = \gamma_{o}^{0} t^{-\eta_o}$, where $0<\gamma_{\mu}^{0},\gamma_{o}^{0}<\infty$, and $0<\eta_{\mu}<\eta_{o}<1$. We consider both learning rates for which the learning rate condition in Proposition \ref{theorem2} is satisfied (when $0.5\leq \eta_{\mu},\eta_{o}\leq 1$), and learning rates for which this condition is violated (when $0<\eta_{\mu},\eta_{o}<0.5$). In this case, the online parameter estimates and optimal sensor converge to their true values, regardless of the choice of learning rate or the initialisation. We note, however, that the rate of convergence does depend on the choice of learning rate. 

\begin{figure}[tbhp]
\centering
  \subfloat[Online parameter estimates.]{\label{fig_1a}\includegraphics[trim = 0 0 0 1cm,width=0.42\textwidth,clip]{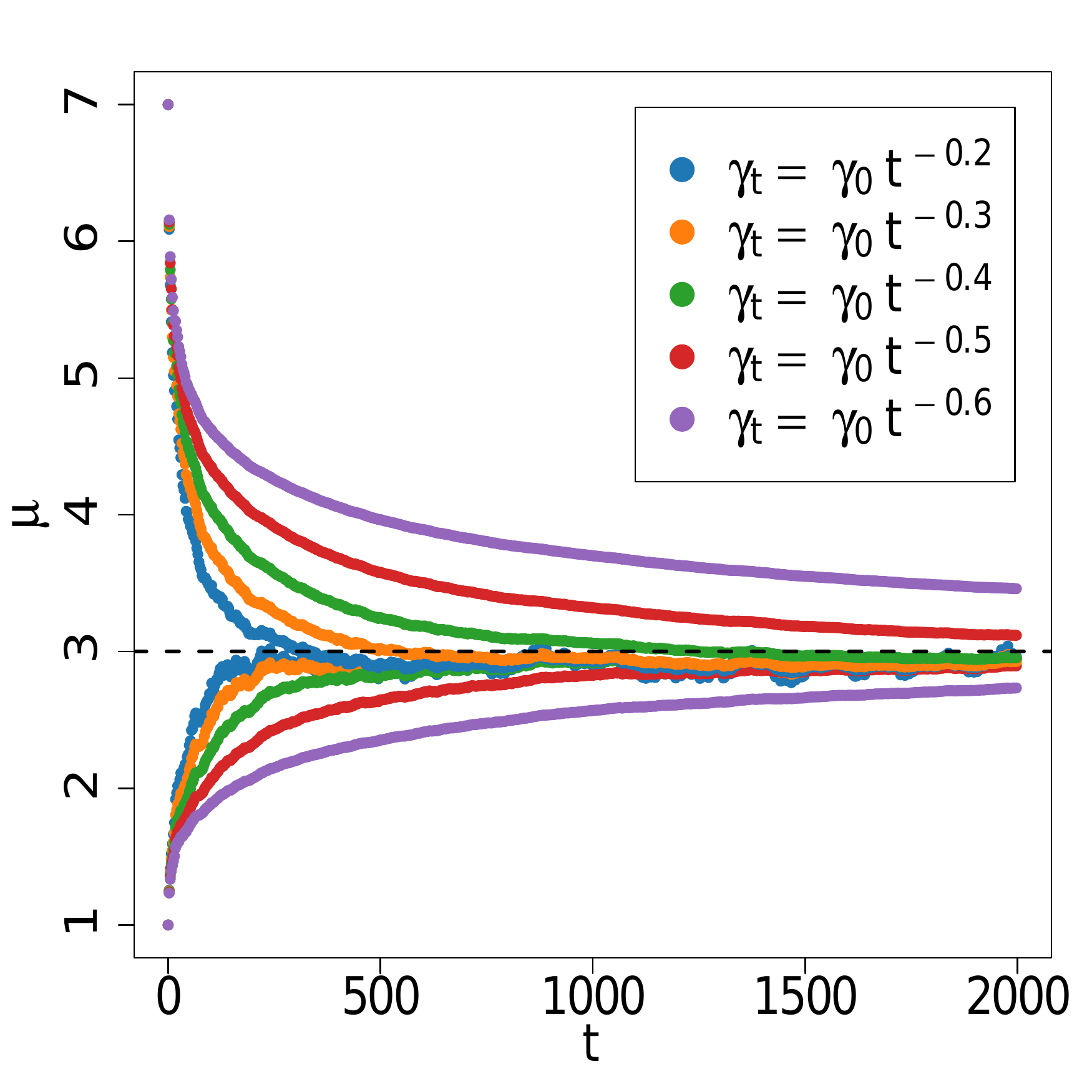}}
  \hspace{6mm}
  \subfloat[Optimal sensor placements.]{\label{fig_1b}\includegraphics[trim = 0 0 0 1cm,width=0.42\textwidth,clip]{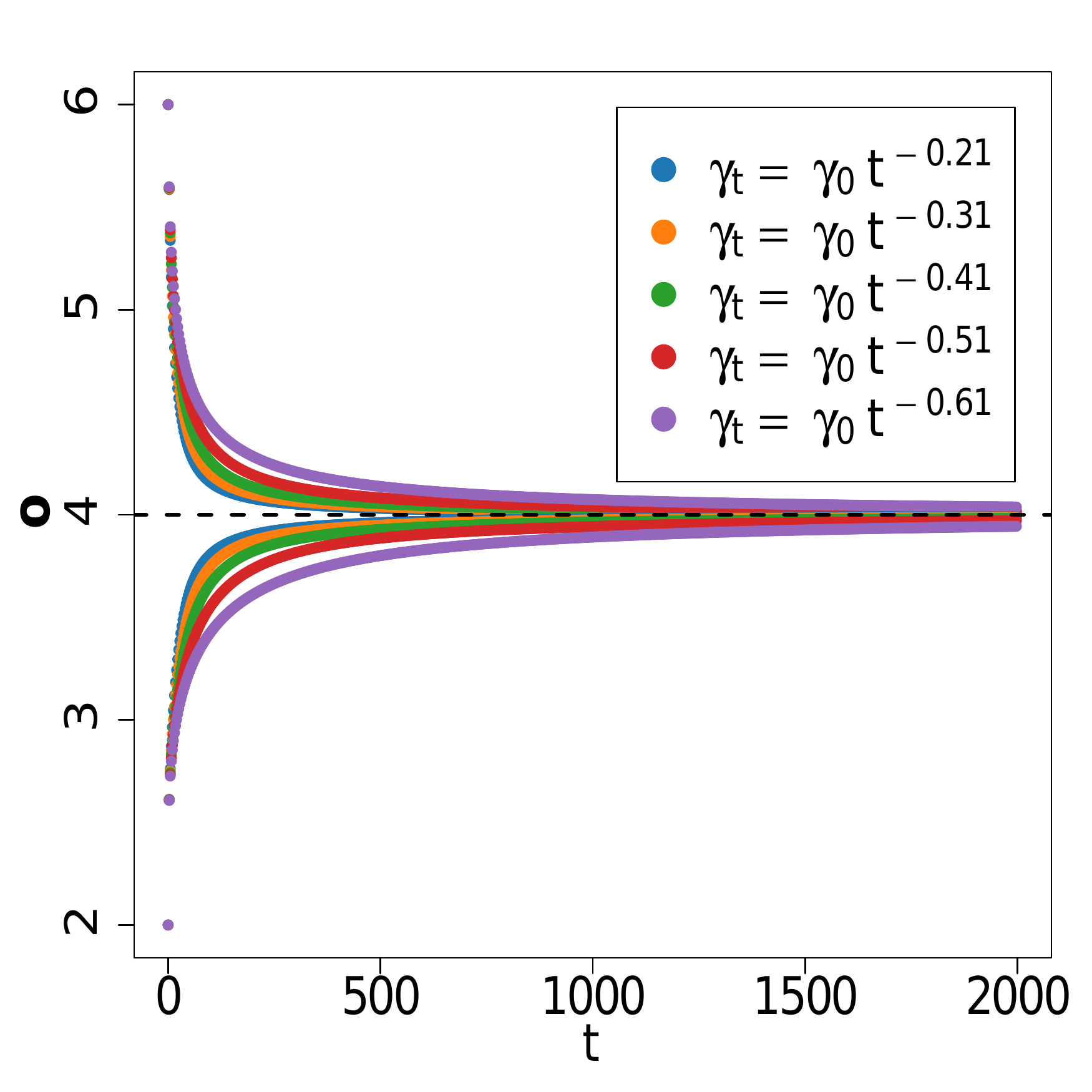}}
  \vspace{-2mm}
\caption{The sequence of online parameter estimates \& optimal sensor placements for the partially observed Bene\v{s} SDE, for several choices of the learning rates $\{\gamma_{\theta}(t)\}_{t\geq 0}$, $\{\gamma_{\boldsymbol{o}}(t)\}_{t\geq 1}$.}
\label{fig1}
\end{figure}

To obtain (optimal) convergence rates which are independent of the choice of learning rate, a standard approach in discrete-time, including the two-timescale case \cite{Mokkadem2006}, is to use Polyak-Ruppert averaging \cite{Polyak1990,Ruppert1988}. 
%Bach2013
In the spirit of this scheme, in the continuous time, two-timescale setting, we can consider new parameter estimates $\{\bar{\theta}(t)\}_{t\geq 0}$ and $\{\bar{\boldsymbol{o}}(t)\}_{t\geq0}$ defined according to
\begin{align}
\bar{\theta}(t) = \frac{1}{t}\int_0^t \theta(s)\mathrm{d}s~~~,~~~\bar{\boldsymbol{o}}(t) = \frac{1}{t}\int_0^t \boldsymbol{o}(s)\mathrm{d}s.
\end{align}
A detailed theoretical analysis of this approach, which extends the results in \cite{Mokkadem2006} to the continuous time setting using the tools established in \cite{Pardoux2003,Pardoux2001,Sirignano2017a,Sirignano2020a}, is beyond the scope of this paper. We do provide tentative numerical evidence, however, to suggest that such results can also be expected to hold in continuous time. In particular, in Figure \ref{fig2}, we plot the sequence of averaged optimal sensor placements (Figure \ref{fig_2a}) and the corresponding $\mathbb{L}^{1}$ error for large times (Figure \ref{fig_2b}), for several choices of the learning rate. The latter illustration, in particular, indicates that the convergence rate is now independent of the learning rate. One can obtain similar results for the corresponding sequence of averaged online parameter estimates (plots omitted).

\begin{figure}[tbhp]
\centering
  \subfloat[Optimal sensor placements.]{\label{fig_2a}\includegraphics[trim = 0 0 0 1cm,width=0.42\textwidth,clip]{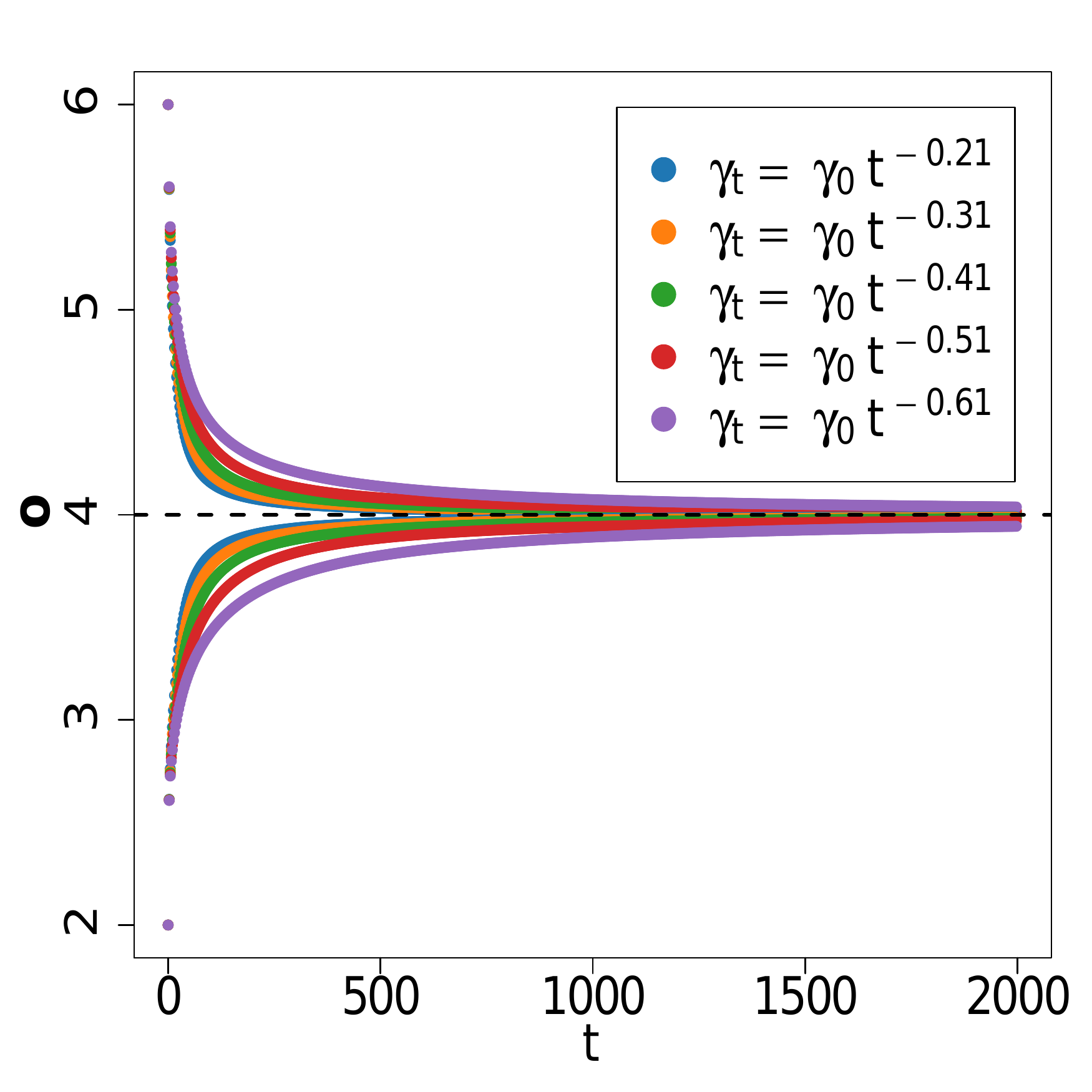}}
  \hspace{6mm}
  \subfloat[$\mathbb{L}_1$ error (log-log plot).]{\label{fig_2b}\includegraphics[trim = 0 0 0 1cm,width=0.42\textwidth,clip]{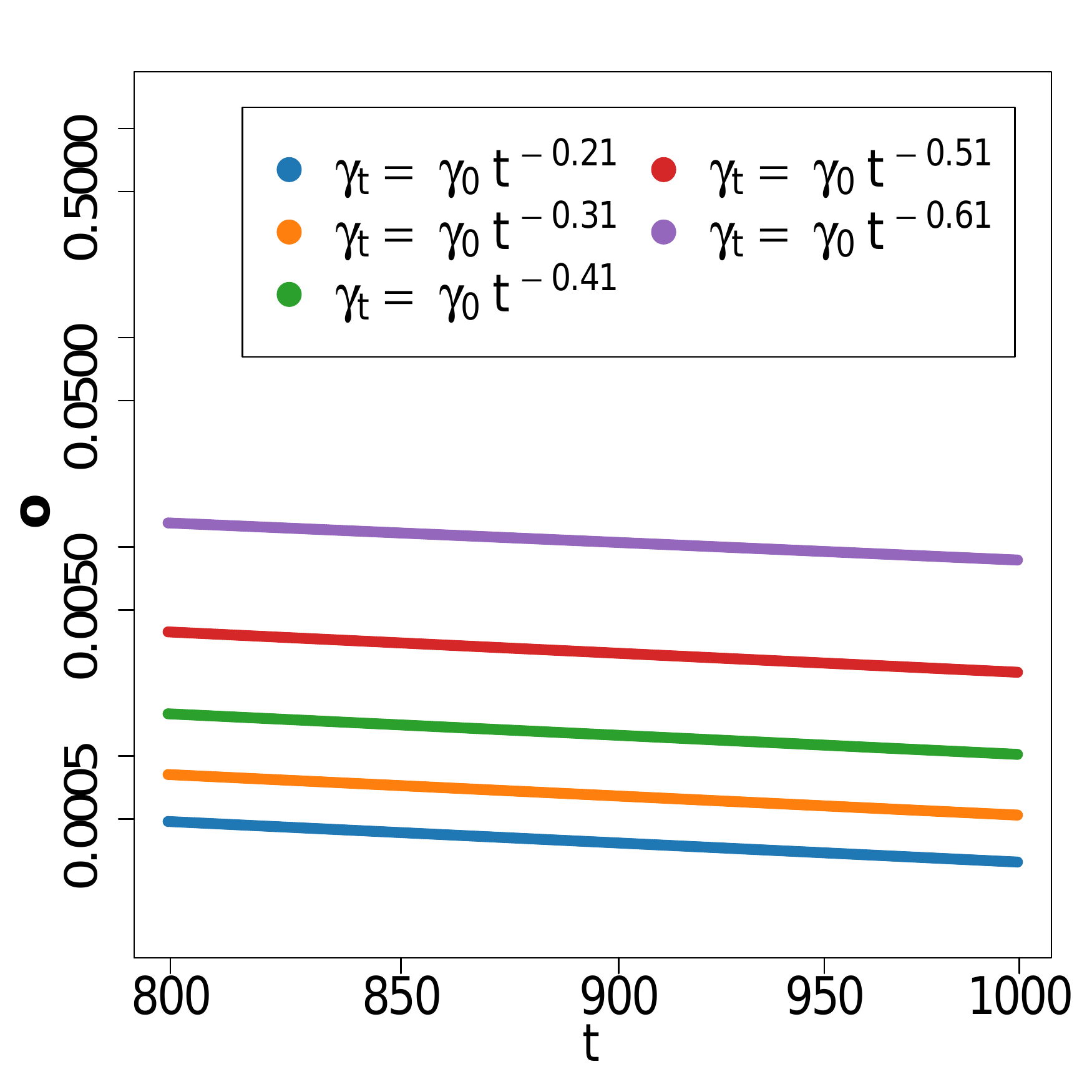}}
  \vspace{-2mm}
\caption{The sequence of averaged optimal sensor placements for the partially observed Bene\v{s} SDE, for several choices of the learning rates $\{\gamma_{\theta}(t)\}_{t\geq 0}$, $\{\gamma_{\boldsymbol{o}}(t)\}_{t\geq 1}$.}
\label{fig2}
\end{figure}

We conclude this section by investigating the performance of the stochastic gradient descent algorithm under the assumption that the true model parameters and the optimal sensor placements are no longer static, but now change in time. This is a scenario of particular practical interest. In this case, we must specify constant learning rates for both the parameter estimates and the sensor placements. While this violates the learning rate condition in Proposition \ref{theorem2}, it is a standard choice when the model parameters are dynamic (e.g., \cite{Soderstrom1983}). In particular, although there is no longer any guarantee that that the algorithm iterates will converge to the stationary points of the two objective functions, they can be expected to oscillate around these points, with amplitude proportional to the learning rate. The performance of the algorithm is shown in Figure \ref{fig3}. As anticipated, the online parameter estimates (sensor placements) are able to track changes in the true model parameter (optimal sensor placement) in real time. It is worth noting that, while here we have considered the case in which the model parameters change discontinuously in time, we obtain similar results when the model parameters change continuously in time. 

\begin{figure}[tbhp]
\centering
  \subfloat[Online parameter estimates.]{\label{fig_3a}\includegraphics[trim = 0 0 0 1cm,width=0.38\textwidth,clip]{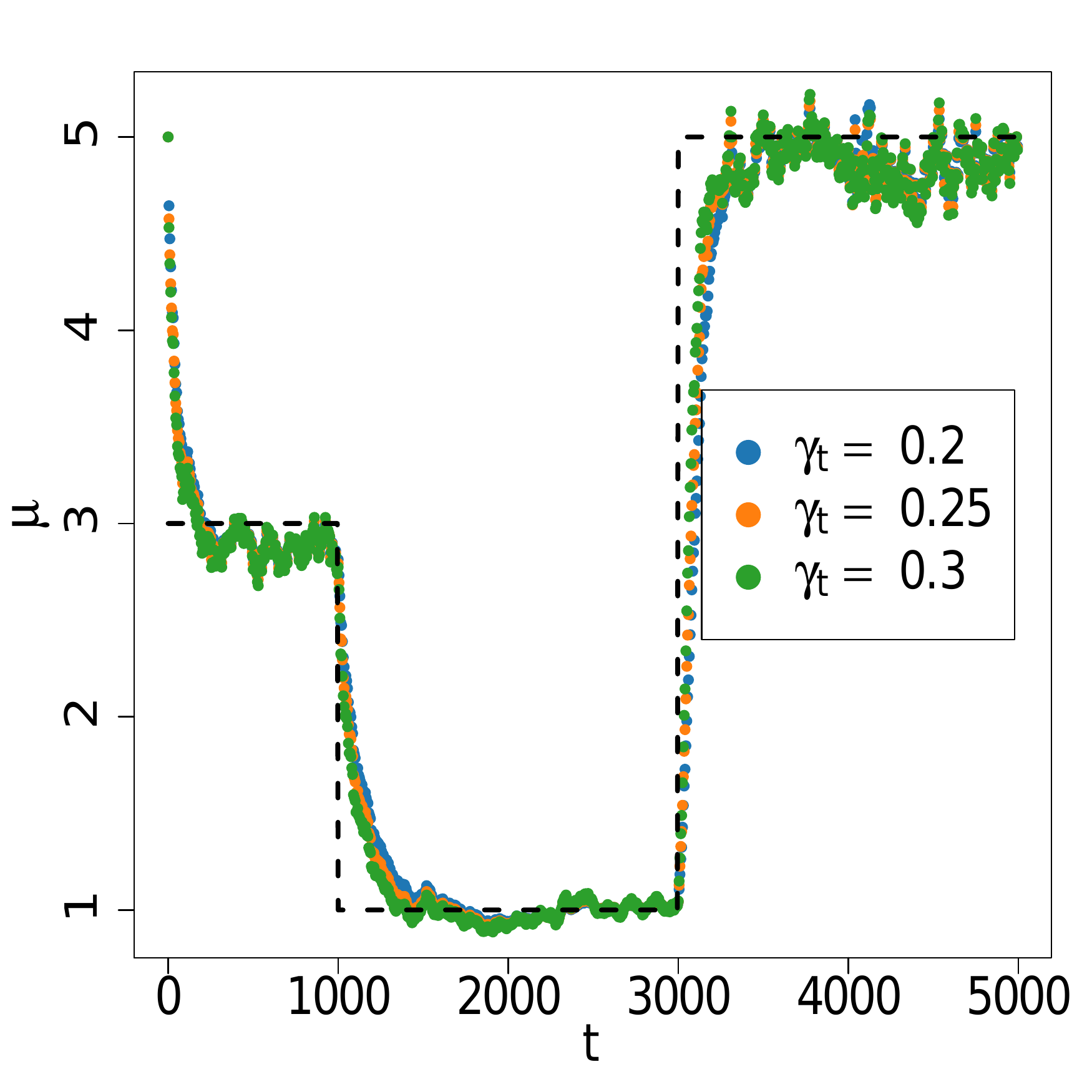}}
  \hspace{6mm}
  \subfloat[Optimal sensor placements.]{\label{fig_3b}\includegraphics[trim = 0 0 0 1cm,width=0.38\textwidth,clip]{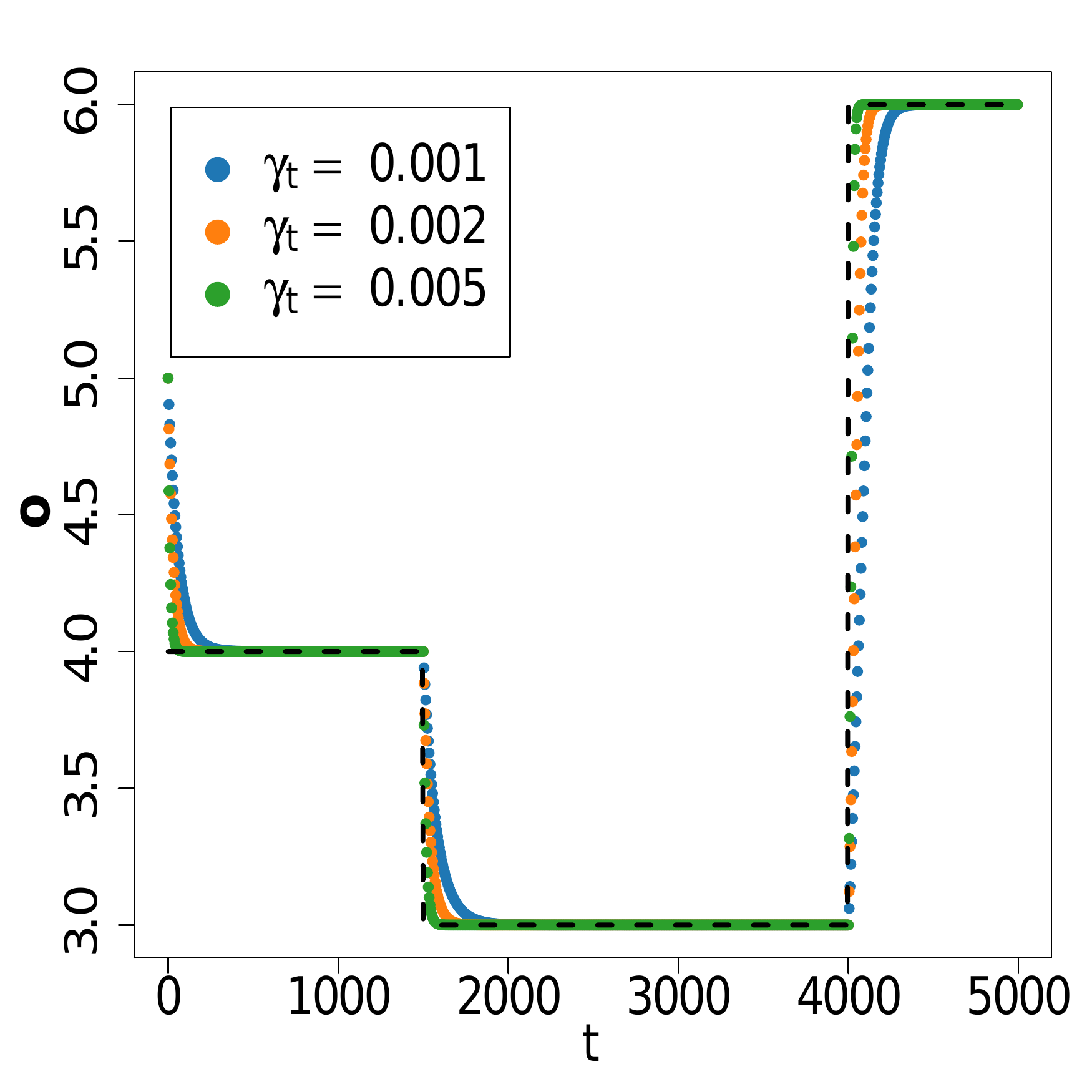}}
  \vspace{-2mm}
\caption{Sequence of online parameter estimates \& optimal sensor placements for the partially observed Bene\v{s} SDE, in the case of a time-varying parameter and optimal sensor location.}
\label{fig3}
\end{figure}

\subsection{Stochastic Advection-Diffusion Equation} \label{sec:advection}
In this section, we present results for a high-dimensional, partially observed linear diffusion process obtained via a Galerkin discretisation of the stochastic advection-diffusion equation on the two-dimensional unit torus $\mathcal{H} = [0,1]^2$, %\footnote{For details, we refer to the sequel paper [].} 
namely, %. We will begin with some preliminaries.
%Let $\Omega = [0,1]^2$ be the two-dimensional torus, and let $\boldsymbol{s}=(s_1,s_2)^T$ denote a point in this space. In addition, let $\mathcal{\mathcal{X}} = \mathcal{W} = L_2^{\text{per.}}(\Omega)$ be the space of periodic square integrable functions on $\Omega$, and let $\mathcal{Y}=\mathbb{R}^{n_y}$.  We then consider a partially observed infinite dimensional diffusion process of the form
\begin{subequations}
\begin{alignat}{2}
\mathrm{d}x(t) &= {A}(\theta,x(t))\mathrm{d}t + \mathrm{d}w(t)~,~~~&&x(0)=x_0\in\mathcal{H}, \label{adv_diff_1} \\
\mathrm{d}y(t) &= C(\boldsymbol{o},x(t))\mathrm{d}t + \mathrm{d}v(t)~,~~~&&y(0)=0,\label{adv_diff_2}
\end{alignat}
\end{subequations}
where ${A}(\theta,\cdot):\mathcal{H} \rightarrow\mathcal{H}$ is the `advection-diffusion operator',  defined according to
\begin{align}
\mathcal{A}(\theta,x)&=-\boldsymbol{\mu}^T\nabla x +\nabla\cdot\Sigma\hspace{1mm}\nabla x -\zeta x,
\end{align}
 %where $\nabla = (\partial/\partial x, \partial/\partial y)^T$ is the two dimensional gradient operator, 
 with $\boldsymbol{\mu}=(\mu_1,\mu_2)^T$ $\in\mathbb{R}^2$ a drift parameter, $\zeta\in\mathbb{R}_{+}$ a damping parameter, and $\Sigma=\Sigma(\rho_1,\gamma,\alpha)\in\mathbb{R}^{2\times2}$ a diffusion matrix. Following \cite{Sigrist2015},
 %Liu2016
  we parametrise $\Sigma$ as
\begin{equation}
\Sigma^{-1} = \frac{1}{\rho_1^2}\left(\begin{array}{cc} \cos{\alpha} & \sin{\alpha} \\ -\gamma\sin{\alpha} & \cos{\alpha} \end{array}\right)^T\left(\begin{array}{cc} \cos{\alpha} & \sin{\alpha} \\ -\gamma\sin{\alpha} & \cos{\alpha} \end{array}\right).
\end{equation} 
for a range parameter $\rho_1\in\mathbb{R}_{+}$, an anisotropic amplitude $\gamma\in\mathbb{R}_{+}$, and an anisotropic direction $\alpha\in[0,\frac{\pi}{2}]$. 
Meanwhile, the signal noise process $w = \{w(t)\}_{t\geq 0}$ is a $\mathcal{H}$-valued Wiener process with incremental covariance operator $Q(\theta):\mathcal{H}\rightarrow\mathcal{H}$. As in \cite{Sigrist2015}, 
% ,Sigrist2015
we assume that this covariance operator is diagonal with respect to the Fourier basis $\{\phi_{\boldsymbol{k}}(\boldsymbol{s})\}_{\boldsymbol{k}\in\mathbb{Z}^2/\{\boldsymbol{0}\}}$, where $\smash{\phi_{\boldsymbol{k}}(\mathbf{s})=\exp(2\pi i \boldsymbol{k}^T\mathbf{s})}$, and defined according to
\begin{equation}
Q(\theta)\phi_{\boldsymbol{k}}(\boldsymbol{s}) = \eta_{\boldsymbol{k}}^2(\theta)\phi_{\boldsymbol{k}}(\boldsymbol{s})~,~~~ \eta^2_{\boldsymbol{k}}(\theta) = \frac{\sigma^2}{(2\pi)^2}\left(\boldsymbol{k}^T\boldsymbol{k}+\frac{1}{\rho_0^2}\right)^{-2},
\end{equation}
where $\sigma_0\in\mathbb{R}_{+}$ is a marginal variance parameter, and $\rho\in\mathbb{R}_{+}$ is a spatial range parameter. This defines a Wiener process with the so-called Mat\'ern covariance function in space (e.g., \cite{Whittle1954}). In the measurement equation, the `observation operator' $C(\boldsymbol{o},\cdot):\mathcal{H}\rightarrow\mathbb{R}^{n_y}$ is defined according to
\begin{align}
C(\boldsymbol{o},x)&= (C_1(\boldsymbol{o},x),\dots,C_{n_y}(\boldsymbol{o},x))^T~~~,~~~C_i(\boldsymbol{o},x) = \frac{\int_{\Omega} K_{\boldsymbol{o}_i}(\boldsymbol{s})x(\boldsymbol{s})\mathrm{d}\boldsymbol{s}}{\int_{\Omega} K_{\boldsymbol{o}_i}(\boldsymbol{s})\mathrm{d}s},
\end{align}
where %$K_{\boldsymbol{o}_i}\in L_2(\mathcal{X})$, $i=1,\dots,n_y$, are some appropriately chosen weighting functions which decay as $|\boldsymbol{s}-\boldsymbol{o}_i|$ increases. In particular, we will set 
$K_{\boldsymbol{o}_i}(\boldsymbol{s}) = \mathds{1}_{\{|\boldsymbol{o}_i-\boldsymbol{s}|<r\}}(\boldsymbol{s})$, for some fixed radius $r>0$. We thus have $n_y$ independent sensors, with each sensor capable of measuring an average value of the signal process within a fixed range of its current location. Finally, the observation noise $v=\{v(t)\}_{t\geq0}$ is a $\mathbb{R}^{n_y}$-valued Wiener process with incremental covariance $R(\theta)
%\in\mathbb{R}^{n_y\times n_y}$. We assume that the $n_y$ sensors have independent and identically distributed noise, with variance $\tau^2\in\mathbb{R}_{+}$, in which case $R(\theta) 
= \tau^2 I_{n_y}$. 

In summary, we consider a partially observed linear diffusion process, which depends on a nine-dimensional parameter vector $\smash{\theta = (\rho_0,\sigma^2,\zeta,\rho_1,\gamma,\alpha,\mu_x,\mu_y,\tau^2)}$, and a set of $n_y$ two-dimensional sensor locations $\smash{\boldsymbol{o}=\{\boldsymbol{o}_i\}_{i=1}^{n_y}}$. The model of interest is linear, and thus the filtering problem admits an analytic, finite-dimensional solution; namely, the Kalman-Bucy Filter (e.g., \cite{Kalman1961}). 
%Kushner1970
% It is thus possible to implement the two-timescale stochastic gradient descent algorithm proposed in Section \ref{sec:RML_ROSP}. 
In particular, assuming a Gaussian initialisation $\smash{x_0\sim N(\hat{x}_0,\hat{\Sigma}_0)}$, the normalised conditional distribution of the latent signal process is Gaussian, and determined uniquely by its mean $\hat{x}(\theta,\boldsymbol{o},t)$ and covariance $\hat{\Sigma}(\theta,\boldsymbol{o},t)$. We should remark that, while the filtering problem for this system is now very well understood, the problem of joint online parameter estimation and optimal sensor placement still poses a significant challenge. Indeed, although the model may be linear, the dependence of the log-likelihood on the model parameters, and of the objective function on the sensor placements, is highly non-linear.

The performance of two-timescale stochastic gradient descent algorithm is visualised in Figure \ref{fig4}. As expected, all of the parameter estimates converge to within a small neighbourhood of their true values, and all of the sensors converge to one of the target locations. In this simulation, we assume that the true model parameters and the initial parameter estimates are given, respectively, by
\begin{subequations}
\begin{align}
{{\theta}}^{*} &= (\rho_0 = 0.50,\sigma^2 = 0.20,\zeta=0.50,\rho_1=0.10,\gamma=2.00,\\
&~~~~~~\alpha=\tfrac{\pi}{4},\mu_x=0.30,\mu_y=-0.30,\tau^2 = 0.01), \label{true_model_param} \nonumber \\[1mm]
{{\theta}}_0 &= ({\rho}_{0} = 0.25,{\sigma}^2 = 0.50,{\zeta}=0.20,{\rho}_{1}=0.20,{\gamma}=1.50,\\
&~~~~~~{\alpha}=\tfrac{\pi}{3},{\mu}_{x}=0.10,{\mu}_{y}=-0.15,{\tau}^2 = 0.10).  \nonumber
\end{align}
\end{subequations}
We also assume that we have $n_y=8$ sensors, and that our objective is to obtain the sensor placement which minimises the uncertainty of the state estimate at a discrete set of 8 spatial locations. In particular, we assume that the target sensor locations and the initial sensor locations are given, respectively, by 
\begin{subequations}
\begin{align}
{\boldsymbol{o}}_{*} &= \frac{1}{12}\left\{\begin{pmatrix} 0.00 \\ 7.00\end{pmatrix},\begin{pmatrix} 6.00 \\ 8.00\end{pmatrix},\begin{pmatrix} 4.00 \\ 4.00 \end{pmatrix}, \begin{pmatrix} 9.00 \\ 6.00 \end{pmatrix}, \begin{pmatrix} 1.00 \\ 1.00 \end{pmatrix}, \begin{pmatrix} 7.00 \\ 10.0 \end{pmatrix}, \begin{pmatrix} 10.0 \\ 11.0\end{pmatrix}, \begin{pmatrix} 3.00 \\ 10.0 \end{pmatrix}\right\},  \nonumber \\[2mm]
{\boldsymbol{o}}_0 &= \frac{1}{12}\left\{\begin{pmatrix} 10.1 \\ 7.80\end{pmatrix},\begin{pmatrix} 4.10 \\ 6.01\end{pmatrix},\begin{pmatrix} 5.20 \\ 3.75 \end{pmatrix}, \begin{pmatrix} 7.20 \\ 4.02 \end{pmatrix}, \begin{pmatrix} 3.20 \\ 3.10 \end{pmatrix}, \begin{pmatrix} 6.10 \\ 2.10 \end{pmatrix}, \begin{pmatrix} 1.01 \\ 2.80\end{pmatrix}, \begin{pmatrix} 3.00 \\ 1.00 \end{pmatrix}\right\}.  \nonumber
\end{align}
\end{subequations}
It remains to specify the learning rates $\{\gamma^{i}_{\theta}(t)\}_{t\geq 0}^{i=1,\dots,9}$ and $\{\gamma^{j}_{\boldsymbol{o}}(t)\}^{j=1,\dots,8}_{t\geq 0}$, where the indices $i,j$ now make explicit the fact that the step sizes are permitted to vary between parameters, and between sensors. In this case, we set $\smash{\gamma_{\theta}^{i}(t) = \gamma^{i}_{\theta,0}t^{-\varepsilon^i_{\theta}}}$ and $\smash{\gamma_{\boldsymbol{o}}^{j}(t) = \gamma^{j}_{\theta,0}t^{-\varepsilon^j_{\boldsymbol{o}}}}$, where $\gamma^{i}_{\theta,0},\gamma^{j}_{\boldsymbol{o},0}>0$ and $\smash{0.5<\varepsilon_{\boldsymbol{o}}^j<\varepsilon_{\theta}^{i}\leq1}$ for all $i=1,\dots,9$ and $j=1,\dots,8$, with the specific values of $\smash{\gamma^{i}_{\theta,0}}$, $\smash{\varepsilon_{\theta}^i}$, $\smash{\gamma^{j}_{\boldsymbol{o},0}}$, and $\smash{\varepsilon_{\boldsymbol{o}}^{j}}$ tuned individually. We defer further details of our implementation, and additional numerical results, to \cite{Sharrock2020}.

\begin{figure}[tbhp]
\centering
  \subfloat[Sequence of online parameter estimates.]{\label{fig_4a}\includegraphics[width=0.49\textwidth]{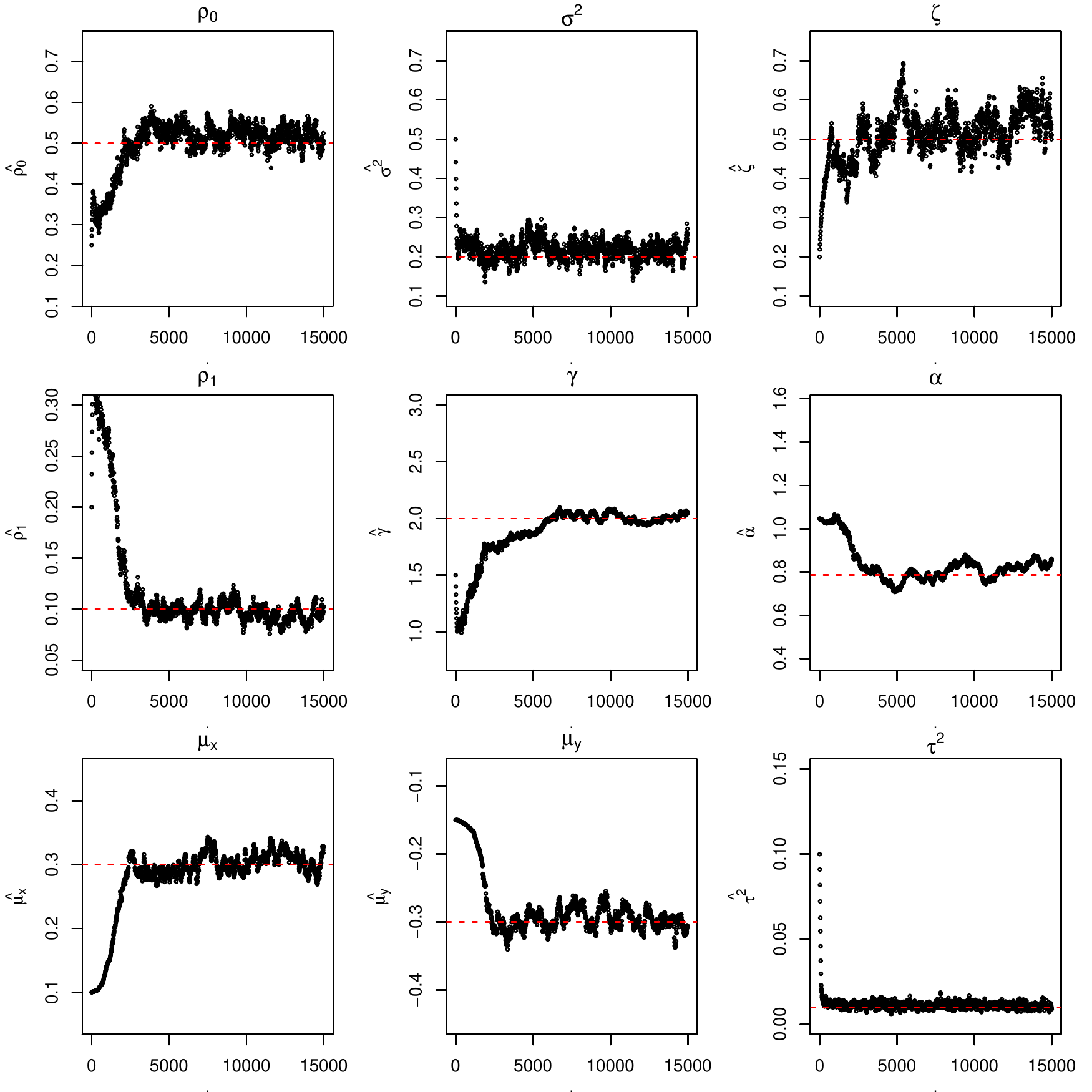}}
  \hspace{1mm}
  \subfloat[Sequence of optimal sensor placements.]{\label{fig_4b}\includegraphics[width=0.49\textwidth]{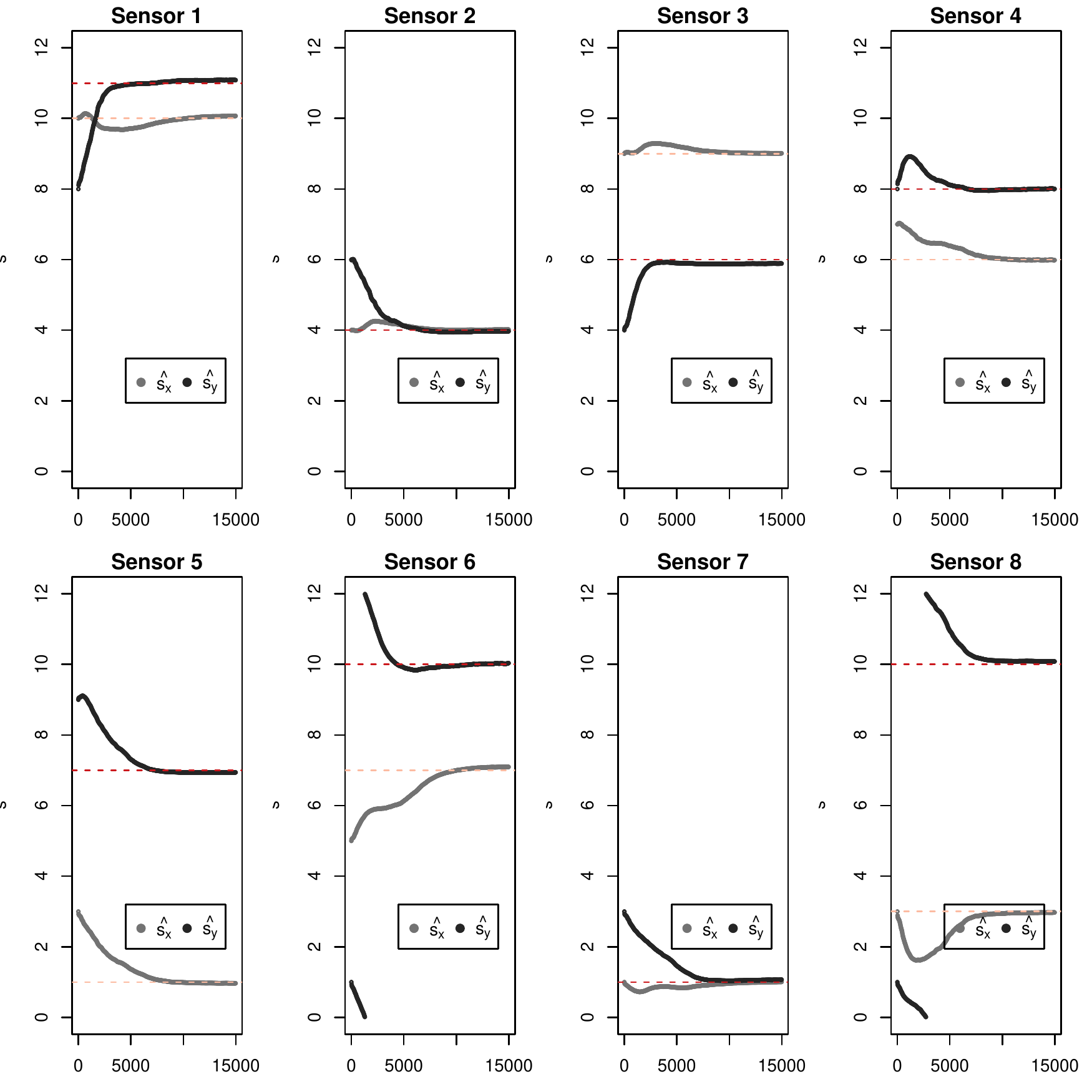}}
\caption{Sequence of online parameter estimates \& optimal sensor placements for the partially observed stochastic-advection diffusion equation.}
\label{fig4}
\end{figure}

\section{Conclusions}
\label{sec:conclusions}
In this paper, we have analysed the almost sure convergence of two-timescale stochastic gradient algorithms in continuous time, under general noise and stability conditions. Moreover, we have demonstrated in detail how such algorithms can be applied to the problem of joint online parameter estimation and optimal sensor placement in continuous-time state space models. Although we focus on this specific application, it is important to emphasise that the proposed methodology is applicable to any problem involving two inter-dependent objective functions, either or both of which may depend on an ergodic diffusion process.

We conclude with some remarks regarding some possible directions for future work. Firstly, there are a number of extensions to Theorems \ref{theorem1} and \ref{theorem1a} which may be of theoretical or practical interest. These include relaxing the assumption that the algorithm iterates are continuous, and thus considering variations of algorithm \eqref{eq1} - \eqref{eq2} in which 
%$\mathrm{d}t$ is replaced by $da_{i}(t)$, 
%
%\begin{subequations}
%\begin{align}
%\mathrm{d}\alpha(t) = -\gamma_{1}(t)\left[\nabla_{\alpha} f(\alpha(t),\beta(t))\mathrm{d}a_1(t) + \mathrm{d}\xi_1(t)\right],~~~t\geq0,~~~\alpha(0)=\alpha_0, \label{eq1_extension} \\
%\mathrm{d}\beta(t) = -\gamma_{2}(t)\left[\nabla_{\beta} g(\alpha(t),\beta(t))\mathrm{d}a_2(t) + \mathrm{d}\xi_2(t)\right],~~~t\geq0,~~~\beta(0)=\beta_0, \label{eq2_extension}
%\end{align}
%\end{subequations}
%
%where $\{a_i(t)\}_{t\geq 0}$, $i=1,2$, are predictable, increasing processes, and 
$\{\xi_i(t)\}_{t\geq 0}$, $i=1,2$, are now general semi-martingales. This algorithm can be regarded, in some sense, as as a two-timescale extension of the Robbins-Monro type semimartingale SDEs studied in, for example, \cite{Lazrieva2008,Valkeila2000}.
%\cite{Lazrieva1997,Lazrieva2008,Lazrieva2003,Lazrieva2010a,Lazrieva2010,Lazrieva2017,Valkeila2000}. 
%Melnikov1996
Obtaining asymptotic results under this somewhat more general framework is of considerable interest, as such results would apply to two-timescale stochastic gradient descent schemes in both discrete time and continuous time. Other possible extensions to our results include an analysis of the asymptotic convergence rate as discussed in Section \ref{subsec:main1} (e.g., \cite{Konda2004,Mokkadem2006}), or the `lock-in probability' (e.g., \cite{Borkar2018,Dalal2020,Xu2019}). 
%,Dalal2018

Another open problem is to obtain sufficient conditions for some of the assumptions required for Theorems \ref{theorem1} and \ref{theorem1a}. Among these is the somewhat restrictive assumption that the algorithm iterates remain almost surely bounded. While this assumption is necessary in order to prove almost sure convergence, it is generally far from automatic, and not very straightforward to establish, particularly in the two-timescale case. %In the two-timescale setting, obtaining sufficient conditions for this assumption remains an open problem. 
%While methods for verifying stability are now relatively well understood in the single-timescale case (e.g., \cite{Benveniste1990,Borkar2000,Kushner1997}), there is only one existing result \cite{Lakshminarayanan2017} along these lines in the two-timescale case. 
Currently, the most promising approaches to this task appear to be extensions of of the randomly varying truncations method in \cite{Chen1987}, the stopping-times approach in \cite{Bertsekas2000,Sirignano2017a}, or the recent results in \cite{Mertikopoulos2020}, to the two-timescale setting.  

Regarding the algorithm considered in Theorem \ref{theorem2}, the main open problem is to obtain conditions on the generative model (i.e., the partially observed diffusion process) which are easy to verify, sufficient for convergence, and not overly restrictive (see the discussion in Section \ref{sec_joint_RML_OSP}). This problem is particularly challenging when the filter is approximate. In this case, even if the latent signal is ergodic, there is no guarantee that the filter is ergodic, let alone the tangent filter. We leave this problem, as well as further numerical experiments investigating the performance of this algorithm for more elaborate continuous time particle filters (e.g., \cite[Chapter 9]{Bain2009}), as future work.

 %There are also a number of interesting extensions to this algorithm. %These include allowing for the possibility of mobile sensors, whose motion may be governed by a set of controlled differential equations (e.g., \cite{Burns2015}). 

\section*{Acknowledgments}
\label{sec:acknowledgments}
We are very grateful to D. Crisan and A. Forbes for many helpful discussions and suggestions. The first author was funded by the EPRSC CDT in the Mathematics of Planet Earth (grant number EP/L016613/1) and the National Physical Laboratory. The second author was partially funded by JPMorgan Chase \& Co. under a J.P. Morgan A.I. Research Award (2019). %Any views or opinions expresses herein are solely those of the authors listed, and may differ from the views and opinions expressed by JPMorgan Chase \& Co. or its affiliates. %This material is not a product of the Research Department of J.P. Morgan Securities LLC. This material does not constitute a solicitation or offer in any jurisdiction.

%\pagebreak
\appendix

\section{Proof of Theorem \ref{theorem1}} 
\label{sec:proof1}
In this Appendix, we provide a proof of Theorem \ref{theorem1}. Our proof follows the approach in \cite[Chapter~6]{Borkar2008}, adapted appropriately to the continuous-time setting.

\subsection{Additional Notation}

We will require the following additional notation. Firstly, in a slight abuse of notation, we will write $(x_1, x_2)$ to denote the concatenation of $x_1\in\mathbb{R}^{d_1}$ and $x_2\in\mathbb{R}^{d_2}$. We will also write $\{q_i(t)\}_{t\geq 0}$, $\{p_i(t)\}_{t\geq 0}$, $i=1,2$, to denote the processes
\begin{subequations}
\begin{align}
q_i(t) &= \int_0^t \gamma_i(s)\mathrm{d}s \\
p_i(t)&=\left\{s: \int_0^s\gamma_i(v)\mathrm{d}v = t\right\}= q_i^{-1}(t).
\end{align}
\end{subequations}
We then define the time-scaled processes $\{\alpha_{\gamma_i}(t)\}_{t\geq 0}$, $\{\beta_{\gamma_i}(t)\}_{t\geq 0}$, $i=1,2$, by
\begin{subequations}
\begin{align}
\alpha_{\gamma_i}(t) &= \alpha(p_i(t)), \\
\beta_{\gamma_i}(t) &= \beta(p_i(t)).
\end{align}
\end{subequations}

\subsection{The Fast Timescale}
\subsubsection{Additional Notation}
We will write $\{\bar{\alpha}(t)\}_{t\geq 0}$, $\{\bar{\beta}(t)\}_{t\geq 0}$ to denote the solutions of the coupled ordinary differential equations
\begin{subequations}
\begin{alignat}{2}
\dot{\bar{\alpha}}(t) &= 0, \label{eqA1} \\
\dot{\bar{\beta}}(t)&= -\nabla_{\beta}g(\bar{\alpha}(t),\bar{\beta}(t)). \label{eqA2}
\end{alignat}
\end{subequations}
We can then define $\{\bar{\alpha}^{(s)}(t)\}_{0\leq s\leq t}$, $\{\bar{\beta}^{(s)}(t)\}_{0\leq s \leq t}$, as the unique solutions of equations (\ref{eqA1})-(\ref{eqA2}) which `start at $s$', and coincide with the time-scaled processes $\{\alpha_{\gamma_2}(t)\}_{t\geq 0}$, $\{\beta_{\gamma_2}(t)\}_{t\geq 0}$, at $s$. That is, 
\begin{subequations}
\begin{alignat}{3}
\dot{\bar{\alpha}}^{(s)}(t) &= 0~,~~&&\bar{\alpha}^{(s)}(s)= {\alpha}_{\gamma_2}(s),~~~&&t\geq s,  \\
\dot{\bar{\beta}}^{(s)}(t)&= -\nabla_{\beta}g(\bar{\alpha}^s(t),\bar{\beta}^{(s)}(t))~,~~&&\bar{\beta}^{(s)}(s)= {\beta}_{\gamma_2}(s),~~~&&t\geq s.
\end{alignat}
\end{subequations}
We can similarly define $\{\bar{\alpha}^{[s]}(t)\}_{0\leq t\leq s}$, $\{\bar{\beta}^{[s]}(t)\}_{0\leq t\leq s}$, as the unique solutions of equations (\ref{eqA1})-(\ref{eqA2}) which `end at $s$', and coincide with the time-scaled processes $\{\alpha_2(t)\}_{t\geq 0}$, $\{\beta_2(t)\}_{t\geq 0}$, at $s$. That is, 
\begin{subequations}
\begin{alignat}{3}
\dot{\bar{\alpha}}^{[s]}(t) &= 0~,~~&&\bar{\alpha}^{[s]}(s)= \alpha_{\gamma_2}(s),~~~&&t\leq s,  \\
\dot{\bar{\beta}}^{[s]}(t)&= -\nabla_{\beta}g(\bar{\alpha}^{[s]}(t),\bar{\beta}^{[s]}(t))~,~~&&\bar{\beta}^{[s]}(s)= \beta_{\gamma_2}(s),~~~&&t\leq s. \\ \nonumber
\end{alignat}
\end{subequations}

\subsubsection{Proof of Convergence} 
We first establish, using the processes just defined, that the time-scaled process $(\alpha_{\gamma_2}(t),\beta_{\gamma_2}(t))$ is an asymptotic pseudo-trajectory (APT) of the flow induced by the coupled ODEs \eqref{eqA1} - \eqref{eqA2}.   Broadly speaking, this means that $(\alpha_{\gamma_2}(t),\beta_{\gamma_2}(t))$ tracks the flow induced by these coupled ODEs with arbitrary accuracy over windows of arbitrary length as time goes to infinity. This provides a notion of ``asymptotic closeness'' between the paths  generated by Algorithm \eqref{eq1} - \eqref{eq2}, and the flow of the coupled ODEs. The motivation for this comparison is that, provided the trajectories generated by Algorithm \eqref{eq1} - \eqref{eq2} are ``good enough'' approximations to the solutions of the coupled ODEs, one can expect that the two sets of equations will enjoy similar convergence properties. For further details, we refer to \cite{Benaim1996,Benaim1999}.

\begin{lemma} \label{lemmaA1}
Assume that Assumptions \ref{assumption1}-\ref{assumption4} hold. Then, for all $T>0$,
\begin{subequations}
\begin{align}
\lim_{s\rightarrow\infty} \sup_{t\in[s,s+T]}\left|\left|\begin{pmatrix} \alpha_{\gamma_2}(t) \\ \beta_{\gamma_2}(t) \end{pmatrix} - \begin{pmatrix} \bar{\alpha}^{(s)}(t) \\ \bar{\beta}^{(s)}(t) \end{pmatrix} \right|\right| &= 0~,~~\text{ a.s.} \\
\lim_{s\rightarrow\infty} \sup_{t\in[s-T,s]}\left|\left|\begin{pmatrix} \alpha_{\gamma_2}(t) \\ \beta_{\gamma_2}(t) \end{pmatrix} - \begin{pmatrix} \bar{\alpha}^{[s]}(t) \\ \bar{\beta}^{[s]}(t) \end{pmatrix} \right|\right| &= 0~,~~\text{ a.s.}
\end{align}
\end{subequations}
\end{lemma}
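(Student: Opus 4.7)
My approach follows the ODE method: I would rewrite Algorithm~\eqref{eq1}--\eqref{eq2} on the $\gamma_2$-rescaled timescale and show that the rescaled iterates $(\alpha_{\gamma_2}, \beta_{\gamma_2})$ are an asymptotic pseudo-trajectory of the flow \eqref{eqA1}--\eqref{eqA2}. Integrating the two SDEs and making the change of variables $u = q_2(v)$ yields, for any $0 \le s \le t$,
\begin{align*}
\alpha_{\gamma_2}(t) - \alpha_{\gamma_2}(s) &= -\int_s^t \frac{\gamma_1(p_2(u))}{\gamma_2(p_2(u))}\, \nabla_\alpha f(\alpha_{\gamma_2}(u), \beta_{\gamma_2}(u))\, \mathrm{d}u \\
&\qquad - \bigl[N_1(p_2(t)) - N_1(p_2(s))\bigr], \\
\beta_{\gamma_2}(t) - \beta_{\gamma_2}(s) &= -\int_s^t \nabla_\beta g(\alpha_{\gamma_2}(u), \beta_{\gamma_2}(u))\, \mathrm{d}u \\
&\qquad - \bigl[N_2(p_2(t)) - N_2(p_2(s))\bigr],
\end{align*}
where $N_i(t) := \int_0^t \gamma_i(v)\, \mathrm{d}\xi_i(v)$. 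Crucially, the drift in the $\alpha_{\gamma_2}$ equation carries the vanishing factor $\gamma_1/\gamma_2$ (Assumption~\ref{assumption1}), while the drift in the $\beta_{\gamma_2}$ equation is precisely the vector field of the limit ODE~\eqref{eqA2}.

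First, I would establish that $\sup_{t \in [s, s+T]} \|\alpha_{\gamma_2}(t) - \alpha_{\gamma_2}(s)\| \to 0$ almost surely as $s \to \infty$. By Assumption~\ref{assumption4} the trajectory is almost surely bounded, so Assumption~\ref{assumption2} furnishes an a.s.\ finite uniform bound $K$ on $\|\nabla_\alpha f\|$ along the sample path. The drift contribution is then bounded by $K T \sup_{u \in [s, s+T]} \gamma_1(p_2(u))/\gamma_2(p_2(u))$, which vanishes by Assumption~\ref{assumption1}, and the noise contribution $\sup_{t \in [s,s+T]} \|N_1(p_2(t)) - N_1(p_2(s))\|$ vanishes by Assumption~\ref{assumption3}. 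An identical argument yields $\sup_{t \in [s,s+T]} \|N_2(p_2(t)) - N_2(p_2(s))\| \to 0$ a.s.

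Since $\bar{\alpha}^{(s)}(t) \equiv \alpha_{\gamma_2}(s)$ on $[s, s+T]$ by definition, the conclusion for the $\alpha$-component is immediate. For the $\beta$-component, I would subtract the integral representation of $\bar{\beta}^{(s)}$ from that of $\beta_{\gamma_2}$ and apply Gr\"onwall's lemma. Assumption~\ref{assumption4} together with Assumption~\ref{assumption2} yields an almost surely finite Lipschitz constant $L$ for $\nabla_\beta g$ on the bounded trajectory set, giving
$$\|\beta_{\gamma_2}(t) - \bar{\beta}^{(s)}(t)\| \le L \int_s^t \|\beta_{\gamma_2}(u) - \bar{\beta}^{(s)}(u)\|\, \mathrm{d}u + \varepsilon(s),$$
where $\varepsilon(s) := L T \sup_{u \in [s, s+T]} \|\alpha_{\gamma_2}(u) - \alpha_{\gamma_2}(s)\| + \sup_{t \in [s, s+T]} \|N_2(p_2(t)) - N_2(p_2(s))\| \to 0$ a.s. Gr\"onwall then gives $\sup_{t \in [s, s+T]} \|\beta_{\gamma_2}(t) - \bar{\beta}^{(s)}(t)\| \le \varepsilon(s)\, e^{LT} \to 0$ a.s. The second claim, for trajectories ending at $s$, follows by an entirely symmetric argument with backward integration replacing forward integration, using $\bar{\alpha}^{[s]}(t) \equiv \alpha_{\gamma_2}(s)$ on $[s-T, s]$.

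The main obstacle I anticipate is the control of the noise terms $\sup_{t \in [s, s+T]} \|N_i(p_2(t)) - N_i(p_2(s))\|$, since the corresponding real-time window $[p_2(s), p_2(s+T)]$ has length tending to infinity as $s \to \infty$ (because $\gamma_2(t) \to 0$). A literal application of Assumption~\ref{assumption3} gives oscillation control only over windows of fixed length in real time, so this step will need to exploit the semi-martingale structure of $\xi_i$ --- for instance, via a Cauchy-type argument ensuring that $N_i$ converges almost surely to a finite limit, after which oscillation over any forward window starting at $p_2(s)$ vanishes automatically. Once this noise estimate is in hand, the remainder of the argument is standard and follows the template of~\cite[Chapter~6]{Borkar2008}, adapted to continuous time.
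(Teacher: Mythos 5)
Your proposal is structurally the same as the paper's proof of Lemma \ref{lemmaA1}: the identical change to the $\gamma_2$-timescale, the same splitting into a drift term killed by the vanishing ratio $\gamma_1(p_2(u))/\gamma_2(p_2(u))$ and a noise increment, boundedness of the gradients via Assumptions \ref{assumption2} and \ref{assumption4}, and the same Gr\"onwall closing step (the paper runs Gr\"onwall on the concatenated norm rather than isolating an $\varepsilon(s)$ first, which is cosmetic; the backward-in-time claim is likewise dispatched symmetrically in both).

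The only point of divergence is the noise estimate, and there your diagnosis is sharper than your cure. The paper disposes of $\sup_{t\in[s,s+T]}\|\int_{p_2(s)}^{p_2(t)}\gamma_i(u)\,\mathrm{d}\xi_i(u)\|$ by the reduction \eqref{A26}--\eqref{A29}, substituting back to real time and invoking Assumption \ref{assumption3} over a window of length $\tau(T)$. You are right that the real-time window $[p_2(s),p_2(s+T)]$ has length tending to infinity, so a literal application of Assumption \ref{assumption3} over fixed-length windows does not suffice. But the repair you sketch --- deducing that $N_i(t)=\int_0^t\gamma_i(v)\,\mathrm{d}\xi_i(v)$ converges a.s.\ and hence has vanishing tail oscillation --- does not follow from Assumption \ref{assumption3} either: controlling the oscillation over every window of a fixed length does not imply the Cauchy property (a path whose increments behave like $\log\log(t)-\log\log(s)$ satisfies the fixed-window condition for every $T$ yet diverges). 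To close this step one needs either the stronger condition $\lim_{s\to\infty}\sup_{t\geq s}\|\int_s^t\gamma_i(v)\,\mathrm{d}\xi_i(v)\|=0$, i.e.\ a.s.\ convergence of $N_i$ --- which is exactly what is verified in the Markovian setting via Doob's martingale convergence theorem in the proof of Lemma \ref{lemmaB1} --- or a version of Assumption \ref{assumption3} posed over windows of fixed length in the rescaled time. With that caveat, which applies equally to the paper's own write-up of this step, the rest of your argument goes through.
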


\begin{proof}
We will prove only the first part of this Lemma, as the method for proving the second part is entirely analogous. We will begin by considering $\{\alpha(t)\}_{0\leq s\leq t}$. By definition, we have 
\begin{align}
\alpha(t)&=\alpha(s)  -\int_s^t\gamma_{1}(u)\nabla_{\alpha} f(\alpha(u),\beta(u))\mathrm{d}u -\int_s^t\gamma_1(u) \mathrm{d}\xi_1(u) \\
&= \alpha(s)  -\int_s^t \frac{\gamma_{1}(u)}{\gamma_2(u)}\gamma_2(u)\nabla_{\alpha} f(\alpha(u),\beta(u))\mathrm{d}u -\int_s^t\gamma_1(u) \mathrm{d}\xi_1(u)
\end{align}  
It follows immediately from the definition of $\{\alpha_{\gamma_2}(t)\}_{0\leq s\leq t}$ that 
\begin{align}
\alpha_{\gamma_2}(t) &= \alpha_{\gamma_2}(s)  -\int_{p_2(s)}^{p_2(t)} \frac{\gamma_{1}(u)}{\gamma_2(u)}\gamma_2(u)\nabla_{\alpha} f(\alpha(u),\beta(u))\mathrm{d}u -\int_{p_2(s)}^{p_2(t)}\gamma_1(u) \mathrm{d}\xi_1(u) \\
&=\alpha_{\gamma_2}(s) -\int_{s}^{t} \frac{\gamma_{1}(p_2(u))}{\gamma_2(p_2(u))}\nabla_{\alpha} f(\alpha_{\gamma_2}(u),\beta_{\gamma_2}(u))\mathrm{d}u -\int_{p_2(s)}^{p_2(t)}\gamma_1(u) \mathrm{d}\xi_1(u). \label{A17}
\end{align}  
We also have, making use of the ODE for $\{\bar{\alpha}^{(s)}(t)\}_{0\leq s\leq t}$, that
\begin{align}
\bar{\alpha}^{(s)}(t) = \alpha_{\gamma_2}(s). \label{A18}
\end{align}
It follows straightforwardly from equations (\ref{A17}), (\ref{A18}) that 
\begin{align}
||\alpha_{\gamma_2}(t)-\bar{\alpha}^{(s)}(t)|| = \left|\left|-\int_{s}^{t} \frac{\gamma_{1}(p_2(u))}{\gamma_2(p_2(u))}\nabla_{\alpha} f(\alpha_{\gamma_2}(u),\beta_{\gamma_2}(u))\mathrm{d}u -\int_{p_2(s)}^{p_2(t)}\gamma_1(u) \mathrm{d}\xi_1(u)\right|\right| \\
\leq \underbrace{\left|\left|\int_{s}^{t} \frac{\gamma_{1}(p_2(u))}{\gamma_2(p_2(u))}\nabla_{\alpha} f(\alpha_{\gamma_2}(u),\beta_{\gamma_2}(u))\mathrm{d}u\right|\right|}_{\Omega_{1,\alpha}(s,t)}+\underbrace{\left|\left|\int_{p_2(s)}^{p_2(t)}\gamma_1(u) \mathrm{d}\xi_1(u)\right|\right|}_{\Omega_{2,\alpha}(s,t)} \label{A20}
\end{align}
For the first term, by Assumptions \ref{assumption2} and \ref{assumption4}, which together imply the boundedness of $||\nabla_{\alpha} f(\cdot,\cdot)||$, we have that for all $T>0$, 
\begin{align}
\sup_{t\in[s,s+T]}\Omega_{1,\alpha}(s,t) &= \sup_{t\in[s,s+T]}\left|\left|\int_{s}^{t} \frac{\gamma_{1}(p_2(u))}{\gamma_2(p_2(u))}\nabla_{\alpha} f(\alpha_{\gamma_2}(u),\beta_{\gamma_2}(u))\mathrm{d}u\right|\right| \\
&\leq  \sup_{t\in[s,s+T]} \left|\left|\nabla_{\alpha} f(\alpha_{\gamma_2}(t),\beta_{\gamma_2}(t))\right|\right|   \int_{s}^{s+T} \frac{\gamma_{1}(p_2(u))}{\gamma_2(p_2(u))}\mathrm{d}u \\
&\leq K  \int_{s}^{s+T} \frac{\gamma_{1}(p_2(u))}{\gamma_2(p_2(u))}\mathrm{d}u \\
%&\leq KT  \lim_{s\rightarrow\infty} \sup_{t\in[s,s+T]} \frac{\gamma_{1}(p_2(t))}{\gamma_2(p_2(t))} \\
&\leq KT  \sup_{t\in[s,s+T]}\frac{\gamma_{1}(p_2(t))}{\gamma_2(p_2(t))}.
\end{align}
It follows immediately, using also Assumption \ref{assumption1}, that, for all $T>0$, 
\begin{equation}
\lim_{s\rightarrow\infty}\sup_{t\in[s,s+T]}\Omega_{1,\alpha}(s,t)=0~,~~\text{a.s.} \label{A25}
\end{equation}
For the second term, using the definition of $\{p(t)\}_{t\geq 0}$, we have that,  for sufficiently large $s$,  
\begin{align}
\sup_{t\in[s,s+T]}\Omega_{2,\alpha}(s,t) &=\sup_{t\in[s,s+T]}\left|\left|\int_{p_2(s)}^{p_2(t)}\gamma_1(u) \mathrm{d}\xi_1(u)\right|\right| \label{A26} \\
&\leq \sup_{t\in[s,s+T]}\left|\left|\int_{s}^{p_2(t)}\gamma_1(u) \mathrm{d}\xi_1(u)\right|\right| \\
&=\sup_{t\in[s,s+\tau]}\left|\left|\int_{s}^{t}\gamma_1(u) \mathrm{d}\xi_1(u)\right|\right| \label{A29}
\end{align}
 where, in the second line, we have used the fact that $s\leq p_2(s)$ for sufficiently large $s$,  and in the final line, we have defined $\tau=\tau(T)=p_2(T)$. It then follows directly from the first part of Assumption \ref{assumption3} that, for all $T>0$, 
\begin{equation}
\lim_{s\rightarrow\infty}\sup_{t\in[s,s+T]}\Omega_{2,\alpha}(s,t)  = 0 ~,~~\text{a.s.} \label{A30}
\end{equation}
%We thus have that, for all $T>0$, 
%\begin{equation}
%\lim_{s\rightarrow\infty}\sup_{t\in[s,s+T]} ||\alpha_{\gamma_2}(t)-\bar{\alpha}^{(s)}(t)||=0.
%\end{equation}
We will now consider $\{\beta(t)\}_{0\leq s\leq t}$. By definition, we have that 
\begin{align}
\beta(t) = \beta(s) - \int_{s}^t \gamma_{2}(u)\nabla_{\beta} g(\alpha(u),\beta(u))\mathrm{d}u -\int_{s}^t \mathrm{d}\xi_2(u)
\end{align}
It follows immediately from the definition of $\{\beta_{\gamma_2}(t)\}_{0\leq s\leq t}$ that 
\begin{align}
\beta_{\gamma_2}(t) &= \beta_{\gamma_2}(s) - \int_{p_2(s)}^{p_2(t)} \gamma_{2}(u)\nabla_{\beta} g(\alpha(u),\beta(u))\mathrm{d}u -\int_{p_2(s)}^{p_2(t)} \gamma_{2}(u)\mathrm{d}\xi_2(u) \\
&= \beta_{\gamma_2}(s) - \int_{s}^{t} \nabla_{\beta} g(\alpha_{\gamma_2}(u),\beta_{\gamma_2}(u))\mathrm{d}u -\int_{p_2(s)}^{p_2(t)}\gamma_{2}(u) \mathrm{d}\xi_2(u).  \label{A33}
\end{align}
We also have, now making use of the ODE for $\{\bar{\beta}^{(s)}(t)\}_{0\leq s\leq t}$, that
\begin{align}
\bar{\beta}^{(s)}(t) = \beta_{\gamma_2}(s)-\int_s^t \nabla_{\beta}g(\bar{\alpha}^{(s)}(u),\bar{\beta}^{(s)}(u))\mathrm{d}u \label{A34}
\end{align}
It follows straightforwardly from equations (\ref{A33}), (\ref{A34}) that
\begin{align}
||\beta_{\gamma_2}(t)-\bar{\beta}^{(s)}(t)|| &= \left|\left| - \int_{s}^{t} \left[\nabla_{\beta} g(\alpha_{\gamma_2}(u),\beta_{\gamma_2}(u)) - \nabla_{\beta}g(\bar{\alpha}^{(s)}(u),\bar{\beta}^{(s)}(u))\right] \mathrm{d}u -\int_{p_2(s)}^{p_2(t)}\gamma_{2}(u) \mathrm{d}\xi_2(u) \right|\right| \hspace{-27mm}  \\
&\leq \underbrace{\left|\left|\int_{p_2(s)}^{p_2(t)}\gamma_{2}(u) \mathrm{d}\xi_2(u) \right|\right|}_{\Omega_{1,\beta}(s,t)}+ \underbrace{\left|\left| \int_{s}^{t} \left[\nabla_{\beta} g(\alpha_{\gamma_2}(u),\beta_{\gamma_2}(u)) - \nabla_{\beta}g(\bar{\alpha}^{(s)}(u),\bar{\beta}^{(s)}(u))\right] \mathrm{d}u \right|\right|}_{\Omega_{2,\beta}(s,t)} \hspace{-27mm} \label{A35} 
\end{align} 
For the first term, using the second part of Assumption \ref{assumption3}, and arguing as in equations (\ref{A26})-(\ref{A29}), we have that, for all $T>0$,
\begin{equation}
\lim_{s\rightarrow\infty}\sup_{t\in[s,s+T]}\Omega_{1,\beta}(s,t)  = 0~,~~\text{a.s.} \label{A40}
\end{equation}
For the second term, using elementary properties of the Euclidean norm, and Assumption \ref{assumption2}  (i.e., Lipschitz continuity of $\nabla_{\alpha}g(\cdot,\cdot)$), we have that, for all $T>0$,
\begin{align}
\Omega_{2,\beta}(s,t) &= \left|\left| \int_{s}^{t} \left[\nabla_{\beta} g(\alpha_{\gamma_2}(u),\beta_{\gamma_2}(u)) - \nabla_{\beta}g(\bar{\alpha}^{(s)}(u),\bar{\beta}^{(s)}(u))\right] \mathrm{d}u \right|\right| \\
&\leq\int_{s}^{t} \left|\left|\nabla_{\beta} g(\alpha_{\gamma_2}(u),\beta_{\gamma_2}(u)) - \nabla_{\beta}g(\bar{\alpha}^{(s)}(u),\bar{\beta}^{(s)}(u))\right|\right| \mathrm{d}u \\
&\leq \int_{s}^{t} L_{\beta} \left|\left| \begin{pmatrix} \alpha_{\gamma_2}(u)- \bar{\alpha}^{(s)}(u) \\ \beta_{\gamma_2}(u)-\bar{\beta}^{(s)}(u)\end{pmatrix} \right| \right|\mathrm{d}u
\end{align}
It remains to observe that, combining inequalities (\ref{A20}) and (\ref{A35}), and using Gr\"onwall's Inequality, we have
\begin{align}
\left|\left|\begin{pmatrix} \alpha_{\gamma_2}(t) \\ \beta_{\gamma_2}(t) \end{pmatrix} - \begin{pmatrix} \bar{\alpha}^{(s)}(t) \\ \bar{\beta}^{(s)}(t) \end{pmatrix} \right|\right|&\leq ||\alpha_{\gamma_2}(t)-\bar{\alpha}^{(s)}(t)|| + ||\beta_{\gamma_2}(t)-\bar{\beta}^{(s)}(t)|| \\[2mm]
&\leq \underbrace{\Omega_{1,\alpha}(s,t) + \Omega_{2,\alpha}(s,t) + \Omega_{1,\beta}(s,t)}_{\Omega(s,t)} + \Omega_{2,\beta}(s,t) \\
& = \Omega(s,t) + \int_{s}^{t} L_{\beta} \left|\left| \begin{pmatrix} \alpha_{\gamma_2}(u)- \bar{\alpha}^{(s)}(u) \\ \beta_{\gamma_2}(u)-\bar{\beta}^{(s)}(u)\end{pmatrix} \right| \right|\mathrm{d}u \\[2mm]
& \leq \Omega(s,t) \exp\left[\int_s^t L_{\beta}\mathrm{d}u\right] \\[2mm]
& = \Omega(s,t)\exp\left[L_{\beta}(t-s)\right], \label{A42}
\end{align}
where, from (\ref{A25}), (\ref{A30}) and (\ref{A40}), we have that, for all $T>0$, 
\begin{equation}
\lim_{s\rightarrow\infty}\sup_{t\in[s,s+T]}\Omega(s,t) %= \lim_{s\rightarrow\infty}\sup_{t\in[s,s+T]}\left[\Omega_{1,\alpha}(s,t)+\Omega_{2,\alpha}(s,t)+\Omega_{1,\beta}(s,t)\right] 
= 0~,~~\text{a.s.} \label{A43}
\end{equation}
It follows immediately from (\ref{A42}) and (\ref{A43}) that, for all $T>0$, 
\begin{align}
\lim_{s\rightarrow\infty} \sup_{t\in[s,s+T]}\left|\left|\begin{pmatrix} \alpha_{\gamma_2}(t) \\ \beta_{\gamma_2}(t) \end{pmatrix} - \begin{pmatrix} \bar{\alpha}^{(s)}(t) \\ \bar{\beta}^{(s)}(t) \end{pmatrix} \right|\right| &\leq \lim_{s\rightarrow\infty}\sup_{t\in[s,s+T]} \left[\Omega(s,t)\exp\left[L_{\beta}(t-s)\right]\right]  \\
&\leq \exp\left[L_{\beta}T\right] \lim_{s\rightarrow\infty}\sup_{t\in[s,s+T]} \Omega(s,t) \\
&=0 ~,~~\text{a.s.}
\end{align}
\end{proof}

\begin{lemma} \label{lemmaA2}
Assume that Assumptions \ref{assumption1}-\ref{assumption5} hold. Then, almost surely, for some $i\geq 1$, 
\begin{equation}
(\alpha(t),\beta(t))\stackrel{ t\rightarrow\infty }{\longrightarrow}\{(\alpha,\beta^{*}_i(\alpha))~:~\alpha\in\mathbb{R}^{d_1}\}.
\end{equation}
\end{lemma}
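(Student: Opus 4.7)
\textbf{Proof plan for Lemma \ref{lemmaA2}.}
The plan is to invoke the theory of asymptotic pseudo-trajectories developed by Bena\"im et al.\ \cite{Benaim1996,Benaim1999}, applied to the time-scaled process $(\alpha_{\gamma_2}, \beta_{\gamma_2})$ and the semiflow $\Phi_t$ generated on $\mathbb{R}^{d_1}\times\mathbb{R}^{d_2}$ by the coupled ODE \eqref{eqA1}--\eqref{eqA2}, which takes the explicit product form
\[
\Phi_t(\alpha,\beta) \;=\; (\alpha,\, \phi_t^{\alpha}(\beta)),
\]
where $\phi_t^{\alpha}$ is the time-$t$ map of the fast ODE $\dot b = -\nabla_{\beta} g(\alpha, b)$. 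The virtue of this formulation is that the slow coordinate is literally frozen under $\Phi_t$, exactly capturing the quasi-static intuition behind Assumption \ref{assumption1}.

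The first step is to verify that $(\alpha_{\gamma_2}, \beta_{\gamma_2})$ is almost surely an asymptotic pseudo-trajectory of $\Phi_t$ in the sense of \cite[Definition~3.1]{Benaim1999}. This is essentially a direct translation of Lemma \ref{lemmaA1}: the conclusions of that lemma state precisely that, uniformly on any compact time-window $[s,s+T]$, the time-scaled iterate tracks a trajectory of $\Phi_t$ initialised at $(\alpha_{\gamma_2}(s),\beta_{\gamma_2}(s))$, with the discrepancy vanishing as $s\to\infty$. Almost sure boundedness of the iterates (Assumption \ref{assumption4}) promotes this to a genuine APT on the relevant compact set. Standard APT theory (e.g.\ \cite[Theorem~5.7]{Benaim1999}) then guarantees that the (almost surely non-empty, compact) $\omega$-limit set
\[
L(\omega) \;=\; \bigcap_{T\geq 0} \overline{\{(\alpha_{\gamma_2}(t,\omega),\beta_{\gamma_2}(t,\omega)) : t\geq T\}}
\]
is, for almost every $\omega$, an internally chain transitive set for $\Phi_t$.

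The second step is to characterise the internally chain transitive sets of $\Phi_t$. Since each $\alpha$-slice $\{\alpha\}\times\mathbb{R}^{d_2}$ is invariant under $\Phi_t$, the $\beta$-dynamics on any such slice are precisely those of the fast-timescale ODE at the corresponding frozen $\alpha$. Consequently, for any internally chain transitive set $L$ of $\Phi_t$ and any $\alpha\in\pi_{\alpha}(L)$, the slice $L\cap(\{\alpha\}\times\mathbb{R}^{d_2})$ must itself be an internally chain transitive set for $\dot\beta = -\nabla_{\beta} g(\alpha,\beta)$. Because $g(\alpha,\cdot)$ is a Lyapunov function for this ODE that strictly decreases away from its equilibria, and because Assumption \ref{assumption5} provides only a discrete, countable set of isolated equilibria $\{\beta^{*}_i(\alpha)\}_{i\geq 1}$, the only internally chain transitive sets of the fast ODE are single equilibria. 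Each slice of $L$ is therefore a singleton $\{\beta^{*}_{i(\alpha)}(\alpha)\}$ for some index $i(\alpha)\geq 1$. Combining the local Lipschitz continuity of the branches $\beta^{*}_i(\cdot)$ (Assumption \ref{assumption5}) with the connectedness of limit sets of bounded continuous trajectories forces the index $i(\alpha)$ to be constant along any connected component of $\pi_{\alpha}(L)$, so that $L \subseteq \{(\alpha,\beta^{*}_i(\alpha)) : \alpha\in\mathbb{R}^{d_1}\}$ for a single $i\geq 1$. The conclusion for the untransformed iterates $(\alpha(t),\beta(t))$ then follows from the fact that $p_2(t)\to\infty$ as $t\to\infty$ under Assumption \ref{assumption1}.

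The main obstacle I expect lies in the second step, specifically in ruling out ``hopping'' between different equilibrium branches $\beta^{*}_i(\alpha)$ and $\beta^{*}_j(\alpha)$ for $i\neq j$ as $\alpha$ drifts slowly. The Lyapunov structure of the fast ODE rules out such hopping when $\alpha$ is frozen, but the internally chain transitive property permits arbitrarily long chains of excursions, so the argument must genuinely exploit both the isolation of equilibria and the Lipschitz continuity of the branches to preclude these excursions in the limit. Once this characterisation of internally chain transitive sets is in hand, the remainder of the proof is a direct application of APT theory.
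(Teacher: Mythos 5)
Your first step (use Lemma \ref{lemmaA1} to exhibit $(\alpha_{\gamma_2},\beta_{\gamma_2})$ as an asymptotic pseudo-trajectory of \eqref{eqA1}--\eqref{eqA2}, invoke pre-compactness from Assumption \ref{assumption4} and Theorem~5.7 of \cite{Benaim1999} to get convergence to an internally chain transitive set) and your final transfer back to $(\alpha(t),\beta(t))$ via $p_2(t)\to\infty$ coincide with the paper's proof. The gap is in your second step, at exactly the point you flag as the main obstacle. The claim that, for every $\alpha$ in the projection of $L$ onto the slow coordinate, the slice $L\cap(\{\alpha\}\times\mathbb{R}^{d_2})$ is internally chain transitive for the frozen-$\alpha$ ODE $\dot\beta=-\nabla_\beta g(\alpha,\beta)$ does not follow from the definition and is false for general product flows $\Phi_t(\alpha,\beta)=(\alpha,\phi_t^{\alpha}(\beta))$: an $(\varepsilon,T)$-chain joining two points of the same slice is only required to have its nodes in $L$, and each link may displace the $\alpha$-coordinate by up to $\varepsilon$; since the number of links is unbounded, the chain can make arbitrarily large excursions in $\alpha$ and exploit the fast dynamics on other slices, so it neither stays in the slice nor projects onto a chain for $\phi^{\alpha}$. (A toy illustration: $\dot\alpha=0$, $\dot\beta=f(\alpha,\beta)$ on a square with $f>0$ in the interior and $f\equiv 0$ on the boundary makes the whole square internally chain transitive, while no interior slice is chain transitive for its own fast dynamics.) Isolation of the equilibria and Lipschitz continuity of the branches, which you invoke to rule out hopping between branches, do not by themselves close this gap.

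What actually precludes the hopping --- and what the paper uses --- is that $g$ is a strict Lyapunov function for the \emph{joint} flow \eqref{eqA1}--\eqref{eqA2}: since $\dot{\bar\alpha}=0$, one has $\frac{\mathrm{d}}{\mathrm{d}t}g(\bar\alpha(t),\bar\beta(t))=-\|\nabla_\beta g(\bar\alpha(t),\bar\beta(t))\|^2\leq 0$, with equality only on $E_g=\cup_{i\geq1}\{(\alpha,\beta_i^{*}(\alpha)):\alpha\in\mathbb{R}^{d_1}\}$. Proposition~6.4 of \cite{Benaim1999} then places every internally chain transitive set inside $E_g$ (an $(\varepsilon,T)$-chain cannot climb the potential barrier in $g$ separating two branches), after which connectedness of the limit set and the isolation of the branches give containment in a single graph $\{(\alpha,\beta_i^{*}(\alpha)):\alpha\in\mathbb{R}^{d_1}\}$. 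If you wish to retain a slicing picture, you must first establish $L\subseteq E_g$ by this Lyapunov argument; the slices are then singletons (equilibria) and there is nothing left to prove about them. You should supply this Lyapunov step explicitly; as written, the characterisation of the internally chain transitive sets is asserted rather than proved.
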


\begin{proof}

We begin with the observation that, by Lemma \ref{lemmaA1}, $(\alpha_{\gamma_2}(t),\beta_{\gamma_2}(t))$ are asymptotic pseudo-trajectories of \eqref{eqA1} - \eqref{eqA2}. Moreover, by Assumption \ref{assumption4}, they are pre-compact. We can thus apply Theorem 5.7 in Bena\"{i}m \cite{Benaim1999} to conclude that $(\alpha_{\gamma_2}(t),\beta_{\gamma_2}(t))$ converges to an internally chain transitive set for \eqref{eqA1} - \eqref{eqA2}. 

We next observe that the function $g:\mathbb{R}^{d_1}\times\mathbb{R}^{d_2}\rightarrow\mathbb{R}$ is a strict Lyapunov function for \eqref{eqA1} - \eqref{eqA2} in the sense of Bena\"{i}m \cite[Chapter 6.2]{Benaim1999}. In particular, $g(\bar{\alpha}(t),\bar{\beta}(t))$ is strictly decreasing in $t$, unless $(\bar{\alpha}(t),\bar{\beta}(t))$ is an equilibrium point of \eqref{eqA1} - \eqref{eqA2}. This follows straightforwardly from
\begin{equation}
\dot{g}(\bar{\alpha}(t),\bar{\beta}(t)) = - || \nabla_{\beta} g(\bar{\alpha}(t),\bar{\beta}(t))||^2\leq 0.
\end{equation}
with equality if and only if $\nabla_{\beta}g(\bar{\alpha}(t),\bar{\beta}(t))=0$. 
By Assumption \ref{assumption5}, the set of critical values of $g$ is given by $E_g= \cup_{i=1}\{(\alpha,\beta^{*}_i(\alpha):\alpha\in\mathbb{R}^d\}$. Since $\beta_{i}^{*}(\cdot)$ are discrete and countable, this set has Lebesgue measure zero, and hence empty topological interior. Thus, by Proposition 6.4 in Bena\"{i}m \cite{Benaim1999}, every internally chain transitive set for \eqref{eqA1} - \eqref{eqA2} is contained in $E_g$. Moreover, by Assumption \ref{assumption5}, the internally chain transitive sets of $E_g$ are precisely the sets $\{(\alpha,\beta^{*}_{i}(\alpha)):\alpha\in\mathbb{R}^{d_1}\}$. 

It follows from our two observations that, for some $i\geq 1$, $(\alpha_{\gamma_2}(t),\alpha_{\gamma_2}(t)):= (\alpha(p_2(t)),\beta(p_2(t)))\rightarrow\{(\alpha,\beta^{*}_{i}(\alpha)):\alpha\in\mathbb{R}^{d_1}\}$ as $t\rightarrow\infty$. Finally, noting that $t\geq p_2(t)$ for sufficiently large $t$, the result holds.
 
\end{proof}

\subsection{The Slow Timescale}

\subsubsection{Additional Notation} For $i=1,2,\dots$, we will write $\{\underline{\alpha}_i(t)\}_{t\geq 0}$ to denote the solutions of the ordinary differential equations
\begin{align}
\dot{\underline{\alpha}}_i(t) &= -\nabla_{\alpha}f(\underline{\alpha}_i(t),\beta^{*}_i(\underline{\alpha}_i(t))) \label{A12}
\end{align}
where $\beta^{*}_i(\cdot):\mathbb{R}^{d_1}\rightarrow\mathbb{R}^{d_2}$, $i=1,\dots,$ are defined in Assumption \ref{assumption5}.

We can then define $\{\underline{\alpha}_i^{(s)}(t)\}_{0\leq s\leq t}$, $i=1,2,\dots$, as the unique solutions of (\ref{A12}) which `start at $s$', and coincide with the time-scaled process $\{\alpha_{\gamma_1}(t)\}_{t\geq 0}$ at $s$. That is, 
\begin{align}
\dot{\underline{\alpha}}_i^{(s)}(t) = -\nabla_{\alpha} f(\underline{\alpha}_i^{(s)}(t),\beta^{*}_i(\underline{\alpha}_i^{(s)}(t)))~,~~~\underline{\alpha}_i^{(s)}(s) = \alpha_{\gamma_1}(s).
\end{align}

We can also define $\{\underline{\alpha}_{i}^{[s]}(t)\}_{0\leq s\leq t}$, $i=1,2,\dots$, as the unique solutions of (\ref{A12}) which `end at $s$', and coincide with the time-scaled process $\{\alpha_{\gamma_1}(t)\}_{t\geq 0}$ at $s$. That is, 
\begin{align}
\dot{\underline{\alpha}}_{i}^{[s]}(t) = -\nabla_{\alpha} f(\underline{\alpha}_{i}^{[s]}(t),\beta^{*}_i(\underline{\alpha}_i^{[s]}(t)))~,~~~\underline{\alpha}_{i}^{[s]}(s) = \alpha_{\gamma_1}(s).
\end{align}

\subsubsection{Proof of Convergence} We now demonstrate, using these processes, that for some $i\geq 1$, the time-scaled process $\alpha_{\gamma_1}(t)$ is an asymptotic pseudo-trajectory of the flow induced by the ODE for $\underline{\alpha}_i(t)$.

\begin{lemma}  \label{lemmaA3}
Assume that Assumptions \ref{assumption1}-\ref{assumption5} hold. Then, for any $T>0$, and the $i\geq 1$ given in Lemma \ref{lemmaA2}, 
\begin{subequations}
\begin{alignat}{2}
\lim_{s\rightarrow\infty}\sup_{t\in[s,s+T]} ||\alpha_{\gamma_1}(t) - \underline{\alpha}_i^{(s)}(t)||&=0~,~~\text{a.s.} \\
\lim_{s\rightarrow\infty}\sup_{t\in[s-T,s]} ||\alpha_{\gamma_1}(t) - \underline{\alpha}_{i}^{[s]}(t)||&=0~,~~\text{a.s.}
\end{alignat}
\end{subequations}
\end{lemma}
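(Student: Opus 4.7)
The plan is to follow the same template as the proof of Lemma A.1, but now applied to the slow recursion under the closure relation $\beta(t)\approx \beta^*_i(\alpha(t))$ guaranteed by Lemma A.2. I will write only the proof of the first identity, since the second (``ending at $s$'') version is entirely analogous.

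First, I would write down the integral form of the slow iterate, rescale by $t\mapsto p_1(t)$, and compare against $\underline{\alpha}_i^{(s)}(t)$. A direct computation yields
\begin{align*}
\alpha_{\gamma_1}(t)-\underline{\alpha}_i^{(s)}(t) &= -\int_s^t\Bigl[\nabla_{\alpha}f(\alpha_{\gamma_1}(u),\beta_{\gamma_1}(u)) - \nabla_{\alpha}f(\underline{\alpha}_i^{(s)}(u),\beta^*_i(\underline{\alpha}_i^{(s)}(u)))\Bigr]\mathrm{d}u \\
&\quad -\int_{p_1(s)}^{p_1(t)}\gamma_1(u)\,\mathrm{d}\xi_1(u).
\end{align*}
The stochastic integral remainder is controlled exactly as in the proof of Lemma A.1, via a change of time (using $s\le p_1(s)$ eventually) and Assumption \ref{assumption3}; this yields a quantity $\Omega_{\xi}(s,T)\to 0$ a.s., uniformly for $t\in[s,s+T]$.

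Next I would decompose the drift difference by adding and subtracting $\nabla_{\alpha}f(\alpha_{\gamma_1}(u),\beta^*_i(\alpha_{\gamma_1}(u)))$, giving two integrands: an \emph{asymptotic closure error}
\[
E_1(u) := \nabla_{\alpha}f(\alpha_{\gamma_1}(u),\beta_{\gamma_1}(u)) - \nabla_{\alpha}f(\alpha_{\gamma_1}(u),\beta^*_i(\alpha_{\gamma_1}(u))),
\]
and a \emph{tracking error}
\[
E_2(u) := \nabla_{\alpha}f(\alpha_{\gamma_1}(u),\beta^*_i(\alpha_{\gamma_1}(u))) - \nabla_{\alpha}f(\underline{\alpha}_i^{(s)}(u),\beta^*_i(\underline{\alpha}_i^{(s)}(u))).
\]
The key point for $E_1$ is the following: Lemma A.2 says that $(\alpha(\tau),\beta(\tau))\to \{(\alpha,\beta^*_i(\alpha))\}$ a.s.; combined with the time change $\tau=p_1(u)$ (which $\to\infty$ with $u$) and Assumption \ref{assumption4} (boundedness), we deduce $\|\beta_{\gamma_1}(u)-\beta^*_i(\alpha_{\gamma_1}(u))\|\to 0$ a.s. Since the iterates stay in a compact set and $\nabla_\alpha f$ is locally Lipschitz (Assumption \ref{assumption2}), this gives $\|E_1(u)\|\le L\,\|\beta_{\gamma_1}(u)-\beta^*_i(\alpha_{\gamma_1}(u))\|\to 0$, whence $\sup_{t\in[s,s+T]}\|\int_s^t E_1(u)\,\mathrm{d}u\|\to 0$ a.s. For $E_2$, Assumption \ref{assumption5} gives local Lipschitz continuity of $\beta^*_i$, and boundedness of iterates together with Assumption \ref{assumption2} then yields $\|E_2(u)\|\le L'\,\|\alpha_{\gamma_1}(u)-\underline{\alpha}_i^{(s)}(u)\|$ for a uniform constant $L'$.

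Putting this together one obtains
\[
\|\alpha_{\gamma_1}(t)-\underline{\alpha}_i^{(s)}(t)\| \le \Omega(s,T) + L'\int_s^t \|\alpha_{\gamma_1}(u)-\underline{\alpha}_i^{(s)}(u)\|\,\mathrm{d}u,
\]
where $\Omega(s,T)\to 0$ a.s.\ as $s\to\infty$ for any fixed $T$. Grönwall's inequality closes the argument, giving the uniform bound $\sup_{t\in[s,s+T]}\|\alpha_{\gamma_1}(t)-\underline{\alpha}_i^{(s)}(t)\| \le \Omega(s,T)e^{L'T}\to 0$ a.s.

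The main obstacle, and the reason this lemma is genuinely about two-timescale behaviour, is step involving $E_1$: we must transfer the chain-recurrence type convergence of Lemma A.2 (which lives on the \emph{fast} time scale $\gamma_2$) into a vanishing-almost-surely statement on the \emph{slow} time scale $\gamma_1$. This is legitimate because the set-valued convergence in Lemma A.2 holds along the original time $t\to\infty$, and the slow time change $u\mapsto p_1(u)$ satisfies $p_1(u)\to\infty$; the only care needed is to verify that $\beta_{\gamma_1}(u)-\beta^*_i(\alpha_{\gamma_1}(u))\to 0$ pointwise (and hence, by boundedness and local Lipschitz continuity of $\nabla_\alpha f$ and $\beta^*_i$, uniformly on the compact range of the iterates) without needing to identify $i$ differently on the two scales --- the index $i$ supplied by Lemma A.2 is the very one appearing in the statement here.
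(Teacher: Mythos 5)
Your proposal is correct and follows essentially the same route as the paper's proof: the same time change via $p_1$, the same splitting of the error into a stochastic-integral remainder (controlled by Assumption \ref{assumption3} after noting $s\le p_1(s)$ eventually), the same add-and-subtract of $\nabla_{\alpha}f(\alpha_{\gamma_1}(u),\beta_i^{*}(\alpha_{\gamma_1}(u)))$ to separate the closure error (killed by Lemma \ref{lemmaA2}) from the tracking error (bounded via Lipschitz continuity of $\nabla_{\alpha}f$ and $\beta_i^{*}$), and the same Gr\"onwall closing step. The point you flag about transferring the convergence of Lemma \ref{lemmaA2} to the slow time scale via $p_1(u)\to\infty$ is exactly how the paper handles the term $\Pi_{2,\alpha}^{(1)}$.
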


\begin{proof}
This proof is similar in style to the proof of Lemma \ref{lemmaA1}. Once more, we will prove only the first part of this Lemma, as the method for proving the second part is entirely analogous. By definition of the process $\{\alpha(t)\}_{0\leq s\leq t}$, we have 
\begin{align}
\alpha(t)&=\alpha(s)  -\int_s^t\gamma_{1}(u)\nabla_{\alpha} f(\alpha(u),\beta(u))\mathrm{d}u -\int_s^t\gamma_1(u) \mathrm{d}\xi_1(u) %\\
%&= \alpha(s)  -\int_s^t \frac{\gamma_{1}(u)}{\gamma_2(u)}\gamma_2(u)\nabla_{\alpha} f(\alpha(u),\beta(u))\mathrm{d}u -\int_s^t\gamma_1(u) \mathrm{d}\xi_1(u)
\end{align}  
It follows immediately from the definition of $\{\alpha_{\gamma_2}(t)\}_{0\leq s\leq t}$ that 
\begin{align}
\alpha_{\gamma_1}(t) &= \alpha_{\gamma_1}(s)  -\int_{p_1(s)}^{p_1(t)} \gamma_1(u)\nabla_{\alpha} f(\alpha(u),\beta(u))\mathrm{d}u -\int_{p_1(s)}^{p_1(t)}\gamma_1(u) \mathrm{d}\xi_1(u) \\
&= \alpha_{\gamma_1}(s)  -\int_{s}^{t} \nabla_{\alpha} f(\alpha_1(u),\beta_1(u))\mathrm{d}u -\int_{p_1(s)}^{p_1(t)}\gamma_1(u) \mathrm{d}\xi_1(u).  \label{A57} 
\end{align}  
We also have, making use of the ODE for $\{\underline{\alpha}_i^{(s)}(t)\}_{0\leq s\leq t}$, that 
\begin{equation}
{\underline{\alpha}}_i^{(s)}(t) = {\underline{\alpha}}_i^{(s)}(s) - \int_{s}^t \nabla_{\alpha} f(\underline{\alpha}_{i}^{(s)}(u),\beta^{*}_i(\underline{\alpha}_i^{(s)}(u)))\mathrm{d}s.  \label{A58}
\end{equation}
It follows straightforwardly from equations (\ref{A57}), (\ref{A58}) that
\begin{align}
\left|\left|\alpha_{\gamma_1}(t) - \underline{\alpha}_i^{(s)}(t)\right|\right|&=\left|\left|-\int_s^t \left[\nabla_{\alpha}f(\alpha_{\gamma_1}(u),\beta_{\gamma_1}(u))-\nabla_{\alpha}f(\underline{\alpha}_{i}^{(s)}(u),\beta^{*}_i(\underline{\alpha}_i^{(s)}(u)))\right]\mathrm{d}u - \int_{p_1(s)}^{p_1(t)}\gamma(u)\mathrm{d}\xi_1(u)\right|\right|  \hspace{-35mm} \\
&\leq \underbrace{\left|\left|\int_{p_1(s)}^{p_1(t)}\gamma_1(u)\mathrm{d}\xi_1(u)\right|\right|}_{\Pi_{1,\alpha}(s,t)} + \underbrace{\left|\left|\int_s^t \left[\nabla_{\alpha}f(\alpha_{\gamma_1}(u),\beta_{\gamma_1}(u))-\nabla_{\alpha}f(\underline{\alpha}_{i}^{(s)}(u),\beta^{*}_i(\underline{\alpha}_i^{(s)}(u)))\right]\mathrm{d}u\right|\right|}_{\Pi_{2,\alpha}(s,t)} \hspace{-35mm} \label{A59}
\end{align}
For the first term, using the first part of Assumption \ref{assumption3}, and arguing as in equations (\ref{A26})-(\ref{A29}), we have that, a.s., for all $T>0$, 
\begin{equation}
\lim_{s\rightarrow\infty}\sup_{t\in[s,s+T]}\Pi_{1,\alpha}(s,t) = 0. \label{A61}
\end{equation}
For the second term, using the triangle inequality, Assumptions \ref{assumption2} and \ref{assumption4}, which together imply the boundedness of $||\nabla_{\alpha} f(\cdot,\cdot)||$, and Assumption \ref{assumption5}, which guarantees the Lipschitz continuity of $\beta^{*}_i(\cdot)$, we have that, a.s., for all $T>0$, 
\begin{align}
\Pi_{2,\alpha}(s,t) %& = \left|\left|\int_s^t \left[\nabla_{\alpha}f(\alpha_{\gamma_1}(u),\beta_{\gamma_1}(u))-\nabla_{\alpha}f(\underline{\alpha}_{i}^{(s)}(u),\beta^{*}_i(\underline{\alpha}_i^{(s)}(u))\right]\mathrm{d}u\right|\right| \\
&\leq \int_s^t \big|\big|\nabla_{\alpha}f(\alpha_{\gamma_1}(u),\beta_{\gamma_1}(u))-\nabla_{\alpha}f(\underline{\alpha}_{i}^{(s)}(u),\beta^{*}_i(\underline{\alpha}_i^{(s)}(u)))\big|\big|\mathrm{d}u \\[2mm]
&\leq \int_s^t \big|\big|\nabla_{\alpha}f(\alpha_{\gamma_1}(u),\beta_{\gamma_1}(u))-\nabla_{\alpha}f(\alpha_{\gamma_1}(u),\beta^{*}_i(\alpha_{\gamma_1}(u)))\big|\big|\mathrm{d}u \nonumber \\
&~~~~+\hspace{.5mm} \int_s^t \big|\big|\nabla_{\alpha} f(\alpha_{\gamma_1}(u),\beta^{*}_i(\alpha_{\gamma_1}(u)))-\nabla_{\alpha}f(\underline{\alpha}_{i}^{(s)}(u),\beta^{*}_i(\underline{\alpha}_i^{(s)}(u))\big|\big|\mathrm{d}u \\[2mm]
& \leq \int_{s}^t L_{\alpha}\left[\big|\big|\alpha_{\gamma_1}(u) - \alpha_{\gamma_1}(u) \big|\big|+\big|\big|\beta_{\gamma_1}(u) - \beta^{*}_i(\alpha_{\gamma_1}(u)) \big|\big|\right]\mathrm{d}u \nonumber \\
&~~~~+ \int_{s}^t L_{\alpha}\left[\big|\big| \alpha_{\gamma_1}(u) - \underline{\alpha}_i^{(s)}(u) \big|\big|+ \big|\big|\beta^{*}_i(\alpha_{\gamma_1}(u)) - \beta^{*}_i(\underline{\alpha}_i^{(s)}(u)) \big|\big|\right]\mathrm{d}u \\[2mm]
& \leq \underbrace{ \int_{s}^t L_{\alpha}\big|\big|\beta_{\gamma_1}(u) - \beta^{*}_i(\alpha_{\gamma_1}(u)) \big|\big|\mathrm{d}u}_{\Pi_{2,\alpha}^{(1)}(s,t)}+ \underbrace{\int_{s}^t L_{\alpha}(1+L_{\beta^{*}_i})\big|\big| \alpha_{\gamma_1}(u) - \underline{\alpha}_i^{(s)}(u) \big|\big|\mathrm{d}u}_{\Pi_{2,\alpha}^{(2)}(s,t)}. \hspace{-5mm} \label{A63}
\end{align}
For the first term, we have that, a.s., for all $T>0$, 
\begin{align} 
\sup_{t\in[s,s+T]}\Pi_{2,\alpha}^{(1)}(s,t) &= \sup_{t\in[s,s+T]}\int_{s}^t L_{\alpha}\big|\big|\beta_{\gamma_1}(u) - \beta^{*}_i(\alpha_{\gamma_1}(u)) \big|\big|\mathrm{d}u \\
%&\leq L_{\alpha}\int_{s}^{s+T}\big|\big|\beta_{\gamma_1}(u) - \beta^{*}_i(\alpha_{\gamma_1}(u)) \big|\big|\mathrm{d}u \\
%&\leq L_{\alpha}T \sup_{t\in[s,s+T]}\big|\big|\beta_{\gamma_1}(t) - \beta^{*}_i(\alpha_{\gamma_1}(t)) \big|\big| \\
&\leq L_{\alpha}T \sup_{t\geq s}\big|\big|\beta_{\gamma_1}(t) - \beta^{*}_i(\alpha_{\gamma_1}(t)) \big|\big|. 
\end{align}
It then follows, using Lemma \ref{lemmaA2}, that, a.s., for all $T>0$, 
\begin{align}
\lim_{s\rightarrow\infty}\sup_{t\in[s,s+T]}\Pi_{2,\alpha}^{(1)}(s,t)%&\leq L_{\alpha}T \lim_{s\rightarrow\infty}\sup_{t\geq s}\big|\big|\beta_{\gamma_1}(t) - \beta^{*}_i(\alpha_{\gamma_1}(t)) \big|\big| \\
&\leq L_{\alpha}T \limsup_{s\rightarrow\infty}\big|\big|\beta_{\gamma_1}(s) - \beta^{*}_i(\alpha_{\gamma_1}(s)) \big|\big| \\
&=L_{\alpha}T \lim_{s\rightarrow\infty}\big|\big|\beta_{\gamma_1}(s) - \beta^{*}_i(\alpha_{\gamma_1}(s)) \big|\big|\\
&=0. \label{A73} % \\
%&=L_{\alpha}T \lim_{s\rightarrow\infty}\big|\big|\beta(s) - \beta^{*}_i(\alpha(s)) \big|\big|=0. \label{A73}
\end{align}
It remains to observe, combining inequalities (\ref{A59}) and (\ref{A63}), and making use of Gr\"onwall's Inequality, that
\begin{align}
\left|\left|\alpha_{\gamma_1}(t) - \underline{\alpha}_i^{(s)}(t)\right|\right|&\leq\underbrace{\Pi_{1,\alpha}(s,t) + \Pi_{2,\alpha}^{(1)}(s,t)}_{\Pi(s,t)} + \Pi_{2,\alpha}^{(2)}(s,t) \\[-1.5mm]
&=\Pi(s,t) + \int_{s}^t L_{\alpha}(1+L_{\beta^{*}_i})\big|\big| \alpha_{\gamma_1}(u) - \underline{\alpha}_i^{(s)}(u) \big|\big|\mathrm{d}u \\
&\leq \Pi(s,t)\exp\left[\int_s^t L_{\alpha}(1+L_{\beta^{*}_i})\mathrm{d}u\right] \\[1.5mm]
&= \Pi(s,t)\exp\left[L_{\alpha}(1+L_{\beta^{*}_i})(t-s)\right] \label{A71}
\end{align}
where, from (\ref{A61}) and (\ref{A73}), we have that, a.s., for all $T>0$, 
\begin{equation}
\lim_{s\rightarrow\infty} \sup_{t\in[s,s+T]} \Pi(s,t) = 0. \label{A72}
\end{equation}
It follows immediately from (\ref{A71}) and (\ref{A72}) that, a.s., for all $T>0$, 
\begin{align}
\lim_{s\rightarrow\infty} \sup_{t\in[s,s+T]}  \left|\left|\alpha_{\gamma_1}(t) - \underline{\alpha}_i^{(s)}(t)\right|\right| &\leq \lim_{s\rightarrow\infty} \sup_{t\in[s,s+T]} \left[ \Pi(s,t)\exp\left[L_{\alpha}(1+L_{\beta^{*}_i})(t-s)\right]\right] \\
&\leq \exp\left[L_{\alpha}(1+L_{\beta^{*}_i})\right] \lim_{s\rightarrow\infty} \sup_{t\in[s,s+T]}\Pi(s,t) \\
&= 0.
\end{align}
\end{proof}

\begin{lemma} \label{lemmaA4}
Assume that Assumptions \ref{assumption1}-\ref{assumption6} hold. Then, almost surely%, for some $i\geq 1$, $j\geq 1$, 
\begin{equation}
\alpha(t)\stackrel{ t\rightarrow\infty }{\longrightarrow} \{\alpha\in\mathbb{R}^{d_1}:\nabla_{\alpha}f(\alpha,\beta^{*}_i(\alpha))=0\}.
\end{equation}
\end{lemma}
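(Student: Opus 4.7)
The plan is to combine Lemma \ref{lemmaA3}, which gives asymptotic pseudo-trajectory (APT) behaviour of $\alpha_{\gamma_1}(\cdot)$ relative to the flow of the limiting ODE \eqref{A12}, with Assumption \ref{assumption6} and the general theory of internally chain transitive invariant sets, via a standard result of Bena\"{i}m \cite{Benaim1999}. I will use the $i \geq 1$ identified in Lemma \ref{lemmaA2}, for which $(\alpha(t),\beta(t))$ tracks the manifold $\{(\alpha,\beta_i^{*}(\alpha))\}$.

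First, I would note that $\{\alpha_{\gamma_1}(t)\}_{t \geq 0}$ is pre-compact almost surely. This follows at once from Assumption \ref{assumption4}, since $\alpha_{\gamma_1}(t) = \alpha(p_1(t))$ and $\sup_{t \geq 0}\|\alpha(t)\| < \infty$. Second, Lemma \ref{lemmaA3} (both halves, applied to forward and backward orbits) establishes that $\alpha_{\gamma_1}(\cdot)$ is an asymptotic pseudo-trajectory for the flow induced by \eqref{A12} in the sense of Bena\"{i}m \cite[Definition~3.1]{Benaim1999}.

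With these two ingredients, I would invoke Theorem 5.7 of \cite{Benaim1999} to conclude that the limit set $L(\alpha_{\gamma_1}) := \cap_{t \geq 0}\overline{\{\alpha_{\gamma_1}(s) : s \geq t\}}$ is almost surely a compact, connected, internally chain transitive invariant set of the ODE \eqref{A12}. By Assumption \ref{assumption6}, the only such sets are the equilibria of \eqref{A12}, namely points $\alpha$ satisfying $\nabla_{\alpha}f(\alpha,\beta_i^{*}(\alpha)) = 0$. Hence $\alpha_{\gamma_1}(t)$ converges almost surely to this equilibrium set as $t \to \infty$.

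Finally, I would translate this back to the original time-scale. Since $\int_{0}^{\infty}\gamma_1(s)\mathrm{d}s = \infty$ by Assumption \ref{assumption1}, we have $q_1(t) \to \infty$ as $t \to \infty$, and $\alpha(t) = \alpha_{\gamma_1}(q_1(t))$, so convergence of $\alpha_{\gamma_1}$ to the equilibrium set along its own time-scale immediately yields convergence of $\alpha(t)$ to the same set. The only real conceptual hurdle is the APT argument, which is already handled by Lemma \ref{lemmaA3}; the remaining steps are essentially a direct citation of the Bena\"{i}m framework, together with Assumption \ref{assumption6}, which has been precisely tailored so that no further work (e.g., constructing a Lyapunov function for the outer ODE, which generally fails when $\beta_i^{*}$ is not sufficiently smooth) is required.
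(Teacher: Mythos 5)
Your proposal is correct and follows essentially the same route as the paper: Lemma \ref{lemmaA3} gives the asymptotic pseudo-trajectory property, Assumption \ref{assumption4} gives pre-compactness, Theorem 5.7 of Bena\"{i}m identifies the limit set as internally chain transitive, and Assumption \ref{assumption6} reduces this to the equilibria of \eqref{A12}. Your transfer back to the original time-scale via $\alpha(t)=\alpha_{\gamma_1}(q_1(t))$ with $q_1(t)\rightarrow\infty$ is, if anything, stated more cleanly than in the paper.
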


\begin{proof}

The proof follows a similar trajectory to the proof of Lemma \ref{lemmaA2}, now with some simplifications. By Lemma \ref{lemmaA3}, $\alpha_{\gamma_1}(t)$ is an asymptotic pseudo-trajectory for \eqref{A12}. Moreover, it is pre-compact by Assumption \ref{assumption4}. Thus, applying Theorem 5.7 in Bena\"{i}m  \cite{Benaim1999}, it follows that $\alpha_{\gamma_1}(t):=\alpha(p_1(t))$ converges to an internally chain transitive set of \eqref{A12}. The same is thus also true for $\alpha(t)$, noting as before that $t\geq p_1(t)$ for sufficiently large $t$. Finally, by Assumption \ref{assumption6}, the only internally chain transitive sets of \eqref{A12} are its (possibly non-isolated) equilibrium points. The result follows immediately.

%This proof is analogous to the proof of Lemma \ref{lemmaA2}. On this occasion, we remark that $f$ is a strict Lyapunov function for \eqref{A12}, and that its set of critical values $E_f^{i} = \{\alpha\in\mathbb{R}^{d_1}:\nabla_{\alpha}f(\alpha,\beta^{*}_{i}(\alpha))=0\}$ has empty interior by Assumption \ref{assumption6}. In addition, by Lemma \ref{lemmaA3}, $\alpha_{\gamma_1}(t)$ is an asymptotic pseudo-trajectory of \eqref{A12}, which is pre-compact by Assumption \ref{assumption4}. 

%As in Lemma \ref{theorem2}, we can thus apply Theorem 5.7 and Proposition 6.4 in \cite{Benaim1999} to conclude that 
 
\end{proof}

\subsection{Proof of Theorem \ref{theorem1}}

\begin{theorem_recall_1}
Assume that Assumptions \ref{assumption1}-\ref{assumption6} hold. Then, almost surely,
%\begin{equation}
%\lim_{t\rightarrow\infty}(\alpha(t),\beta(t)) = (\alpha^{*}_{ij},\beta^{*}_i(\alpha^{*}_{ij}))
%\end{equation}
%and, in particular, 
\begin{equation}
\lim_{t\rightarrow\infty}\nabla_{\alpha}f(\alpha(t),\beta(t)) = \lim_{t\rightarrow\infty}\nabla_{\beta}g(\alpha(t),\beta(t)) = 0.
\end{equation}
\end{theorem_recall_1}

\begin{proof}
The result is an immediate consequence of Lemmas \ref{lemmaA2} and \ref{lemmaA4}. By Lemma \ref{lemmaA2}, the process $(\alpha(t),\beta(t))\rightarrow \{(\alpha,\beta^{*}_i(\alpha)):\alpha\in\mathbb{R}^{d_1}\}$ a.s., for some $i\geq 1$. By Lemma \ref{lemmaA4}, the process $\alpha(t)\rightarrow\{\alpha\in\mathbb{R}^{d_1}:\nabla_{\alpha}f(\alpha,\beta^{*}_i(\alpha))=0\}$ a.s.. Together, these lemmas imply that, for some $i\geq 1$, 
\begin{equation}
(\alpha(t),\beta(t))\rightarrow \{(\alpha,\beta^{*}_i(\alpha))\in\mathbb{R}^{d_1}\times\mathbb{R}^{d_2}:\nabla_{\alpha}f(\alpha,\beta^{*}_i(\alpha))=0\}~\text{ a.s.}
\end{equation}
It follows, in particular, that $\nabla_{\alpha}f(\alpha(t),\beta(t)) \rightarrow 0$ and $\nabla_{\beta}g(\alpha(t),\beta(t))\rightarrow 0$ as $t\rightarrow\infty$ with probability one.
\end{proof}

\section{Extensions to Theorem \ref{theorem1}} \label{theorem1_ext}
In this Appendix, we provide details of several possible extensions to Theorem \ref{theorem1}. We first discuss how to obtain an almost sure convergence result for an alternative version of Algorithm \eqref{eq1} - \eqref{eq2}. We then outline the additional assumptions required in order to establish convergence of Algorithm \eqref{eq1} - \eqref{eq2} to the set of local and global minima of the two objective functions (in a sense to be made precise below).

\subsection{Two-Timescale Stochastic Gradient Descent in Continuous Time: An Alternative Algorithm} \label{theorem1_ext1}
In Theorem \ref{theorem1}, we analysed a two-timescale stochastic gradient descent algorithm designed to solve a weak formulation of the original bilevel optimisation problem in \eqref{global_min}, as stated in \eqref{opt2_}. This, we recall, refers to the task of obtaining $(\alpha^{*},\beta^{*})\in\Lambda_{\alpha}\times\Lambda_{\beta}$ which jointly satisfy
\begin{equation}
\alpha^{*} =\argmin_{\alpha\in U_{\alpha^{*}}}f(\alpha,\beta^{*}) ~~~,~~~\beta^{*} = \argmin_{\beta\in U_{\beta^{*}}} g(\alpha^{*},\beta) \label{opt2}
\end{equation}
where $U_{\alpha^{*}}\subset \mathbb{R}^{d_1}$ and $U_{\beta^{*}}\subset \mathbb{R}^{d_2}$ are local neighbourhoods of $\alpha^{*}$ and $\beta^{*}$, respectively. That is, equivalently, values $(\alpha^{*},\beta^{*})$ such that $\alpha^{*}$ locally minimises $f(\alpha,\beta^{*})$ with respect to $\alpha$, and $\beta^{*}$ which locally minimises $g(\alpha^{*},\beta)$ with respect to $\beta$. 
To tackle this problem using gradient methods, it is natural to consider an algorithm which only utilises (noisy estimates of) the partial derivatives $\nabla_{\alpha}f(\alpha,\beta)$ and $\nabla_{\beta}g(\alpha,\beta)$. This is precisely the form of Algorithm \eqref{eq1} - \eqref{eq2}. 

Suppose, instead, that we would like to solve (a local version of) the original bilevel optimisation problem \eqref{global_min} directly. This is, suppose that we wish to obtain $(\alpha^{*},\beta^{*}(\alpha^{*}))\in\Lambda_{\alpha}\times\Lambda_{\beta}$ which satisfy
\begin{alignat}{2}
\alpha^{*}&= \argmin_{\alpha\in U_{\alpha^{*}}} f\big(\alpha,\beta^{*}(\alpha)\big)~~~\text{s.t.}~~~\beta^{*}(\alpha)&&=\argmin_{\beta\in U_{\beta*(\alpha)}}g(\alpha,\beta) \label{global_min2}
\end{alignat}
where, similarly to above, $U_{\alpha^{*}}\subset \mathbb{R}^{d_1}$ and $U_{\beta^{*}(\alpha)}\subset \mathbb{R}^{d_2}$ are local neighbourhoods of $\alpha^{*}$ and $\beta^{*}(\alpha)$, respectively. The crucial difference between \eqref{opt2} and \eqref{global_min2} is that, in the latter, we insist that $\alpha^{*}$ minimises $f(\alpha,\beta^{*}(\alpha))$ with respect to $\alpha$. In particular, the second argument in the upper level optimisation function now depends explicitly on $\alpha$. 

To tackle this problem using gradient methods, we can still use the partial derivative $\nabla_{\beta}g(\alpha,\beta)$ to minimise the lower-level objective function. If possible, however, we should now use the total derivative $\nabla f(\alpha,\beta^{*}(\alpha))$ to minimise the upper-level objective function. This will, of course, require additional assumptions on the two-objective functions (to be specified below). To make progress in this direction, first note that, via the chain rule, we have
\begin{equation}
\nabla f(\alpha,\beta^{*}(\alpha)) = \nabla_{\alpha} f(\alpha,\beta^{*}(\alpha)) + [\nabla_{\alpha}\beta^{*}(\alpha)]^T \nabla_{\beta}f(\alpha,\beta^{*}(\alpha)).  \label{eqB13}
\end{equation}
Moreover, owing to the first order optimality condition for $\beta^{*}(\alpha)$, under appropriate additional assumptions on $g$, it holds that (see, e.g., \cite{Ghadimi2018})
%Using the first order optimality condition for $\beta^{*}(\alpha)$, namely that $\nabla_{\beta} g(\alpha,\beta^{*}(\alpha))=0$, it is then possible to obtain
\begin{equation}
\nabla_{\alpha} \beta^{*}(\alpha)  = -\nabla_{\alpha\beta}^2 g(\alpha,\beta^{*}(\alpha))\left[\nabla^2_{\beta\beta} g(\alpha,\beta^{*}(\alpha))\right]^{-1}. \label{eqB14}
\end{equation}
% It then follows straightforwardly that
%\begin{equation}
%\nabla_{\alpha} h(\alpha) = \nabla_{\alpha} f(\alpha,\beta^{*}(\alpha)) -\nabla_{\alpha\beta}^2 g(\alpha,\beta^{*}(\alpha))\left[\nabla_{\beta\beta} g(\alpha,\beta^{*}(\alpha))\right]^{-1} \nabla_{\beta}f(\alpha,\beta^{*}(\alpha)). 
%\end{equation}
In practice, $\beta^{*}(\alpha)$ is not available in closed form. Thus, one typically approximates $\nabla_{\alpha}f(\alpha,\beta^{*}(\alpha))$ by replacing $\beta^{*}(\alpha)$ with $\beta\in\mathbb{R}^{d_2}$. This yields, instead of \eqref{eqB13}, the `surrogate' gradient
\begin{equation}
\bar{\nabla} f(\alpha,\beta) = \nabla_{\alpha} f(\alpha,\beta) -\nabla_{\alpha\beta}^2 g(\alpha,\beta)\left[\nabla^2_{\beta\beta} g(\alpha,\beta)\right]^{-1} \nabla_{\beta}f(\alpha,\beta).  \label{surrogate}
\end{equation}
Using this surrogate gradient, we can now obtain an alternative version of Algorithm \eqref{eq1} - \eqref{eq2}. In particular, suppose that we continuously observe noisy gradients of $\bar{\nabla} f(\alpha,\beta)$ and $\nabla_{\beta}g(\alpha,\beta)$, as in \eqref{obs1} - \eqref{obs2}. Then it is natural to consider the following continuous-time two-timescale stochastic gradient descent algorithm:
\begin{subequations}
\begin{align}
\mathrm{d}\alpha(t) &= -\gamma_{1}(t)\left[\bar{\nabla} f(\alpha(t),\beta(t))\mathrm{d}t + \mathrm{d}\xi_1(t)\right],~~~t\geq0,~~~\alpha(0)=\alpha_0, \label{eq1_alt} \\
\mathrm{d}\beta(t) &= -\gamma_{2}(t)\left[\nabla_{\beta} g(\alpha(t),\beta(t))\mathrm{d}t + \mathrm{d}\xi_2(t)\right],~~~t\geq0,~~~\beta(0)=\beta_0, \label{eq2_alt}
\end{align}
\end{subequations}
where $f:\mathbb{R}^{d_1}\times\mathbb{R}^{d_2}\rightarrow\mathbb{R}$ is a continuously differentiable function, $g:\mathbb{R}^{d_1}\times\mathbb{R}^{d_2}\rightarrow\mathbb{R}$ is now a \emph{twice} continuously differentiable function, $\bar{\nabla}f :\mathbb{R}^{d_1}\times\mathbb{R}^{d_2}\rightarrow\mathbb{R}^{d_1}$ is defined in \eqref{surrogate}, and all other terms are as defined in Section \ref{subsec:main1}. This represents the continuous time version of the two-timescale stochastic gradient descent algorithm recently introduced in \cite{Hong2020}. 

We can analyse this algorithm using similar assumptions to those used to establish the convergence of Algorithm \eqref{eq1} - \eqref{eq2} in Theorem \ref{theorem1}. Let us briefly highlight the required modifications. We will first require the following stronger version of Assumption \ref{assumption2}. 

\begin{manualassumption}{2.1.2.i'} \label{assumption2_alt_1}
The outer function $f:\mathbb{R}^{d_1}\times\mathbb{R}^{d_2}\rightarrow\mathbb{R}$ has the following properties 
\begin{itemize}
\item The function $\nabla_{\alpha} f:\mathbb{R}^{d_1}\times\mathbb{R}^{d_2}\rightarrow\mathbb{R}^{d_1}$ is locally Lipschitz continuous. 
\item The function $\nabla_{\beta}f:\mathbb{R}^{d_1}\times\mathbb{R}^{d_2}\rightarrow\mathbb{R}^{d_2}$ is locally Lipschitz continuous.
\end{itemize}
\end{manualassumption} 

\begin{manualassumption}{2.1.2.ii'}  \label{assumption2_alt_2}
The inner function $g:\mathbb{R}^{d_1}\times\mathbb{R}^{d_2}\rightarrow\mathbb{R}$ has the following properties.
\begin{itemize}
\item The function $\nabla_{\beta} g:\mathbb{R}^{d_2}\times\mathbb{R}^{d_2}\rightarrow\mathbb{R}^{d_2}$ is locally Lipschitz continuous. 
\item For all $\alpha\in\mathbb{R}^{d_1}$, the function $g(\alpha,\cdot):\mathbb{R}^{d_1}\rightarrow\mathbb{R}$ is strongly convex. 
\item The function $\smash{\nabla_{\alpha\beta}^2g:\mathbb{R}^{d_1}\times\mathbb{R}^{d_2}\rightarrow\mathbb{R}^{d_1\times d_2}}$ is bounded.
\end{itemize}  
\end{manualassumption}

These assumptions, which also appear in \cite{Ghadimi2018,Hong2020}, ensure that the surrogate $\bar{\nabla}f(\alpha,\beta)$ is well-defined and locally Lipschitz continuous. In particular, it is necessary to assume strong convexity for $g(\alpha,\cdot)$ to ensure that the Hessian of this function, whose inverse appears in the definition of $\bar{\nabla}f(\alpha,\beta)$ in \eqref{surrogate}, is bounded away from zero. An immediate consequence of this assumption is that, for all $\alpha\in\mathbb{R}^{d_1}$, $g(\alpha,\cdot)$ has a single global minimiser. This, in turn, implies that, for all $\alpha\in\mathbb{R}^{d_1}$, the equation $\dot{\beta}(t) = -\nabla_{\beta}g(\alpha,\beta(t))$ has a globally asymptotically stable equilibrium $\beta^{*}(\alpha)$, thus doing away with with the need for Assumption \ref{assumption5}.

Finally, we will now replace Assumption \ref{assumption6} with the following condition.
\begin{manualassumption}{2.1.6'} \label{assumption6_alt}
The set $\smash{f(E_f,\beta_i^{*}(E_f))}$ contains no open sets of $\mathbb{R}^{d_1}$ other than the empty set (i.e., has empty interior), where
\begin{equation}
E_f = \big\{\alpha\in\mathbb{R}^{d_1}:\nabla f(\alpha,\beta^{*}(\alpha))=0\big\}.
\end{equation}
\end{manualassumption}

Interestingly, this assumption is actually slightly weaker than Assumption \ref{assumption6}. This condition was first introduced in \cite{Benaim1999}, in the context of single-timescale stochastic approximation, and later also appeared in \cite{Tadic1997} in a slightly different form: namely, that the set $\smash{f(E_f,\beta^{*}(E_f)) \cap f(E_f^{c},\beta^{*}(E_f^{c}))}$ has Lebesgue measure zero. Both versions have since also appeared in the two-timescale setting \cite{Karmakar2018,Tadic2004}. Broadly speaking, this condition ensures that the function $f(\cdot,\beta^{*}(\cdot))$ admits a certain topological property: namely, that each closed continuous path starting and ending in $\smash{E_{f}^c}$ has a subpath contained in $\smash{E_{f}^c}$ along which $f(\cdot,\beta^{*}(\cdot))$ does not increase. This property prevents the noise processes from forcing the slow process to drift from one connected component of $E_i$ to another. In turn, this ensures that the slow process converges to a connected component of $E_f$. This assumption is satisfied under several more easily verifiable conditions. In particular, it holds if $E_f$ or $f(E_f)$ are countable (e.g., \cite{Benaim1999}). By the Morse-Sard Theorem \cite{Hirsch1976}, it also holds if the function $f(\cdot,\beta^{*}(\cdot))$ is $d_1$-times differentiable, a situation which is somewhat common in two-timescale stochastic approximation algorithms (e.g., \cite{Konda2003a}). 

Our main result on the convergence of Algorithm \eqref{eq1_alt} - \eqref{eq2_alt} is contained in the following theorem. 
\begin{manualtheorem}{2.1'} \label{theorem1_alt}
Assume that Assumptions \ref{assumption1}, \ref{assumption2_alt_1} - \ref{assumption2_alt_2}, \ref{assumption3}, \ref{assumption4}, and \ref{assumption6} hold. Then, almost surely,
\begin{equation}
\lim_{t\rightarrow\infty}\nabla_{\alpha} f(\alpha(t),\beta^{*}(\alpha(t)))= \lim_{t\rightarrow\infty} \nabla_{\beta} g(\alpha(t),\beta(t))=0.
\end{equation}
The second limit implies, in particular, that $\lim_{t\rightarrow\infty}||\beta(t)-\beta^{*}(\alpha(t))||=0$, where $\beta^{*}(\alpha) = \argmin_{\beta\in\mathbb{R}^{d_2}} g(\alpha,\beta)$. 
\end{manualtheorem}

\begin{proof}
Under the stated assumptions, the proof of Theorem \ref{theorem1_alt} is essentially identical to the proof of Theorem \ref{theorem1}. Let us briefly highlight the main changes. Lemma \ref{lemmaA1} goes through unchanged, replacing $\nabla_{\alpha} f(\alpha,\beta)$ with $\bar{\nabla} f(\alpha,\beta)$ where required. Lemma \ref{lemmaA2}, which now states that $(\alpha(t),\beta(t))\rightarrow\{(\alpha,\beta^{*}(\alpha)):\alpha\in\mathbb{R}^{d_1}\}$, is now much more straightforward, since the only internally chain transitive set for (the analogue of) \eqref{eqA1} - \eqref{eqA2} is now the globally asymptotically stable equilibrium point $\beta^{*}(\alpha)$.

Lemma \ref{lemmaA3} is essentially unchanged, again replacing $\nabla_{\alpha}f(\alpha,\beta)$ with $\bar{\nabla} f(\alpha,\beta)$, and also replacing $\beta_{i}^{*}(\alpha)$ with $\beta^{*}(\alpha)$. Finally, Lemma \ref{lemmaA4} is proved along the same lines as the original proof of Lemma \ref{lemmaA2}. In particular, this proof begins by noting that, by (the modified version of) Lemma \ref{lemmaA3}, $\alpha_{\gamma_1}(t)$ is an asymptotic pseudo-trajectory for the ODE
\begin{equation}
\dot{\underline{\alpha}}(t) = -\bar{\nabla} f(\underline{\alpha}(t),\beta^{*}(\underline{\alpha}(t)). \label{f_alt_ode}
\end{equation}
By Assumption \ref{assumption4}, this trajectory is also pre-compact. Thus, by Theorem 5.7 in Benaim \cite{Benaim1999}, $\alpha_{\gamma_1}(t)$ converges to an internally chain transitive set for \eqref{f_alt_ode}. We next observe that $f:\mathbb{R}^{d_1}\times\mathbb{R}^{d_2}\rightarrow\mathbb{R}$ is a strict Lyapunov function for \eqref{f_alt_ode}. Indeed, this follows immediately from
\begin{align}
\dot{f}(\underline{\alpha}(t),\beta^{*}(\underline{\alpha}(t))) &= \dot{\underline{\alpha}}(t)\nabla_{\alpha}(\underline{\alpha}(t),\beta^{*}(\underline{\alpha}(t))) + \dot{\beta}^{*}(\underline{\alpha}(t))\nabla_{\beta}f(\underline{\alpha}(t),\beta^{*}(\underline{\alpha}(t))) \\
&=\dot{\underline{\alpha}}(t)\left[\nabla_{\alpha}(\underline{\alpha}(t),\beta^{*}(\underline{\alpha}(t))) + \left[\nabla_{\alpha} {\beta}^{*}(\underline{\alpha}(t))\right]^T\nabla_{\beta}f(\underline{\alpha}(t),\beta^{*}(\underline{\alpha}(t)))
\right] \\
&=- ||\nabla f(\underline{\alpha}(t),\beta^{*}(\underline{\alpha}(t)))||^2 \leq 0.
\end{align}
In addition, by Assumption \ref{assumption6_alt}, the set of critical values of $f$, namely $E_f$, has Lebesgue measure zero. We can thus apply Proposition 6.4 in Bena\"{i}m \cite{Benaim1999} to conclude that every internally chain transitive set for \eqref{f_alt_ode} is contained in $E_f$. Lemma \ref{lemmaA4} now follows straightforwardly. 

Finally, combining the results of the modified versions of Lemma \ref{lemmaA2} and Lemma \ref{lemmaA4}, one obtains the result of Theorem \ref{theorem1_alt}.
\end{proof}

\subsection{Convergence to Local Minima} \label{theorem1_ext2}
In this section, we outline the (minimal) additional assumptions required in order to establish almost sure convergence of Algorithm \eqref{eq1} - \eqref{eq2} to local minima of the two objective functions. In this case, we mean local minima in the sense of \eqref{opt2}. That is, $(\alpha^{*},\beta^{*})$ such that $\alpha^{*}$ locally minimises $f(\alpha,\beta^{*})$ with respect to $\alpha$, and $\beta^{*}$ which locally minimises $g(\alpha^{*},\beta)$ with respect to $\beta$. We will first require the following definition. 

We will first require the following definition. A twice differentiable function $h:\mathbb{R}^{d}\rightarrow\mathbb{R}^d$ is said to be {strict saddle} if all of its local minima satisfy $\nabla_{x}^2h(x)\succ 0$, and all of its other stationary points satisfy $\lambda_{\min}(\nabla_{x}^2h(x))<0$ (i.e., the minimum eigenvalue of the Hessian evaluated at the critical points is negative). % and $\det(\nabla_{x}^2f(x))\neq 0$. 
We can now introduce the additional assumptions, which will be required in addition to Assumptions \ref{assumption1} - \ref{assumption6}.

\begin{manualassumption}{2.1.3 (L)}
The quadratic variations of the noise processes $\{\xi_i(t)\}_{t\geq 0}\hspace{-1mm}$, $i=1,2$, are uniformly positive definite. 
\end{manualassumption}

\begin{manualassumption}{2.1.5 (L)}
For all $\alpha\in\mathbb{R}^{d_1}$, the function $g(\alpha,\cdot):\mathbb{R}^{d_2}\rightarrow\mathbb{R}$ is twice continuously differentiable. Moreover, this function is strict saddle. 
\end{manualassumption}

\begin{manualassumption}{2.1.6 (L)}
For all $\beta\in\mathbb{R}^{d_2}$, the function $f(\cdot,\beta):\mathbb{R}^{d_1}\rightarrow\mathbb{R}$ is twice continuously differentiable. Moreover, this function is strict saddle.\footnote{ Strictly speaking one would only require that this holds for $\beta\in\mathbb{R}^{d_2}$ such that $\beta=\beta_{i}^{*}(\alpha)$ for some $i\geq 1$, $\alpha\in\mathbb{R}^{d_1}$. }
\end{manualassumption}

The analogue of these assumptions (for a single objective function) appear in both classical \cite{Brandiere1998,Pemantle1990} and more recent \cite{Ge2015,Mertikopoulos2020} %,Panageas2019} 
results on the `avoidance of saddles' in the discrete-time stochastic approximation literature. See also \cite{Yang2020} for a related result in continuous time. Broadly speaking, the first of these assumptions is required in order to ensure that the additive noise processes are `sufficiently exciting', that is, that they have sufficiently large components in all directions. Meanwhile, the final two assumptions rule out degenerate cases in which the Hessian does not contain sufficient information to characterise the nature of a critical point. 

Using these assumptions, one can establish (in single-timescale, discrete-time stochastic gradient descent) that unstable equilibria (i.e., saddle points) are avoided with probability one via the central manifold theorem (e.g., \cite{Shub1987}). We leave the rigorous extension of these results to the continuous-time, two-timescale framework to future work.

\subsection{Convergence to Global Minima} \label{theorem1_ext3}
%\subsubsection{Assumption \ref{assumption5}}
We conclude this appendix by detailing the assumptions required to establish convergence to global minima of the two objective functions. Similarly to before, we use the term `global minima' to mean $(\alpha^{*},\beta^{*})$ such that, simultaneously, $\alpha^{*}$ globally minimises $f(\alpha,\beta^{*})$, and $\beta^{*}$ globally minimises $g(\alpha,\beta^{*})$. 

In this case, we can replace Assumption \ref{assumption5} and Assumption \ref{assumption6} with the following.
\begin{manualassumption}{2.1.5 (G)} \label{alt1}
For all $\alpha\in\mathbb{R}^{d_1}$, the ordinary differential equation
\begin{equation}
%\frac{\mathrm{d}\beta(t)}{\mathrm{d}t}
\dot{\beta}(t) = -\nabla_{\beta} g(\alpha,\beta(t))
\end{equation} 
has a globally asymptotically stable equilibrium $\beta^{*}(\alpha)$, where $\beta^{*}:\mathbb{R}^{d_1}\rightarrow\mathbb{R}^{d_2}$ is a Lipschitz-continuous map. 
\end{manualassumption}

\begin{manualassumption}{2.1.6 (G)} \label{alt2}
The ordinary differential equation 
\begin{equation}
\dot{\alpha}(t) = -\nabla_{\alpha}f(\alpha(t),\beta^{*}(\alpha(t)))
\end{equation}
has a globally asymptotically stable equilibrium $\alpha^{*}$. 
\end{manualassumption}

These are rather classical assumptions used to establish almost sure convergence of discrete-time two-timescale stochastic approximation algorithms to a unique equilibrium point $(\alpha^{*},\beta^{*}(\alpha^{*}))$ (e.g., \cite{Borkar1997,Borkar2008,Konda1999,Tadic2004}). In the context of two-timescale stochastic gradient descent, it is common to instead use the assumptions that the functions $g(\alpha,\cdot):\mathbb{R}^{d_2}\rightarrow\mathbb{R}^{d_2}$ and $f(\alpha,\beta^{*}(\alpha)):\mathbb{R}^{d_1}\rightarrow\mathbb{R}^{d_1}$ are (strongly) convex, with some additional assumptions on the mixed partial derivatives of $g$ to ensure that $\beta^{*}(\alpha)$ is Lipschitz continuous (e.g., \cite{Doan2021,Ghadimi2018,Hong2020}).

Under these assumptions, the proof of Theorem \ref{theorem1} only requires minor modifications in order to establish convergence to the global minima, that is, $(\alpha(t),\beta(t))\rightarrow(\alpha^{*},\beta^{*}(\alpha^{*}))$ as $t\rightarrow\infty$. In particular, Assumption \ref{alt1} implies that Lemma \ref{lemmaA2} can now conclude $(\alpha(t),\beta(t))\rightarrow\{(\alpha,\beta^{*}(\alpha)):\alpha\in\mathbb{R}^{d_1}\}$. Meanwhile, Assumption \ref{alt2} implies that Lemma \ref{lemmaA4} yields $\alpha(t)\rightarrow \alpha^{*}$. The remainder of the proof is unchanged.

\section{Proof of Theorem \ref{theorem1a}}
\label{sec:proof2} 
In this Appendix, we provide a proof of Theorem \ref{theorem1a}. Our proof combines the methods in \cite[Lemma 3.1]{Sirignano2017a} and \cite[Lemma 1]{Surace2019}, adapted appropriately to the two-timescale setting, with the results of Theorem \ref{theorem1}. 

\begin{lemma} \label{lemmaB1}
For $0\leq s\leq t$, define
\begin{subequations}
\begin{align}
\Gamma_{\alpha}(s,t) &= \int_s^t \gamma_1(u)\left[F(\alpha(u),\beta(u),\mathcal{X}(u))-\nabla_{\alpha}f(\alpha(u),\beta(u))\right]\mathrm{d}u \\
\Gamma_{\beta}(s,t) &= \int_s^t \gamma_2(u)\left[G(\alpha(u),\beta(u),\mathcal{X}(u))-\nabla_{\beta}g(\alpha(u),\beta(u))\right]\mathrm{d}u.
\end{align}
\end{subequations}

Assume that Assumptions \ref{assumption1a}-\ref{assumption5a} hold. Then, for all $T\in[0,\infty)$, with probability one,
\begin{subequations}
\begin{align}
\lim_{s\rightarrow\infty} \sup_{t\in[s,s+T]} ||\Gamma_{\alpha}(s,t)|| &= 0, \\
 \lim_{s\rightarrow\infty} \sup_{t\in[s,s+T]} ||\Gamma_{\beta}(s,t)|| &=  0.
\end{align}
\end{subequations}
\end{lemma}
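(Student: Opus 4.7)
The plan is to exploit Assumption \ref{assumption4a}, which provides us with functions $\tilde{F}$ and $\tilde{G}$ solving the Poisson equations $\mathcal{A}_{\mathcal{X}}\tilde{F} = \nabla_{\alpha}f - F$ and $\mathcal{A}_{\mathcal{X}}\tilde{G} = \nabla_{\beta}g - G$. The integrand in $\Gamma_{\alpha}(s,t)$ can therefore be rewritten as $-\mathcal{A}_{\mathcal{X}}\tilde{F}(\alpha(u),\beta(u),\mathcal{X}(u))$, and the corresponding statement holds for $\Gamma_{\beta}$. I will focus on $\Gamma_{\alpha}$, the argument for $\Gamma_{\beta}$ being entirely analogous with the role of $\gamma_1$ replaced by $\gamma_2$. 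This approach follows closely the method underlying \cite[Lemma~3.1]{Sirignano2017a} and \cite[Lemma~1]{Surace2019}, and is described in essence in the discussion following Theorem \ref{theorem1a}.

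Applying It\^o's formula to $\tilde{F}(\alpha(u),\beta(u),\mathcal{X}(u))$, using the SDEs \eqref{alg2_1}, \eqref{alg2_2}, and \eqref{diffusion}, and then multiplying by $\gamma_1(u)$ and integrating by parts in $u$, yields a decomposition of $\Gamma_{\alpha}(s,t)$ as a finite sum of terms of the following types:
\begin{itemize}
\item Boundary terms of the form $\gamma_1(s)\tilde{F}(\alpha(s),\beta(s),\mathcal{X}(s)) - \gamma_1(t)\tilde{F}(\alpha(t),\beta(t),\mathcal{X}(t))$.
\item A term $\int_{s}^{t}\dot{\gamma}_1(u)\tilde{F}(\alpha(u),\beta(u),\mathcal{X}(u))\mathrm{d}u$ coming from the integration by parts.
\item A stochastic integral $\int_{s}^{t}\gamma_{1}(u)\nabla_{x}\tilde{F}\,\Psi(\alpha(u),\beta(u),\mathcal{X}(u))\mathrm{d}b(u)$ arising from the martingale part of $\mathrm{d}\mathcal{X}$.
\item Drift-coupling terms of the form $\int_{s}^{t}\gamma_{1}(u)\gamma_{i}(u)\nabla_{\alpha}\tilde{F}\cdot F(\cdots)\mathrm{d}u$ and $\int_{s}^{t}\gamma_{1}(u)\gamma_{i}(u)\nabla_{\beta}\tilde{F}\cdot G(\cdots)\mathrm{d}u$, $i=1,2$.
\item Noise-coupling terms $\int_{s}^{t}\gamma_{1}(u)\gamma_{i}(u)\nabla_{\alpha,\beta}\tilde{F}\,\mathrm{d}\zeta_{i}(u)$, $i=1,2$.
\item Cross-variation terms involving $\mathrm{d}[b,z_{i}](u)$ arising from It\^o corrections.
\end{itemize}

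The bulk of the work is to show that the supremum over $t\in[s,s+T]$ of each of these pieces tends to zero almost surely as $s\to\infty$. The boundary and $\dot{\gamma}_1$ terms are handled by combining the polynomial growth of $\tilde{F}$ (Assumption \ref{assumption4a}) with the moment bounds of Assumption \ref{assumption4aii} and the boundedness of the iterates (Assumption \ref{assumption4}), together with $\gamma_{1}(t)\to 0$ and $\int_{0}^{\infty}|\dot{\gamma}_{1}(u)|\mathrm{d}u<\infty$. The stochastic integral is a continuous martingale whose quadratic variation is integrable by Assumption \ref{assumption1a} ($\int \gamma_1^{2}<\infty$) combined with the polynomial growth of $\Psi$ (Assumption \ref{assumption4aiii}) and the moment bounds on $\mathcal{X}$; an application of Doob's maximal inequality together with the Burkholder--Davis--Gundy inequality controls its supremum. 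The drift- and noise-coupling terms contain an extra $\gamma_{i}(u)$ factor and are handled similarly, using in addition Assumption \ref{assumption5b} for $\zeta_i^{(2)}$ and Assumption \ref{assumption5a}, while the cross-variation terms are absorbed using Assumption \ref{assumption5c}.

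The main obstacle I anticipate is upgrading the $L^{p}$ estimates obtained from the moment bounds and BDG inequality to \emph{almost sure} uniform convergence on intervals $[s,s+T]$. To do this I will derive, for each term, a quantitative bound of the form $\mathbb{E}[\sup_{t\in[s,s+T]}(\cdots)^{q}]\leq C\gamma_{i}^{q}(s)s^{1/2+\varepsilon}$ for some small $\varepsilon<2r_{i}$, using the moment estimates in Assumption \ref{assumption4aii} (whose $\sqrt{t}$ growth is precisely why the condition $\gamma_{i}^{2}(t)t^{1/2+2r_{i}}\to 0$ in Assumption \ref{assumption1a} is imposed). Markov's inequality then gives a summable tail along the integer times $s_n=n$, whence Borel--Cantelli yields almost sure convergence along $\{s_n\}$. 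Uniform continuity of the integrals in $s$ on compacts extends this to the continuous parameter $s\to\infty$, completing the proof.
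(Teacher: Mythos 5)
Your overall strategy is exactly the paper's: rewrite the integrand of $\Gamma_{\alpha}$ via the Poisson equation of Assumption \ref{assumption4a}, apply It\^o's formula to $\hat{F}(\alpha,\beta,x,t)=\gamma_1(t)\tilde{F}(\alpha,\beta,x)$, and control the resulting boundary, finite-variation, martingale and cross-variation pieces term by term. The decomposition you list matches the one in the paper's proof essentially item for item.

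The genuine gap is in your final almost-sure upgrade. For the boundary term $J^{(1)}(t)=\gamma_1(t)\sup_{\tau\le t}\|\tilde F(\alpha(\tau),\beta(\tau),\mathcal X(\tau))\|$, the available estimate is $\mathbb E[(J^{(1)}(t))^2]\le K\gamma_1^2(t)\sqrt t$, and combined with $\gamma_1^2(t)t^{1/2+2r_1}\le 1$ this gives $\mathbb P\bigl(J^{(1)}(t)\,t^{r_1-\delta}\ge 1\bigr)\le K t^{-2\delta}$ for $0<\delta<r_1$. Since Assumption \ref{assumption1a} only asserts that \emph{some} $r_1>0$ exists, $\delta$ may be forced below $1/2$, in which case $\sum_n n^{-2\delta}$ diverges and Borel--Cantelli along the integer times $s_n=n$ fails; no choice of decaying threshold rescues summability when $r_1$ is small. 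The paper instead runs Borel--Cantelli along the geometric subsequence $t_n=2^n$, where $\sum_n 2^{-2n\delta}<\infty$ for every $\delta>0$, and then interpolates to $t\in[2^n,2^{n+1}]$ using the monotonicity of $\tau\mapsto\sup_{\tau'\le\tau}\|\cdot\|$ and the non-increase of $\gamma_1$ --- not a generic ``uniform continuity in $s$'' argument, which is unavailable because $\sup_{\tau\le t}\|\mathcal X(\tau)\|$ can grow in $t$. Once you make that repair, the remainder of your plan is also heavier than needed: the absolutely integrable drift pieces converge a.s. simply because their expected total variation over $[0,\infty)$ is finite (Assumptions \ref{assumption1a}, \ref{assumption4aiii}--\ref{assumption4aii}, \ref{assumption5b}--\ref{assumption5c}), and the stochastic-integral piece is an $L^2$-bounded martingale by the It\^o isometry, so Doob's martingale convergence theorem yields a.s. convergence of $J^{(3)}(t)$ directly; neither requires a quantitative $\sup_{t\in[s,s+T]}$ tail bound, and for the martingale the form $C\gamma_i^{q}(s)s^{1/2+\varepsilon}$ you propose is not the natural estimate in any case.
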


\begin{proof}
We will prove only the first part of the Lemma, as the method for proving the second part is entirely analogous. By Assumption \ref{assumption4a}, there exists a differentiable function $f:\mathbb{R}^{d_1}\times\mathbb{R}^{d_2}\rightarrow\mathbb{R}$, and a unique Borel-measurable function $\tilde{F}:\mathbb{R}^{d_1}\times\mathbb{R}^{d_2}\times\mathbb{R}^{d_3}$ such that $\nabla_{\alpha}f(\cdot)$ is Lipschitz continuous, and moreover, such that the Poisson equation
\begin{equation}
\mathcal{A}_{\mathcal{X}}\tilde{F}(\alpha,\beta,x) = F(\alpha,\beta,x) - \nabla_{\alpha}f(\alpha,\beta) \label{eqb03}
\end{equation}
has a unique, twice-differentiable solution which grows at most polynomially in $x$. In particular, there exist $K',q'>0$ such that
\begin{subequations}
\begin{align}
\sum_{i=0}^2 ||\partial_{\alpha}^{i} \tilde{F}(\alpha,\beta,x)|| + ||\partial_x\partial_{\alpha}\tilde{F}(\alpha,\beta,x)||   &\leq K(1+||x||^{q'}),\label{poisson_eq}  \\
\sum_{i=0}^2 ||\partial_{\beta}^{i} \tilde{F}(\alpha,\beta,x)|| + ||\partial_x\partial_{\beta}\tilde{F}(\alpha,\beta,x)|| &\leq K(1+||x||^{q'}).
\end{align}
\end{subequations}
Now consider the vector-valued function $\hat{F}(\alpha,\beta,x,t) = \gamma_1(t)\tilde{F}(\alpha,\beta,x)$,  with $\tilde{F}$ as defined in \eqref{eqb03}.  Applying It\^o's Lemma to each component of $\hat{F}$, we obtain, for $i=1,\dots,d_1$,
{\allowdisplaybreaks
\begin{align}
&\hat{F}_i(\alpha(t),\beta(t),\mathcal{X}(t),t)-\hat{F}_i(\alpha(s),\beta(s),\mathcal{X}(s),s) \\
&=\int_s^t \partial_{\tau} \hat{F}_i(\alpha(\tau),\beta(\tau),\mathcal{X}(\tau),\tau) \mathrm{d}\tau + \int_s^t \mathcal{A}_{{\mathcal{X}}} \hat{F}_i(\alpha(\tau),\beta(\tau),\mathcal{X}(\tau),\tau)\mathrm{d}\tau \nonumber\\
&- \int_s^t \gamma_1(\tau)F(\alpha(\tau),\beta(\tau),\mathcal{X}(\tau))\cdot \nabla_{\alpha} \hat{F}_i(\alpha(\tau),\beta(\tau),\mathcal{X}(\tau),\tau)\mathrm{d}\tau \nonumber\\
&- \int_s^t \gamma_2(\tau)G(\alpha(\tau),\beta(\tau),\mathcal{X}(\tau))\cdot \nabla_{\beta} \hat{F}_i(\alpha(\tau),\beta(\tau),\mathcal{X}(\tau),\tau)\mathrm{d}\tau \nonumber \\
&+\frac{1}{2}\int_{s}^t\gamma_1^2(\tau)\nabla_{\alpha}\nabla_{\alpha} \hat{F}_i(\alpha(\tau),\beta(\tau),\mathcal{X}(\tau),\tau):\mathrm{d}\left[\zeta_1,\zeta_1\right](\tau)\nonumber \\
&+\frac{1}{2}\int_{s}^t\gamma_2^2(\tau)\nabla_{\beta} \nabla_{\beta} \hat{F}_i(\alpha(\tau),\beta(\tau),\mathcal{X}(\tau),\tau):\mathrm{d}\left[\zeta_2,\zeta_2\right](\tau) \nonumber\\
&-\int_s^t \gamma_1(\tau) \nabla_{\alpha} \hat{F}_i(\alpha(\tau),\beta(\tau),\mathcal{X}(\tau),\tau) \cdot \mathrm{d}\zeta_1(\tau) \nonumber\\
&-\int_s^t \gamma_2(\tau) \nabla_{\beta} \hat{F}_i(\alpha(\tau),\beta(\tau),\mathcal{X}(\tau),\tau) \cdot \mathrm{d}\zeta_2(\tau)\nonumber \\
&+\int_s^t \nabla_x \hat{F}_i(\alpha(\tau),\beta(\tau),\mathcal{X}(\tau),\tau) \cdot \Psi(\alpha(\tau),\beta(\tau),\mathcal{X}(\tau))\mathrm{d}b(\tau) \nonumber\\
&+ \int_s^t \gamma_1(\tau)\gamma_2(\tau) \nabla_{\alpha}\nabla_{\beta}\hat{F}_i(\alpha(\tau),\beta(\tau),\mathcal{X}(\tau),\tau):\mathrm{d}\left[\zeta_1,\zeta_2\right](\tau) \nonumber\\
&-\int_s^t \gamma_1(\tau) \nabla_{\alpha}\nabla_{x}\hat{F}_i(\alpha(\tau),\beta(\tau),\mathcal{X}(\tau),\tau):\Psi(\alpha(\tau),\beta(\tau),\mathcal{X}(\tau))\mathrm{d}\left[\zeta_1,b\right](\tau) \nonumber\\
&-\int_s^t \gamma_2(\tau) \nabla_{\beta}\nabla_{x}\hat{F}_i(\alpha(\tau),\beta(\tau),\mathcal{X}(\tau),\tau):\Psi(\alpha(\tau),\beta(\tau),\mathcal{X}(\tau))\mathrm{d}\left[\zeta_2,b\right](\tau) \nonumber
\end{align}
}
For the sake of brevity, we will proceed under the assumption that the continuous semi-martingales $\{\zeta_i(t)\}_{t\geq 0}$, $i=1,2$ are, in fact, diffusion processes. We should emphasise, however, that this assumption does not change the subsequent analysis in any meaningful way, and can be easily relaxed. In particular, we will assume that
\begin{equation}
\mathrm{d}\zeta_i(t) = \zeta_i^{(1)}(\alpha(t),\beta(t),\mathcal{X}(t))\mathrm{d}t + \zeta_i^{(2)}(\alpha(t),\beta(t),\mathcal{X}(t)) \mathrm{d}z_i(t)
%\psi_i,\varphi_i
\end{equation}
where $\smash{\zeta_i^{(1)}(\alpha,\beta,\cdot):\mathbb{R}^{d_3}\rightarrow}$ $\smash{\mathbb{R}^{d_i}}$ and $\smash{\zeta_i^{(2)}(\alpha,\beta,\cdot):\mathbb{R}^{d_3}\rightarrow \mathbb{R}^{d_i\times d_5^{i}}}$ are Borel measurable functions%; $\{a_i(t)\}_{t\geq 0}$ are predictable, increasing processes
; and $\{z_i(t)\}_{t\geq 0}$ are $\mathbb{R}^{d_5^{i}}$ valued Wiener processes. 
\iffalse
[\textbf{to add}: assumptions on these functions, perhaps relax this assumption to allow for more general continuous time local martingales].
\fi
\iffalse
\noindent
To add:
\begin{itemize}
\item Assumptions on these functions. In particular, $\psi_i(t),\varphi_i(t)$ satisfy standard assumptions, and must also be such that the noise assumption (Assumption \ref{assumption5a}) is satisfied. $w_i=\{w_i(t)\}_{t\geq 0}$, $i=1,2$, are standard Wiener process, which are either independent, equal, or subsets of each other. In addition, they are each independent of $b=\{b(t)\}_{t\geq 0}$, equal to $b=\{b(t)\}_{t\geq 0}$, or a subset of $b=\{b(t)\}_{t\geq 0}$.
%\item We can then compute the following quadratic variations
%\begin{align}
%\mathrm{d}\left[\zeta_i,\zeta_i\right](\tau) &= \varphi_i(\tau)\varphi_i^T(\tau)\mathrm{d}\tau, \\
%\mathrm{d}\left[\zeta_1,\zeta_2\right](\tau) &= \tilde{\varphi}_1(\tau)\tilde{\varphi}_2(\tau)\mathrm{d}\tau, \\
%\mathrm{d}\left[\zeta_1,b\right](\tau) &= \hat{\varphi}_1(\tau)\hat{\Psi}(\alpha(\tau),\beta(\tau),\mathcal{X}(\tau))\mathrm{d}\tau, \\
%\mathrm{d}\left[\zeta_2,b\right](\tau) &= \bar{\varphi}_2(\tau)\bar{\Psi}(\alpha(\tau),\beta(\tau),\mathcal{X}(\tau))\mathrm{d}\tau.
%\end{align}
%where, for example, $\tilde{\varphi}_1$, $\tilde{\varphi}_2$, denote the subsets of the matrices $\varphi_1$, $\varphi_2$, which correspond to subsets of $w_1,w_2$ which are equal. This includes the cases when two of $w_1$, $w_2$, $b$ are equal, and/or when two of $w_1$, $w_2$, $b$ are independent. 
\item It is worth noting that it is straightforward to adapt our proof to the case in which $\{\zeta_i(t)\}_{t\geq0}$ are continuous semi-martingales, with no other restrictions. In this case, we would simply replace $t$ with an increasing process with bounded variation $a(t)$, and $\varphi_i(\cdot)\mathrm{d}w_i(t)$ with an arbitrary locally square-integrable martingale. In this case, we would require slightly different conditions; for example, $\int_0^{\infty}\gamma_i^2(t)\mathrm{d}t$ would become $\int_0^t\gamma_i^2(t)\mathrm{d}[a](t)$.
\end{itemize}
\fi
In this case, recalling the definition of the functions $c_{z_1,z_2}$, $c_{z_1,b}$ and $c_{z_2,b}$ in Assumption \ref{assumption5c}, the previous equation becomes
{\allowdisplaybreaks
\begin{align}
&\hat{F}_i(\alpha(t),\beta(t),\mathcal{X}(t),t)-\hat{F}_i(\alpha(s),\beta(s),\mathcal{X}(s),s) \\
&=\int_s^t \partial_{\tau} \hat{F}_i(\alpha(\tau),\beta(\tau),\mathcal{X}(\tau),\tau) \mathrm{d}\tau + \int_s^t \mathcal{A}_{{\mathcal{X}}} \hat{F}_i(\alpha(\tau),\beta(\tau),\mathcal{X}(\tau),\tau)\mathrm{d}\tau \nonumber\\
&+\int_s^t \mathcal{A}_{\alpha}\hat{F}_i(\alpha(\tau),\beta(\tau),\mathcal{X}(\tau),\tau)\mathrm{d}\tau +\int_s^t \mathcal{A}_{\beta}\hat{F}_i(\alpha(\tau),\beta(\tau),\mathcal{X}(\tau),\tau)\mathrm{d}\tau \nonumber\\
&-\int_s^t \gamma_1(\tau) \nabla_{\alpha} \hat{F}_i(\alpha(\tau),\beta(\tau),\mathcal{X}(\tau),\tau) \cdot \zeta_1^{(2)}(\tau)\mathrm{d}z_1(\tau)\nonumber \\
&-\int_s^t \gamma_2(\tau) \nabla_{\beta} \hat{F}_i(\alpha(\tau),\beta(\tau),\mathcal{X}(\tau),\tau) \cdot \zeta_2^{(2)}(\tau)\mathrm{d}z_2(\tau) \nonumber\\
&+\int_s^t \nabla_x \hat{F}_i(\alpha(\tau),\beta(\tau),\mathcal{X}(\tau),\tau) \cdot \Psi(\tau)\mathrm{d}b(\tau) \nonumber\\
&-\int_s^t \gamma_1(\tau) \mathrm{Tr}\left[\nabla_{\alpha}\nabla_{x}\hat{F}_i(\alpha(\tau),\beta(\tau),\mathcal{X}(\tau),\tau){\Psi}(\tau) \zeta_1^{(2)}(\tau)c_{z_1,b}(\tau) \right]\mathrm{d}\tau \nonumber\\
&-\int_s^t \gamma_2(\tau)\mathrm{Tr}\left[\nabla_{\beta}\nabla_{x}\hat{F}_i(\alpha(\tau),\beta(\tau),\mathcal{X}(\tau),\tau){\Psi}(\tau) \zeta_2^{(2)}(\tau)c_{z_2,b}(\tau)\right]\mathrm{d}\tau \nonumber\\
&+\int_s^t \gamma_1(\tau)\gamma_2(\tau) \mathrm{Tr}\left[\nabla_{\alpha}\nabla_{\beta}\hat{F}_i(\alpha(\tau),\beta(\tau),\mathcal{X}(\tau),\tau)\zeta_1^{(2)}(\tau)\zeta_2^{(2)}(\tau)c_{z_1,z_2}(\tau)\right]\mathrm{d}\tau, \nonumber
\end{align}
}
where, $\mathcal{A}_{\alpha}$ and $\mathcal{A}_{\beta}$ are the infinitesimal generators of the processes $\{\alpha(t)\}_{t\geq 0}$ and $\{\beta(t)\}_{t\geq 0}$; and $\nabla_{\alpha}\nabla_{\beta} u_k(\alpha,\beta,x,\tau)_{ij} = \partial_{\alpha_i}\partial_{\beta_j} u_k(\alpha,\beta,x,\tau)$, with $\nabla_{\alpha}\nabla_{x}$ and $\nabla_{\beta}\nabla_{x}$ defined similarly. For the sake of simplicity, we have temporarily suppressed the dependence of the functions $\smash{\zeta_i^{(1)},\zeta_i^{(2)}}$, $i=1,2$, and $\Psi$ on $\smash{\{\alpha(t)\}_{t\geq 0},\{\beta(t)\}_{t\geq 0}}$ and $\smash{\{\mathcal{X}(t)\}_{t\geq0}}$. It follows straightforwardly that
{\allowdisplaybreaks
\begin{align}
\Gamma_{\alpha}(s,t) &= \int_s^t \gamma_1(\tau)\left[F(\alpha(\tau),\beta(\tau),\mathcal{X}(\tau))-\nabla_{\alpha}f(\alpha(\tau),\beta(\tau))\right]\mathrm{d}\tau  \\
&=\int_s^t \gamma_1(\tau)\mathcal{A}_{{\mathcal{X}}}\tilde{F}(\alpha(\tau),\beta(\tau),\mathcal{X}(\tau))\mathrm{d}\tau \\
&=\int_s^t \mathcal{A}_{{\mathcal{X}}}\hat{F}(\alpha(\tau),\beta(\tau),\mathcal{X}(\tau))\mathrm{d}\tau  \\[2mm]
&=\gamma_1(t)\tilde{F}(\alpha(t),\beta(t),\mathcal{X}(t))-\gamma_1(s)\tilde{F}(\alpha(s),\beta(s),\mathcal{X}(s)) \\[1mm]
&-\int_s^t \dot{\gamma}_1(\tau)\partial_{\tau} \tilde{F}(\alpha(\tau),\beta(\tau),\mathcal{X}(\tau)) \mathrm{d}\tau \nonumber\\
&-\int_s^t \gamma_1(\tau)\mathcal{A}_{\alpha}  \tilde{F}(\alpha(\tau),\beta(\tau),\mathcal{X}(\tau))\mathrm{d}\tau -\int_s^t \gamma_1(\tau) \mathcal{A}_{\beta} \tilde{F}(\alpha(\tau),\beta(\tau),\mathcal{X}(\tau))\mathrm{d}\tau \nonumber \\
&+\int_s^t \gamma_1^2(\tau) \nabla_{\alpha} \tilde{F}(\alpha(\tau),\beta(\tau),\mathcal{X}(\tau),\tau) \cdot \zeta_1^{(2)}(\tau)\mathrm{d}z_1(\tau) \nonumber\\
&+\int_s^t \gamma_1(\tau)\gamma_2(\tau) \nabla_{\beta} \tilde{F}(\alpha(\tau),\beta(\tau),\mathcal{X}(\tau),\tau) \cdot \zeta_2^{(2)}(\tau)\mathrm{d}z_2(\tau)  \nonumber\\
&-\int_s^t \gamma_1(\tau) \nabla_x \tilde{F}(\alpha(\tau),\beta(\tau),\mathcal{X}(\tau)) \cdot \Psi(\tau)\mathrm{d}b(\tau) \nonumber\\
&+\int_s^t \gamma^2_1(\tau) \mathrm{Tr}\left[\nabla_{\alpha}\nabla_{x}\tilde{F}(\alpha(\tau),\beta(\tau),\mathcal{X}(\tau)){\Psi}(\tau) \zeta_1^{(2)}(\tau)c_{z_1,b}(\tau) \right]\mathrm{d}\tau \nonumber\\
&+\int_s^t \gamma_1(\tau)\gamma_2(\tau)\mathrm{Tr}\left[\nabla_{\beta}\nabla_{x}\tilde{F}(\alpha(\tau),\beta(\tau),\mathcal{X}(\tau)){\Psi}(\tau) \zeta_2^{(2)}(\tau)c_{z_2,b}(\tau)\right]\mathrm{d}\tau \hspace{-5mm}  \nonumber\\
&- \int_s^t \gamma^2_1(\tau)\gamma_2(\tau) \mathrm{Tr}\left[\nabla_{\alpha}\nabla_{\beta}\tilde{F}(\alpha(\tau),\beta(\tau),\mathcal{X}(\tau))\zeta_1^{(2)}(\tau)\zeta_2^{(2)}(\tau)c_{z_1,z_2}(\tau)\right]\mathrm{d}\tau \nonumber 
\end{align}
}
We will now bound each of these terms in turn. We first define
\begin{equation}
J^{(1)}(t) = \gamma_{1}(t)\sup_{\tau\in[0,t]}||\tilde{F}(\alpha(\tau),\beta(\tau),\mathcal{X}(\tau))||.
\end{equation}
By Assumption \ref{assumption4a} and Assumption \ref{assumption4aii}, there exists $q>0$, and $K,K'>0$ such that for all $t$ sufficiently large, we have
\begin{align}
\mathbb{E}\big[\big(J^{(1)}(t)\big)^2\big]&= \mathbb{E}\big[\gamma_1^2(t)\sup_{\tau\in[0,t]}||\tilde{F}(\alpha(\tau),\beta(\tau),\mathcal{X}(\tau))||^2\big] \\
%&\leq K\gamma_1^2(t)\mathbb{E}\bigg[1+\sup_{\tau\in[0,t]}||\mathcal{X}(\tau)||^q\bigg] \\
&\leq K\gamma_1^2(t)\big[1+\mathbb{E}\sup_{\tau\in[0,t]}||\mathcal{X}(\tau)||^q\big] \\
&\leq K\gamma_1^2(t)\big[1+K'\sqrt{t}\big] \\[2mm]
&\leq K''\gamma_1^2(t)\sqrt{t}.\label{B55}
\end{align}
By Assumption \ref{assumption1a}, there exists $r_1>0$ such that $\lim_{t\rightarrow\infty}\gamma_1^2(t)t^{\frac{1}{2}+2r_1}=0$. In particular, there exists $T>0$ such that for all $t\geq T$, 
\begin{equation}
\gamma_1^2(t)t^{\frac{1}{2}+2r_1}\leq 1.\label{B56}
\end{equation}
Now suppose that, for any $0< \delta <r_1$, we define the event $A_{\delta}(t) = \{J^{(1)}(t)\cdot t^{r_1-\delta}\geq 1\}$. Then, by Markov's inequality, equation (\ref{B55}), and equation (\ref{B56}), we have that, for all $t\geq T$, 
\begin{equation}
\mathbb{P}(A_{\delta}(t))\leq \mathbb{E}\left[(J^{(1)}(t))^2\right]t^{2(r_{1}-\delta)}\leq K''\gamma_{1}^2(t)t^{\frac{1}{2}+2r_{1}-2\delta}\leq K''t^{-2\delta}
\end{equation}
It follows that $\sum_{n=1}^{\infty} \mathbb{P}(A_{\delta}(2^n))<\infty$. By the Borel-Cantelli Lemma, this observation implies that only finitely many events $A_{\delta}(2^n)$ can occur. Therefore, there exists a random index $n_0(\omega)$ such that 
\begin{equation}
J^{(1)}(2^n) \cdot 2^{n(r_{1}-\delta)}\leq 1
\end{equation}
for all $n\geq n_0$. Equivalently, there exists a finite positive random variable $d(\omega)$ and a deterministic $0<n_1<\infty$ such that for all $n\geq n_1$, 
\begin{equation}
J^{(1)}(2^n)\cdot 2^{n(r-\delta)}\leq d(\omega).
\end{equation}
Thus, for $t\in[2^n,2^{n+1}]$, and $n\geq n_1$, we have, for some constant $0<K<\infty$,
\begin{align}
J^{(1)}(t)&=\gamma_{1}(t)\sup_{\tau\in[0,t]}||\tilde{F}(\alpha(\tau),\beta(\tau),\mathcal{X}(\tau))|| \\[2mm]
%&\leq \gamma_{1}(2^n)\sup_{\tau\in[0,t]}||\tilde{F}(\alpha(\tau),\beta(\tau),\mathcal{X}(\tau))|| \\
%&\leq \gamma_{1}(2^n)\sup_{\tau\in[0,2^{n+1}]}||\tilde{F}(\alpha(\tau),\beta(\tau),\mathcal{X}(\tau))|| \\
& \leq K \gamma_{1}(2^{n+1})\sup_{\tau\in[0,2^{n+1}]}||\tilde{F}(\alpha(\tau),\beta(\tau),\mathcal{X}(\tau))|| \\
& = KJ^{(1)}(2^{n+1})\\
&\leq K\frac{d(\omega)}{2^{(n+1)(r_{1}-\delta)}} \\
&\leq K\frac{d(\omega)}{t^{r_{1}-\delta}}.
\end{align}
It follows that, for all $t\geq 2^{n_0}$, with probability one,
\begin{equation}
J^{(1)}(t)\leq K\frac{d(\omega)}{t^{r_1-\delta}}\rightarrow 0~\text{as $t\rightarrow\infty$.} \label{B65}
\end{equation}
We next define 
\begin{align}
J^{(2)}(t) &= \int_0^t \bigg|\bigg| \dot{\gamma}_1(\tau)\partial_{\tau} \tilde{F}(\alpha(\tau),\beta(\tau),\mathcal{X}(\tau)) \\
&\hspace{8.5mm}+ \gamma_1(\tau)\mathcal{A}_{\alpha} \tilde{F}(\alpha(\tau),\beta(\tau),\mathcal{X}(\tau)) +\gamma_1(\tau) \mathcal{A}_{\beta} \tilde{F}(\alpha(\tau),\beta(\tau),\mathcal{X}(\tau)) \hspace{-5mm} \nonumber \\[2mm] 
&\hspace{8.5mm}-\gamma^2_1(\tau) \mathrm{Tr}\left[\nabla_{\alpha}\nabla_{x}\tilde{F}(\alpha(\tau),\beta(\tau),\mathcal{X}(\tau)){\Psi}(\tau) \zeta_1^{(2)}(\tau)c_{z_1,b}(\tau) \right] \nonumber\\[1mm]
&\hspace{8.5mm}- \gamma_1(\tau)\gamma_2(\tau)\mathrm{Tr}\left[\nabla_{\beta}\nabla_{x}\tilde{F}(\alpha(\tau),\beta(\tau),\mathcal{X}(\tau)){\Psi}(\tau) \zeta_2^{(2)}(\tau)c_{z_2,b}(\tau)\right] \hspace{-8mm}\nonumber \\
&\hspace{8.5mm}+\gamma^2_1(\tau)\gamma_2(\tau) \mathrm{Tr}\left[\nabla_{\alpha}\nabla_{\beta}\tilde{F}(\alpha(\tau),\beta(\tau),\mathcal{X}(\tau))\zeta_1^{(2)}(\tau)\zeta_2^{(2)}(\tau)c_{z_1,z_2}(\tau)\right] \bigg|\bigg| \hspace{.5mm}\mathrm{d}\tau. \nonumber
\end{align}
By Assumptions \ref{assumption1a}, \ref{assumption4aiii}, \ref{assumption4a}, \ref{assumption4aii}, \ref{assumption5b} and \ref{assumption5c}, there exists $q>0$, and constants $K,K',K''>0$ such that
\begin{align}
\sup_{t\geq 0} \mathbb{E}[J^{(2)}(t)]%&\leq \mathbb{E}\bigg[K \int_0^{\infty}(\dot{\gamma}_{1}(\tau)+\gamma_{1}^2(\tau)+\gamma_1(\tau)\gamma_2(\tau)+\gamma_1^2(\tau)\\[-3mm]
%&\hspace{20mm}+\gamma_1(\tau)\gamma_2(\tau)+\gamma_1^2(\tau)\gamma_2(\tau))(1+||{\mathcal{X}}(\tau)||^q)\mathrm{d}\tau\bigg] \nonumber \\
&\leq K \int_0^{\infty}\left(\dot{\gamma}_{1}(\tau)+\gamma_{1}^2(\tau)+\gamma_1(\tau)\gamma_2(\tau)+\gamma_1^2(\tau)\right.\\[-1mm]
&\hspace{20mm}+\left.\gamma_1(\tau)\gamma_2(\tau)+\gamma_1^2(\tau)\gamma_2(\tau)\right)(1+\mathbb{E}||{\mathcal{X}}(\tau)||^q)\mathrm{d}\tau \nonumber \\[2mm]
&\leq KK' \int_0^{\infty}\left(\dot{\gamma}_{1}(\tau)+\gamma_{1}^2(\tau)+\gamma_2^2(\tau)+\gamma_1^2(\tau)\right.\\[-1mm]
&\hspace{25mm}+\left.\gamma_1(\tau)\gamma_2(\tau)+\gamma_1^2(\tau)\gamma_2(\tau)\right)\mathrm{d}\tau \nonumber \\[2mm]
&\leq KK'K''<\infty.
\end{align}
In particular, the first inequality follows from Assumptions \ref{assumption4aiii}, \ref{assumption4a}, \ref{assumption5b} and \ref{assumption5c}, using additionally the fact that $\mathcal{A}_{\alpha}$ contains at least a factor of $\gamma_1(t)$, and $\mathcal{A}_{\beta}$ contains at least a factor of $\gamma_2(t)$. The second inequality follows from the first part of Assumption \ref{assumption4aii}. The final inequality follows from Assumption \ref{assumption1a}. It follow that there exists a finite random variable, say $\smash{\bar{J}^{(2)}_{\infty}}$, such that, with probability one,
\begin{equation}
\lim_{t\rightarrow\infty} J^{(2)}(t)= \bar{J}^{(2)}_{\infty}. \label{B74}
\end{equation}
Finally, we define 
\begin{align}
J^{(3)}(t) &= \int_0^t \gamma_1^2(\tau) \nabla_{\alpha} \tilde{F}(\alpha(\tau),\beta(\tau),\mathcal{X}(\tau),\tau) \cdot \zeta_1^{(2)}(\tau)\mathrm{d}z_1(\tau) \\
&\hspace{6mm}+\gamma_1(\tau)\gamma_2(\tau) \nabla_{\beta} \tilde{F}(\alpha(\tau),\beta(\tau),\mathcal{X}(\tau),\tau) \cdot \zeta_2^{(2)}(\tau)\mathrm{d}z_2(\tau) \nonumber \\[1.5mm]
&\hspace{6mm}-\gamma_1(\tau) \nabla_x \tilde{F}(\alpha(\tau),\beta(\tau),\mathcal{X}(\tau)) \cdot \Psi(\tau)\mathrm{d}b(\tau) \nonumber
\end{align}
By the It\^o Isometry, and Assumptions \ref{assumption1a}, \ref{assumption4aiii}, \ref{assumption4a}, \ref{assumption4aii}, \ref{assumption5b} and \ref{assumption5c}, similar calculations to those for $J^{(2)}(t)$ show there exists $q>0$, and constants $K,K',K''>0$ such that
\begin{align}
\sup_{t\geq 0} \mathbb{E}[||J^{(3)}(t)||^2]%&\leq \mathbb{E}\bigg[K \int_0^{\infty}(\gamma_{1}^4(\tau)+\gamma_1^2(\tau)\gamma^2_2(\tau)+\gamma_1^2(\tau)+2\gamma_1^3(\tau)\gamma_2(\tau)\\[-1mm]
%&\hspace{22mm}+2\gamma_1^2(\tau)\gamma_2(\tau)+2\gamma_1^3(\tau))(1+||{\mathcal{X}}(\tau)||^q)\mathrm{d}\tau\bigg] \nonumber \\
&\leq K \int_0^{\infty}\left(\gamma_{1}^4(\tau)+\gamma_1^2(\tau)\gamma^2_2(\tau)+\gamma_1^2(\tau)+2\gamma_1^3(\tau)\gamma_2(\tau) \right. \\[1mm]
&\hspace{18mm}+\left.2\gamma_1^2(\tau)\gamma_2(\tau)+2\gamma_1^3(\tau)\right)(1+\mathbb{E}||{\mathcal{X}}(\tau)||^q)\hspace{.5mm}\mathrm{d}\tau \nonumber \\[2mm]
&\leq KK' \int_0^{\infty}\left(\gamma_{1}^4(\tau)+\gamma_1^2(\tau)\gamma^2_2(\tau)+\gamma_1^2(\tau) \right. \\[1mm]
&\hspace{20mm}+\left.2\gamma_1^3(\tau)\gamma_2(\tau)+2\gamma_1^2(\tau)\gamma_2(\tau)+2\gamma_1^3(\tau)\right)\hspace{.5mm}\mathrm{d}\tau \nonumber \\[2mm]
&\leq KK'K''<\infty.
\end{align}
Thus, by Doob's martingale convergence theorem, there exists a square integrable random variable, say $\bar{J}^{(3)}_{\infty}$, such that, with probability one and in $L^2$, 
\begin{equation}
\lim_{t\rightarrow\infty} J^{(3)}(t)= \bar{J}^{(3)}_{\infty}. \label{B82}
\end{equation}
It remains only to observe that 
\begin{equation}
||\Gamma_{\alpha}(s,t)|| \leq J^{(1)}(t) + J^{(1)}(s) + J^{(2)}(t) - J^{(2)}(s)  + ||J^{(3)}(t) - J^{(3)}(s)||. 
\end{equation}
Together with (\ref{B65}), (\ref{B74}) and (\ref{B82}), this expression implies that for all $T\in[0,\infty)$, with probability one, 
\begin{equation}
\lim_{s\rightarrow\infty}\sup_{t\in[s,s+T]}||\Gamma_{\alpha}(s,s+T)||= 0.
\end{equation} 

\end{proof}

\begin{theorem_recall_2}
Assume that Assumptions \ref{assumption1a} - \ref{assumption5c} and \ref{assumption4} hold. In addition, assume that Assumptions \ref{assumption5} - \ref{assumption6} hold for the functions $f(\cdot)$ and $g(\cdot)$ defined in Assumption \ref{assumption4a}. Then, almost surely, 
%\begin{equation}
%\lim_{t\rightarrow\infty}(\alpha(t),\beta(t)) \in \{\alpha\in\mathbb{R}^{d_1}:\nabla_{\alpha}f(\alpha,\beta^{*}_i(\alpha))=0\}.
%\end{equation}
%In particular, 
\begin{equation}
\lim_{t\rightarrow\infty}\nabla_{\alpha}f(\alpha(t),\beta(t)) = \lim_{t\rightarrow\infty}\nabla_{\beta}g(\alpha(t),\beta(t)) = 0.
\end{equation}
\end{theorem_recall_2}

\begin{proof}
We begin with the observation that Algorithm \eqref{alg2_1} - \eqref{alg2_2} can be written in the form of Algorithm \eqref{eq1} - \eqref{eq2}, viz
\begin{align}
\mathrm{d}\alpha(t) = -\gamma_{1}(t)\bigg[&\nabla_{\alpha} f(\alpha(t),\beta(t))\mathrm{d}t + \\[-3.5mm]
& \underbrace{\big(F(\alpha(t),\beta(t),\mathcal{X}(t))-\nabla_{\alpha} f(\alpha(t),\beta(t))\big)\mathrm{d}t + \mathrm{d}\zeta_1(t)}_{=\hspace{.5mm}\mathrm{d}\xi_1(t)}\bigg], \nonumber \\
\mathrm{d}\beta(t) = -\gamma_{2}(t)\bigg[&\nabla_{\beta} g(\alpha(t),\beta(t))\mathrm{d}t + \\[-3.5mm]
& \underbrace{\big(G(\alpha(t),\beta(t),\mathcal{X}(t))-\nabla_{\beta} g(\alpha(t),\beta(t))\big)\mathrm{d}t + \mathrm{d}\zeta_2(t)}_{=\hspace{.5mm}\mathrm{d}\xi_2(t)}\bigg]. \nonumber
\end{align}
\iffalse
\begin{align}
\mathrm{d}\alpha(t) = -\gamma_{1}(t)\left[\nabla_{\alpha} f(\alpha(t),\beta(t))\mathrm{d}t + \underbrace{\left[\big(F(\alpha(t),\beta(t),\mathcal{X}(t))-\nabla_{\alpha} f(\alpha(t),\beta(t))\big)\mathrm{d}t + \mathrm{d}\zeta_1(t)\right]}_{=\hspace{.5mm}\mathrm{d}\xi_1(t)}\right]&, \\
\mathrm{d}\beta(t) = -\gamma_{2}(t)\left[\nabla_{\beta} g(\alpha(t),\beta(t))\mathrm{d}t + \underbrace{\left[\big(G(\alpha(t),\beta(t),\mathcal{X}(t))-\nabla_{\beta} g(\alpha(t),\beta(t))\big)\mathrm{d}t + \mathrm{d}\zeta_2(t)\right]}_{=\hspace{.5mm}\mathrm{d}\xi_2(t)}\right]&,
\end{align}
\fi
It is thus sufficient to prove that the alternative conditions in Theorem \ref{theorem1a} (Assumptions \ref{assumption1a} - \ref{assumption5c}) imply the original conditions of Theorem \ref{theorem1} (Assumptions \ref{assumption1} - \ref{assumption3}). Indeed, if this is the case, then Theorem \ref{theorem1a} follows directly from Theorem \ref{theorem1}. This statement holds trivially for all conditions except those relating to the noise processes. It thus remains to establish that, under the noise conditions in Theorem \ref{theorem1a} (Assumptions \ref{assumption2a} - \ref{assumption4aii}, \ref{assumption5a} - \ref{assumption5c}), the noise condition in Theorem \ref{theorem1} (Assumption \ref{assumption3}) holds for the noise processes $\{\xi_i(t)\}_{t\geq 0}$, $i=1,2$, as defined above. That is, for all $T>0$, and $i=1,2$, 
\begin{equation}
\lim_{s\rightarrow\infty} \sup_{t\in[s,s+T]} \left|\left| \int_{s}^{t} \gamma_i(v)\mathrm{d}\xi_i(v)\right|\right|=0
\end{equation}
But this is an immediate consequence of Assumption \ref{assumption5a} and Lemma \ref{lemmaB1}. The result follows immediately. 
\end{proof}
%~\\[-6mm]

\iffalse
To add (or modify in the proof). Some comments on ellipticity:
\begin{itemize}
\item[(i)] In the proof of Lemma \ref{lemmaB1}, we use Theorem [] to guarantee that there exist positive constants $0<K',q'<\infty$ such that
\begin{align}
\sum_{i=0}^2 ||\partial_{\theta}^{i} v(\theta,\boldsymbol{o},x)|| + ||\partial_x\partial_{\theta}v(\theta,\boldsymbol{o},x)||   &\leq K'(1+||x||^{q'}).
\end{align}
If we do not have uniform ellipticity, this property is guaranteed by Assumption \ref{assumption3a}. \\[-1mm]

\item[(ii)] In the proof of Lemma \ref{lemmaB1}, we use \cite[Proposition 1]{Pardoux2001} to guarantee that, for all $q>0$, and for all $t\geq 0$,
\begin{equation}
\mathbb{E}\left[\big|\big|\tilde{X}(\tau)\big|\big|^{q}\right]<\infty.
\end{equation}
If we do not have uniform ellipticity, this property is guaranteed by Assumption \ref{assumption4aii}. \\[-1mm]

\item[(iii)] In the proof of Lemma \ref{lemmaB1}, we use \cite[Proposition 2]{Pardoux2001} to guarantee that, for all $q>0$, and for $t$ sufficiently large, there exists $K$ such that 
\begin{equation}
\mathbb{E}\left[\sup_{\tau\in[0,t]}\big|\big|\tilde{X}(\tau)\big|\big|^q\right]\leq K\sqrt{t}.
\end{equation}
f we do not have uniform ellipticity, this property is guaranteed by Assumption \ref{assumption4aii}.\\
\end{itemize}
\fi

\section{Sufficient Conditions for Theorem \ref{theorem1a}} \label{appendix_sufficient_conditions_elliptic}
In this Appendix, we provide sufficient conditions for Theorem \ref{theorem1a} in the case that $\smash{\{\mathcal{X}(\alpha,\beta,t)\}_{t\geq 0}}$  in \eqref{diffusion}  is a non-degenerate elliptic diffusion process for all $\alpha\in\mathbb{R}^{d_1}$, $\beta\in\mathbb{R}^{d_2}$. %For the convenience of the reader, we will restate all of the assumptions required for Theorem \ref{theorem1a}, substituting sufficient conditions where appropriate.

\subsection{Preliminaries}
We first recall that $\smash{\{\mathcal{X}(\alpha,\beta,t)\}_{t\geq 0}}$ is an elliptic diffusion process if $\smash{\mathcal{A}_{\mathcal{X}}(\alpha,\beta)}$ is an elliptic operator on $\smash{\mathbb{R}^{d_3}}$. That is, if the matrix $a(\alpha,\beta,x)$, defined according to $\smash{\left(a(\alpha,\beta,x)\right)_{ij} \hspace{-.5mm} = \hspace{-.5mm} \tfrac{1}{2}(\Psi(\alpha,\beta,x)\Psi^T(\alpha,\beta,x))_{ij}}$
%\begin{equation}
%\smash{\left(a(\alpha,\beta,x)\right)_{ij} = \frac{1}{2}(\Psi(\alpha,\beta,x)\Psi^T(\alpha,\beta,x))_{ij}}
%\end{equation}
 is symmetric and positive semi-definite for all $x\in\mathbb{R}^{d_3}$, i.e., $\forall x,y\in\mathbb{R}^{d_3}$,  
\begin{equation}
0\leq \sum_{i,j=1}^d a_{ij}(\alpha,\beta,x)y_iy_j.
\end{equation}
Meanwhile, we say that $\smash{\{\mathcal{X}(\alpha,\beta,t)\}_{t\geq 0}}$ is a non-degnerate elliptic diffusion process if $\mathcal{A}_{\mathcal{X}}(\alpha,\beta)$ is a \emph{uniformly elliptic operator} on $\mathbb{R}^d$. That is, if there exists $0<\lambda<\infty$ such that, for all $\smash{x,y\in\mathbb{R}^{d_3}}$, the matrix $a(\alpha,\beta,x)$ satisfies
\begin{equation}
\lambda |y|^2\leq \sum_{i,j=1}^d a_{ij}(\alpha,\beta,x)y_iy_j.
\end{equation}

%We now consider each of the assumptions for Theorem \ref{theorem1a} in turn. Assumption \ref{assumption1a} does not relate to the ergodic diffusion process, and thus remains unchanged. In particular, we have the following.

%\begin{manualassumption}{2.1.1'} \label{elliptic_assumption0}
%The step sizes $\{\gamma_i(t)\}_{t\geq 0}$, $i=1,2$,  satisfy Assumption \linebreak \ref{assumption1}, and additionally
%\begin{align}
%\int_0^{\infty}\gamma^2_1(t)\mathrm{d}t, \int_0^{\infty}\gamma^2_2(t)\mathrm{d}t &< \infty, \\
%\int_0^{\infty}\left|\dot{\gamma}_1(t)\right|\mathrm{d}t, \int_0^{\infty}\left|\dot{\gamma}_2(t)\right|\mathrm{d}t &< \infty.
%\end{align}

%Furthermore, there exist $r_1,r_2>0$ such that 
%\begin{equation}
%\lim_{t\rightarrow\infty} \gamma_1^2(t)t^{\frac{1}{2}+2r_1} = \lim_{t\rightarrow\infty} \gamma_2^2(t)t^{\frac{1}{2}+2r_2} = 0. \\[3mm]
%\end{equation}
%\end{manualassumption}

\subsection{Sufficient Conditions}

\subsubsection{Assumption \ref{assumption2a}}
Assumption \ref{assumption2a} relates to the ergodicity of the diffusion process $\{\mathcal{X}(\alpha,\beta,t)\}_{t\geq 0}$. It is implied by the following two sufficient conditions, which correspond to Assumptions (H$_a$), (H$_b$) in \cite{Pardoux2003}. They also appear, in a somewhat less general form, as Assumption 2.2 in the analysis of the continuous-time stochastic gradient algorithm with Markovian dynamics in \cite{Sirignano2017a}.

We remark that, in the case that $\{\mathcal{X}(\alpha,\beta,t)\}_{t\geq 0}$ is a degenerate elliptic diffusion process, one can also obtain sufficient conditions which guarantee ergodicity (e.g., \cite{Hormander1967,Ichihara1974}). % ,, Arnold1987, Cannarsa2010, Bianca2017,Bellet2006
However, these conditions are significantly more difficult to verify, and we do not state them here.

\begin{manualassumption}{2.2.2a.i'} \label{elliptic_assumption1}
The diffusion coefficient $\Psi(\alpha,\beta,x)$ is uniformly non degenerate and bounded in $\mathbb{R}^{d_3}$ for all $\alpha\in\mathbb{R}^{d_1}$, $\beta\in\mathbb{R}^{d_2}$. That is, there exist constants $0<\kappa_1<\kappa_2<\infty$ such that, for all $\alpha\in\mathbb{R}^{d_1},\beta\in\mathbb{R}^{d_2},x\in\mathbb{R}^{d_3}$, 
\begin{equation}
\kappa_1 I \leq a(\alpha,\beta,x)\leq\kappa_2I.
\end{equation}
\end{manualassumption}

\begin{manualassumption}{2.2.2a.ii'} \label{elliptic_assumption2}
The following recurrence condition holds for all $\alpha\in\mathbb{R}^{d_1}$, $\beta\in\mathbb{R}^{d_2}$,
\begin{equation}
\lim_{|x|\rightarrow\infty} \Phi(\alpha,\beta,x) \cdot x = - \infty. \\[2mm]
\end{equation}
\end{manualassumption}

\begin{proposition}
Assumptions \ref{elliptic_assumption1} - \ref{elliptic_assumption2} imply Assumption \ref{assumption2a}.
\end{proposition}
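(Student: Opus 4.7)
The plan is to verify the Foster--Lyapunov drift condition together with an irreducibility/regularity statement; jointly these imply existence, uniqueness, and ergodic convergence of the invariant probability measure for the diffusion $\{\mathcal{X}(\alpha,\beta,t)\}_{t\geq 0}$ at each fixed $\alpha\in\mathbb{R}^{d_1}$, $\beta\in\mathbb{R}^{d_2}$. Since Assumptions \ref{elliptic_assumption1}--\ref{elliptic_assumption2} correspond exactly to conditions $(H_a)$, $(H_b)$ of Pardoux and Veretennikov, the cleanest route would be to invoke their results directly; I sketch the underlying argument, which the reader can then recognise as standard.

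Fix $(\alpha,\beta)$. The first step uses Assumption \ref{elliptic_assumption1} to obtain the regularity side of the argument. Uniform non-degeneracy together with the boundedness of $a(\alpha,\beta,\cdot)$ ensures that the generator $\mathcal{A}_{\mathcal{X}}(\alpha,\beta)$ is uniformly elliptic on $\mathbb{R}^{d_3}$, and classical parabolic PDE theory then guarantees the existence of a smooth, strictly positive transition density $p_t^{\alpha,\beta}(x,y)$. As a consequence, the process is strong Feller and open-set irreducible, so that every skeleton chain is irreducible and aperiodic in the sense of Meyn--Tweedie. In particular, any invariant probability measure is automatically unique.

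The second step is to establish existence of such a measure through a Lyapunov argument. I would choose $V(x) = 1+|x|^2$ and compute
\[
\mathcal{A}_{\mathcal{X}}(\alpha,\beta)\,V(x) \;=\; 2\,\Phi(\alpha,\beta,x)\cdot x + \mathrm{Tr}\!\left(a(\alpha,\beta,x)\right).
\]
By Assumption \ref{elliptic_assumption1} the trace is bounded above by $d_3\kappa_2$ uniformly in $x$; by Assumption \ref{elliptic_assumption2} the first term tends to $-\infty$ as $|x|\to\infty$. Consequently $\mathcal{A}_{\mathcal{X}}V(x)\to -\infty$ as $|x|\to\infty$, which yields constants $c,d>0$ and a compact set $K\subset\mathbb{R}^{d_3}$ such that
\[
\mathcal{A}_{\mathcal{X}}V(x) \;\leq\; -c + d\,\mathds{1}_{K}(x)\qquad \forall\, x\in\mathbb{R}^{d_3}.
\]
Combined with the strong Feller/irreducibility established in the first step, this drift condition implies, via the standard Meyn--Tweedie (or Has'minskii) theory for continuous-time Markov processes, that $\{\mathcal{X}(\alpha,\beta,t)\}_{t\geq 0}$ is positive Harris recurrent and admits a unique invariant probability measure $\mu_{\alpha,\beta}$, to which the law of $\mathcal{X}(\alpha,\beta,t)$ converges in total variation as $t\to\infty$. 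This is precisely the content of Assumption \ref{assumption2a}.

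The only subtlety I anticipate is bookkeeping: Assumption \ref{elliptic_assumption2} is stated only as a limit, so one must translate it into a genuine pointwise inequality outside a compact set (pick $R>0$ such that $\Phi(\alpha,\beta,x)\cdot x \leq -\tfrac{1}{2}d_3\kappa_2 - c$ for $|x|\geq R$, which is possible since the limit is $-\infty$). This is entirely routine and is the one spot requiring a moment of care. Notably, no uniformity of constants in $(\alpha,\beta)$ is required at this stage, since Assumption \ref{assumption2a} asserts the ergodic statement for each $(\alpha,\beta)$ separately; uniformity issues are addressed separately by Assumption \ref{assumption2ai}, which is handled by further sufficient conditions elsewhere in the paper.
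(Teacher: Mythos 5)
Your proposal is correct and matches the paper's approach: the paper's proof consists of a single citation to Pardoux and Veretennikov, whose hypotheses $(H_a)$, $(H_b)$ are exactly Assumptions \ref{elliptic_assumption1}--\ref{elliptic_assumption2}, and your Foster--Lyapunov plus strong-Feller sketch is a faithful unpacking of the standard argument behind that citation (modulo an immaterial factor of $2$ on the trace term, since $\nabla^2 V = 2I$). Nothing further is needed.
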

\begin{proof}
This is a standard result. See, for example, \cite{Pardoux2003}. %Veretennikov1997,Khasminskii2012, Bianca2017
\end{proof}

\subsubsection{Assumption \ref{assumption2ai}}
Assumptions \ref{assumption2ai} relates to the regularity of the invariant measure and its derivatives. It is guaranteed by the following sufficient condition, in addition to the previous two. This corresponds to Assumption (H$^{2+\alpha,1}$) in \cite{Pardoux2003}. It also appears, in a slightly less general form, as Assumption 2.3.3 in \cite{Sirignano2017a}.

\begin{manualassumption}{2.2.2b'} \label{elliptic_assumption3}
The functions $\Phi(\alpha,\beta,x)$, $\Psi(\alpha,\beta,x)$ are in $\mathbb{H}^{2+\delta,1}(\mathbb{R}^{d_3})$, for all $\alpha\in\mathbb{R}^{d_1},\beta\in\mathbb{R}^{d_2}$. Namely, these functions have two bounded derivatives in $x$, and one bounded derivative in $\alpha,\beta$, with all partial derivatives H\"older continuous with exponent $\delta$ with respect to $x$, uniformly in $\alpha,\beta$. 
\end{manualassumption}

\begin{proposition}
Assumptions \ref{elliptic_assumption1} - \ref{elliptic_assumption3} imply Assumption \ref{assumption2ai}
\end{proposition}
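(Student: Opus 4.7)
The plan is to establish the three bounds in Assumption \ref{assumption2ai} by invoking the analytic framework developed by Pardoux and Veretennikov, whose results on ergodic diffusions with polynomial Lyapunov functions are tailored precisely to this setting. Since Assumptions \ref{elliptic_assumption1} -- \ref{elliptic_assumption3} are pointwise-in-$(\alpha,\beta)$ translations of hypotheses (H$_a$), (H$_b$), (H$^{2+\delta,1}$) in their work, the strategy is essentially to verify that their conclusions go through \emph{uniformly} in the parameters $(\alpha,\beta)$ over any compact set in $\mathbb{R}^{d_1}\times\mathbb{R}^{d_2}$.

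First I would tackle the polynomial moment bound on the invariant measure itself. The key observation is that the recurrence condition in Assumption \ref{elliptic_assumption2}, combined with the uniform non-degeneracy in Assumption \ref{elliptic_assumption1}, delivers a Foster--Lyapunov drift estimate of the form $\mathcal{A}_{\mathcal{X}} V(x) \leq -c_{1} V(x) + c_{2}$ for $V(x) = (1+\|x\|^{2})^{q/2}$, with constants $c_1, c_2 > 0$ depending on $(\alpha,\beta)$ but controllable uniformly on compacts. Integrating this inequality against $\mu_{\alpha,\beta}$ and using stationarity yields $\int V \, \mathrm{d}\mu_{\alpha,\beta} \leq c_{2}/c_{1} =: K_{q}$, which is the first bound. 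This is essentially the content of Proposition 1 in \cite{Pardoux2001}.

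For the bounds on the signed measures $\nu^{(\alpha)}_{\alpha,\beta,i}$ and $\nu^{(\beta)}_{\alpha,\beta,i}$, I would use the regularity supplied by Assumption \ref{elliptic_assumption3}. Under the $\mathbb{H}^{2+\delta,1}(\mathbb{R}^{d_3})$ hypothesis, standard parabolic regularity theory guarantees that $\mu_{\alpha,\beta}$ admits a smooth density $\rho_{\alpha,\beta}(x)$ with respect to Lebesgue measure, and that this density is continuously differentiable in $(\alpha,\beta)$. Formally differentiating the stationary Fokker--Planck equation $\mathcal{A}_{\mathcal{X}}^{*} \rho_{\alpha,\beta} = 0$ with respect to $\alpha_{i}$ yields an inhomogeneous adjoint equation for $\partial_{\alpha_{i}} \rho_{\alpha,\beta}$ whose forcing term is linear in the parameter derivatives of $\Phi$ and $\Psi$. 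Combining the Lyapunov estimate above with the pointwise bounds on these derivatives that come from Assumption \ref{elliptic_assumption3} then produces the desired polynomial moment bound on $|\nu^{(\alpha)}_{\alpha,\beta,i}|$ via a weighted $L^{1}$ estimate for solutions of the adjoint equation. An identical argument handles $|\nu^{(\beta)}_{\alpha,\beta,i}|$.

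The main obstacle will be obtaining the derivative bounds \emph{uniformly} in $(\alpha,\beta)$, since these require not merely existence of $\partial_{\alpha}\mu_{\alpha,\beta}$ but quantitative control of how the invariant measure varies with the parameters in a polynomially-weighted total variation norm. The cleanest route is to invoke Theorem 1 (and its corollaries) in \cite{Pardoux2003}, whose hypotheses coincide with Assumptions \ref{elliptic_assumption1} -- \ref{elliptic_assumption3}: their result shows that if $h\in\bar{\mathbb{H}}^{0,1}(\mathbb{R}^{d_3})$ is centered under $\mu_{\alpha,\beta}$, then the Poisson equation $\mathcal{A}_{\mathcal{X}}\tilde{h} = h$ admits a solution in $\bar{\mathbb{H}}^{2+\delta,1}(\mathbb{R}^{d_3})$, and an integration-by-parts argument applied to $\int h \, \partial_{\alpha_{i}} \mu_{\alpha,\beta} = -\int \partial_{\alpha_{i}}(\mathcal{A}_{\mathcal{X}}\tilde{h})\,\mathrm{d}\mu_{\alpha,\beta}$ then converts the parameter derivative of the invariant measure into a polynomially bounded quantity. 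The result follows by taking a sufficiently rich family of test functions $h$ of polynomial growth to characterise the weighted total variation of $\nu^{(\alpha)}_{\alpha,\beta,i}$.
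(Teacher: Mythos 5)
Your proposal is correct and follows essentially the same route as the paper, whose proof is simply a citation of the bounds in Theorem 1 of \cite{Pardoux2003} (via the single-parameter template of Lemma 3.3 in \cite{Sirignano2017a}); the pointwise decay estimates on the invariant density and its parameter derivatives established there immediately yield all three weighted bounds. Your Foster--Lyapunov and Fokker--Planck discussion is a sound unpacking of the machinery behind that citation, though the final ``rich family of test functions'' step is unnecessary once one has the pointwise bound $\smash{|\partial_{\alpha_i}\rho_{\alpha,\beta}(x)|\leq C_k(1+\|x\|)^{-k}}$ for every $k$.
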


\begin{proof}
The result follows by a simple extension of the result in \cite[Lemma 3.3]{Sirignano2017a}, making use of the bounds established in \cite[Theorem 1]{Pardoux2003}. %It is omitted here.
\end{proof}

%Assumption \ref{assumption4aiii} relates to the growth properties of the drift coefficient and the diffusion coefficient of the ergodic diffusion process. It is also guaranteed by the previous sufficient condition.%\footnote{In fact, Assumption \ref{elliptic_assumption3} is a slightly stronger version of Assumption \ref{assumption4aiii}.}

\subsubsection{Assumption \ref{assumption4a}}

Assumption \ref{assumption4a} relates to the existence, uniq-ueness, and properties of solutions of the Poisson equation associated with the ergodic diffusion process. It can be replaced by the following sufficient condition, in addition to the previous three. This corresponds to the second condition of \cite[Theorem 3]{Pardoux2003}; see also Assumption 2.3.1 - 2.3.2 in \cite{Sirignano2017a}.

\begin{manualassumption}{2.2.2d'} \label{elliptic_assumption4}
The functions $F(\alpha,\beta,x)$, $G(\alpha,\beta,x)$ are in $\bar{\mathbb{H}}^{\delta,2}(\mathbb{R}^d)$, for all $\alpha\in\mathbb{R}^{d_1},\beta\in\mathbb{R}^{d_2}$. Namely, these functions have two bounded derivatives in $\alpha,\beta$, with all partial derivatives Hold\"er continuous with exponent $\delta$ with respect to $x$, uniformly in $\alpha,\beta$. Furthermore, $F(\alpha,\beta,x)$ and $G(\alpha,\beta,x)$, and all of their first and second derivatives with respect to $\alpha,\beta$, have the PGP. 
\end{manualassumption}

\begin{proposition}
Assumptions \ref{elliptic_assumption1} - \ref{elliptic_assumption4} imply Assumption \ref{assumption4a}.
\end{proposition}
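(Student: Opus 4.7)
The plan is to construct the objective functions $f,g$ via ergodic averages and then invoke the Pardoux--Veretennikov regularity theory \cite{Pardoux2003,Pardoux2001} for Poisson equations associated with uniformly elliptic generators. First, since Assumptions \ref{elliptic_assumption1}--\ref{elliptic_assumption2} imply ergodicity of $\mathcal{X}(\alpha,\beta,\cdot)$ with unique invariant measure $\mu_{\alpha,\beta}$, and Assumption \ref{elliptic_assumption4} ensures $F,G$ have the PGP, I would define
\begin{equation}
\nabla_{\alpha}f(\alpha,\beta) := \int_{\mathbb{R}^{d_3}} F(\alpha,\beta,x)\,\mu_{\alpha,\beta}(\mathrm{d}x), \qquad \nabla_{\beta}g(\alpha,\beta) := \int_{\mathbb{R}^{d_3}} G(\alpha,\beta,x)\,\mu_{\alpha,\beta}(\mathrm{d}x),
\end{equation}
which are well defined by Assumption \ref{assumption2ai} (already established from Assumptions \ref{elliptic_assumption1}--\ref{elliptic_assumption3}). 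Local Lipschitz continuity of these maps follows by differentiating under the integral sign, using that $F,G$ and their first derivatives in $(\alpha,\beta)$ have the PGP (Assumption \ref{elliptic_assumption4}), and that the signed measures $\partial_{\alpha_i}\mu_{\alpha,\beta}$, $\partial_{\beta_i}\mu_{\alpha,\beta}$ integrate polynomial weights uniformly (Assumption \ref{assumption2ai}). Note that these definitions automatically enforce the centering property $\int_{\mathbb{R}^{d_3}}[F(\alpha,\beta,x)-\nabla_{\alpha}f(\alpha,\beta)]\mu_{\alpha,\beta}(\mathrm{d}x)=0$, which is the necessary solvability condition for the Poisson equation.

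The next step is to apply \cite[Theorem 3]{Pardoux2003} (see also \cite[Theorem 1]{Pardoux2001}) to both Poisson equations $\mathcal{A}_{\mathcal{X}}\tilde{F} = \nabla_{\alpha}f - F$ and $\mathcal{A}_{\mathcal{X}}\tilde{G} = \nabla_{\beta}g - G$. The hypotheses of that theorem are exactly: uniform non-degeneracy of the diffusion matrix (Assumption \ref{elliptic_assumption1}), a recurrence condition on the drift (Assumption \ref{elliptic_assumption2}), H\"older regularity of the coefficients $\Phi,\Psi$ in the class $\mathbb{H}^{2+\delta,1}$ (Assumption \ref{elliptic_assumption3}), and H\"older regularity with PGP for the right-hand side in the class $\bar{\mathbb{H}}^{\delta,2}$ (Assumption \ref{elliptic_assumption4}). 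This yields unique Borel-measurable centered solutions $\tilde{F},\tilde{G}$ together with polynomial growth bounds on $\tilde{F},\tilde{G}$, their first and second derivatives in $(\alpha,\beta)$, and crucially on the derivatives in $x$ up to order two.

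It then remains to verify membership in $\bar{\mathbb{H}}^{1+\delta,2}(\mathbb{R}^{d_3})$ and the PGP of the mixed partials $\partial_{\alpha}\partial_x \tilde{F}$, $\partial_{\beta}\partial_x \tilde{F}$ and analogously for $\tilde{G}$. The first regularity claim follows from standard interior Schauder estimates for elliptic PDEs applied to the Poisson equation, propagating the H\"older exponent $\delta$ of the right-hand side through $\partial_x\tilde{F}$ (gaining one derivative in $x$) uniformly in the parameters $(\alpha,\beta)$; the uniform ellipticity in Assumption \ref{elliptic_assumption1} is essential here. The mixed-partial PGP can be obtained by formally differentiating the Poisson equation in $\alpha$ (respectively $\beta$), writing
\begin{equation}
\mathcal{A}_{\mathcal{X}}\bigl(\partial_{\alpha_i}\tilde{F}\bigr) = \partial_{\alpha_i}\nabla_{\alpha}f - \partial_{\alpha_i} F - \bigl(\partial_{\alpha_i}\mathcal{A}_{\mathcal{X}}\bigr)\tilde{F},
\end{equation}
and observing that the new right-hand side again has the PGP (by Assumption \ref{elliptic_assumption4} and the growth bounds on $\tilde{F}$); another application of the Pardoux--Veretennikov estimate gives PGP bounds on $\partial_x \partial_{\alpha_i}\tilde{F}$.

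The main obstacle I expect is precisely this last step: propagating polynomial growth through the mixed $(\alpha,x)$ and $(\beta,x)$ derivatives. The parameter derivatives of the generator $\mathcal{A}_{\mathcal{X}}$ involve $\partial_{\alpha}\Phi$ and $\partial_{\alpha}\Psi\Psi^T$, and one must ensure these source terms, when multiplied by $\tilde{F}$ and its $x$-derivatives, still fit within the growth scale compatible with \cite[Theorem 3]{Pardoux2003}. This requires that the H\"older regularity and PGP assumptions on $\Phi,\Psi$ in Assumption \ref{elliptic_assumption3} and on $F,G$ in Assumption \ref{elliptic_assumption4} are tight enough to close the loop; a careful bookkeeping argument (essentially mirroring \cite[Lemmas 3.3--3.4]{Sirignano2017a}) is what will make the proof rigorous.
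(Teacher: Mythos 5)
Your proposal is correct and follows essentially the same route as the paper, whose entire proof consists of the single citation of \cite[Theorem 3]{Pardoux2003}: the construction of $\nabla_{\alpha}f$ and $\nabla_{\beta}g$ as ergodic averages of $F$ and $G$ against $\mu_{\alpha,\beta}$, the centering (solvability) condition, and the differentiate-the-Poisson-equation-in-the-parameter argument you sketch are precisely the internal mechanics of that theorem. The ``main obstacle'' you flag---propagating polynomial growth through the mixed $(\alpha,x)$ and $(\beta,x)$ partial derivatives---is in fact already part of the conclusion of the cited result under Assumptions \ref{elliptic_assumption1}--\ref{elliptic_assumption4}, so no additional bookkeeping beyond invoking it is required.
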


\begin{proof}
This result follows immediately form \cite[Theorem 3]{Pardoux2003}.
\end{proof}

\subsubsection{Assumption \ref{assumption4aii}}
Assumption \ref{assumption4aii} relates to bounds on the moments of the ergodic diffusion process. It can be replaced by identical sufficient conditions to those used for the existence and uniqueness of an invariant measure, under the additional requirement that the diffusion process is independent of $\alpha$ and $\beta$. These conditions represent a particular case of Assumptions (A$_{\sigma}$), (A$_b$) in \cite{Pardoux2001}. %\footnote{In particular, n the case $\alpha=0$.} 
They also appear, in a slightly less general setting, as Assumption 2.2 in \cite{Sirignano2017a}.

\begin{proposition}
Suppose that $\{\mathcal{X}(\alpha,\beta,t)\}_{t\geq 0} = \{\mathcal{X}(t)\}_{t\geq 0}$ is independent of $\alpha$, $\beta$. Then Assumptions \ref{elliptic_assumption1} - \ref{elliptic_assumption2} imply Assumption \ref{assumption4aii}.
\end{proposition}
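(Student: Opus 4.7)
The plan is to verify that Assumptions \ref{elliptic_assumption1}--\ref{elliptic_assumption2} coincide with hypotheses (A$_\sigma$), (A$_b$) of Pardoux and Veretennikov \cite{Pardoux2001}, so that Propositions 1 and 2 of that paper can be invoked directly. Since $\{\mathcal{X}(\alpha,\beta,t)\}_{t\geq 0}=\{\mathcal{X}(t)\}_{t\geq 0}$ is independent of $\alpha$ and $\beta$, the two displays in Assumption \ref{assumption4aii} collapse to the single statement that $\mathbb{E}\|\mathcal{X}(t)\|^q<\infty$ for every $q>0$ and every $t\geq 0$, and that $\mathbb{E}[\sup_{s\leq t}\|\mathcal{X}(s)\|^q]\leq K\sqrt{t}$ for all $t$ sufficiently large. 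Thus the task reduces to producing these two moment bounds from the uniform ellipticity/boundedness of $\Psi$ and the recurrence condition on $\Phi$.

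First I would construct a Lyapunov function $V(x)=(1+|x|^2)^{q/2}$ and apply It\^o's formula to $V(\mathcal{X}(t))$. The boundedness of $a=\tfrac12\Psi\Psi^{T}$ from Assumption \ref{elliptic_assumption1} controls the second-order part of the generator $\mathcal{A}_{\mathcal{X}}V$, while the recurrence condition $\Phi(x)\cdot x\to-\infty$ from Assumption \ref{elliptic_assumption2} ensures that the drift dominates at infinity. A short computation then shows that there exist constants $c,C>0$ such that $\mathcal{A}_{\mathcal{X}}V(x)\leq -cV(x)+C$ for all $x\in\mathbb{R}^{d_3}$. Taking expectations and invoking Gr\"onwall's inequality gives $\sup_{t\geq 0}\mathbb{E}[V(\mathcal{X}(t))]<\infty$, which yields the first bound in Assumption \ref{assumption4aii} uniformly in $t$.

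The $\sqrt{t}$ supremum bound is the real technical content. I would apply It\^o's formula to $V(\mathcal{X}(t))$, take the supremum over $s\in[0,t]$, and bound the resulting martingale term using the Burkholder--Davis--Gundy inequality. Uniform ellipticity and boundedness of $\Psi$ together with the polynomial growth of $|\nabla V|$ control the quadratic variation, while the drift contribution is controlled by the Lyapunov inequality from the previous step; applying Cauchy--Schwarz to the BDG term and using the uniform moment bound produces a bound of the form
\begin{equation}
\mathbb{E}\Big[\sup_{s\leq t}V(\mathcal{X}(s))\Big]\leq V(\mathcal{X}_0)+C_1 t +C_2\sqrt{t}.
\end{equation}
Because $V(x)\sim|x|^q$, this gives the stated $\sqrt{t}$ growth only once the drift term is reabsorbed; the correct route, as in \cite{Pardoux2001}, is to apply the argument to $V^{1/2}$ (or to a lower power of $1+|x|^2$), so that the deterministic $t$-linear contribution is converted into a $\sqrt{t}$ one via the Lyapunov decay.

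The main obstacle is precisely this last point: obtaining $\sqrt{t}$ rather than $t$ in the supremum bound requires combining the exponential Lyapunov decay with the BDG maximal inequality in a carefully chosen scale of test functions, as carried out in the proof of Proposition 2 of \cite{Pardoux2001}. Once this is done, verifying that their hypotheses are satisfied under our Assumptions \ref{elliptic_assumption1}--\ref{elliptic_assumption2} is immediate, and the proposition follows by direct application of their result to the $\alpha,\beta$-independent process $\{\mathcal{X}(t)\}_{t\geq 0}$.
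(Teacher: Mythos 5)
Your proposal is correct and ultimately takes the same route as the paper, which simply observes that Assumptions \ref{elliptic_assumption1}--\ref{elliptic_assumption2} are the hypotheses (A$_\sigma$), (A$_b$) of \cite{Pardoux2001} and cites Propositions 1 and 2 of that paper for the two moment bounds. Your additional Lyapunov/Burkholder--Davis--Gundy sketch of what lies inside those propositions is a reasonable account of the underlying machinery (and you correctly flag that the $\sqrt{t}$ supremum bound is the only genuinely delicate point), but it is not needed beyond the citation.
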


\begin{proof}
This result follows immediately from \cite[Proposition 1,2]{Pardoux2001}. 
\end{proof}

\section{Proof of Proposition \ref{theorem2}} \label{appendix:proof_theorem2}
In this Appendix, we provide a proof of Proposition \ref{theorem2}. For convenience, let us recall the statement of this result.

\begin{proposition_recall_1}
Assume that Assumptions \ref{assumption1a}, \ref{assumption2a} -  \ref{assumption4aii}, \ref{assumption5b}, \ref{assumption4} - \ref{assumption6}, and \ref{filter_assumption1} - \ref{filter_assumption3} hold. Then, with probability one, 
\begin{align}
\lim_{t\rightarrow\infty}\nabla_{\theta}\tilde{\mathcal{L}}^{ \mathrm{(filter)}}(\theta(t),\boldsymbol{o}(t)) = \lim_{t\rightarrow\infty}\nabla_{\boldsymbol{o}}\tilde{\mathcal{J}}^{ \mathrm{(filter)}}(\theta(t),\boldsymbol{o}(t)) = 0,
\end{align}
or
\begin{align}
\lim_{t\rightarrow\infty}(\theta(t),\boldsymbol{o}(t)) \in \{(\theta,\boldsymbol{o}):\theta\in \partial\Theta\cup \boldsymbol{o}\in \partial\Omega^{n_y}\}.
\end{align}
\end{proposition_recall_1}

\begin{remark__}
We assume here that the stated assumptions hold for the diffusion process $\mathcal{X}$ defined in \eqref{compact_SDE}, the functions $F$ and $G$ defined in \eqref{F_def_} and \eqref{G_def_}, and the semi-martingale $\zeta_1$ defined in \eqref{zeta_1_def}. Moreover, where necessary, we replace the algorithm iterates $(\alpha,\beta)$ by $(\theta,\boldsymbol{o})$, and the functions $f$ and $g$ by $\tilde{\mathcal{L}}^{(\text{filter})}$ and $\tilde{\mathcal{J}}^{(\text{filter})}$, as defined in \eqref{approxL} and \eqref{approxJ}.
\end{remark__}

\begin{proof}
We begin by defining the first exit times from $\Theta$, $\Omega^{n_y}$, respectively, as 
\begin{align}
\tau_{\theta} &= \inf\{t\geq 0:\theta(t)\not\in\Theta\}, \\
\tau_{\boldsymbol{o}} &= \inf\{t\geq 0:\boldsymbol{o}(t)\not\in\Omega^{n_y}\}.
\end{align}
First suppose that $\tau_{\theta}<\infty$. Since the paths of $\{\theta(t)\}_{t\geq 0}$ are continuous, it follows that $\theta(\tau_{\theta})\in\partial\Theta$. Furthermore, since $\mathrm{d}\theta(t)=0$ on $\partial\Theta$, we in fact have $\theta(t)\in\partial\Theta$ for all $t\geq \tau_{\theta}$. In particular, it follows that 
\begin{equation}
\lim_{t\rightarrow\infty}(\theta(t),\boldsymbol{o}(t)) \in \{(\theta,\boldsymbol{o}):\theta\in \partial\Theta\cup \boldsymbol{o}\in \partial\Omega^{n_y}\}.
\end{equation}
Using an identical argument, the same conclusion holds under the assumption that $\tau_{\boldsymbol{o}}<\infty$. 

It remains to consider the case when $\tau_{\theta}=\tau_{\boldsymbol{o}}=\infty$. That is, equivalently, when $\theta(t)\in\Theta$ and $\boldsymbol{o}(t)\in\Omega^m$ for all $t\geq 0$. In this instance, it is straightforward to see that Algorithm \eqref{RML_ROSP_eq1} - \eqref{RML_ROSP_eq2} is a special case of Algorithm \eqref{alg2_1} - \eqref{alg2_2}, in which the noise sequences  $\{\zeta_i(t)\}_{t\geq 0}$, $i=1,2$, are defined according to
\begin{align}
\mathrm{d}\zeta_1(t) &= -\zeta_1^{(2)}(\theta(t),\boldsymbol{o}(t),\mathcal{X}(t))\mathrm{d}w(t), \label{noise1} \\
\mathrm{d}\zeta_2(t) &= 0,\label{noise2}
\end{align}
and where the function $\zeta_1^{(2)}:\mathbb{R}^{n_{\theta}}\times\mathbb{R}^{n_yn_{\boldsymbol{o}}}\times\mathbb{R}^{N}\rightarrow\mathbb{R}^{n_{\theta}\times n_y}$ is defined in equation \eqref{zeta_1_def}. It is thus sufficient to prove that the single condition relating to these noise sequences in Proposition \ref{theorem2} (Assumption \ref{assumption5b}) is sufficient for the additional conditions in Theorem \ref{theorem1a} (Assumptions \ref{assumption5a}, \ref{assumption5c}). Indeed, in this case, it follows immediately from Theorem \ref{theorem1a} that 
\begin{align}
\lim_{t\rightarrow\infty}\nabla_{\theta}\tilde{\mathcal{L}}(\theta(t),\boldsymbol{o}(t)) = \lim_{t\rightarrow\infty}\nabla_{\boldsymbol{o}}\tilde{\mathcal{J}}(\theta(t),\boldsymbol{o}(t)) = 0.
\end{align}
We begin by considering Assumption \ref{assumption5a}. We wish to prove that for all $T>0$, the noise sequences $\{\zeta_i(t)\}_{t\geq 0}$, $i=1,2$, almost surely satisfy
\begin{equation}
\lim_{s\rightarrow\infty}\sup_{t\in[s,s+T]}\left|\left|\int_{s}^{t}\gamma_i(v)\mathrm{d}\zeta_i(v)\right|\right|=0
\end{equation}
This condition holds trivially for $\{\zeta_2(t)\}_{t\geq 0}$. We thus turn our attention to $\{\zeta_1(t)\}_{t\geq 0}$. Using the It\^{o} Isometry, and Assumptions \ref{assumption1a}, \ref{assumption4aii}, and \ref{assumption5b}, there exist constants $q>0$ and constants $K,K'>0$ such that
\begin{align}
\sup_{t\geq 0}\mathbb{E}\left[\left(\int_0^{\infty}\gamma_1(t)\mathrm{d}\zeta_1(t)\right)^2\right] &= \mathbb{E}\left[\left(\int_0^t \gamma_{1}(t)\zeta_1^{(2)}(\theta(t),\boldsymbol{o}(t),\mathcal{X}(t))\mathrm{d}w(t)\right)^2\right] \\
&\leq K \mathbb{E}\left[\int_0^t \gamma_1^2(t) (1+\mathbb{E}||\mathcal{X}(t)||^q)\mathrm{d}t\right] \\
&\leq KK'\int_0^t\gamma_1^2(t)\mathrm{d}t< \infty.
\end{align}
Thus, by Doob's martingale convergence theorem, there exists a square integrable random variable, say $M_{\infty}$, such that, both a.s. and in $L^2$,  $\lim_{t\rightarrow\infty} \int_0^{t}\gamma_1(t)\mathrm{d}\zeta_1(t) = M_{\infty}$. The required result follows. 

It remains to consider Assumption \ref{assumption5c}. We wish to prove that there exist constants $A_{z_1,z_2},A_{z_i,b}>0$, $i=1,2$,  such that, componentwise, 
\begin{equation}
c_{z_1,z_2}(t) = \frac{\mathrm{d}[z_1,z_2](t)}{\mathrm{d}t}\leq A_{z_1,z_2}~,~c_{z_i,b}(t) = \frac{\mathrm{d}[z_i,b](t)}{\mathrm{d}t}\leq A_{z_i,b}.
\end{equation}
where, in the general case, $\smash{\{z_i(t)\}_{t\geq 0}}$ are the $\smash{\mathbb{R}^{d_5^i}}$-valued Wiener processes appearing in the definition of the noise processes $\smash{\{\zeta_i(t)\}_{t\geq 0}}$, c.f. \eqref{add_noise}, and $\smash{\{b(t)\}_{t\geq 0}}$ is the $\mathbb{R}^{d_4}$-valued Wiener process appearing in the definition of the ergodic diffusion process $\smash{\{\mathcal{X}(t)\}_{t\geq 0}}$, c.f. \eqref{diffusion}. 

In the case of Algorithm \eqref{RML_ROSP_eq1} - \eqref{RML_ROSP_eq2}, we identify $z_1(t) = w(t)$, $z_2(t) = 0$ from \eqref{noise1} - \eqref{noise2}, and $\smash{b(t) = (v(t),w(t),a(t))^T}$ from \eqref{compact_SDE}. Thus, using elementary properties of the quadratic variation, we have, componentwise,
\begin{equation}
\frac{\mathrm{d}[z_1,z_2](t)}{\mathrm{d}t} = 0~,~\frac{\mathrm{d}[z_1,b](t)}{\mathrm{d}t} = 1 \text{ or } 0~,~\frac{\mathrm{d}[z_2,b](t)}{\mathrm{d}t} = 0.
\end{equation}

In particular, Assumption \ref{assumption5c} is satisfied. The result now follows immediately from our previous remarks.
\end{proof}
%\section{A Corollary of Theorem \ref{theorem1a}} \label{appendix:sirignano_result}

\section{Sufficient Conditions for Proposition \ref{theorem2}} \label{appendix:loglik_sufficient}
In this Appendix, we provide sufficient conditions for some of the assumptions in Proposition \ref{theorem2}. %These represent a two-timescale generalisation of conditions given in \cite{Surace2019}. 

%For the convenience of the reader, we will once more restate all of the assumptions required for Proposition \ref{theorem2}, substituting sufficient conditions where appropriate.
%In this Appendix, we provide sufficient conditions for two of the assumptions in Proposition \ref{theorem2}. Before providing these conditions, it is worth recalling that the original assumptions for Proposition \ref{theorem2} were borrowed from the assumptions from Theorem \ref{theorem1a}, and are thus stated in terms of the parameters $\alpha$ and $\beta$ and the functions $f(\cdot)$ and $g(\cdot)$. For the purposes of this Appendix, the reader is reminded that, when referring back to the original assumptions, one should replace $\alpha \leftrightarrow \theta$, $\beta\leftrightarrow\boldsymbol{o}$, and the functions $f(\cdot)\leftrightarrow\tilde{\mathcal{L}}(\cdot)$, $g(\cdot)\leftrightarrow\tilde{\mathcal{J}}(\cdot)$, where appropriate. 

\subsection{Assumption \ref{assumption4a}}
%\subsection{Sufficient Conditions}
%\subsubsection{Assumption \ref{assumption4a}}
Assumption \ref{assumption4a} relates to the existence and properties of the filter representations of the asymptotic log-likelihood and the asymptotic sensor placement objective function, namely

\begin{equation}
\tilde{\mathcal{L}}^{\text{(filter)}}(\theta,\boldsymbol{o}) = \lim_{t\rightarrow\infty}\frac{1}{t}\mathcal{L}_t^{\text{(filter)}}(\theta,\boldsymbol{o})~~,~~\tilde{\mathcal{J}}^{\text{(filter)}}(\theta,\boldsymbol{o}) = \lim_{t\rightarrow\infty}\frac{1}{t}\mathcal{J}_t^{\text{(filter)}}(\theta,\boldsymbol{o})
\end{equation}

It also relates to the existence, uniqueness, and properties of solutions of the Poisson equations associated with these functions, and the $\mathbb{R}^{N}$-valued ergodic diffusion process $\{\mathcal{X}(t)\}_{t\geq 0}$, consisting of the latent state, the filter, and the tangent filter. It can be replaced by the following two sufficient conditions, which represent a two-timescale extension of two of the conditions appearing in the analysis of the continuous time, online parameter estimation algorithm in \cite{Surace2019}.

\begin{manualassumption}{2.2.2c.i''}  \label{loglik_assumption2di}
For all $H\in\mathbb{H}_c^{1+\delta,2}(\mathbb{R}^{N})$, the Poisson equation \linebreak $\smash{\mathcal{A}_{\mathcal{X}}v(\theta,\boldsymbol{o},x) = H(\theta,\boldsymbol{o},x)}$ 
%\begin{equation}
%\mathcal{A}_{\mathcal{X}}v(\theta,\boldsymbol{o},x) = H(\theta,\boldsymbol{o},x)
%\end{equation}
has a unique solution $v(\theta,\boldsymbol{o},x)$ that lies in $\mathbb{H}^{1+\delta,2}(\mathbb{R}^{N})$, with $v(\theta,\boldsymbol{o},\cdot)\in C^2(\mathbb{R}^{N})$. Moreover, if $H\in\bar{\mathbb{H}}^{1+\delta,2}(\mathbb{R}^{N})$, then $v\in\bar{\mathbb{H}}^{1+\delta,2}(\mathbb{R}^{N})$, and its mixed first partial derivatives with respect to $(\theta,x)$ and $(\boldsymbol{o},x)$ have the PGP.
\end{manualassumption}

\begin{manualassumption}{2.2.2c.ii''}  \label{loglik_assumption2dii}
%The functions $F$, $\psi_{C}$, $C$, $G$, and $H$, have the following properties.
%\begin{itemize}
%\item[(i)] 
The functions $F$, $G$ as defined in \eqref{F_def_} - \eqref{G_def_}, are in $\bar{\mathbb{H}}^{1+\delta,2}(\mathbb{R}^{N})$.
%\item[(ii)] 
The functions $\psi_{C}$, $\psi_{j}$, as defined in \eqref{phi_C_def} - \eqref{phi_j_def}, are in $\mathbb{H}^{1+\delta,2}(\mathbb{R}^{p})$, and have the PGP componentwise.
%\item[(iii)] 
The functions $C$, $\psi_{C}^{\theta}$, and $\psi_{j}^{\boldsymbol{o}}$, as defined in (\ref{obs_finite_dim_inf}), (\ref{hatC_theta_0}), and (\ref{hatj_o_0}), have the PGP componentwise.
%\end{itemize}  
\end{manualassumption}

\begin{proposition}
Assumptions \ref{assumption2a}, \ref{assumption2ai}, \ref{assumption4aiii}, \ref{loglik_assumption2di} - \ref{loglik_assumption2dii} and \ref{assumption4aii} imply Assumption \ref{assumption4a}. 
\end{proposition}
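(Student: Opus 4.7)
The plan is to construct the functions $f,g$ required by Assumption \ref{assumption4a} as the filter representations of the asymptotic log-likelihood and sensor objective, and then to obtain $\tilde{F},\tilde{G}$ by applying the Poisson-equation hypothesis of Assumption \ref{loglik_assumption2di} to appropriately centered residuals. First I would set $f(\theta,\boldsymbol{o}):=-\tilde{\mathcal{L}}^{\mathrm{(filter)}}(\theta,\boldsymbol{o})$ and $g(\theta,\boldsymbol{o}):=\tilde{\mathcal{J}}^{\mathrm{(filter)}}(\theta,\boldsymbol{o})$. Ergodicity of $\{\mathcal{X}(\theta,\boldsymbol{o},t)\}$ (Assumption \ref{assumption2a}), combined with the polynomial growth of $F,G$ (contained in $F,G\in\bar{\mathbb{H}}^{1+\delta,2}(\mathbb{R}^{N})$ per Assumption \ref{loglik_assumption2dii}) and the moment bounds in Assumptions \ref{assumption2ai} and \ref{assumption4aii}, would let me invoke the Birkhoff ergodic theorem to conclude that the $\tfrac{1}{t}$-limits defining $f,g$ exist and that the ergodic-average identities $\nabla_{\theta}f(\theta,\boldsymbol{o})=\int F(\theta,\boldsymbol{o},x)\mu_{\theta,\boldsymbol{o}}(\mathrm{d}x)$ and $\nabla_{\boldsymbol{o}}g(\theta,\boldsymbol{o})=\int G(\theta,\boldsymbol{o},x)\mu_{\theta,\boldsymbol{o}}(\mathrm{d}x)$ hold.

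Next I would establish local Lipschitz continuity of $\nabla_{\theta}f$ and $\nabla_{\boldsymbol{o}}g$ by differentiating these integrals under the integral sign, which is justified by the PGP of the first and second $(\theta,\boldsymbol{o})$-derivatives of $F,G$ (again from Assumption \ref{loglik_assumption2dii}) together with the total-variation bounds on $\partial_{\theta_{i}}\mu_{\theta,\boldsymbol{o}}$ and $\partial_{\boldsymbol{o}_{j}}\mu_{\theta,\boldsymbol{o}}$ in Assumption \ref{assumption2ai}; the resulting Hessian entries are uniformly bounded integrals against $\mu_{\theta,\boldsymbol{o}}$ and its first-order signed derivative measures, and local Lipschitz continuity of the gradients follows. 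I would then set $H_{F}(\theta,\boldsymbol{o},x):=\nabla_{\theta}f(\theta,\boldsymbol{o})-F(\theta,\boldsymbol{o},x)$ and $H_{G}(\theta,\boldsymbol{o},x):=\nabla_{\boldsymbol{o}}g(\theta,\boldsymbol{o})-G(\theta,\boldsymbol{o},x)$; these are centered against $\mu_{\theta,\boldsymbol{o}}$ by construction, and belong componentwise to $\mathbb{H}_{c}^{1+\delta,2}(\mathbb{R}^{N})\cap\bar{\mathbb{H}}^{1+\delta,2}(\mathbb{R}^{N})$, inheriting their $x$-regularity from $F,G$ (Assumption \ref{loglik_assumption2dii}) and their $(\theta,\boldsymbol{o})$-regularity from the preceding step. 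Assumption \ref{loglik_assumption2di} then delivers, component by component, unique $\tilde{F},\tilde{G}\in\bar{\mathbb{H}}^{1+\delta,2}(\mathbb{R}^{N})$ satisfying $\mathcal{A}_{\mathcal{X}}\tilde{F}=\nabla_{\theta}f-F$ and $\mathcal{A}_{\mathcal{X}}\tilde{G}=\nabla_{\boldsymbol{o}}g-G$, with mixed first partial derivatives in $(\theta,x)$ and $(\boldsymbol{o},x)$ having the PGP, which is exactly what Assumption \ref{assumption4a} demands.

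The main obstacle will be the joint $(\theta,\boldsymbol{o},x)$-regularity step: in order to place the residuals $H_{F},H_{G}$ in $\mathbb{H}^{1+\delta,2}(\mathbb{R}^{N})$ one needs not merely Lipschitz continuity but $C^{2}$-smoothness with $\delta$-H\"older second derivatives of $\nabla f,\nabla g$ in $(\theta,\boldsymbol{o})$. The $F,G$ contribution is immediate from Assumption \ref{loglik_assumption2dii}, but the $\nabla f,\nabla g$ contribution requires a quantitative H\"older estimate for $(\theta,\boldsymbol{o})\mapsto\mu_{\theta,\boldsymbol{o}}$ in an appropriate weighted total-variation norm; this must be extracted either by strengthening Assumption \ref{assumption2ai} to cover the relevant second-order derivative measures, or by a direct argument based on regularity theory for the associated Fokker-Planck operator in the spirit of Pardoux and Veretennikov. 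A secondary technical point is rigorously passing from the $\tfrac{1}{t}$-limits defining $\tilde{\mathcal{L}}^{\mathrm{(filter)}},\tilde{\mathcal{J}}^{\mathrm{(filter)}}$ along the (stochastic) algorithm path to deterministic ergodic integrals, for which the polynomial growth of $\Psi$ (Assumption \ref{assumption4aiii}) and the uniform-in-time moment bound (Assumption \ref{assumption4aii}) are the relevant tools.
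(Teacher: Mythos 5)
Your proposal is correct and follows essentially the same route as the paper: take $f$ and $g$ to be the filter representations of the asymptotic log-likelihood and sensor objective, check that the centred residuals $\nabla_{\theta}f-F$ and $\nabla_{\boldsymbol{o}}g-G$ lie in $\mathbb{H}_c^{1+\delta,2}(\mathbb{R}^{N})\cap\bar{\mathbb{H}}^{1+\delta,2}(\mathbb{R}^{N})$, and then feed them into the Poisson-solvability hypothesis of Assumption \ref{loglik_assumption2di} to obtain $\tilde{F}$ and $\tilde{G}$. The only difference is that the paper outsources your first two steps --- existence of the ergodic limits and the regularity of the residuals, including the $(\theta,\boldsymbol{o})$-regularity you flag as the main obstacle --- to Proposition 1(i)--(iii) of \cite{Surace2019}, extended to the two-timescale setting, rather than rederiving them via Birkhoff's theorem and differentiation under the integral as you do.
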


\begin{proof}
The result follows as an extension of Proposition 1 in \cite{Surace2019}.  In particular, under Assumptions \ref{assumption2a}, \ref{assumption2ai}, \ref{loglik_assumption2di} and \ref{assumption4aii}, Proposition 1(i)-(ii) in \cite{Surace2019} guarantee the existence of $\tilde{\mathcal{L}}^{\text{(filter)}}(\theta,\boldsymbol{o})$ and $\nabla_{\theta}\tilde{\mathcal{L}}^{\text{(filter)}}(\theta,\boldsymbol{o})$. Moreover, under the same assumptions, Proposition 1(iii) in \cite{Surace2019} guarantees that the function $\nabla_{\theta}\tilde{\mathcal{L}}^{\text{(filter)}}(\theta,\boldsymbol{o}) - F(\theta,\boldsymbol{o},x)$ is in $\mathbb{H}_c^{1+\delta,2}(\mathbb{R}^{N})\cap\bar{\mathbb{H}}^{1+\delta,2}(\mathbb{R}^N)$. 

Using the same assumptions, and very similar arguments, one can similarly establish the existence of $\tilde{\mathcal{J}}^{\text{(filter)}}(\theta,\boldsymbol{o})$ and $\smash{\nabla_{\boldsymbol{o}}\tilde{\mathcal{J}}^{\text{(filter)}}(\theta,\boldsymbol{o})}$, and that the function $\smash{\nabla_{\boldsymbol{o}}\tilde{\mathcal{J}}^{\text{(filter)}}(\theta,\boldsymbol{o}) - G(\theta,\boldsymbol{o},x)}$ is in $\smash{\mathbb{H}_c^{1+\delta,2}(\mathbb{R}^{N})\cap\bar{\mathbb{H}}^{1+\delta,2}(\mathbb{R}^N)}$. Given the similarity with the proof of Proposition 1 in \cite{Surace2019}, the details are omitted. 

Having established these properties, one can now use Assumption \ref{loglik_assumption2di} to conclude that the Poisson equations 
\begin{align}
\mathcal{A}_{X}\tilde{F}(\theta,\boldsymbol{o},x) = \nabla_{\theta}\tilde{\mathcal{L}}^{\text{(filter)}}(\theta,\boldsymbol{o}) - F(\theta,\boldsymbol{o},x) \\
\mathcal{A}_{X}\tilde{G}(\theta,\boldsymbol{o},x) = \nabla_{\boldsymbol{o}}\tilde{\mathcal{J}}^{\text{(filter)}}(\theta,\boldsymbol{o}) - G(\theta,\boldsymbol{o},x) 
\end{align}
have unique solutions in $\smash{\bar{\mathbb{H}}^{1+\delta,2}(\mathbb{R}^N)}$, whose mixed first partial derivatives with respect to $(\theta,x)$ and $(\boldsymbol{o},x)$ have the PGP. Thus, in particular, all of the conditions in Assumption \ref{assumption4a} are satisfied.  
\end{proof}

\subsection{Assumption \ref{assumption5b}} Assumption \ref{assumption5b} relates to the properties of the additive, state-dependent noise sequences $\{\zeta_i(t)\}_{t\geq 0}$, $i=1,2$, appearing in the update equations for $\smash{\{\theta(t)\}_{t\geq 0}}$ and $\smash{\{\boldsymbol{o}(t)\}_{t\geq 0}}$. It is guaranteed by the following sufficient condition. 

\begin{manualassumption}{2.2.3b'}  \label{loglik_assumption23b}
The function $\psi_{C}^{\theta}$, as defined in equation \eqref{hatC_theta_0}, has the polynomial growth property componentwise. 
\end{manualassumption}

\begin{proposition}
Assumption \ref{loglik_assumption23b} implies Assumption \ref{assumption5b}
\end{proposition}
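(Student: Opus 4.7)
The plan is to proceed by direct inspection of the two noise coefficient functions $\zeta_1^{(2)}$ and $\zeta_2^{(2)}$ arising in Algorithm \eqref{RML_ROSP_eq1} - \eqref{RML_ROSP_eq2}, and to verify the polynomial growth property in the $\mathcal{X}$ variable componentwise for each of them. First, I would dispense with $\zeta_2$: comparing \eqref{RML_ROSP_eq2} with the canonical form \eqref{alg2_1} - \eqref{alg2_2} and \eqref{add_noise}, the sensor recursion carries no additive noise, so $\zeta_2 \equiv 0$ and consequently $\zeta_2^{(2)} \equiv 0$. This trivially satisfies the PGP.

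Second, I would read off the explicit form of $\zeta_1^{(2)}$ from the definition in \eqref{zeta_1_def}, namely
\begin{equation}
\zeta_1^{(2)}(\theta,\boldsymbol{o},\mathcal{X}) \;=\; \bigl[\psi_{C}^{\theta}(\theta,\boldsymbol{o},M,M^{\theta})\bigr]^T R^{-1}(\boldsymbol{o}),
\end{equation}
where $M$ and $M^{\theta}$ are the components of $\mathcal{X}$ corresponding to the filter and its parameter derivative, as defined in \eqref{compact_SDE}. Since $R^{-1}(\boldsymbol{o})$ does not depend on $\mathcal{X}$, and the algorithm projection forces $\boldsymbol{o}(t) \in \Omega^{n_y}$, the factor $R^{-1}(\boldsymbol{o})$ is bounded uniformly in $\mathcal{X}$ componentwise. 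Therefore the PGP of $\zeta_1^{(2)}$ in the $\mathcal{X}$ variable reduces to the PGP of $\psi_{C}^{\theta}$ in its last two arguments $(M,M^{\theta})$.

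Third, I would invoke Assumption \ref{loglik_assumption23b}, which is precisely the componentwise PGP of $\psi_{C}^{\theta}$. Combining this with the boundedness of $R^{-1}(\boldsymbol{o})$ in $\mathcal{X}$ yields constants $q,K>0$ such that for all $\theta\in\Theta$, $\boldsymbol{o}\in\Omega^{n_y}$, and all $\mathcal{X}\in\mathbb{R}^{N}$,
\begin{equation}
\bigl|\bigl(\zeta_1^{(2)}(\theta,\boldsymbol{o},\mathcal{X})\bigr)_{ij}\bigr| \;\leq\; K\bigl(1 + \|\mathcal{X}\|^{q}\bigr),
\end{equation}
componentwise, which is exactly the statement of Assumption \ref{assumption5b} for $\zeta_1^{(2)}$.

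There is no serious obstacle here; the only mild subtlety is to ensure that $R^{-1}(\boldsymbol{o})$ is well-defined and bounded over the relevant range of $\boldsymbol{o}$, which is immediate given the projection onto $\Omega^{n_y}$ in \eqref{RML_ROSP_eq2} together with the standing invertibility of the measurement noise covariance $R(\boldsymbol{o})$ in the state-space model \eqref{signal_finite_dim_inf} - \eqref{obs_finite_dim_inf}. Apart from this book-keeping, the proposition is a direct consequence of the explicit form of the additive noise identified in the proof of Proposition \ref{theorem2}.
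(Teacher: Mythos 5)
Your proposal is correct and follows essentially the same route as the paper's own proof: the paper likewise observes that $\zeta_2^{(2)}\equiv 0$ trivially has the PGP and that the PGP of $\zeta_1^{(2)}$ follows directly from Assumption \ref{loglik_assumption23b}. Your additional remark that the factor $R^{-1}(\boldsymbol{o})$ is independent of $\mathcal{X}$ and bounded over the projected range of $\boldsymbol{o}$ merely makes explicit a step the paper leaves implicit.
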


\begin{proof}
We are required to prove that the functions $\smash{\zeta_i^{(2)}}$, $i=1,2$, have the PGP componentwise. By Assumption \ref{loglik_assumption23b}, $\smash{\zeta_1^{(2)}}$ has the PGP componentwise. Meanwhile, $\smash{\zeta_2^{(2)}}$ is identically zero, c.f. \eqref{noise2}, and thus trivially has the PGP.
\end{proof}

\section{Extensions to Proposition \ref{theorem2}} \label{app:theorem2_ext}
In this Appendix, we discuss extensions to Proposition \ref{theorem2} in the case the Kushner-Stratonovich equation only admits a finite-dimensional recursive approximation. Under the stated assumption, Proposition \ref{theorem2} guarantees that,\footnote{For the purpose of this discussion, we will ignore the projection device which ensures that the algorithm iterates remain within $\Theta$ and $\Omega$, respectively.}
\begin{equation}
\lim_{t\rightarrow\infty}\nabla_{\theta}\tilde{\mathcal{L}}^{\text{(filter)}}(\theta(t),\boldsymbol{o}(t))=
\lim_{t\rightarrow\infty}\nabla_{\theta}\tilde{\mathcal{J}}^{\text{(filter)}}(\theta(t),\boldsymbol{o}(t))=0~,~~~\text{a.s.}
\end{equation}
where $\smash{\tilde{\mathcal{L}}^{\text{(filter)}}}(\theta,\boldsymbol{o})$ and $\smash{\tilde{\mathcal{J}}^{\text{(filter)}}}(\theta,\boldsymbol{o})$ are the representations of the asymptotic log-likelihood and the asymptotic sensor placement objective in terms of the approximate finite dimensional filter, and $\theta(t)$ and $\boldsymbol{o}(t)$ are the parameter estimates and optimal sensor placements generated by Algorithm \eqref{RML_ROSP_eq1} - \eqref{RML_ROSP_eq2}. In the case that one only has access to an approximate filter, the functions $\smash{\tilde{\mathcal{L}}^{\text{(filter)}}}(\theta,\boldsymbol{o})$ and $\smash{\tilde{\mathcal{J}}^{\text{(filter)}}}(\theta,\boldsymbol{o})$ are only approximations of the true objective functions $\tilde{\mathcal{L}}(\theta,\boldsymbol{o})$ and $\tilde{\mathcal{J}}(\theta,\boldsymbol{o})$. As such, it would clearly be preferable to obtain a result of the form
\begin{equation}
\lim_{t\rightarrow\infty}\nabla_{\theta}\tilde{\mathcal{L}}(\theta(t),\boldsymbol{o}(t))=
\lim_{t\rightarrow\infty}\nabla_{\theta}\tilde{\mathcal{J}}(\theta(t),\boldsymbol{o}(t))=0~,~~~\text{a.s.}
\end{equation}
For now, we will consider the slightly easier task of trying to obtain a result of the form
\begin{equation}
\lim_{t\rightarrow\infty} ||\nabla_{\theta}\tilde{\mathcal{L}}(\theta(t),\boldsymbol{o}(t))|| =
\lim_{t\rightarrow\infty}||\nabla_{\theta}\tilde{\mathcal{J}}(\theta(t),\boldsymbol{o}(t))||=0 \label{easy_converge}
\end{equation}
where the mode of convergence is to be specified. To make progress towards this goal, let us consider the simple decomposition
\begin{align}
||\nabla_{\theta}\tilde{\mathcal{L}}(\theta(t),\boldsymbol{o}(t))|| &\leq  ||\nabla_{\theta} \tilde{\mathcal{L}}(\theta(t),\boldsymbol{o}(t)) - \nabla_{\theta}\tilde{\mathcal{L}}^{\text{(filter)}}(\theta(t),\boldsymbol{o}(t))|| \label{decomp1} \\
&+ ||\nabla_{\theta} \tilde{\mathcal{L}}^{\text{(filter)}}(\theta(t),\boldsymbol{o}(t))|| \nonumber
\end{align} 
where, for the sake of brevity, we have now restricted our attention to the log-likelihood. Proposition \ref{theorem2} guarantees that the second term in this decomposition converges to zero a.s. as $t\rightarrow\infty$. It thus remains to bound the first term. Evidently this bound will depend on the properties of the filter, and vanishes if the filter is exact. To obtain such a bound, we will write
\begin{align}
&||\nabla_{\theta} \tilde{\mathcal{L}}(\theta(t),\boldsymbol{o}(t)) - \nabla_{\theta}\tilde{\mathcal{L}}^{\text{(filter)}}(\theta(t),\boldsymbol{o}(t))|| \\
&\hspace{20mm}\leq  ||\nabla_{\theta} \tilde{\mathcal{L}}(\theta(t),\boldsymbol{o}(t)) - \frac{1}{t}\nabla_{\theta}\mathcal{L}_t (\theta(t),\boldsymbol{o}(t))|| \label{decomp2a} \\
&\hspace{20mm}+||\frac{1}{t}\nabla_{\theta}\mathcal{L}_t (\theta(t),\boldsymbol{o}(t)) - \frac{1}{t}\nabla_{\theta}\mathcal{L}_t^{\text{(filter)}} (\theta(t),\boldsymbol{o}(t))||  \nonumber\\ % \label{decomp2b}\\
&\hspace{20mm}+||\frac{1}{t}\nabla_{\theta}\mathcal{L}_t^{\text{(filter)}} (\theta(t),\boldsymbol{o}(t)) - \nabla_{\theta}\tilde{\mathcal{L}}^{\text{(filter)}}(\theta(t),\boldsymbol{o}(t))|| \nonumber %\label{decomp2c} 
\end{align}
where $\mathcal{L}_t(\theta,\boldsymbol{o})$ denotes the true log-likelihood at time $t$, and $\mathcal{L}_t^{\text{(filter)}}(\theta,\boldsymbol{o})$ denotes the filter representation of the log-likelihood at time $t$. Under our assumptions, it is straightforward to show that the first term and the third term in this decomposition converge to zero a.s. as $t\rightarrow\infty$ (see also \cite[Proposition 1]{Surace2019}). %\footnote{To establish convergence of the first term, we require much weaker assumptions. Namely, we only require that the latent signal process $x(t)$ satisfy the assumptions which we have assumed for the process $\mathcal{X}(t)$.} 
We thus turn our attention to the central term. Using our previous expression for the log-likelihood function, c.f. \eqref{ll_func}, and its representation in terms of the approximate filter, we have that 
%We can compute, c.f. (3.1.1), 
\begin{align}
& \frac{1}{t} \nabla_{\theta} \mathcal{L}_t(\theta,\boldsymbol{o}) - \frac{1}{t} \nabla_{\theta} \mathcal{L}_t^{\text{(filter)}}(\theta,\boldsymbol{o}) \\
&=\frac{1}{t}\nabla_{\theta} \left[ \int_0^t R^{-1}(\boldsymbol{o}) \hat{C}(\theta,\boldsymbol{o},s)\cdot\mathrm{d}y(s)-\frac{1}{2}\int_0^t ||R^{-\frac{1}{2}}(\boldsymbol{o})\hat{C}(\theta,\boldsymbol{o},s)||^2\mathrm{d}s\right] \\
&-\frac{1}{t}\nabla_{\theta} \left[ \int_0^t R^{-1}(\boldsymbol{o}) \psi_{C}(\theta,\boldsymbol{o},M(s))\cdot\mathrm{d}y(s)-\frac{1}{2}\int_0^t ||R^{-\frac{1}{2}}(\boldsymbol{o})\psi_{C}(\theta,\boldsymbol{o},M(s))||^2\mathrm{d}s\right]. \hspace{-10mm}
%&= \frac{1}{t} \int_0^t \nabla_{\theta} \left[R^{-1}(\boldsymbol{o})\left[\psi_{C}(\theta,\boldsymbol{o},M(s)) - \hat{C}(\theta,\boldsymbol{o},s)\right]\right] \cdot\mathrm{d}y(s) \nonumber \\
%&-\frac{1}{2t}\int_0^t \nabla_{\theta} \left[\left[R^{-\frac{1}{2}}(\boldsymbol{o})\left[\psi_{C}(\theta,\boldsymbol{o},s)+ \hat{C}(\theta,\boldsymbol{o},M(s))\right]\right]\cdot \left[R^{-\frac{1}{2}}(\boldsymbol{o})\left[\psi_{C}(\theta,\boldsymbol{o},s)- \hat{C}(\theta,\boldsymbol{o},M(s))\right]\right]\right] \mathrm{d}s  \nonumber \hspace{-20mm}
\end{align}
It follows that, after some rearrangement of this expression, that given suitable bounds on the quantities $||\psi_{C}(\theta,\boldsymbol{o},M(s)) - \hat{C}(\theta,\boldsymbol{o},s)||$ and $||\psi_C^{\theta}(\theta,\boldsymbol{o},s) - \hat{C}^{\theta}(\theta,\boldsymbol{o},s)||$, it will be possible to bound this term. In many cases (e.g., linear observations), this corresponds to bounds on $||\psi_{x}(\theta,\boldsymbol{o},M(s)) - \hat{x}(\theta,\boldsymbol{o},s)||$ and $||\psi_{x}^{\theta}(\theta,\boldsymbol{o},M(s)) - \hat{x}^{\theta}(\theta,\boldsymbol{o},s)||$, where, for example, $\psi_{x}$ denotes the estimate of the conditional mean $\hat{x}$ in terms of the approximate filter.
%For linear observations, one has $\hat{C}(\theta,\boldsymbol{o},s) = C(\theta,\boldsymbol{o})\hat{x}(\theta,\boldsymbol{o},s)$ and $\psi_{C}(\theta,\boldsymbol{o},M(s)) = C(\theta,\boldsymbol{o})\psi_{x}(\theta,\boldsymbol{o},M(s))$, where $\varphi_{x}(\theta,\boldsymbol{o},M(s))$ is the estimate for the conditional mean in the terms of the approximate filter. Thus, in this case, 
%Similarly, given bounds for $||\psi_{j}(\theta,\boldsymbol{o},M(s)) - \hat{j}(\theta,\boldsymbol{o},s)||$ and $||\psi_j^{\boldsymbol{o}}(\theta,\boldsymbol{o},s) - \hat{j}^{\boldsymbol{o}}(\theta,\boldsymbol{o},s)||$, it will be possible to bound the corresponding term for the sensor placement objective function.
%The nature of these bounds will then determine the mode of the convergence in \eqref{easy_converge}.
In general, it will be necessary to verify these bounds on a case by case basis. There are, however, some notable exceptions, including the Ensemble Kalman-Bucy Filter (EnKBF) (e.g., \cite{DeWiljes2018,Moral2018}). Let us briefly demonstrate how existing results on this filter can be applied in our context, in the simplified setting where the observations are linear: that is, $C(\theta,\boldsymbol{o},x) = C(\theta,\boldsymbol{o})x$.

Suppose that $\smash{(v^{i,N}(t),w^{i,N}(t),x^{i,N}_0)_{i=1}^N}$ are independent copies of $\smash{(v(t),w(t),x_0)}$. The EnKBF consists of $N$ interacting particles $\smash{(x^{i,N}(\theta,\boldsymbol{o},t)_{i=1}^N}$ which evolve according to following system of interacting stochastic differential equations
\begin{align}
\mathrm{d}x^{i,N}(\theta,\boldsymbol{o},t) &= A(\theta,x^{i,N}(\theta,\boldsymbol{o},t))\mathrm{d}t + B(\theta,x^{i,N}(\theta,\boldsymbol{o},t))\mathrm{d}v^{i,N}(t)  \\
&+P^N(\theta,\boldsymbol{o},t){C}^T(\theta,\boldsymbol{o})R^{-1}(\boldsymbol{o})(\mathrm{d}y(t) - C(\theta,\boldsymbol{o})x^{i,N}(\theta,\boldsymbol{o},t)\mathrm{d}t - \mathrm{d}w^{i,N}(t))  \nonumber% \\
%x^{i,N}(\theta,\boldsymbol{o},0)&=x_0 \nonumber
\intertext{where}
P_N(\theta,\boldsymbol{o},t) &= \frac{1}{N-1}\sum_{i=1}^N(x^{i,N}(\theta,\boldsymbol{o},t) - {m}_N(\theta,\boldsymbol{o},t))(x^{i,N}(\theta,\boldsymbol{o},t) - {m}_N(\theta,\boldsymbol{o},t))^T  \\
{m}_N(\theta,\boldsymbol{o},t) &= \frac{1}{N}\sum_{i=1}^N x^{i,N}(\theta,\boldsymbol{o},t).  \nonumber
\end{align}
represent the (empirical) filter estimates of the conditional covariance $\hat{\Sigma}(\theta,\boldsymbol{o},t)$ and the conditional mean $\hat{x}(\theta,\boldsymbol{o},t)$. Under additional assumptions,\footnote{In particular, one requires that $A(\theta,\boldsymbol{o},x)$ is linear: $A(\theta,\boldsymbol{o},x) = A(\theta,\boldsymbol{o})x$.} it is possible to show that, for all $p\geq 1$, and for sufficiently large $N$, (e.g., \cite[Theorem 3.6]{Moral2018})
\begin{align}
\mathbb{E}\left[||m_N(\theta,\boldsymbol{o},t) - \hat{x}(\theta,\boldsymbol{o},t)||^p\right]^\frac{1}{p}\leq \frac{K(p)}{N^{\frac{1}{2}}} %\\
%\mathbb{E}\left[||P_N(\theta,\boldsymbol{o},t) - \hat{\Sigma}(\theta,\boldsymbol{o},t)||^n\right]^\frac{1}{n}\leq \frac{K(n)}{\sqrt{N}}
\end{align}
where $K(p)<\infty$ is a constant independent of $N$.  % some $K^{(1)}_{\theta},K^{(2)}_{\theta},K^{(1)}_{\boldsymbol{o}},K^{(2)}_{\boldsymbol{o}}>0$ 
%and $p^{(1)}_{\theta},p^{(2)}_{\theta},p^{(1)}_{\boldsymbol{o}},p^{(2)}_{\boldsymbol{o}}>1$. 
Suppose, in addition, that one could establish a similar bound for the tangent EnKBF (this remains an open problem in the general case). Then, using these results, our existing assumptions (e.g., polynomial growth, uniformly bounded moments for the filter and the tangent filter), and the H\"older inequality, after some algebra one arrives at
\begin{align}
\mathbb{E}\left[||\frac{1}{t} \nabla_{\theta} \mathcal{L}_t(\theta,\boldsymbol{o})  - \frac{1}{t}  \nabla_{\theta} \mathcal{L}^{\text{(filter)}}_t(\theta,\boldsymbol{o}) ||^p \right]^{\frac{1}{2p}} \leq \frac{K(p)}{N^{\frac{1}{2}}} \label{eq_bound1} %\\
%\mathbb{E}\left[||\frac{1}{t} \nabla_{\theta} \mathcal{J}_t(\theta,\boldsymbol{o})  - \frac{1}{t}  \nabla_{\theta} \mathcal{J}^{\text{(filter)}}_t(\theta,\boldsymbol{o}) || \right] \leq \frac{K}{N^{p}} \label{eq_bound2}
\end{align}
%$K_{\theta},K_{\boldsymbol{o}}>0$,  $p_{\theta},p_{\boldsymbol{o}}>1$. 
It follows, substituting this bound into \eqref{decomp2a}, substituting \eqref{decomp2a} into \eqref{decomp1}, that 
\begin{align}
\lim_{t\rightarrow\infty}\mathbb{E}\left[||\nabla_{\theta}\tilde{\mathcal{L}}(\theta(t),\boldsymbol{o}(t))||^{p}\right]^{\frac{1}{2p}} & \leq \frac{K(p)}{N^{\frac{1}{2}}}  %\\
%\lim_{t\rightarrow\infty}\mathbb{E}\left[||\nabla_{\theta}\tilde{\mathcal{J}}(\theta(t),\boldsymbol{o}(t))||\right] &\leq \frac{K_{\theta}}{N^{p_{\theta}}}.
\end{align}
One can follow the same argument to obtain an identical bound for $\nabla_{\theta}\tilde{\mathcal{J}}(\theta,\boldsymbol{o})$. It follows immediately from these bounds that the limit \eqref{easy_converge} holds in $\mathbb{L}^p$, for all $p\geq 1$, under the additional limit that $N\rightarrow\infty$ (i.e., as the number of particles goes to infinity). We remark that a rigorous result of this type has recently been established for (discrete-time) recursive maximum likelihood estimation in non-linear state-space models \cite{Tadic2021}.

\bibliographystyle{siamplain}
\bibliography{references}

\begin{thebibliography}{100}

\bibitem{Athans1972}
{\sc M.~Athans}, {\em {On the Determination of Optimal Costly Measurement
  Strategies for Linear Stochastic Systems}}, Automatica, 8 (1972),
  pp.~397--412, \url{https://doi.org/10.1016/S1474-6670(17)68422-2}.

\bibitem{Bain2009}
{\sc A.~Bain and D.~Crisan}, {\em {Fundamentals of Stochastic Filtering}},
  Springer-Verlag New York, 1~ed., 2009,
  \url{https://doi.org/10.1007/978-0-387-76896-0}.

\bibitem{Balakrishnan1973}
{\sc A.~V. Balakrishnan}, {\em {Stochastic Differential Systems I}}, vol.~84 of
  Lecture Notes in Economics and Mathematical Systems, Springer, Berlin,
  Heidelberg, 1973, \url{https://doi.org/10.1007/978-3-642-80759-6}.

\bibitem{Benaim1996}
{\sc M.~Benaim}, {\em {A Dynamical System Approach to Stochastic
  Approximations}}, SIAM J. Control Optim., 34 (1996), pp.~437--472,
  \url{https://doi.org/10.1137/S0363012993253534}.

\bibitem{Benaim1999}
{\sc M.~Bena{\"{i}}m}, {\em {Dynamics of stochastic approximation algorithms}},
  in S{\'{e}}minaire Probab. XXXIII, J.~Az{\'{e}}ma, M.~{\'{E}}mery, M.~Ledoux,
  and M.~Yor, eds., Springer, Berlin, Heidelberg, 1999, pp.~1--68,
  \url{https://doi.org/10.1007/BFb0096509}.

\bibitem{Benes1981}
{\sc V.~E. Bene{\v{S}}}, {\em {Exact finite-dimensional filters for certain
  diffusions with nonlinear drift}}, Stochastics, 5 (1981), pp.~65--92,
  \url{https://doi.org/10.1080/17442508108833174}.

\bibitem{Benveniste1990}
{\sc A.~Benveniste, M.~Metivier, and P.~Priouret}, {\em {Adaptive Algorithms
  and Stochastic Approximations}}, Springer, Berlin, Heidelberg, 1st~ed., 1990,
  \url{https://doi.org/10.1007/978-3-642-75894-2}.

\bibitem{Bertsekas2000}
{\sc D.~P. Bertsekas and J.~N. Tsitsiklis}, {\em {Gradient Convergence in
  Gradient methods with Errors}}, SIAM J. Optim., 10 (2000), pp.~627--642,
  \url{https://doi.org/10.1137/S1052623497331063}.

\bibitem{Bhatnagar2001}
{\sc S.~Bhatnagar, M.~C. Fu, S.~I. Marcus, and S.~Bhatnagar}, {\em
  {Two-timescale algorithms for simulation optimization of hidden Markov
  models}}, IIE Trans., 33 (2001), pp.~245--258,
  \url{https://doi.org/10.1023/A:1007611801240}.

\bibitem{Bhudisaksang2020}
{\sc T.~Bhudisaksang and {\'{A}}.~Cartea}, {\em {Online Drift Estimation for
  Jump-Diffusion Processes}}, SSRN,  (2020),
  \url{https://doi.org/10.2139/ssrn.3540252}.

\bibitem{Bishwal2008}
{\sc J.~P.~N. Bishwal}, {\em {Parameter Estimation in Stochastic Differential
  Equations}}, Springer, Berlin, Heidelberg, 2008,
  \url{https://doi.org/10.1007/978-3-540-74448-1}.

\bibitem{Borkar2008}
{\sc V.~Borkar}, {\em {Stochastic Approximation: A Dynamical Systems
  Viewpoint}}, Cambridge University Press, 2008,
  \url{https://doi.org/10.1007/978-93-86279-38-5}.

\bibitem{Borkar1997}
{\sc V.~S. Borkar}, {\em {Stochastic approximation with two time scales}},
  Syst. Control Lett., 29 (1997), pp.~291--294,
  \url{https://doi.org/10.1016/S0167-6911(97)90015-3}.

\bibitem{Borkar2000}
{\sc V.~S. Borkar and S.~P. Meyn}, {\em {The O.D.E. Method for Convergence of
  Stochastic Approximation and Reinforcement Learning}}, SIAM J. Control
  Optim., 38 (2000), pp.~447--469,
  \url{https://doi.org/10.1137/S0363012997331639}.

\bibitem{Borkar2018}
{\sc V.~S. Borkar and S.~Pattathil}, {\em {Concentration bounds for two time
  scale stochastic approximation}}, in 2018 56th Annu. Allert. Conf. Commun.
  Control. Comput., 2018, pp.~504--511,
  \url{https://doi.org/10.1109/ALLERTON.2018.8636078}.

\bibitem{Brandiere1998}
{\sc O.~Brandi{\`{e}}re}, {\em {The Dynamic System Method and the Traps}}, Adv.
  Appl. Probab., 30 (1998), pp.~137--151,
  \url{http://www.jstor.org/stable/1427880}.

\bibitem{Brigo1999}
{\sc D.~Brigo, B.~Hanzon, and F.~{Le Gland}}, {\em {Approximate Nonlinear
  Filtering by Projection on Exponential Manifolds of Densities}}, Bernoulli, 5
  (1999), pp.~495--534, \url{https://doi.org/10.2307/3318714},
  \url{http://www.jstor.org/stable/3318714}.

\bibitem{Brigo1998}
{\sc D.~Brigo, B.~Hanzon, and F.~LeGland}, {\em {A differential geometric
  approach to nonlinear filtering: the projection filter}}, IEEE Trans.
  Automat. Contr., 43 (1998), pp.~247--252,
  \url{https://doi.org/10.1109/9.661075}.

\bibitem{Budhiraja2003}
{\sc A.~Budhiraja}, {\em {Asymptotic stability, ergodicity and other asymptotic
  properties of the nonlinear filter}}, Ann. l'Institut Henri Poincare Probab.
  Stat., 39 (2003), pp.~919--941,
  \url{https://doi.org/10.1016/S0246-0203(03)00022-0}.

\bibitem{Burns2015}
{\sc J.~A. Burns and C.~N. Rautenberg}, {\em {The Infinite-Dimensional Optimal
  Filtering Problem with Mobile and Stationary Sensor Networks}}, Numer. Funct.
  Anal. Optim., 36 (2015), pp.~181--224,
  \url{https://doi.org/10.1080/01630563.2014.970647}.

\bibitem{Cappe2005}
{\sc O.~Cappe, E.~Moulines, and T.~Ryden}, {\em {Inference in Hidden Markov
  Models}}, Springer, New York, 2005.

\bibitem{Chen1987}
{\sc H.~Chen, C.~Wu, and Y.~Zhu}, {\em {Continuous-Time Stochastic
  Approximation Proecdure with Randomly Varying Truncations}}, Acta Math. Sci.,
  7 (1987), pp.~43--55, \url{https://doi.org/10.1016/S0252-9602(18)30487-9}.

\bibitem{Chen1980}
{\sc H.~F. Chen}, {\em {Least squares identification for continuous-time
  systems}}, in Anal. Optim. Syst., A.~Bensoussan and J.~L. Lions, eds.,
  Springer, Berlin, Heidelberg, 1980, pp.~264--277,
  \url{https://doi.org/10.1007/BFb0004047}.

\bibitem{Chen1975}
{\sc W.~Chen and J.~H. Seinfeld}, {\em {Optimal location of process
  measurements}}, Int. J. Control, 21 (1975), pp.~1003--1014,
  \url{https://doi.org/10.1080/00207177508922052}.

\bibitem{Chen1982}
{\sc {Chen Han Fu}}, {\em {On continuous-time stochastic approximatiom}}, in
  Anal. Optim. Syst., A.~Bensoussan and J.~L. Lions, eds., Springer, Berlin,
  Heidelberg, 1982, pp.~203--214, \url{https://doi.org/10.1007/BFb0044392}.

\bibitem{Chen1994}
{\sc {Chen Han Fu}}, {\em {Continuous-time stochastic approximation:
  Convergence and asymptotic efficiency}}, Stochastics Stoch. Reports, 51
  (1994), pp.~111--132, \url{https://doi.org/10.1080/17442509408833947}.

\bibitem{Dabbous1992}
{\sc T.~E. Dabbous and N.~U. Ahmed}, {\em {Parameter identification for
  partially observed diffusions}}, J. Optim. Theory Appl., 75 (1992),
  pp.~33--50, \url{https://doi.org/10.1007/BF00939904}.

\bibitem{Dalal2020}
{\sc G.~Dalal, B.~Szorenyi, and G.~Thoppe}, {\em {A Tale of Two-Timescale
  Reinforcement Learning with the Tightest Finite-Time Bound}}, Proc. AAAI
  Conf. Artif. Intell., 34 (2020), pp.~3701--3708,
  \url{https://doi.org/10.1609/aaai.v34i04.5779}.

\bibitem{Daum1986}
{\sc F.~Daum}, {\em {Exact finite-dimensional nonlinear filters}}, IEEE Trans.
  Automat. Contr., 31 (1986), pp.~616--622,
  \url{https://doi.org/10.1109/TAC.1986.1104344}.

\bibitem{DeWiljes2018}
{\sc J.~de~Wiljes, S.~Reich, and W.~Stannat}, {\em {Long-Time Stability and
  Accuracy of the Ensemble Kalman--Bucy Filter for Fully Observed Processes and
  Small Measurement Noise}}, SIAM J. Appl. Dyn. Syst., 17 (2018),
  pp.~1152--1181, \url{https://doi.org/10.1137/17M1119056},
  \url{https://doi.org/10.1137/17M1119056}.

\bibitem{DelMoral2018}
{\sc P.~{Del Moral}, A.~Kurtzmann, and J.~Tugaut}, {\em {On the stability and
  the concentration of extended Kalman-Bucy filters}}, Electron. J. Probab., 23
  (2018), \url{https://doi.org/10.1214/18-EJP188}.

\bibitem{DelMoral2000}
{\sc P.~{Del Moral} and L.~Miclo}, {\em {Branching and interacting particle
  systems approximations of feynman-kac formulae with applications to
  non-linear filtering}}, in S{\'{e}}minaire Probab. XXXIV, J.~Az{\'{e}}ma,
  M.~Ledoux, M.~{\'{E}}mery, and M.~Yor, eds., Springer Berlin Heidelberg,
  Berlin, Heidelberg, 2000, pp.~1--145,
  \url{https://doi.org/10.1007/BFb0103798},
  \url{https://doi.org/10.1007/BFb0103798}.

\bibitem{Moral2018}
{\sc P.~{Del Moral} and J.~Tugaut}, {\em {On the stability and the uniform
  propagation of chaos properties of Ensemble Kalman–Bucy filters}}, Ann.
  Appl. Probab., 28 (2018), pp.~790--850,
  \url{https://doi.org/10.1214/17-AAP1317},
  \url{https://doi.org/10.1214/17-AAP1317}.

\bibitem{Doan2021}
{\sc T.~T. Doan}, {\em {Finite-Time Convergence Rates of Nonlinear
  Two-Time-Scale Stochastic Approximation under Markovian Noise}}, arXiv
  Prepr.,  (2021), \url{https://arxiv.org/abs/2104.01627}.

\bibitem{Doan2017}
{\sc T.~T. Doan, C.~L. Beck, and R.~Srikant}, {\em {On the Convergence Rate of
  Distributed Gradient Methods for Finite-Sum Optimization under Communication
  Delays}}, Proc. ACM Meas. Anal. Comput. Syst., 1 (2017),
  \url{https://doi.org/10.1145/3154496}.

\bibitem{Driml1961}
{\sc M.~Driml and J.~Nedoma}, {\em {Stochastic approximations for continuous
  random functions}}, in Trans. Second Prague Conf. Inf. Theory, New York,
  1961, Academic Press, pp.~145--158.

\bibitem{Franceschi2018}
{\sc L.~Franceschi, P.~Frasconi, S.~Salzo, R.~Grazzi, and M.~Pontil}, {\em
  {Bilevel programming for hyperparameter optimization and meta-learning}}, in
  Proc. 25th Int. Conf. Mach. Learn., Stockholm, 2018, pp.~1563--1572.

\bibitem{Ge2015}
{\sc R.~Ge, F.~Huang, C.~Jin, and Y.~Yuan}, {\em {Escaping from saddle points -
  Online stochastic gradient for tensor decomposition}}, in COLT '15 Proc. 28th
  Annu. Conf. Learn. Theory, 2015.

\bibitem{Gerencser1984}
{\sc L.~Gerencs{\'{e}}r, I.~Gy{\"{o}}ngy, and G.~Michaletzky}, {\em
  {Continuous-Time Recursive Maximum Likelihood Method. A New Approach to
  Ljung's Scheme}}, IFAC Proc. Vol., 17 (1984), pp.~683--686,
  \url{https://doi.org/10.1016/S1474-6670(17)61050-4}.

\bibitem{Gerencser2009}
{\sc L.~Gerencs{\'{e}}r and V.~Prokaj}, {\em {Recursive identification of
  continuous-time linear stochastic systems - Convergence w.p.1 and in Lq}}, in
  2009 Eur. Control Conf., 2009, pp.~1209--1214,
  \url{https://doi.org/10.23919/ECC.2009.7074570}.

\bibitem{Ghadimi2018}
{\sc S.~Ghadimi and M.~Wang}, {\em {Approximation Methods for Bilevel
  Programming}}, arXiv Prepr.,  (2018), \url{https://arxiv.org/abs/1802.02246}.

\bibitem{Heusel2017}
{\sc M.~Heusel, H.~Ramsauer, T.~Unterthiner, B.~Nessler, and S.~Hochreiter},
  {\em {GANs Trained by a Two Time-Scale Update Rule Converge to a Local Nash
  Equilibrium}}, in Proc. 31st Int. Conf. Neural Inf. Process. Syst., NIPS'17,
  Red Hook, NY, USA, 2017, Curran Associates Inc., pp.~6629--6640,
  \url{https://doi.org/10.5555/3295222.3295408}.

\bibitem{Hirsch1976}
{\sc M.~W. Hirsch}, {\em {Differential Topology}}, Springer-Verlag New York,
  1976, \url{https://doi.org/10.1007/978-1-4684-9449-5}.

\bibitem{Hong2020}
{\sc M.~Hong, H.-T. Wai, Z.~Wang, and Z.~Yang}, {\em {A Two-Timescale Framework
  for Bilevel Optimization: Complexity Analysis and Application to
  Actor-Critic}}, arXiv Prepr.,  (2020),
  \url{https://arxiv.org/abs/2007.05170}.

\bibitem{Hormander1967}
{\sc L.~H{\"{o}}rmander}, {\em {Hypoelliptic second order differential
  equations}}, Acta Math., 119 (1967), pp.~147--171,
  \url{https://doi.org/10.1007/BF02392081}.

\bibitem{Ichihara1974}
{\sc K.~Ichihara and H.~Kunita}, {\em {A classification of the second order
  degenerate elliptic operators and its probabilistic characterization}},
  Zeitschrift f{\"{u}}r Wahrscheinlichkeitstheorie und Verwandte Gebiete, 30
  (1974), pp.~235--254, \url{https://doi.org/10.1007/BF00533476},
  \url{https://doi.org/10.1007/BF00533476}.

\bibitem{Ikeda1989}
{\sc N.~Ikeda and S.~Watanabe}, {\em {Stochastic differential equations and
  diffusion processes}}, North-Holland Publishing Company, 2nd~ed., 1989.

\bibitem{Kalman1961}
{\sc R.~E. Kalman and R.~S. Bucy}, {\em {New Results in Linear Filtering and
  Prediction Theory}}, J. Basic Eng., 83 (1961), pp.~95--103,
  \url{https://doi.org/10.1115/1.3658902}.

\bibitem{Karmakar2018}
{\sc P.~Karmakar and S.~Bhatnagar}, {\em {Two Time-Scale Stochastic
  Approximation with Controlled Markov Noise and Off-Policy Temporal-Difference
  Learning}}, Math. Oper. Res., 43 (2018), pp.~130--151,
  \url{https://doi.org/10.1287/moor.2017.0855}.

\bibitem{Konda1999}
{\sc V.~R. Konda and V.~S. Borkar}, {\em {Actor-Critic Type Learning Algorithms
  for Markov Decision Processes}}, SIAM J. Control Optim., 38 (1999),
  pp.~94--123, \url{https://doi.org/10.1137/S036301299731669X}.

\bibitem{Konda2003}
{\sc V.~R. Konda and J.~N. Tsitsiklis}, {\em {Linear stochastic approximation
  driven by slowly varying Markov chains}}, Syst. Control Lett., 50 (2003),
  pp.~95--102, \url{https://doi.org/10.1016/S0167-6911(03)00132-4}.

\bibitem{Konda2003a}
{\sc V.~R. Konda and J.~N. Tsitsiklis}, {\em {On Actor-Critic Algorithms}},
  SIAM J. Control Optim., 42 (2003), pp.~1143--1166,
  \url{https://doi.org/10.1137/S0363012901385691}.

\bibitem{Konda2004}
{\sc V.~R. Konda and J.~N. Tsitsiklis}, {\em {Convergence rate of linear
  two-time-scale stochastic approximation}}, Ann. Appl. Probab., 14 (2004),
  pp.~796--819, \url{https://doi.org/10.1214/105051604000000116}.

\bibitem{Kunita1991}
{\sc H.~Kunita}, {\em {Ergodic Properties of Nonlinear Filtering Processes}},
  in Spat. Stoch. Process., K.~S. Alexander and J.~C. Watkins, eds.,
  Birkh{\"{a}}user Boston, Boston, MA, 1991, pp.~233--256,
  \url{https://doi.org/10.1007/978-1-4612-0451-0_11}.

\bibitem{Kushner2003}
{\sc H.~Kushner and G.~G. Yin}, {\em {Stochastic approximation and recursive
  algorithms and applications}}, Springer-Verlag, New York, 2003,
  \url{https://doi.org/10.1007/b97441}.

\bibitem{Kushner1964}
{\sc H.~J. Kushner}, {\em {On the Differential Equations Satisfied by
  Conditional Probablitity Densities of Markov Processes, with Applications}},
  J. Soc. Ind. Appl. Math. Ser. A Control, 2 (1964), pp.~106--119,
  \url{https://doi.org/10.1137/0302009}.

\bibitem{Kushner1978}
{\sc H.~J. Kushner and D.~S. Clark}, {\em {Stochastic approximation methods for
  constrained and unconstrained systems}}, Springer-Verlag New York, New York,
  1978, \url{https://doi.org/10.1007/978-1-4684-9352-8}.

\bibitem{Kushner1984}
{\sc H.~J. Kushner and A.~Shwartz}, {\em {An Invariant Measure Approach to the
  Convergence of Stochastic Approximations with State Dependent Noise}}, SIAM
  J. Control Optim., 22 (1984), pp.~13--27,
  \url{https://doi.org/10.1137/0322002}.

\bibitem{Kushner1996}
{\sc H.~J. Kushner and F.~J. V{\'{a}}zquez-Abad}, {\em {Stochastic
  Approximation Methods for Systems over an Infinite Horizon}}, SIAM J. Control
  Optim., 34 (1996), pp.~712--756,
  \url{https://doi.org/10.1137/S0363012994263743}.

\bibitem{Kushner1997}
{\sc H.~J. Kushner and G.~G. Yin}, {\em {Stochastic approximation algorithms
  and applications}}, Springer-Verlag New York, 1997,
  \url{https://doi.org/10.1007/978-1-4899-2696-8}.

\bibitem{Kutoyants1984}
{\sc Y.~A. Kutoyants}, {\em {Parameter estimation for stochastic processes}},
  Heldermann, 1984.

\bibitem{Lakshminarayanan2017}
{\sc C.~Lakshminarayanan and S.~Bhatnagar}, {\em {A Stability Criterion for Two
  Timescale Stochastic Approximation Schemes}}, Automatica, 79 (2017),
  pp.~108--114, \url{https://doi.org/10.1016/j.automatica.2016.12.014}.

\bibitem{Lazrieva2008}
{\sc N.~Lazrieva, T.~Sharia, and T.~Toronjadze}, {\em {Semimartingale
  stochastic approximation procedure and recursive estimation}}, J. Math. Sci.,
  153 (2008), pp.~211--261, \url{https://doi.org/10.1007/s10958-008-9127-y}.

\bibitem{LeGland1995}
{\sc F.~LeGland and L.~Mevel}, {\em {Recursive identification of HMMs with
  observations in a finite set}}, in Proc. 1995 34th IEEE Conf. Decis. Control,
  vol.~1, 1995, pp.~216--221 vol.1,
  \url{https://doi.org/10.1109/CDC.1995.478681}.

\bibitem{LeGland1997}
{\sc F.~LeGland and L.~Mevel}, {\em {Recursive Identification in Hidden Markov
  Models}}, in Proc. 1997 36th IEEE Conf. Decis. Control, 1997, pp.~3468--3473,
  \url{https://doi.org/10.1109/CDC.1997.652384}.

\bibitem{Levanony1994}
{\sc D.~Levanony, A.~Shwartz, and O.~Zeitouni}, {\em {Recursive identification
  in continuous-time stochastic processes}}, Stoch. Process. their Appl., 49
  (1994), pp.~245--275, \url{https://doi.org/10.1016/0304-4149(94)90137-6}.

\bibitem{Ljung1977}
{\sc L.~Ljung}, {\em {Analysis of recursive stochastic algorithms}}, IEEE
  Trans. Automat. Contr., 22 (1977), pp.~551--575,
  \url{https://doi.org/10.1109/TAC.1977.1101561}.

\bibitem{Ljung1999}
{\sc L.~Ljung}, {\em {System Identification: Theory for the User}},
  Prentice-Hall, Englewood Cliffs, NJ, 2nd~ed., 1999.

\bibitem{Maurel1984}
{\sc M.~C. Maurel and D.~Michel}, {\em {Des resultats de non existence de
  filtre de dimension finie}}, Stochastics, 13 (1984), pp.~83--102,
  \url{https://doi.org/10.1080/17442508408833312},
  \url{https://doi.org/10.1080/17442508408833312}.

\bibitem{Meier1967}
{\sc L.~Meier, J.~Peschon, and R.~Dressler}, {\em {Optimal control of
  measurement subsystems}}, IEEE Trans. Automat. Contr., 12 (1967),
  pp.~528--536, \url{https://doi.org/10.1109/TAC.1967.1098668}.

\bibitem{Mertikopoulos2020}
{\sc P.~Mertikopoulos, N.~Hallak, A.~Kavis, and V.~Cevher}, {\em {On the Almost
  Sure Convergence of Stochastic Gradient Descent in Non-Convex Problems}},
  arXiv Prepr.,  (2020), \url{https://arxiv.org/abs/2006.11144}.

\bibitem{Metivier1984}
{\sc M.~Metivier and P.~Priouret}, {\em {Applications of a Kushner and Clark
  lemma to general classes of stochastic algorithms}}, IEEE Trans. Inf. Theory,
  30 (1984), pp.~140--151, \url{https://doi.org/10.1109/TIT.1984.1056894}.

\bibitem{Mokkadem2006}
{\sc A.~Mokkadem and M.~Pelletier}, {\em {Convergence rate and averaging of
  nonlinear two-time-scale stochastic approximation algorithms}}, Ann. Appl.
  Probab., 16 (2006), pp.~1671--1702,
  \url{https://doi.org/10.1214/105051606000000448}.

\bibitem{Moore1988}
{\sc J.~B. Moore}, {\em {Convergence of continuous time stochastic ELS
  parameter estimation}}, Stoch. Process. their Appl., 27 (1988), pp.~195--215,
  \url{https://doi.org/10.1016/0304-4149(87)90038-X}.

\bibitem{Nevelson1976}
{\sc M.~Nevel'son and R.~Has'minskii}, {\em {Stochastic Approximation and
  Recursive Estimation}}, Transl. Math. Monogr., 47 (1976),
  \url{https://doi.org/10.1090/mmono/047}.

\bibitem{Ocone1982}
{\sc D.~L. Ocone, J.~S. Baras, and S.~I. Marcus}, {\em {Explicit filters for
  diffusions with certain nonlinear drifts}}, Stochastics, 8 (1982), pp.~1--16,
  \url{https://doi.org/10.1080/17442508208833224}.

\bibitem{Pardoux2003}
{\sc E.~Pardoux and A.~Y. Veretennikov}, {\em {On Poisson equation and
  diffusion approximation 2}}, Ann. Probab., 31 (2003), pp.~1166--1192,
  \url{https://doi.org/10.1214/aop/1015345596}.

\bibitem{Pardoux2001}
{\sc E.~Pardoux and Y.~Veretennikov}, {\em {On the Poisson Equation and
  Diffusion Approximation. I}}, Ann. Probab., 29 (2001), pp.~1061--1085,
  \url{https://doi.org/10.1214/aop/1015345596}.

\bibitem{Pemantle1990}
{\sc R.~Pemantle}, {\em {Nonconvergence to Unstable Points in Urn Models and
  Stochastic Approximations}}, Ann. Probab., 18 (1990), pp.~698--712,
  \url{https://doi.org/10.1214/aop/1176990853},
  \url{https://doi.org/10.1214/aop/1176990853}.

\bibitem{Polyak1990}
{\sc B.~Polyak}, {\em {New stochastic approximation type procedures}}, Avtom. i
  Telemekhanika, 7 (1990), pp.~98--107.

\bibitem{Rajeswaran2019}
{\sc A.~Rajeswaran, C.~Finn, S.~M. Kakade, and S.~Levine}, {\em {Meta-Learning
  with Implicit Gradients}}, in 33rd Conf. Neural Inf. Process. Syst. (NeurIPS
  2019), 2019.

\bibitem{Ruppert1988}
{\sc D.~Ruppert}, {\em {Efficient estimators from a slowly converging
  robbins-monro process}}, tech. report, Cornell University Operations Research
  and Industrial Engineering, 1988.

\bibitem{Salgado1988}
{\sc M.~Salgado, R.~Middleton, and G.~C. Goodwin}, {\em {Connection between
  continuous and discrete Riccati equations with applications to Kalman
  filtering}}, IEE Proc. D - Control Theory Appl., 135 (1988), pp.~28--34,
  \url{https://doi.org/10.1049/ip-d.1988.0004}.

\bibitem{Sarkka2007}
{\sc S.~Sarkka}, {\em {On Unscented Kalman Filtering for State Estimation of
  Continuous-Time Nonlinear Systems}}, IEEE Trans. Automat. Contr., 52 (2007),
  pp.~1631--1641, \url{https://doi.org/10.1109/TAC.2007.904453}.

\bibitem{Sarkka2019}
{\sc S.~S{\"{a}}rkk{\"{a}} and A.~Solin}, {\em {Applied stochastic differential
  equations}}, Cambridge University Press, 2019,
  \url{https://doi.org/10.1017/9781108186735}.

\bibitem{Sen1978}
{\sc P.~Sen and K.~B. Athreya}, {\em {On Stochastic Approximation Procedures in
  Continuous Time}}, J. Indian Inst. Sci., 60 (1978).

\bibitem{Sharrock2020}
{\sc L.~Sharrock and N.~Kantas}, {\em {Joint Online Parameter Estimation and
  Optimal Sensor Placement for the Partially Observed Stochastic Advection
  Diffusion Equation}}, arXiv Prepr.,  (2020),
  \url{https://arxiv.org/abs/2009.08693}.

\bibitem{Shub1987}
{\sc M.~Shub}, {\em {Global Stability of Dynamical Systems}}, Springer-Verlag,
  New York, 1987.

\bibitem{Sigrist2015}
{\sc F.~Sigrist, H.~R. Kunsch, and W.~A. Stahel}, {\em {Stochastic partial
  differential equation based modelling of large space–time data sets}}, J.
  R. Stat. Soc. Ser. B (Statistical Methodol., 77 (2015), pp.~3--33,
  \url{https://doi.org/10.1111/rssb.12061}.

\bibitem{Sirignano2017a}
{\sc J.~Sirignano and K.~Spiliopoulos}, {\em {Stochastic Gradient Descent in
  Continuous Time}}, SIAM J. Financ. Math., 8 (2017), pp.~933--961,
  \url{https://doi.org/10.1137/17M1126825}.

\bibitem{Sirignano2020a}
{\sc J.~Sirignano and K.~Spiliopoulos}, {\em {Stochastic Gradient Descent in
  Continuous Time: A Central Limit Theorem}}, Stoch. Syst., 10 (2020),
  pp.~124--151, \url{https://doi.org/10.1287/stsy.2019.0050}.

\bibitem{Soderstrom1983}
{\sc T.~Soderstrom and L.~Ljung}, {\em {Theory and Practice of Recursive
  Identification}}, MIT Press, 1983.

\bibitem{Surace2019}
{\sc S.~C. Surace and J.~Pfister}, {\em {Online Maximum-Likelihood Estimation
  of the Parameters of Partially Observed Diffusion Processes}}, IEEE Trans.
  Automat. Contr., 64 (2019), pp.~2814--2829,
  \url{https://doi.org/10.1109/TAC.2018.2880404}.

\bibitem{Tadic1997}
{\sc V.~Tadic}, {\em {Convergence of stochastic approximation under general
  noise and stability conditions}}, in Proc. 36th IEEE Conf. Decis. Control,
  vol.~3, 1997, pp.~2281--2286 vol.3,
  \url{https://doi.org/10.1109/CDC.1997.657114}.

\bibitem{Tadic2004}
{\sc V.~B. Tadic}, {\em {Almost sure convergence of two time-scale stochastic
  approximation algorithms}}, in Proc. 2004 Am. Control Conf., vol.~4, 2004,
  pp.~3802--3807 vol.4, \url{https://doi.org/10.23919/ACC.2004.1384505}.

\bibitem{Tadic2010}
{\sc V.~B. Tadic}, {\em {Analyticity, Convergence, and Convergence Rate of
  Recursive Maximum-Likelihood Estimation in Hidden Markov Models}}, IEEE
  Trans. Inf. Theory, 56 (2010), pp.~6406--6432,
  \url{https://doi.org/10.1109/TIT.2010.2081110}.

\bibitem{Tadic2015}
{\sc V.~B. Tadi{\'{c}}}, {\em {Convergence and convergence rate of stochastic
  gradient search in the case of multiple and non-isolated extrema}}, Stoch.
  Process. their Appl., 125 (2015), pp.~1715--1755,
  \url{https://doi.org/10.1016/j.spa.2014.11.001}.

\bibitem{Tadic2021}
{\sc V.~Z.~B. Tadi{\'{c}} and A.~Doucet}, {\em {Asymptotic Properties of
  Recursive Particle Maximum Likelihood Estimation}}, IEEE Trans. Inf. Theory,
  67 (2021), pp.~1825--1848, \url{https://doi.org/10.1109/TIT.2020.3047761}.

\bibitem{Valkeila2000}
{\sc E.~Valkeila and A.~V. Melnikov}, {\em {Martingale Models of Stochastic
  Approximation and Their Convergence}}, Theory Probab. Its Appl., 44 (2000),
  pp.~333--360, \url{https://doi.org/10.1137/S0040585X97977549}.

\bibitem{Whittle1954}
{\sc P.~Whittle}, {\em {On stationary processes in the plane}}, Biometrika, 41
  (1954), pp.~434--449, \url{https://doi.org/10.1093/biomet/41.3-4.434}.

\bibitem{Wu2016}
{\sc X.~Wu, B.~Jacob, and H.~Elbern}, {\em {Optimal Control and Observation
  Locations for Time-Varying Systems on a Finite-Time Horizon}}, SIAM J.
  Control Optim., 54 (2016), pp.~291--316,
  \url{https://doi.org/10.1137/15M1014759}.

\bibitem{Xu2019}
{\sc T.~Xu, S.~Zou, and Y.~Liang}, {\em {Two time-scale off-policy TD learning:
  Non-asymptotic analysis over Markovian samples}}, in Adv. Neural Inf.
  Process. Syst., 2019, pp.~10634--10644.

\bibitem{Yang2020}
{\sc J.~Yang, W.~Hu, and C.~J. Li}, {\em {On the fast convergence of random
  perturbations of the gradient flow}}, arXiv Prepr.,  (2020),
  \url{https://arxiv.org/abs/1706.00837v3}.

\bibitem{Yang2019}
{\sc S.~Yang, M.~Wang, and E.~X. Fang}, {\em {Multilevel Stochastic Gradient
  Methods for Nested Composition Optimization}}, SIAM J. Optim., 29 (2019),
  pp.~616--659, \url{https://doi.org/10.1137/18M1164846}.

\bibitem{Yin1993}
{\sc G.~Yin and I.~Gupta}, {\em {On a continuous time stochastic approximation
  problem}}, Acta Appl. Math., 33 (1993), pp.~3--20,
  \url{https://doi.org/10.1007/BF00995492}.

\bibitem{Zhang2018}
{\sc M.~Zhang and K.~Morris}, {\em {Sensor Choice for Minimum Error Variance
  Estimation}}, IEEE Trans. Automat. Contr., 63 (2018), pp.~315--330,
  \url{https://doi.org/10.1109/TAC.2017.2714643}.

\end{thebibliography}

\end{document}